\renewcommand{\arraystretch}{0.85}
\renewcommand{\leq}{\leqslant}
\renewcommand{\geq}{\geqslant}
\numberwithin{equation}{section}
\numberwithin{figure}{section}
\DeclarePairedDelimiter\floor{\lfloor}{\rfloor}
\theoremstyle{plain}
\newtheorem*{thm*}{Theorem}
\newtheorem{thm}{Theorem}[section]
\newtheorem{prop}[thm]{Proposition}
\newtheorem{conj}[thm]{Conjecture}
\theoremstyle{definition}
\newtheorem{defn}[thm]{Definition}
\newtheorem*{defn*}{Definition}
\newtheorem{rem}[thm]{Remark}
\newtheorem{linpro}{Linear Program}
\newtheorem{SDP}{Semidefinite Program}
\DeclarePairedDelimiterX{\abs}[1]{\lvert}{\rvert}{\ifblank{#1}{{}\cdot{}}{#1}}
\newcommand{\trace}{\text{Tr }}
\newcommand{\sclambda}{\lambda^{(0)}} 
\newcommand{\splambda}{\lambda^{(1/2)}} 
\newcommand{\sct}{t^{(0)}} 
\newcommand{\spt}{t^{(1/2)}} 
\newtheorem*{thm:main}{Theorem \ref{thm:main}}
\newtheorem*{thm:prop}{Proposition \ref{thm:prop}}
\newcommand{\coloneq}{:~\!\!=}
\newcommand{\strangefont}[1]{{\mathbf{#1}}}
\newcommand{\DSD}[1]{{\color{red}[DSD: #1]}}
\begin{document}
\title{Bounds on spectral gaps of Hyperbolic spin surfaces} 

\author{Elliott Gesteau\textsuperscript{1}}
\address[1]{Division of Physics, Mathematics, and Astronomy, California Institute of Technology, Pasadena, CA 91125, USA}
\address[2]{Walter Burke Institute for Theoretical Physics,  California Institute of Technology,  Pasadena, CA 91125, USA}
\curraddr{}
\email{egesteau@caltech.edu}

\author{Sridip Pal\textsuperscript{1,2}}
\email{sridip@caltech.edu}

\author{ David Simmons--Duffin\textsuperscript{1,2}}
\email{dsd@caltech.edu}

\author{Yixin Xu\textsuperscript{1,2}}
\email{yixinxu@caltech.edu}

%
\begin{abstract}
We describe a method for constraining Laplacian and Dirac spectra of two dimensional compact orientable hyperbolic spin manifolds and orbifolds. The key ingredient is an infinite family of identities satisfied by the spectra. These spectral identities follow from the consistency between 1) the spectral decomposition of functions on the spin bundle into irreducible representations of $\mathrm{SL}(2,\mathbb{R})$ and 2) associativity of pointwise multiplication of functions. Applying semidefinite programming methods to our identities produces rigorous upper bounds on the Laplacian spectral gap as well as on the Dirac spectral gap conditioned on the former. In several examples, our bounds are nearly sharp; a numerical algorithm based on the Selberg trace formula shows that the $[0;3,3,5]$ orbifold, a particular surface with signature $[1;3]$, and the Bolza surface nearly saturate the bounds 
at genus $0$, $1$ and $2$ respectively. Under additional assumptions on the number of harmonic spinors carried by the spin-surface, we obtain more restrictive bounds on the Laplacian spectral gap. In particular, these bounds apply to hyperelliptic surfaces. We also determine the set of Laplacian spectral gaps attained by all compact orientable two-dimensional hyperbolic spin orbifolds. We show that  this set is upper bounded by $12.13798$; this bound is nearly saturated by the $[0;3,3,5]$ orbifold, whose first non-zero Laplacian eigenvalue is $\lambda^{(0)}_1\approx 12.13623$.

\end{abstract}

\date{}

\makeatletter
\gdef\@fpheader{}
\makeatother


\maketitle

\newpage
\setcounter{tocdepth}{1}
\tableofcontents
\section{Introduction}


In this paper, we derive new upper bounds on the first nonzero eigenvalue of Laplace and Dirac operators on compact orientable hyperbolic surfaces and orbifolds with spin. Our approach is inspired by a technique known in high energy physics as the conformal bootstrap \cite{Rattazzi:2008pe,Simmons-Duffin:2016gjk,Kos_2016,PolandNature,Poland:2018epd}. Bootstrap methods have already been successfully applied to bound the first nonzero eigenvalue of the Laplace operator on hyperbolic surfaces \cite{Kravchuk:2021akc,Bonifacio:2021msa,Bonifacio:2021aqf} and hyperbolic 3-manifolds \cite{Bonifacio:2023ban}, see also \cite{Bonifacio:2019ioc,Bonifacio:2020xoc}. A remarkable achievement of \cite{Kravchuk:2021akc,Bonifacio:2021msa} is that for several low genera, it improves on the Yang--Yau bounds \cite{yang1980eigenvalues} and its successors \cite{ilias,10.2969/jmsj/85898589,karpukhin2022first}, as well as the bounds of \cite{Huber_1980}, which collectively had been the strongest bounds available for decades. (In the large genus limit, \cite{Huber_1980} gives the optimal bound, as was recently shown in \cite{hide2023near}.)

Recently, \cite{bourque2023linear} used linear programming methods and the Selberg trace formula (STF) to obtain new bounds on Laplacian eigenvalues and other geometric quantities for hyperbolic surfaces. These bounds are stronger than the ones of \cite{Kravchuk:2021akc} for genus $g>6$. However, the technique of \cite{bourque2023linear} cannot immediately be extended to the Dirac operator due to a lack of positivity in the STF with spin. Furthemore, \cite{bourque2023linear}  treats orbifolds and manifolds on a separate footing, while \cite{Kravchuk:2021akc} treats them uniformly. In this paper, we follow the methods of \cite{Kravchuk:2021akc}. 




The basic idea of \cite{Kravchuk:2021akc} is to leverage the fact that the orthonormal frame bundle of any compact orientable hyperbolic surface or orbifold can be written as $\Gamma\backslash \mathrm{PSL}(2,\mathbb{R})$, where $\Gamma$ is a cocompact Fuchsian group. Constraints on spectra come from combining the associativity of pointwise multiplication of functions on $\Gamma\backslash \mathrm{PSL}(2,\mathbb{R})$ with the representation theory of $\mathrm{PSL}(2,\mathbb{R})$. By decomposing functions into irreducible representations of $\mathrm{PSL}(2,\mathbb{R})$, and imposing associativity of their pointwise products, one obtains an infinite set of constraint equations on the eigenvalues of the Laplace operator. Linear programming techniques applied to these equations yield bounds on the spectrum.

We adapt this method to study surfaces and orbifolds with spin structure as follows. Two operators are relevant in this context: the Laplace operator, which is insensitive to the spin structure, and the Dirac operator, which depends on the spin structure. In group-theoretic language, adding a spin structure amounts to considering a quotient of the form  $\widetilde{\Gamma}\backslash \mathrm{SL}(2,\mathbb{R})$, where $\widetilde{\Gamma}$ is a lift of the cocompact Fuchsian group $\Gamma$ to $\mathrm{SL}(2,\mathbb{R})$. Each lift corresponds to a spin structure. We can then study the interplay between associativity of pointwise multiplication of functions on $\widetilde{\Gamma}\backslash \mathrm{SL}(2,\mathbb{R})$ and the representation theory of $\mathrm{SL}(2,\mathbb{R})$. By decomposing functions into irreducible representations of $\mathrm{SL}(2,\mathbb{R})$, and imposing associativity of their pointwise products, we derive spectral identities, which are amenable to linear/semidefinite programming.

Our bounds depend on the number of linearly-independent modular forms of various weights possessed by a surface. For forms of weight $2$ or higher, the Riemann--Roch theorem determines their number from purely topological information: the genus and the orders of orbifold points. The situation is different for weight 1 forms (which correspond to harmonic spinors): the number of weight 1 forms can be different for surfaces with the same topology but different geometries. For example, except in genus 4 and 6, a surface carries the maximal possible number of weight 1 modular forms if and only if it is hyperelliptic. This allows us to obtain stronger bounds for hyperelliptic surfaces.\\

Among our results, we prove the following theorems:

\begin{thm}[Universal upper bound on Laplacian eigenvalue: I]\label{thm:1*}
    Given a compact orientable hyperbolic spin orbifold $X$, the first non-zero eigenvalue of the Laplacian operator, $\lambda_1^{(0)}(X)$ satisfies 
    \begin{equation*}
        \lambda_1^{(0)}(X)< 12.137980 \,.
    \end{equation*}
\end{thm}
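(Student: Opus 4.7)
The plan is to combine the spectral identities sketched in the introduction with semidefinite programming in order to rule out any spin orbifold whose first non-zero Laplacian eigenvalue exceeds the stated value. The identities come from the following setup. For any compact orientable hyperbolic spin orbifold $X=\widetilde{\Gamma}\backslash\mathrm{SL}(2,\mathbb{R})$, the space $L^{2}(X)$ decomposes into irreducible unitary representations of $\mathrm{SL}(2,\mathbb{R})$. The $K$-lowest-weight vectors (primaries) in these irreducibles are the joint eigenstates of the Casimir (giving the Laplace eigenvalues on appropriate spinor bundles) and of rotations in the fiber. Pointwise multiplication of primaries expands into other primaries via $\mathrm{SL}(2,\mathbb{R})$ Clebsch--Gordan coefficients, with OPE-like data $C_{ijk}$. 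Associativity of the triple pointwise product yields an infinite family of crossing equations relating the $C_{ijk}$ and the Casimir labels; these are precisely the spectral identities described in the abstract.

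With these in hand, I would prove the theorem by contradiction. Assume $\lambda^{(0)}_{1}(X)\geq 12.137980$ on some $X$. Consider the crossing equations obtained from the four-point function of the simplest scalar primary on the spin bundle (or a combination including spinor primaries if that tightens the bound), and extract the contributions of the identity, of principal and complementary series (Laplace eigenvalues $\lambda\geq 0$ with the gap assumption), and of the discrete series (modular forms of weight $\geq 1$). The aim is to construct a linear functional $\alpha$, depending polynomially on a few conformal cross-ratio derivatives, that is (i)~strictly negative when applied to the identity contribution, (ii)~nonnegative when applied to every scalar primary contribution with Laplace eigenvalue $\lambda\geq 12.137980$, (iii)~nonnegative on every spinor primary contribution in the spectrum allowed by $\mathrm{SL}(2,\mathbb{R})$ unitarity, and (iv)~nonnegative on every discrete series contribution from modular forms of weight $\geq 1$. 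The existence of such $\alpha$ contradicts the decomposition of the four-point function, so the assumption is false and the theorem follows.

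The functional $\alpha$ is found as follows. I would parametrize $\alpha$ as a linear combination of a truncated basis of derivatives, reduce conditions (ii)--(iv) to semidefinite constraints by writing the conformal blocks as rational times positive functions of the Casimir label (allowing the usual reformulation via positive semidefinite matrices of polynomials), and solve the resulting SDP numerically with a tool such as SDPB. The target value $12.137980$ is obtained by binary search: the threshold at which a feasible $\alpha$ first appears. Finally, I would upgrade the numerical solution to a rigorous bound by recomputing $\alpha$ in high precision arithmetic and verifying positivity of the resulting polynomials either symbolically or via interval arithmetic on the critical intervals.

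The hard part will be handling, uniformly in the choice of $X$, the geometry-dependent pieces of the spectrum. The scalar Laplace spectrum above the presumed gap and the principal/complementary series contributions are universal once the gap is assumed. The number of modular forms of weight $\geq 2$ is fixed by Riemann--Roch once the orbifold signature is chosen, but the signature varies across $X$; one must ensure the SDP constraint accommodates arbitrary nonnegative multiplicities for all these discrete series, which is automatic if condition (iv) is imposed at every allowed weight. The number of weight-$1$ harmonic spinors is not topological and may be any integer between $0$ and the Riemann--Roch upper bound; imposing (iv) for weight $1$ with unconstrained nonnegative multiplicity again makes the bound universal. The delicate point is that the allowed set of orbifold signatures is infinite, so the discrete series positivity constraints must be shown to hold simultaneously for \emph{all} weights, which will be accomplished by exhibiting $\alpha$ whose action on the weight-$k$ block is manifestly nonnegative as a function of $k\in\mathbb{Z}_{\geq 1}$. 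If the chosen truncation supports such a functional that cuts the scalar gap down to $12.137980$, the theorem is proved, and near-saturation by the $[0;3,3,5]$ orbifold (for which an independent Selberg-trace-formula computation gives $\lambda^{(0)}_{1}\approx 12.13623$) certifies that the truncation is effectively optimal.
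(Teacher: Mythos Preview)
Your general framework is right in spirit, but there is a genuine gap: you have not identified \emph{which} four-point functions to bootstrap, and this is where the universality over all spin orbifolds actually comes from in the paper. The external operators are not scalar primaries but \emph{discrete-series} coherent states, i.e.\ holomorphic modular forms of fixed weight. The paper first splits by genus. For $g\geq 1$ the bound already follows from \cite{Kravchuk:2021akc}. For $g=0$, a spin orbifold must have all orbifold orders odd (hence $\geq 3$) and at least three orbifold points; Riemann--Roch then guarantees $\ell_{3/2}\geq 1$ and $\ell_{3}\geq 1$ for \emph{every} such orbifold, while $\ell_{1/2}=0$ and there are no modular forms of weight below $3$. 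So one can always take external operators $\mathscr{O}_{3}$ and $\mathscr{O}_{3/2}$ and run the mixed-correlator SDP (the paper's \texttt{SDP}~\ref{linpro1} with $n=3$), setting the Dirac gap to its trivial value $\lambda_*^{(1/2)}=1/4$. The resulting functional yields $\lambda_1^{(0)}<12.137980$, verified in rational arithmetic.

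Your worry that ``the allowed set of orbifold signatures is infinite, so the discrete series positivity constraints must hold for all weights'' dissolves once the external operators are fixed: the internal discrete series that appear in the $s$-channel are determined by the selection rules (weights $p\geq n_1+n_2$), and the SDP conditions on them are just a discrete list of sign constraints on the functional components, independent of the orbifold signature or the multiplicities $\ell_p$. Likewise, your concern about variable numbers of harmonic spinors is moot in genus $0$ since $\ell_{1/2}=0$ there, and in genus $\geq 1$ the stronger KMP bound already applies. In short, the missing idea is the Riemann--Roch argument pinning down a universal pair of external weights, together with the genus split; without that, your proposed functional has no concrete crossing equation to act on.
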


\begin{rem}
    The above bound is nearly saturated by $[0;3,3,5]$, a genus $0$ orbifold with three orbifold singularities of order $3$, $3$ and $5$. We have $\lambda_1^{(0)} ([0;3,3,5])\simeq 12.13623$.
\end{rem}
\begin{thm}[Universal upper bound on Laplacian eigenvalue: II]\label{thm:2*}
    Given a compact orientable hyperbolic spin orbifold $X$ admitting a harmonic spinor, the first non-zero eigenvalue of the Laplacian operator, $\lambda_1^{(0)}(X)$ satisfies 
    \begin{equation*}
        \lambda_1^{(0)}(X)< 4.763782\,.
    \end{equation*}
\end{thm}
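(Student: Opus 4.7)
The plan is to reuse the semidefinite-programming framework behind Theorem \ref{thm:1*} but with a refined set of identities in which the existence of a harmonic spinor is hard-wired from the start. In the dictionary described in the introduction, a harmonic spinor on $X$ is the highest-weight vector of a weight-one discrete series representation of $\mathrm{SL}(2,\mathbb{R})$ appearing in the decomposition of $L^{2}(\widetilde{\Gamma}\backslash \mathrm{SL}(2,\mathbb{R}))$. Its presence forces a known, nonzero contribution into every associativity identity that couples to that particular channel; this extra, ``known'' state on top of the identity component is exactly what lets the SDP do strictly better than in the unconditional case.

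First I would select a pair of external weights (the same low-weight external choice that drives Theorem \ref{thm:1*} is natural, since it already couples to the weight-one discrete series) and write out the associativity identity for the corresponding four-point configuration. Schematically, after decomposing both sides into $\mathrm{SL}(2,\mathbb{R})$ irreducibles and projecting onto a single crossing equation, I get
\[
I_{\mathrm{id}} \;+\; I_{\mathrm{harm}} \;+\; \sum_{n:\,\lambda^{(0)}_{n}>0} |c_{n}|^{2}\,F\!\left(\lambda^{(0)}_{n}\right) \;+\; \sum_{m} |d_{m}|^{2}\,G\!\left(\lambda^{(1/2)}_{m}\right) \;=\; 0,
\]
where $I_{\mathrm{id}}$ is the contribution of the trivial representation, $I_{\mathrm{harm}}$ is the universal contribution of the assumed harmonic spinor (absent in the setting of Theorem \ref{thm:1*} and mandatory here), and $F,G$ are crossing kernels built from $\mathrm{SL}(2,\mathbb{R})$ Clebsch--Gordan/$6j$ data. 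Next I would construct a linear functional $\alpha$, parametrized by a finite-dimensional polynomial ansatz, such that (i) $\alpha[F(\lambda)]\geq 0$ for every $\lambda\geq \lambda^{\ast}$, (ii) $\alpha[G(\lambda^{(1/2)})]\geq 0$ for every admissible Dirac eigenvalue, and (iii) $\alpha[I_{\mathrm{id}}+I_{\mathrm{harm}}]>0$. Any $\alpha$ satisfying these three conditions forces $\lambda^{(0)}_{1}<\lambda^{\ast}$, and pushing $\lambda^{\ast}$ down to the edge of feasibility yields the quoted value $4.763782$. The reason the bound is strictly stronger than in Theorem \ref{thm:1*} is precisely that one is allowed to choose $\alpha$ with $\alpha[I_{\mathrm{id}}]<0$, as long as $\alpha[I_{\mathrm{harm}}]$ compensates for it.

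The main obstacle is numerical and certification-theoretic rather than structural: one must truncate the infinite-dimensional cone of admissible functionals to a finite-dimensional basis, solve the resulting SDP at sufficiently high precision, and then convert the floating-point dual solution into a rigorously certified dual-feasible functional so that $4.763782$ is a theorem and not just a numerical observation. A subsidiary, but essential, bookkeeping point is to verify that the chosen crossing channel actually sees the harmonic spinor: depending on the lift $\widetilde{\Gamma}$ one of $D^{\pm}$ at weight one contributes, and one must pick an identity in which $I_{\mathrm{harm}}$ is nonzero and its sign and multiplicity are correctly tracked. Otherwise the extra assumption would drop out of the SDP and only return the weaker Theorem \ref{thm:1*} bound.
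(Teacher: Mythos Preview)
Your proposal misidentifies where the harmonic spinor enters the argument, and the mechanism you describe would not produce a stronger bound.

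You treat the harmonic spinor as an \emph{exchanged} state contributing a ``known, nonzero'' term $I_{\mathrm{harm}}$ alongside the identity, and you claim the gain comes from being allowed to pick $\alpha[I_{\mathrm{id}}]<0$ with $\alpha[I_{\mathrm{harm}}]$ compensating. But the contribution of an exchanged discrete-series state is $|f|^{2}\cdot(\text{kernel})$ with $|f|^{2}$ an \emph{unknown} triple-product coefficient; it is not universal like the identity contribution. If you keep the same externals as in Theorem~\ref{thm:1*} (weights $3$ and $3/2$) and now allow a weight-$1$ discrete series in the $t$-channel, the functional must be \emph{nonnegative} on that block as well, which is an additional constraint and can only weaken the bound. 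There is no ``known $I_{\mathrm{harm}}$'' to play against the identity.

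The paper's mechanism is different and simpler. The hypothesis ``$X$ admits a harmonic spinor'' means $\ell_{1/2}\geq 1$, hence also $\ell_{1}\geq 1$; so the coherent states $\mathscr{O}_{1/2}$ and $\mathscr{O}_{1}$ exist and can be used as \emph{external} operators. One then runs exactly the same semidefinite program (SDP~\ref{linpro1}) as in Theorem~\ref{thm:1*}, but with $n=1$ instead of $n=3$, taking $\lambda_{*}^{(1/2)}=1/4$ (the universal lower bound for the Dirac side). The improvement from $12.13\ldots$ to $4.76\ldots$ comes entirely from the fact that the lower-weight external system constrains the Laplacian spectrum more tightly; no extra ``known'' internal state is invoked. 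Your last paragraph about certification is fine, but the core idea---harmonic spinor as external, not as a known internal block---needs to be corrected.
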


\begin{rem}\label{remark:[1,3]}
  The above bound is nearly saturated by the most symmetric point (see fig.~\ref{fig:1,3}) of the moduli space of $[1;3]$ i.e a 
 torus with one orbifold singularity of order $3$. Let us call the orbifold $[1;3]_{sym}$. Using \texttt{FreeFEM++}, we learn that  $\lambda_1^{(0)} ([1;3]_{sym})\simeq 4.7609$. 
\end{rem}
\begin{figure}[!ht]
    \centering
\includegraphics[scale=0.5]{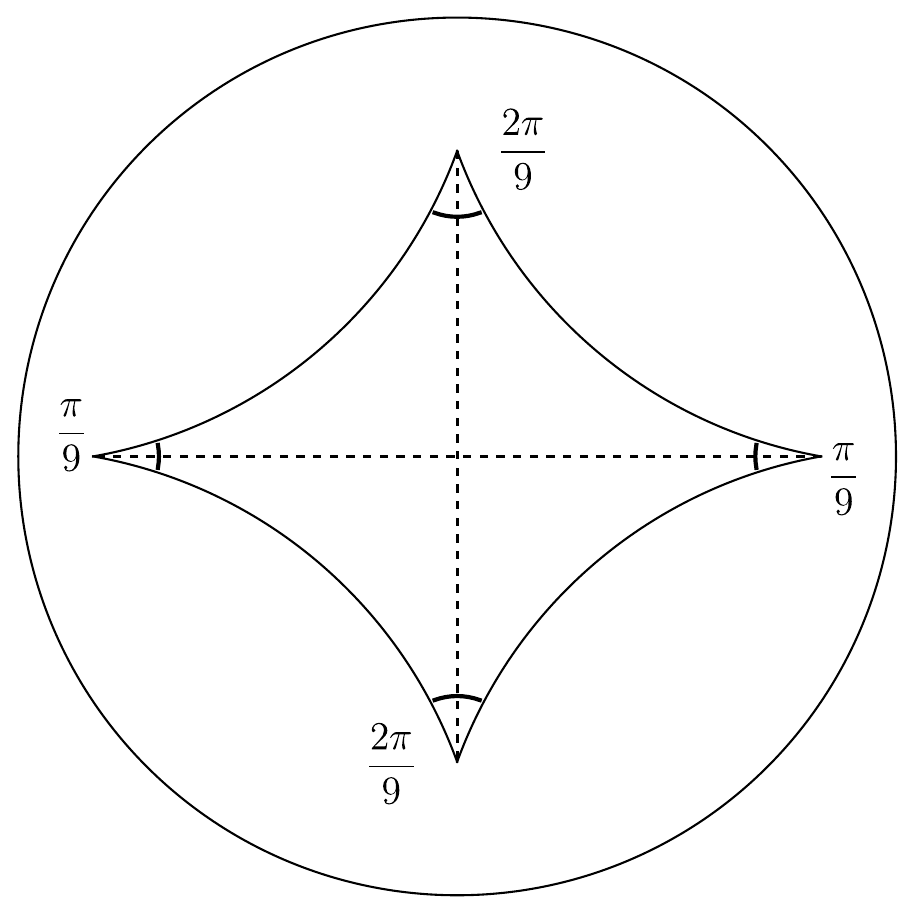}
    \caption{A fundamental domain of $\Gamma$ drawn on the Poincaré disc where $\Gamma$ is a subgroup of $\mathrm{PSL}(2,\mathbb{R})$ isomorphic to the fundamental group of the most symmetric point in the moduli space of $[1;3]$. This fundamental domain is a hyperbolic quadrilateral symmetric under reflections against the two dashed lines.} 
    \label{fig:1,3}
\end{figure}



More generally, we obtain two types of bounds. The first type are \textit{exclusion plots} in a two-dimensional parameter space, labelled by the first nonzero eigenvalues of the Laplace and Dirac operators. We use $\lambda_1^{(0)}$ to denote the first non-zero Laplacian eigenvalue. For the first non-zero Dirac eigenvalue $t^{(1/2)}_1$, we use the variable $\lambda_1^{(1/2)}:=1/4+(t_1^{(1/2)})^2$ in the plots. Examples of such plots are given on Figures \ref{fig:3,3/2g}, \ref{fig:3,3/2} and \ref{fig:1/2,1}.  Let us focus on Figure \ref{fig:3,3/2g}. For any hyperbolic spin orbifold $X$, equipped with a spin structure such that there is no harmonic spinor, the pair $(\lambda_1^{(0)}(X),\lambda_1^{(1/2)}(X))$ must lie in the union of the pink-shaded and yellow-shaded regions. If the hyperbolic spin orbifold $X$ has genus $1$ or more and is equipped with a spin structure admitting one or more harmonic spinors, then $(\lambda_1^{(0)}(X),\lambda_1^{(1/2)}(X))$ must lie in the pink shaded region. The yellow shaded region as well as the pink one has a kink at rightmost corner. Exclusion plots with kinks are abundant in the conformal bootstrap literature. Often, one can identify the object that lives near the kink. Here a similar phenomenon is true. Indeed, the corner point in the yellow-shaded region is very close to $(\lambda_1^{(0)}([0;3,3,5]),\lambda_1^{(1/2)}([0;3,3,5]))$. See Figure~\ref{fig:3,3/2zoomWithoutRes} for a zoomed-in version of the exclusion plot. For the pink shaded region, the corner point is close to $(\lambda_1^{(0)}(X),\lambda_1^{(1/2)}(X))$, where $X$ is the most symmetric point of the moduli space of $[1;3]$, equipped with the odd spin structure. See Table \ref{app:tableOfLambdaOne} in Appendix \ref{app:tableOfLambdaOne} for the numerical estimates of the eigenvalues corresponding to these nearly saturating examples.


\begin{figure}[!h]
    \centering
 \includegraphics[scale=0.9]{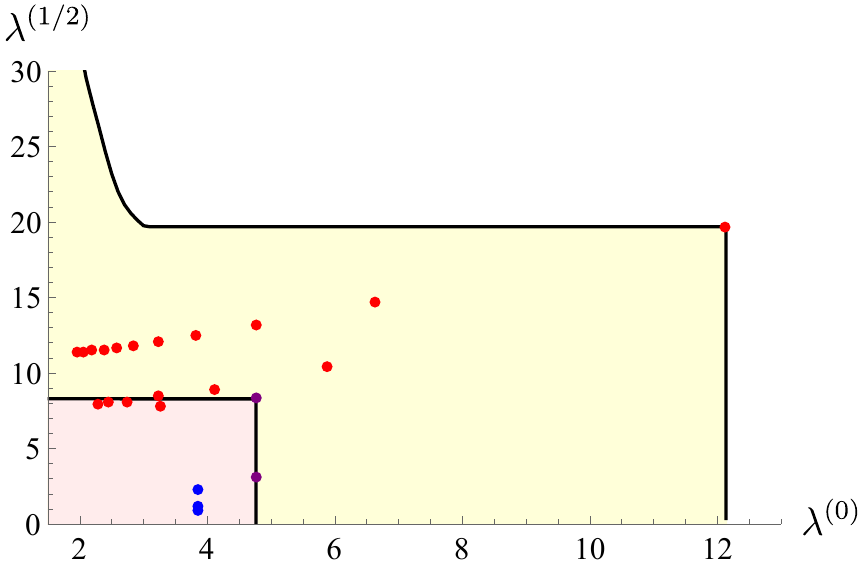}
    \caption{For any compact orientable hyperbolic spin orbifold $X$, equipped with a spin structure such that there is no harmonic spinor, $(\lambda_1^{(0)}(X),\lambda_1^{(1/2)}(X))$ must lie in the union of the pink-shaded and yellow-shaded regions.  
    Note that, even though it is not shown explicitly, we have $\lambda_{1}^{1/2}>1/4$. If the hyperbolic spin orbifold $X$ has genus $1$ or more and is equipped with a spin structure admitting one or more harmonic spinors, then $(\lambda_1^{(0)}(X),\lambda_1^{(1/2)}(X))$ must lie in the pink shaded region. 
    The red dot in the corner corresponds to $(\lambda_1^{(0)}([0;3,3,5]),\lambda_1^{(1/2)}([0;3,3,5]))$.
    The other red dots come from the eigenvalues corresponding to various hyperbolic triangles while the blue dots come from the eigenvalues corresponding to the Bolza surface, equipped with various spin structures. The purple dot in the corner of the pink shaded region corresponds to $(\lambda_1^{(0)}([1;3]_{sym}),\lambda_1^{(1/2)}([1;3]_{sym}))$, where $[1;3]_{sym}$ is equipped with an odd spin structure. See Remark~\ref{remark:[1,3]}. The other purple dot lying on the boundary of the pink shaded region corresponds to $[1;3]_{sym}$, equipped with an even spin structure.  See the table \ref{app:tableOfLambdaOne} in Appendix \ref{app:tableOfLambdaOne} for the numerical estimates of these eigenvalues. Here the origin is at $(1.5,0)$. }
    \label{fig:3,3/2g}
\end{figure}

We can derive more specific theorems from the exclusion plot. For example, we prove

\begin{thm}[Conditional upper bound on Dirac eigenvalue]\label{thm:30*}
    Given any compact orientable hyperbolic spin orbifold $X$ such that the first non-zero eigenvalue of the Laplacian operator satisfies $\lambda_1^{(0)}(X)>1.5$, we have 
    \begin{equation*}
        \lambda_1^{(1/2)}(X)< 55.9 \,.
    \end{equation*}
    Here $\lambda_1^{(1/2)}(X)=1/4+t^2$ and $|t|$ is the lowest non-zero positive eigenvalue of Dirac operator on $X$.
\end{thm}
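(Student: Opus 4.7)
The plan is to obtain Theorem \ref{thm:30*} as a direct corollary of a semidefinite programming computation whose result is visually encoded in the exclusion plot of Figure \ref{fig:3,3/2g}: the theorem is exactly the statement that any vertical slice of the allowed (pink $\cup$ yellow) region at $\lambda_1^{(0)} = 1.5$ terminates at a $\lambda_1^{(1/2)}$ value no larger than $55.9$. So the task is to produce a rigorous bootstrap functional realizing this slice.

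First, I would invoke the infinite family of spectral identities derived earlier in the paper. These arise from decomposing $L^2(\widetilde{\Gamma}\backslash \mathrm{SL}(2,\mathbb{R}))$ into irreducible representations and comparing the two associations $(f_1 f_2) f_3$ and $f_1 (f_2 f_3)$ of pointwise multiplication, yielding crossing-like equations of the schematic form $\sum_i c_i^2 \, F_i(\lambda^{(0)}, \lambda^{(1/2)}) = 0$, with the sum ranging over Laplacian and Dirac eigenvalues (including possible harmonic spinors) and $F_i$ built from $\mathrm{SL}(2,\mathbb{R})$ Clebsch--Gordan data.

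Second, I pass to the dual bootstrap: find a linear functional $\alpha$ acting on these identities such that (i) $\alpha$ applied to the vacuum/identity contribution is strictly negative, (ii) $\alpha$ is non-negative on every Laplacian contribution with $\lambda^{(0)} \geq 1.5$, (iii) $\alpha$ is non-negative on every Dirac contribution with $\lambda^{(1/2)} \geq 55.9$, and (iv) $\alpha$ is non-negative on any harmonic-spinor contribution (since the theorem makes no hypothesis excluding them). Existence of such an $\alpha$ is inconsistent with an orbifold $X$ simultaneously satisfying $\lambda_1^{(0)}(X) > 1.5$ and $\lambda_1^{(1/2)}(X) \geq 55.9$, proving the theorem. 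I would construct $\alpha$ numerically with \texttt{SDPB}, truncating to a finite number of derivative constraints and enforcing the positivity requirements on the continua $\lambda^{(0)} \in [1.5, \infty)$ and $\lambda^{(1/2)} \in [55.9, \infty)$ via polynomial matrix programming, exactly as in the standard bootstrap set-up used to generate Figure \ref{fig:3,3/2g}. The number $55.9$ should be a safe upward rounding of the SDP certificate.

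The main obstacle I anticipate is twofold: (a) engineering the functional to remain positive on all relevant channels at once, in particular the discrete-series (harmonic-spinor) and the continuous principal-series Dirac contributions, which requires the spin-adapted version of polynomial matrix programming developed earlier in the paper; and (b) running the SDP at a truncation high enough that the numerical bound along the slice $\lambda_1^{(0)} = 1.5$ has stabilized well below $55.9$, so that a modest rounding margin yields a rigorous certificate. Both are routine steps of the bootstrap pipeline, but the spin structure makes the positivity requirements more delicate than in the purely bosonic bounds of earlier work such as \cite{Kravchuk:2021akc}.
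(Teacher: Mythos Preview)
Your approach is essentially the same as the paper's: invoke the spectral identities, then exhibit an SDP functional yielding the either/or conclusion. The paper's proof is one line---apply Semidefinite Program~\ref{linpro1} with $n=3$ and $\lambda_*^{(0)}=3/2$ (implicitly $\lambda_*^{(1/2)}=55.9$); the contrapositive of its conclusion is exactly the theorem.

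Two remarks on where your sketch is looser than the paper. First, your positivity list (i)--(iv) omits the $s$-channel discrete-series constraints (conditions 1, 3, 4 of SDP~\ref{linpro1}): these force the $S_{p;(n_1,n_2)}$ terms appearing on the right-hand side of the combined crossing identity to be non-positive, without which you cannot conclude that side is strictly negative. You do cite the ``standard bootstrap set-up used to generate Figure~\ref{fig:3,3/2g}'', so this is an expository omission rather than a conceptual gap. Second, you never commit to a specific correlator system. The paper uses $n=3$, i.e.\ external operators $\mathscr{O}_{3/2}$ and $\mathscr{O}_3$ built from weight-$3$ and weight-$6$ modular forms; one must check via Riemann--Roch that $\ell_{3/2},\ell_3\geq 1$ on every compact orientable hyperbolic spin orbifold (cf.\ the argument in the proof of Theorem~\ref{thm:1*}). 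On the other hand, your explicit harmonic-spinor condition (iv) is actually more careful than the paper's terse proof: the hypothesis of SDP~\ref{linpro1} as stated excludes modular forms of weight below $n$, so with $n=3$ the only relevant extra $t$-channel discrete contribution is $\mathscr{D}_{1/2}$; covering the $\ell_{1/2}>0$ case requires either imposing your (iv) on the $n=3$ functional or observing that the $n=1$ system already confines such orbifolds to the pink subregion of Figure~\ref{fig:3,3/2g}, where the bound is stronger.
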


 To estimate the Laplace and Dirac eigenvalues of a given surface and compare them with our bounds, we use a numerical method based on the STF, described in \cite{lin2022seiberg}.
Using this method, we can populate the exclusion plots as shown on Figure \ref{fig:3,3/2g}. We also derive more specific exclusion plots, for example Figures~\ref{fig:1/2,1} and \ref{fig:1/2,1zoom}, which describe the bound applicable to all compact orientable hyperbolic genus $2$ spin orbifolds. As we can see, $(\lambda_1^{(0)}(X),\lambda_1^{(1/2)}(X))$, where $X$ is the Bolza surface with odd spin strcuture, sit very close to the boundary, see the table \ref{app:tableOfLambdaOne} in Appendix \ref{app:tableOfLambdaOne} for the numerical estimates.

The second type of bound is obtained from the first type, by noticing that the first nonzero eigenvalue of the Dirac operator corresponds to $\lambda_1^{(1/2)}(X)>1/4$. 
This leads to upper bounds on the first nonzero eigenvalue of the Laplace operator for a given choice of spin structure, admitting a given number of weight $1$ modular form. These upper bounds are collected in Table \ref{table:1}. Interestingly, when the spin structure allows for enough harmonic spinors, the upper bounds become more stringent. In particular, this leads to improved upper bounds on the first Laplace eigenvalue for hyperelliptic surfaces compared to \cite{Kravchuk:2021akc}. As a warmup, we can use a small number of spectral identities to derive a very simple analytical bound for genus $g$ hyperelliptic surfaces by using the fact that they can be equipped with a spin structure, admitting $\lfloor \frac{g+1}{2}\rfloor$ harmonic spinors.

\renewcommand{\arraystretch}{0.9}
\begin{table}[!h]
	\begin{tabular}[t]{ccccc}\toprule
$g$  & $\ell_{1/2}$ & $\lambda^{(0)}_{1}(g,\ell_{1/2})$ & KMP \cite{Kravchuk:2021akc} & FP \cite{bourque2023linear}\\
\midrule
\multirow{1}{*}{2} & 0,1 & 3.8388976481 & 3.8388976481 & 4.625307\\
\cmidrule(lr){1-5}
\multirow{2}{*}{3} & 0,1 & 2.6784823893 & \multirow{2}{*}{2.6784823893} & \multirow{2}{*}{2.816427}\\
  & 2 &  1.9497673318$^{\color{red}*}$ \\
\cmidrule(lr){1-5}
\multirow{2}{*}{4}  & 0,1 & 2.1545041334 & \multirow{2}{*}{2.1545041334} & \multirow{2}{*}{2.173806}\\
  & 2 & 1.9497673318$^{\color{red}*}$\\
\cmidrule(lr){1-5}
\multirow{2}{*}{5}  & 0,1,2 & 1.8526509456 & \multirow{2}{*}{1.8526509456} & \multirow{2}{*}{1.836766}\\
   & 3 & 1.18275751${\color{red}^*}$ \\
\cmidrule(lr){1-5}
\multirow{2}{*}{6} & 0,1,2 & 1.654468363 & \multirow{2}{*}{1.654468363} & \multirow{2}{*}{1.625596}\\
& 3 & 1.386265630$^{\color{red}*}$ \\
   \cmidrule(lr){1-5}
\multirow{3}{*}{7} & 0,1,2 & 1.51326783 & \multirow{3}{*}{1.51326783} & \multirow{3}{*}{1.480008}\\
   & 3 & 1.38626563$^{\color{red}*}$ \\
   & 4 & 0.9160143\! $^{\color{red}*}$ \\
   \cmidrule(lr){1-5}
   \multirow{3}{*}{8} & 0,1,2 & 1.40690466 & \multirow{3}{*}{1.40690466} & \multirow{3}{*}{1.372804}\\
   & 3 &1.38626563$^{\color{red}*}$ \\
   & 4 & 1.0443114\! $^{\color{red}*}$ \\
   \cmidrule(lr){1-5}
   \multirow{3}{*}{9} & 0,1,2,3 & 1.32348160 & \multirow{3}{*}{1.32348160} & \multirow{3}{*}{1.289024}\\
   & 4 &  1.148493243\! $^{\color{red}*}$ \\
   & 5 &  0.78690\! $^{\color{red}*}$ \\
   \cmidrule(lr){1-5}
   \multirow{3}{*}{10} & 0,1,2,3 & 1.25602193 & \multirow{3}{*}{1.25602193} & \multirow{3}{*}{1.222189}\\
   & 4 & 1.148493243\! $^{\color{red}*}$ \\
   & 5 & 0.85292\! $^{\color{red}*}$ \\
\bottomrule
\end{tabular}
	\caption{Table of upper bounds on the Scalar-Laplacian gap as a function of genus and the number of harmonic spinors. For comparison, we also tabulate the spinor-independent bounds from \cite{Kravchuk:2021akc} and \cite{bourque2023linear}. We note that bounds from \cite{bourque2023linear} are only applicable to manifolds while the bounds from \cite{Kravchuk:2021akc} and the bounds in this paper are applicable to both manifolds and orbifolds. For $2\leqslant g\leqslant 10$, the bounds here coincide with the ones from \cite{Kravchuk:2021akc}, except when, given the genus $g$, the surface admits a spin structure such that number of harmonic spinors is strictly more than $\lceil\ell_{Max}/2\rceil$, where $\ell_{Max}=\lfloor (g+1)/2\rfloor$ is the maximal number of harmonic spinors allowed for that genus. In that case, the bounds are more restrictive; we mark them with {\color{red}*}.}
 \label{table:1}
\end{table}
\normalsize

\begin{thm}\label{thm:analytic}
    Given a genus $g\geq 3$ compact hyperbolic hyperelliptic surface $X$, we must have
    \begin{equation}
        \lambda_1^{(0)}(X) \leq \frac{\lfloor \frac{g+1}{2}\rfloor}{\lfloor \frac{g+1}{2}\rfloor-1}\,.
    \end{equation}
\end{thm}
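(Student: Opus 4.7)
The proof specializes the spectral identities developed earlier in the paper to the hyperelliptic setting. The key geometric input is that a genus $g\geq 3$ hyperelliptic surface admits a spin structure carrying the maximal number $\ell:=\lfloor(g+1)/2\rfloor$ of harmonic spinors. Fixing this spin structure produces $\ell$ known Dirac zero-modes which enter the bootstrap equations as explicit, sign-definite contributions; everything else is handled by positivity.

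First, I would specialize the simplest crossing identity from the body of the paper (the one with the smallest number of derivatives/weights), evaluated on an orbifold equipped with the maximal-harmonic-spinor spin structure. Schematically the identity reads
\begin{equation*}
I_{\mathbf 1} + \ell\cdot I_{\mathrm{hs}} + \sum_{\lambda_n^{(0)}>0}|c_n|^2 K_0(\lambda_n^{(0)}) + \sum_{m}|c'_m|^2 K_{1/2}(\lambda_m^{(1/2)}) = 0,
\end{equation*}
where $I_{\mathbf 1}$ is the identity-representation contribution, $I_{\mathrm{hs}}$ is the contribution per harmonic spinor (multiplied by the count $\ell$), $|c_n|^2,|c'_m|^2\geq 0$ are squared matrix elements, and $K_0, K_{1/2}$ are the kernels coming from $\mathrm{SL}(2,\mathbb{R})$ harmonic analysis on the spin bundle.

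Next, I would apply a positive linear functional $\omega$, chosen to be a low-degree polynomial in $\lambda$, with two sign properties: $\omega(K_0(\lambda))\geq 0$ for $\lambda\geq \ell/(\ell-1)$, and $\omega(K_{1/2}(\lambda))\geq 0$ throughout the physical Dirac spectrum $\lambda\geq 1/4$. If by contradiction $\lambda_1^{(0)}>\ell/(\ell-1)$, then nonnegativity of the tails and of the squared coefficients forces $\omega(I_{\mathbf 1})+\ell\,\omega(I_{\mathrm{hs}})\leq 0$. The quantitative bound $\ell/(\ell-1)$ then emerges by tuning $\omega$ so that $\omega(K_0)$ has its zero precisely at $\lambda=\ell/(\ell-1)$, turning the positivity inequality into an equality at threshold and pinning down the ratio via $\omega(I_{\mathbf 1})=-\ell\,\omega(I_{\mathrm{hs}})$. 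Solving the last equation for $\ell$ yields the desired bound.

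The main obstacle is the construction of the functional $\omega$: one must verify that both kernels are simultaneously nonnegative on the full physical spectra under a single explicit choice. Positivity of $\omega(K_{1/2})$ over the entire unitary range $[1/4,\infty)$ is the principal technical hurdle, since the Dirac continuum is infinite-dimensional and controlling it with a low-degree functional requires careful use of the explicit form of the crossing kernels for weight $1/2$; positivity of $\omega(K_0)$ above $\ell/(\ell-1)$ is by design and pins down the bound. Once a suitable $\omega$ is exhibited, the theorem follows by a one-line rearrangement of the crossing identity.
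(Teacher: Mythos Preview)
Your proposal misidentifies the mechanism by which $\ell=\ell_{1/2}$ enters the problem, and this leads to a schematic identity with the wrong structure. In the paper's proof the harmonic spinors are not internal states contributing a term ``$\ell\cdot I_{\mathrm{hs}}$'' to a crossing equation; they are the \emph{external} operators. One works only with the symmetrized four-point function $\langle \mathscr{O}_{1/2}\mathscr{O}_{1/2}\widetilde{\mathscr{O}}_{1/2}\widetilde{\mathscr{O}}_{1/2}\rangle_{\texttt{SYM}}$ of weight-$1/2$ coherent states. In both channels the exchanged representations have \emph{integer} $L_0$-weight, so the Dirac spectrum $\lambda^{(1/2)}$ never appears at all; your term $\sum_m |c'_m|^2 K_{1/2}(\lambda_m^{(1/2)})$ and your ``principal technical hurdle'' of controlling it are simply absent. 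The integer $\ell$ enters instead through the $U(\ell_{1/2})$ Haar average: the symmetrized identities split into a singlet part (with coefficient $\strangefont{Q}_k^2$) and an adjoint part (with coefficient $\strangefont{T}_k\geq 0$), and $\ell$ appears only as the factors $(1\pm 1/\ell)$ multiplying $\strangefont{T}_k$ in the two $t$-channel equations of Proposition~\ref{crossEqnMultiple}.

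Concretely, the paper takes the $m=0$ instances of the four identities for $\langle \mathscr{O}_{1/2}^{\,4}\rangle_{\texttt{SYM}}$, eliminates the $s$-channel quantities $\strangefont{S}$ and $\strangefont{A}$, and obtains two homogeneous relations. Using that $\mathcal{F}_0^{(1/2,1/2,1/2,1/2)}$ is a nonzero constant and $\mathcal{F}_1^{(1/2,1/2,1/2,1/2)}$ is linear in $\lambda$, a specific combination collapses to
\[
0=\sum_{k\geq 1}\Bigl[\lambda_k^{(0)}\,\strangefont{Q}_{k;1/2}^2+\Bigl(\tfrac{\ell-1}{\ell}\,\lambda_k^{(0)}-1\Bigr)\strangefont{T}_{k;1/2}\Bigr].
\]
Since $\strangefont{Q}_k^2\geq 0$ and $\strangefont{T}_k\geq 0$, if $\lambda_1^{(0)}\geq \ell/(\ell-1)$ every summand is nonnegative and at least one is strictly positive, a contradiction. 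Your high-level outline (positive functional plus contradiction) is right in spirit, but the functional is a two-term combination acting only on the Laplacian channel, and the $\ell$-dependence comes from the symmetrization, not from counting zero-mode contributions.
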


\begin{rem}
    For a given numerical value of $g$, it is possible to achieve a much stronger bound using computer assistance. This is evident from table  \ref{table:1}, upon using the fact that the hyperelliptic surfaces can be equipped with a spin structure which admits $\ell_{1/2}=\lfloor \frac{g+1}{2}\rfloor$.
\end{rem}

Using the STF, we can compute the location of $\lambda_1^{(0)}(X)$ for many orbifolds. Combining these numerical data, we also put forward the following conjecture:
\begin{conj}\label{conj}
Let $\lambda^{(0)}_1(X)$ be the first non-zero eigenvalue of the Laplacian on an hyperbolic orbifold $X$.
  Let $E_{\mathtt{spin}} \subset \mathbb{R}_{>0}$ be the set of $\lambda^{(0)}_1(X)$ as $X$ runs over all compact hyperbolic spin orbifolds. Then $E_{\mathtt{spin}}$ is the union of the interval $\left(0, \lambda_1^{[3,3,9]}\right]$ with the finite set $\left\{\lambda_1^{[3,3,5]}\right\} \cup\left\{\lambda_1^{[3,5,5]}\right\} \cup\left\{\lambda_1^{[3,3,7]}\right\}$. When $X$ is an orbifold with signature $\left[0 ; k_1, k_2, k_3\right]$, we write $\lambda_1^{\left[k_1, k_2, k_3\right]}$ to denote $\lambda^{(0)}_1(\left[0 ; k_1, k_2, k_3\right])$. See Figure~\ref{fig:conjecture}.
\end{conj}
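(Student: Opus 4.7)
The conjecture has two qualitatively distinct components which I would address separately: (A) every value in the interval $(0,\lambda_1^{[3,3,9]}]$ is attained by some hyperbolic spin orbifold, and (B) the only values of $\lambda_1^{(0)}$ strictly greater than $\lambda_1^{[3,3,9]}$ come from $[0;3,3,5]$, $[0;3,5,5]$, and $[0;3,3,7]$.

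For (B), the plan is to use signature-specific versions of the semidefinite-programming bounds developed in this paper to reduce to a finite check. For each orbifold signature $\sigma = [g;k_1,\ldots,k_n]$ that admits a spin structure, the spectral-identity bootstrap can be specialized via Riemann--Roch, which fixes the numbers of modular forms of each weight, producing a rigorous upper bound $U_\sigma$ on $\lambda_1^{(0)}(X)$ for every $X$ of signature $\sigma$. The crucial intermediate step is to show that $U_\sigma \leq \lambda_1^{[3,3,9]}$ for all but finitely many $\sigma$. Heuristically, adding a handle, adding a cone point, or increasing a cone order all enlarge the spaces of modular forms and tighten the bootstrap; making this precise would leave only a short list of sphere-orbifold candidates with small orbifold Euler characteristic to check — for instance $[0;3,3,5]$, $[0;3,5,5]$, $[0;3,3,7]$, $[0;3,3,9]$, $[0;3,3,11]$, $[0;3,5,7]$, $[0;5,5,5]$, and a few more. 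For each, one either observes that $U_\sigma$ already sits below $\lambda_1^{[3,3,9]}$ or computes $\lambda_1^{(0)}$ to high precision using the Selberg trace formula algorithm of the paper and compares with the conjectured list.

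For (A), the strategy is to exhibit continuous families of spin orbifolds whose images under $\lambda_1^{(0)}$ collectively sweep the interval. Continuity of $\lambda_1^{(0)}$ on the relevant moduli spaces, together with the standard fact that $\lambda_1^{(0)} \to 0$ under pinching degenerations, implies that every positive-dimensional moduli space admitting a spin structure contributes a connected subset of $E_{\mathtt{spin}}$ descending to $0$. Combining enough such families — for instance $[1;9]$, $[0;3,3,3,9]$, $[0;3,3,3,3,3]$, and high-genus manifold families — and arranging at least one so that $\lambda_1^{(0)}$ approaches $\lambda_1^{[3,3,9]}$ in an appropriate limit should cover the full interval, with the upper endpoint itself realised by $[0;3,3,9]$.

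The main obstacle is (B): the monotonicity claim is plausible but not a formal corollary of the bootstrap setup, and without it one is left with an infinite list of signatures to treat individually. Moreover, for borderline sphere-orbifolds such as $[0;3,5,7]$ or $[0;5,5,5]$, whose areas are comparable to that of $[0;3,3,9]$, the rigorous SDP bound may fail to drop below $\lambda_1^{[3,3,9]}$ at feasible truncation levels, forcing direct high-precision STF computations to verify that $\lambda_1^{(0)}$ lies in the excluded range. A secondary difficulty in (A) is arranging the chosen families so that their combined image leaves no gap near the upper endpoint of the interval.
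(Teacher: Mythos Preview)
The statement you are attempting to prove is labelled a \emph{Conjecture} in the paper, and the paper does not provide a proof. The authors explicitly say they ``put forward the following conjecture'' on the basis of numerical STF data, and the subsequent remark only expresses the belief that it ``can partially be proven by bootstrapping the spectral identities'' in the style of the analogous Conjecture~4.2 of \cite{Kravchuk:2021akc}. So there is no proof in the paper to compare your proposal against.

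That said, your outline is broadly in the spirit of what the authors suggest might eventually work, and you are candid about the main obstruction: the monotonicity of the SDP bound $U_\sigma$ in the signature is not a formal consequence of the bootstrap framework here. Indeed the paper itself flags, in the Discussion, that the presence of discrete-series contributions in the $t$-channel obstructs the monotonicity argument that worked in \cite{Kravchuk:2021akc}; this is precisely the missing ingredient you would need for part~(B). Without it you have no mechanism to reduce to a finite list of signatures, and your proposal remains a heuristic.

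For part~(A), one concrete correction: the paper's own evidence (Figure~\ref{fig:conjecture} and Table~\ref{table:2}) indicates that the continuum up to $\lambda_1^{[3,3,9]}$ is traced out by the moduli spaces of $[1;3]$ and $[0;3,3,3,3]$, with the most symmetric point $[1;3]_{\mathrm{sym}}$ having $\lambda_1^{(0)}\approx 4.7609$, essentially coinciding with $\lambda_1^{[3,3,9]}$. Your suggested families $[1;9]$, $[0;3,3,3,9]$, etc.\ have strictly larger area and are unlikely to reach that high; the natural candidate to close the gap at the top of the interval is $[1;3]$.
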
 

\begin{figure}
    \centering
\includegraphics[scale=0.30]{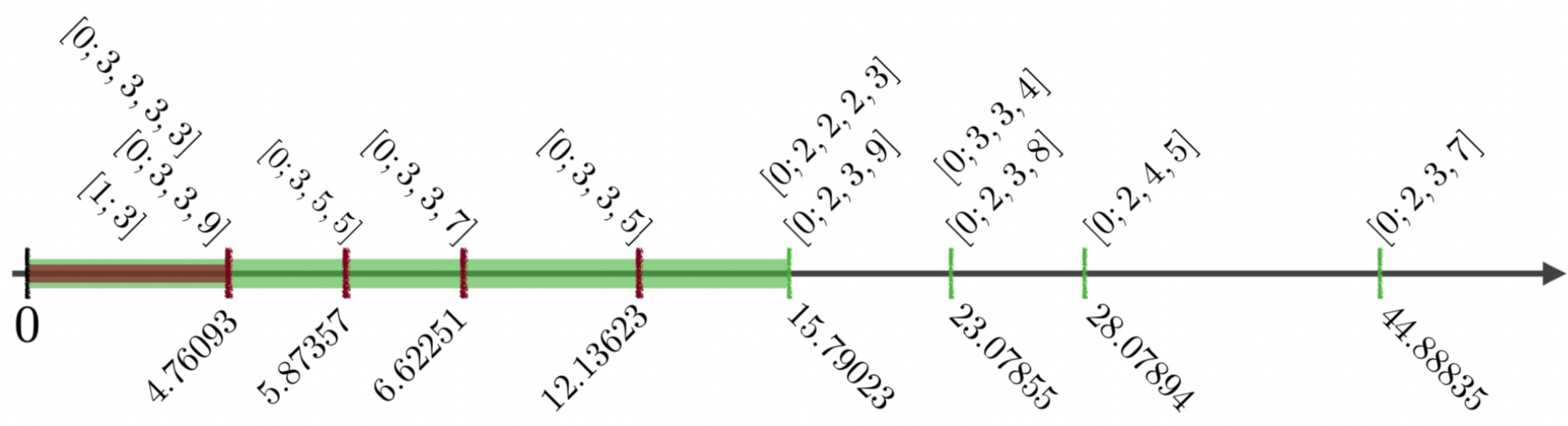}
    \caption{Pictorial description of Conjecture \ref{conj}: The green color denotes the set of $\lambda^{(0)}_1(X)$, as $X$ runs over all compact orientable hyperbolic orbifolds. The red color denotes the set of $\lambda^{(0)}_1(X)$, as $X$ runs over all compact orientable hyperbolic orbifolds equipped with a spin structure. The red continuum comes from the moduli space of $[1;3]$ and $[0;3,3,3,3]$ while the green continuum comes from the moduli space of $[0;2,2,2,3]$. This conjecture is a refinement of Conjecture $4.2$ in \cite{Kravchuk:2021akc} as one can see that the red continuum along with $3$ discrete red points lie entirely within the green continuum.}
\label{fig:conjecture}
\end{figure}
\begin{rem}
    The above conjecture is a refinement of Conjecture $4.2$ in \cite{Kravchuk:2021akc}, where the relevant set $E \subset \mathbb{R}_{>0}$ was the set of $\lambda^{(0)}_1(X)$ as $X$ runs over all orbifolds. We believe that this conjecture can partially be proven by bootstrapping the spectral identities as done for Conjecture $4.2$ in \cite{Kravchuk:2021akc}.
\end{rem}

To put the above results in context, let us make some historical remarks. The Dirac operator (in particular, the twisted Dirac operator, i.e.\ Dirac operator in presence of a gauge connection) plays numerous interesting roles in physics. An upper bound on the spectral gap of such a twisted Dirac operator was found for the first time in the context of QCD in a paper by Vafa and Witten \cite{cite-key-vafa}, followed by a paper by Atiyah \cite{ 10.1007/BFb0084593}. These two papers prove a \textit{uniform upper bound} (uniform with respect to the gauge connection) on the $n$th  nontrivial eigenvalue of the twisted Dirac operator on a compact Riemannian spin manifold in any dimension $D$. Similar methods were used to arrive at further specialized bounds in \cite{cite-key-baum} in terms of a smooth map from the manifold to the sphere. They further derived bounds for surfaces with positive sectional curvature. Ref.\cite{AGRICOLA19991} derived extrinsic bounds on Dirac eigenvalues for isometrically immersed surfaces inside $\mathbb{R}^3$  in terms of mean curvature and a smooth function from the surface to $\mathbb{R}$, as well as intrinsic bounds for genus $0$ and genus $1$ surfaces. Finally, lower bounds for the first nonzero Dirac eigenvalue have also been studied for various kinds of spin manifolds \cite{cite-key-bar}, for general manifolds of positive curvature \cite{Friedrich_1980,bar2004first}, as well as in the case of surfaces depending on some geometric parameters including the choice of spin structure \cite{bar99spin,ammann2002dirac}. For a general discussion on the spectrum of the Dirac operator on various kinds of Riemannian manifolds, we refer the readers to \cite{Friedrich_Nestke_2012} for a review, or \cite{bar2000dirac} for details on the hyperbolic case. 


Our bounds have a different flavor from these results, and it is not straightforward to directly compare them. In particular, our method encodes interesting interplays between eigenvalues of Laplacian operator and eigenvalues of Dirac operator on a hyperbolic spin orbifold, for example, see Theorem~\ref{thm:30*} or Figure \ref{fig:3,3/2g}.




\subsection{Organization of the paper}

The rest of this paper is organized as follows. In Section \ref{sec:specgeo}, we introduce the basic notions of spectral geometry that we will need. In Section \ref{sec:constraints}, we derive our constraint equations from associativity and harmonic analysis. In Section \ref{sec:selberg}, we use the Selberg trace formula to develop a numerical technique to estimate the Dirac and Laplace spectra of an orbifold of interest. In Section \ref{sec:results}, we derive our main bounds and exclusion plots.  In Section \ref{sec:discuss}, we discuss a few possible future directions that our results suggest. In Appendix \ref{app:harmonicspinors}, we relate the number of harmonic spinors a surface of low genus can carry to its geometrical properties. In Appendix \ref{app:tableOfLambdaOne} we tabulate the numerical estimates of $\lambda^{(0)}_1$ and $\lambda_1^{(1/2)}$ for various surfaces and orbifolds. 

We emphasize that while the search for functionals in our linear/semidefinite programming algorithm necessitates computer assistance, the output can be rationalized in such a way that one can explicitly exhibit the required properties of the rationalized functionals leading to the bounds (such verifications of bounds coming from linear/semidefinite programming using \textit{rational arithmetic} appear in many contexts, for example in sphere packing \cite{cohn2003new} and in ref. \cite{Kravchuk:2021akc}).  Consequently, the proofs of our theorems are completely rigorous and the computer is only used as an oracle. It is only in Section \ref{sec:selberg} that our individual estimates, which are independent from the theorems, are numerical and not proven at a mathematical level, although it should also be possible to do so.\\

\textbf{Technical note:} Unless specified otherwise, all surfaces or orbifolds considered in this paper are connected, compact and orientable.

\subsection*{Acknowledgments}
We would like to thank Anshul Adve, James Bonifacio, Vikram Giri, Petr Kravchuk, Matilde Marcolli, Dalimil Mazac, Paul Nelson, Peter Sarnak and Antoine Song for useful discussions.  SP and YX acknowledge the support of the U.S. Department of Energy, Office of Science, Office of High Energy Physics, under Award Number DE-SC0011632 and of the Walter Burke Institute for Theoretical Physics. YX
and DSD are supported by Simons Foundation grant no.\ 488657 (Simons Collaboration on the Nonperturbative Bootstrap) and a DOE Early Career Award under grant no.\ 
DE-SC0019085. The computations in this work were run on the Resnick High Performance Computing Center, a facility supported by Resnick Sustainability Institute at the California Institute of Technology.

\section{Spectral geometry on a spin-surface}
\label{sec:specgeo}
\subsection{Dirac operator and Automorphic Laplacian}
Let us consider a $d$-dimensional orientable connected Riemannian manifold $(X,g)$ and the orthonormal frame bundle over $X$, $(Q,\pi,X; SO(d))$; here $Q$ is the total space, $\pi: Q\to X$ is the projection map. We will often suppress $\pi$ and write the shorthand $Q(X)$ to mean the orthonormal frame bundle over X. 
A spin structure on this principal bundle $Q$ is a pair $(P,\Lambda)$ such that $P$ is $\text{Spin}(d)$-principal bundle and there exists a principal-bundle morphism $\Lambda:P\to Q$, that is equivariant with respect to the double covering $\text{Spin}(d)\to SO(d)$.  A spin structure on $Q$ exists if and only if the second Stiefel-Whitney class of the $SO(d)$ principal bundle vanishes.

To set the stage, we restrict our attention to upper-half plane $\mathbb{H}\coloneq \{(x,y)\in \mathbb{R}^2: y>0\}$. The metric $g$ is given by Poincare metric i.e
\begin{equation}
ds^2= \frac{dx^2+dy^2}{y^2}\,.
\end{equation}
There exists a unique spin structure on the orthonormal frame bundle over $\mathbb{H}$ (which is an $SO(2)$ principal bundle). The corresponding $\mathrm{Spin}(2)$ principal bundle is trivial and given by $P=\mathbb{H}\times \mathrm{Spin}(2)$.  If $\rho$ is the representation of $\mathrm{Spin}(2)$ on its Clifford module $\mathbb{C}^2$,  then the bundle $S$ associated with $\rho$,
\begin{equation}
S\coloneq P\times_{\!\rho}\mathbb{C}^2,
\end{equation}
is the spinor bundle over the upper half-plane. The sections of this spinor bundle are spinor fields defined on $\mathbb{H}$. In particular, we will be interested in $\mathfrak{C}^{\infty}(S)$,  the space of smooth sections of the spinor bundle $S$.  

\begin{defn}[Dirac operator on $\mathbb{H}$]
The Dirac operator $\slashed{D}:\mathfrak{C}^{\infty}(S)\to \mathfrak{C}^{\infty}(S)$ is given by\footnote{For more details on defining the Dirac operator on a $d$-dimensional oriented connected Riemannian manifold $(X,g)$, see Section $3$ of \cite{belgun2007singularity}.}
\begin{align}
    \slashed{D}\coloneq i\begin{bmatrix}0&iy\partial_x+y\partial_y-\frac{1}{2}\\-iy\partial_x+y\partial_y-\frac{1}{2}&0\end{bmatrix}.
\end{align}
\end{defn}

\begin{rem}
By restricting the domain of $\slashed{D}$ to $\mathfrak{C}_c^\infty(S)$, the space of smooth compactly supported sections of the spinor bundle $S$, one can construct a pre-Hilbert space, which can then be promoted to a Hilbert space in the standard way: first,  we define the inner product of $f,g\in\mathfrak{C}_c^\infty(S)$ by
\begin{align}
    \braket{f,g}\coloneq\int_{\mathbb{H}}\braket{f(z),g(z)}_{\mathbb{C}^2}d\mu(z),
\end{align}
where $\mu$ is the hyperbolic measure on $\mathbb{H}$ and then complete $\mathfrak{C}_c^\infty(S)$ into a Hilbert space $\mathfrak{H}(S)$ using the induced norm. The space $\mathfrak{C}_c^\infty(S)$ is dense inside $\mathfrak{H}(S)$.
\end{rem}

\begin{rem}
 The Dirac operator is elliptic and essentially self-adjoint on $\mathfrak{C}_c^\infty(S)$,    which in particular implies that its spectrum is real \cite{bolte2006selberg}.
 \end{rem}

A compact connected orientable hyperbolic surface $\Sigma$ can be thought of as $\Gamma\backslash \mathbb{H}$, where $\Gamma$ is a cocompact Fuchsian group, a discrete subgroup of $\mathrm{PSL}(2,\mathbb{R})$. 
More precisely, the cocompact Fuchsian group $\Gamma$ acts properly discontinuously on $\mathbb{H}$. If $\gamma=\pm\begin{pmatrix}a&b\\c&d\end{pmatrix}\in\Gamma$, the action is defined by
\begin{align}
\begin{array}{ccc}
\Gamma\times \mathbb{H}&\longrightarrow& \mathbb{H}\\(\gamma,z)&\longmapsto &\frac{az+b}{cz+d}.
\end{array}
\end{align}
The above induces an action on the orthonormal frame bundle $Q(\mathbb{H})\cong \mathrm{PSL}(2,\mathbb{R})$, which is simply the action by left multiplication:
\begin{align}
\begin{array}{ccc}
\Gamma\times \mathrm{PSL}(2,\mathbb{R})&\longrightarrow& \mathrm{PSL}(2,\mathbb{R})\\(\gamma,g)&\longmapsto &\gamma\cdot g.
\end{array}
\end{align}
The double cover of the bundle $Q$ is $P(\mathbb{H})\cong \mathrm{SL}(2,\mathbb{R})$.

\begin{defn}
Let $\Gamma\subset \mathrm{PSL}(2,\mathbb{R})$ be a cocompact Fuchsian group. Let $p:\mathrm{SL}(2,\mathbb{R})\rightarrow \mathrm{PSL}(2,\mathbb{R})$ be the canonical projection. We define the $\bar{\Gamma}:=p^{-1}(\Gamma)$.
\end{defn}
In order to describe spin structures more directly in terms of the action of $\Gamma$, it is convenient to apply the following result to our context:
\begin{prop}[Ref.\cite{AIF_1999__49_5_1637_0}]
\label{prop:splitseq}
Spin structures on $\Sigma$ are in one-to-one correspondence with lifts to $\mathrm{SL}(2,\mathbb{R})$ of the action by left multiplication of $\Gamma$ on $\mathrm{PSL}(2,\mathbb{R})$ on itself. In other words, the spin structures on $\Gamma \backslash \mathbb{H}$ are in one-to-one correspondence with the possible right splittings of the short exact sequence:
\begin{equation}
\label{exactseq}
\begin{tikzcd}
0\arrow[r] & \mathbb{Z}_2 \arrow[r,"\iota"] &\bar{\Gamma}\arrow[l,dashed,bend left,"\chi"]\arrow[r,"p"]&\Gamma\arrow{r}\arrow[l,dashed,bend left,"\rho"]&0.
\end{tikzcd}
\end{equation}
\end{prop}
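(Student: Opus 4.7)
The plan is to construct a chain of bijections between (i) spin structures on $\Sigma=\Gamma\backslash\mathbb{H}$, (ii) lifts to $\mathrm{SL}(2,\mathbb{R})$ of the left-multiplication action of $\Gamma$ on $\mathrm{PSL}(2,\mathbb{R})$, and (iii) right splittings $\rho\colon\Gamma\to\bar{\Gamma}$ of the short exact sequence in the statement. The equivalence of (ii) and (iii) is essentially tautological: a lifted action must send each $\gamma\in\Gamma$ to left multiplication by some $\tilde{\gamma}\in p^{-1}(\gamma)\subset\bar{\Gamma}$, and compatibility with the group law in $\Gamma$ is exactly the condition that $\gamma\mapsto\tilde{\gamma}$ be a group homomorphism splitting $p$.

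The substantive content is the bijection (i)$\leftrightarrow$(ii). The key input is that $\mathbb{H}$ is contractible, so $H^{1}(\mathbb{H},\mathbb{Z}_{2})=0$ and there is a unique spin structure on $\mathbb{H}$; concretely this unique spin structure is realised by the double covering $\mathrm{SL}(2,\mathbb{R})\to\mathrm{PSL}(2,\mathbb{R})\cong Q(\mathbb{H})$ of principal bundles, with structure groups $\mathrm{Spin}(2)$ and $SO(2)$ respectively. From a splitting $\rho$ I would construct a spin structure on $\Sigma$ by setting $P(\Sigma):=\rho(\Gamma)\backslash\mathrm{SL}(2,\mathbb{R})$; because $p$ restricts to an isomorphism on $\rho(\Gamma)$, the projection $p\colon\mathrm{SL}(2,\mathbb{R})\to\mathrm{PSL}(2,\mathbb{R})$ descends to a $\mathrm{Spin}(2)$-equivariant double cover $P(\Sigma)\to Q(\Sigma)$, giving the desired spin structure.

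Conversely, given a spin structure $(P(\Sigma),\Lambda)$, I would pull it back along the universal covering map $\mathbb{H}\to\Sigma$. The pull-back is a spin structure on $\mathbb{H}$, hence by uniqueness is isomorphic to the canonical one $\mathrm{SL}(2,\mathbb{R})\to\mathrm{PSL}(2,\mathbb{R})$. The deck transformation action of $\Gamma$ on $Q(\mathbb{H})$ then lifts, uniquely once a base point is fixed, to a $\mathrm{Spin}(2)$-equivariant action on $\mathrm{SL}(2,\mathbb{R})$ covering the $\mathrm{PSL}(2,\mathbb{R})$-action, producing the element of (ii). Verifying that the two constructions are mutually inverse is then a direct check using only the definitions above.

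The main technical point I expect to be careful about is ensuring that the bijection is well-defined on isomorphism classes rather than on individual representatives: different choices of isomorphism between the pulled-back spin structure and the canonical one $\mathrm{SL}(2,\mathbb{R})$ could a priori give different lifted actions. However, two such choices differ by multiplication by a central element $\pm\mathbf{1}\in\mathrm{SL}(2,\mathbb{R})$, which commutes with left multiplication by every $\tilde{\gamma}\in\bar{\Gamma}$, so the resulting splitting of the short exact sequence is independent of this choice. A secondary subtlety in the orbifold setting is that $\Gamma$ has torsion; but the same argument goes through once one interprets the quotient $\rho(\Gamma)\backslash\mathrm{SL}(2,\mathbb{R})$ and the notion of spin structure in the appropriate orbifold sense.
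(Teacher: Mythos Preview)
Your argument is correct and is essentially the standard proof of this fact. Note, however, that the paper does not actually prove this proposition: it is stated with a citation to an external reference and is used as a known input. There is therefore no ``paper's own proof'' to compare against. What the paper does add immediately after the statement is the observation that right splittings $\rho$ are in bijection with left splittings $\chi$ (via $\mathrm{Ker}\,\chi=\mathrm{Im}\,\rho$), which is the reformulation it actually uses downstream; your proposal does not touch this, but it is also not part of the proposition as stated.
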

The number of right splittings $\rho$ (i.e. $\rho$ for which $p\circ\rho=Id$) of this exact sequence is equal to the number of left splittings $\chi$, for which $\chi\circ\iota=Id$, which amounts to imposing the constraint $\chi(-Id)=-1$. A natural way to establish a one-to-one correspondence between the two is to characterize $\chi$ by $\mathrm{Ker}\,\chi=\mathrm{Im}\,\rho$. As a result, we obtain:

\begin{prop}\label{prop:multiplier}
Spin structures on a compact orientable smooth Riemann surface $\Sigma$ can be labelled by homomorphisms $\chi:\bar{\Gamma}\rightarrow\mathbb{Z}_2$ satisfying $\chi(-Id)=-1$. 
\end{prop}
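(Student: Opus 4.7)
The plan is to invoke Proposition \ref{prop:splitseq}, which identifies spin structures with right splittings $\rho: \Gamma \to \bar{\Gamma}$ of the short exact sequence \eqref{exactseq}, and then to establish a bijection between such right splittings and homomorphisms $\chi: \bar{\Gamma} \to \mathbb{Z}_2$ satisfying $\chi(-\mathrm{Id}) = -1$. Writing $\mathbb{Z}_2 = \{\pm 1\}$ multiplicatively with $\iota(\pm 1) = \pm \mathrm{Id}$, the stated constraint on $\chi$ is equivalent to $\chi \circ \iota = \mathrm{Id}_{\mathbb{Z}_2}$, i.e., $\chi$ is a left splitting of \eqref{exactseq}. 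So the content of the proposition is that left and right splittings of \eqref{exactseq} are in canonical bijection, and I would implement the identification $\mathrm{Ker}\,\chi = \mathrm{Im}\,\rho$ suggested in the text.

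The key structural observation is that the extension \eqref{exactseq} is central: since $\{\pm \mathrm{Id}\}$ is the center of $\mathrm{SL}(2,\mathbb{R})$, it lies in the center of $\bar{\Gamma}$. Given a right splitting $\rho$, I would set $H := \rho(\Gamma)$ and use $p \circ \rho = \mathrm{Id}_\Gamma$ to show that $\rho$ is injective and that $H \cap \{\pm\mathrm{Id}\} = \{\mathrm{Id}\}$; then, for any $\bar\gamma \in \bar{\Gamma}$, the element $\bar\gamma \cdot \rho(p(\bar\gamma))^{-1}$ projects to the identity in $\Gamma$, hence equals $\pm \mathrm{Id}$. Combined with centrality of $\{\pm \mathrm{Id}\}$, this yields the internal direct product decomposition $\bar{\Gamma} = \{\pm \mathrm{Id}\} \times H$, and I would define $\chi_\rho$ as the projection onto the first factor composed with the identification $\{\pm\mathrm{Id}\} \cong \mathbb{Z}_2$. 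By construction $\chi_\rho$ is a homomorphism with $\chi_\rho(-\mathrm{Id}) = -1$ and $\mathrm{Ker}\,\chi_\rho = H = \mathrm{Im}\,\rho$.

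Conversely, given $\chi$ with $\chi(-\mathrm{Id}) = -1$, I would set $H := \mathrm{Ker}\,\chi$, a normal subgroup of index $2$. Since $\chi(-\mathrm{Id}) \neq 1$, we have $H \cap \{\pm\mathrm{Id}\} = \{\mathrm{Id}\}$, so the restriction $p|_H : H \to \Gamma$ is injective (its kernel equals $H \cap \ker p = \{\mathrm{Id}\}$) and surjective (every $\bar\gamma \in \bar{\Gamma}$ lies in $H$ or $-\mathrm{Id}\cdot H$, so $p(\bar\gamma) = p(h)$ for some $h \in H$, and $p$ itself is surjective). Define $\rho_\chi := (p|_H)^{-1}$; this is a right splitting of \eqref{exactseq} with $\mathrm{Im}\,\rho_\chi = H = \mathrm{Ker}\,\chi$. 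A quick check that the assignments $\rho \mapsto \chi_\rho$ and $\chi \mapsto \rho_\chi$ are mutually inverse — both characterised by the equality $\mathrm{Ker}\,\chi = \mathrm{Im}\,\rho$ — then completes the bijection.

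I do not anticipate a substantive obstacle: this is a routine consequence of the theory of central extensions with kernel of order two, where a complement to the kernel can be specified equivalently as a subgroup (the image of a right splitting) or as a character (a left splitting). The only care required is to be explicit about the multiplicative identification $\mathbb{Z}_2 = \{\pm 1\}$ so that the conditions $\chi(-\mathrm{Id}) = -1$ and $\chi \circ \iota = \mathrm{Id}$ coincide, and to use centrality of $\{\pm \mathrm{Id}\}$ when asserting the internal direct product structure of $\bar{\Gamma}$.
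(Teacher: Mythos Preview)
Your proposal is correct and follows essentially the same approach as the paper: invoke Proposition~\ref{prop:splitseq} to identify spin structures with right splittings of the exact sequence~\eqref{exactseq}, then pass to left splittings (homomorphisms $\chi$ with $\chi(-\mathrm{Id})=-1$) via the correspondence $\mathrm{Ker}\,\chi = \mathrm{Im}\,\rho$. The paper states this correspondence without elaboration in the paragraph preceding the proposition; your version simply fills in the standard details (centrality of $\{\pm\mathrm{Id}\}$, the internal direct product decomposition, and the explicit inverse constructions).
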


\begin{rem}
In the language of automorphic forms, $\chi$ is a multiplier which allows the existence of non-trivial odd weight automorphic forms.
\end{rem}

\begin{rem}
There is also a geometric interpretation of the morphism $\chi$. If $\tilde{\gamma}\in \bar{\Gamma}$, $\chi(\tilde{\gamma})$ gives the holonomy of the spin bundle along the corresponding closed geodesic.
\end{rem}

This new characterization of spin structure in terms of a homomorphism $\chi:\bar{\Gamma}\rightarrow\mathbb{Z}_2$ such that $\chi(-Id)=-1$ has the very convenient feature of easily generalizing to the case in which the cocompact Fuchsian group $\Gamma$ has some elliptic elements, in particular, to the case of orbifolds which will be of particular interest to us. We hence define:

\begin{defn}
Let $\Gamma$ be a general cocompact Fuchsian group. A spin structure on $\Gamma\backslash \mathrm{PSL}(2,\mathbb{R})$ is a group homomorphism $\chi:\bar{\Gamma}\rightarrow\mathbb{Z}_2$ such that $\chi(-Id)=-1$. 
\end{defn}

\begin{rem}
Unlike Riemann surfaces, not every orbifold admits a spin structure. In particular, if there exists an order-2 element in $\Gamma$, injectivity of $\rho$ implies that this element must be mapped to $-\text{Id}\in\bar{\Gamma}$ which lies in the image of $\iota$ hence the kernel of $p$. This is inconsistent with the fact that $p\circ\rho=\text{Id}$, thus there does not exist any right splitting of the exact sequence \eqref{exactseq}, which, by splitting lemma, implies that there does not exist any left splitting $\chi$. Hence even order orbifold points are obstructions to the existence of a spin structure.\end{rem}

\begin{rem}
The geometric interpretation of $\chi$ naturally extends to the case of orbifolds. In the case of elliptic elements, it gives the holonomy around orbifold singularities.
\end{rem}

\begin{prop}[Ref.~\cite{geiges2012generalised}, Theorem 3]
On a smooth Riemann surface of genus $g$, there are $2^{2g}$ spin structures. An orbifold admits a spin structure if and only if all its orbifold points have odd order, and then the number of spin structures is also given by $2^{2g}$.
\end{prop}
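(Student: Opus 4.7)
The plan is to exploit the parameterization of spin structures by homomorphisms $\chi:\bar{\Gamma}\to\mathbb{Z}_2$ satisfying $\chi(-\mathrm{Id})=-1$, as established in Proposition \ref{prop:multiplier} and the subsequent orbifold definition. The set of such $\chi$ is either empty or a torsor over $\mathrm{Hom}(\Gamma,\mathbb{Z}_2)$: any two valid $\chi,\chi'$ differ by a homomorphism $\chi\cdot(\chi')^{-1}:\bar{\Gamma}\to\mathbb{Z}_2$ that vanishes on $-\mathrm{Id}$ and hence factors through $\Gamma=\bar{\Gamma}/\langle -\mathrm{Id}\rangle$; conversely, any valid $\chi$ multiplied by the pullback of any $\phi\in\mathrm{Hom}(\Gamma,\mathbb{Z}_2)$ yields another. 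The proof thus reduces to (i) computing $|\mathrm{Hom}(\Gamma,\mathbb{Z}_2)|$ and (ii) deciding when the set is non-empty.

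For (i), I would use the standard presentation of a cocompact Fuchsian group of signature $[g;k_1,\dots,k_n]$,
\[
\Gamma=\left\langle a_i,b_i,e_j\ \Big|\ e_j^{k_j}=1,\ \textstyle\prod_j e_j\cdot\prod_i[a_i,b_i]=1\right\rangle,
\]
and abelianize with $\mathbb{Z}_2$ coefficients. When all $k_j$ are odd, $\gcd(k_j,2)=1$ combined with $k_j e_j=0=2e_j$ forces $e_j=0$ in the $\mathbb{Z}_2$-abelianization, whereupon the global product relation becomes vacuous. This leaves $\Gamma^{\mathrm{ab}}\otimes\mathbb{Z}_2\cong(\mathbb{Z}_2)^{2g}$, so $|\mathrm{Hom}(\Gamma,\mathbb{Z}_2)|=2^{2g}$. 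The smooth case ($n=0$) is immediate.

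For (ii), the obstruction direction is the remark following the orbifold definition, generalized to any even $k_i$: a $\mathrm{PSL}(2,\mathbb{R})$-rotation of order $k$ lifts to two elements of $\mathrm{SL}(2,\mathbb{R})$ (rotations by angles $\pi/k$ and $\pi/k+\pi$), both of which have order $2k$ when $k$ is even; hence $\tilde{e}_i^{k_i}=-\mathrm{Id}$ independent of lift choice (since $(-\tilde e_i)^{k_i}=(-1)^{k_i}\tilde e_i^{k_i}=\tilde e_i^{k_i}$), whence any $\chi$ forces $\chi(-\mathrm{Id})=\chi(\tilde{e}_i)^{k_i}=1$, contradicting $\chi(-\mathrm{Id})=-1$. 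For the converse (all $k_j$ odd, including $n=0$), choose lifts $\tilde{e}_j$ of order exactly $k_j$ (uniquely determined: one preimage has order $k_j$, the other $2k_j$), and arbitrary lifts $\tilde{a}_i,\tilde{b}_i$. The element $W:=\prod_j\tilde{e}_j\cdot\prod_i[\tilde{a}_i,\tilde{b}_i]\in\bar{\Gamma}$ lies in $\ker p=\{\pm\mathrm{Id}\}$, and a valid $\chi$ exists iff $W=+\mathrm{Id}$: the constraints force $\chi(\tilde{e}_j)=1$ (odd $k_j$) and kill commutators, leaving only $\chi(W)=1$.

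The main obstacle is verifying that $W=+\mathrm{Id}$ under the odd-order hypothesis. My preferred route is to pass to the universal cover $\widetilde{\mathrm{SL}}(2,\mathbb{R})$, whose center $\mathbb{Z}$ projects onto $\{\pm\mathrm{Id}\}\subset\mathrm{SL}(2,\mathbb{R})$ with $2\mathbb{Z}$ mapping to $+\mathrm{Id}$: lifting each generator appearing in $W$ to $\widetilde{\mathrm{SL}}(2,\mathbb{R})$ and tracking the accumulated rotation number of the resulting lift $\widetilde W$, the parity of this integer determines the sign of $W$. The rotation number can be computed by a Gauss--Bonnet calculation on the orbifold together with integer shifts coming from the lift choices for the $\tilde{e}_j$; the odd-order hypothesis allows these shifts to be arranged so that the total lies in $2\mathbb{Z}$, yielding $W=+\mathrm{Id}$. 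Alternatively, one identifies the sign of $W$ with the mod-$2$ reduction of the Euler class of $\Gamma\hookrightarrow\mathrm{PSL}(2,\mathbb{R})$ in $H^2(\Gamma,\mathbb{Z}_2)\cong\mathbb{Z}_2$, whose explicit formula for Fuchsian groups vanishes precisely when no $k_j$ is even; this is the perspective taken in \cite{geiges2012generalised}. Combining (i) and (ii) yields $2^{2g}$ spin structures in both the smooth and odd-order orbifold cases.
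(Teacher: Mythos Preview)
The paper does not give its own proof of this proposition; it simply cites Theorem~3 of \cite{geiges2012generalised}. Your write-up therefore supplies substantially more than the paper does, and the overall architecture---torsor over $\mathrm{Hom}(\Gamma,\mathbb{Z}_2)$, the $\mathbb{Z}_2$-abelianization count via the Fuchsian presentation, and the obstruction from even-order elliptics---is correct and is indeed the standard route.

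Two small remarks on the final existence step. First, once you have fixed the order-$k_j$ lifts $\tilde e_j\in\mathrm{SL}(2,\mathbb{R})$, the element $W$ is completely determined (commutators are insensitive to the sign of $\tilde a_i,\tilde b_i$ since $-\mathrm{Id}$ is central), so the phrase ``the odd-order hypothesis allows these shifts to be arranged so that the total lies in $2\mathbb{Z}$'' is slightly misleading: there is nothing to arrange at the $\mathrm{SL}(2,\mathbb{R})$ level, and further lifts to $\widetilde{\mathrm{SL}}(2,\mathbb{R})$ can only change $\widetilde W$ by elements of $2\mathbb{Z}$, leaving the parity fixed. What actually has to be shown is that this fixed parity is even, and that is precisely the Euler-class computation you mention (in the smooth case it reduces to $\chi=2-2g$ being even; in the orbifold case it is the Seifert-invariant calculation carried out in \cite{geiges2012generalised}). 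Second, your obstruction argument for even $k_i$ implicitly uses that $\tilde e_i^{k_i}=-\mathrm{Id}$ and not $+\mathrm{Id}$; the cleanest justification is that any lift is conjugate to $R(m\pi/k_i)$ with $\gcd(m,k_i)=1$, hence $m$ odd, so $R(m\pi/k_i)^{k_i}=R(m\pi)=-\mathrm{Id}$. With these clarifications your outline is complete and matches the argument in the cited reference.
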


To define the Dirac operator for $\Sigma$,  we need to restrict the domain of the $\slashed{D}$,  the Dirac operator on the upper half plane to special sets of sections (since the bundle is trivial,  they are actually  functions on $\mathbb{H}$ to $\mathbb{C}^2$) that satisfy the right automorphic properties under $\overline{\Gamma}$.   

\begin{defn}[Automorphy factor]
For $\gamma=\begin{pmatrix}
   a&b\\c&d
\end{pmatrix}\in\overline{\Gamma}\,,$ and $k\in \mathbb{N}$,  let us define autmorphy factor $j_\gamma:\mathbb{H}\times\overline{\Gamma}\to \mathbb{C}$ and $J_\gamma:\mathbb{H}\times\overline{\Gamma}\to \mathbb{C}^2$ as
\begin{equation}
\begin{split}
j_\gamma(z,k)\coloneq \frac{(cz+d)^k}{(c\bar{z}+d)^k}\,,\quad \quad J_\gamma(z,k)\coloneq\begin{pmatrix}
    j_\gamma(z,k)&0\\0&j_\gamma(z,k-1)
\end{pmatrix}\,.
\end{split}
\end{equation}
\end{defn}

\begin{defn}
Let $\Gamma$ be a cocompact Fuchsian group, and let $\chi$ be a multiplier of weight $2k\in\mathbb{N}$ on $\bar{\Gamma}$ ($\chi(
-\mathrm{Id})=(-1)^{2k}$).  We define the space $\mathfrak{F}(\Gamma,k,\chi)$ as the space of functions $\Psi:\mathbb{H}\rightarrow \mathbb{C}^2$ that satisfy
\begin{align}
\Psi(\gamma z)=\chi(\gamma)J_\gamma(z,k)\Psi(z)\,, \quad \forall\ \gamma=\begin{pmatrix}
   a&b\\c&d
\end{pmatrix}\in\overline{\Gamma}\,.
\end{align}
If these functions are asked to be smooth (resp. $L^2$ over the fundamental domain of $\Gamma$), the corresponding space is denoted $\mathfrak{C}^\infty(\Gamma,k,\chi)$ (resp. $\mathfrak{L}^2(\Gamma,k,\chi)$).  
\end{defn}

It can be checked that $\slashed{D}$ leaves the space $\mathfrak{C}^\infty(\Gamma,1/2,\chi)$ invariant and thus we have 

\begin{defn}[Dirac operator on $\Sigma$]
    The Dirac operator associated to $\Sigma$ is defined to be the restriction of $\slashed{D}$ to $\mathfrak{C}^\infty(\Gamma,1/2,\chi)$. 
\end{defn}
Under a slight abuse of notation, we will continue calling $\slashed{D}$ this Dirac operator with restricted domain. $\mathfrak{C}^\infty(\Gamma,1/2,\chi)$ is dense inside $\mathfrak{L}^2(\Gamma,1/2,\chi)$.  Compactness of the surface implies that the spectrum of $\slashed{D}$ is discrete.

\begin{defn}
Let $\Gamma$ be a cocompact Fuchsian group, and let $\chi$ be a multiplier of weight $2k$ on $\bar{\Gamma}$ ($\chi(
-\mathrm{Id})=(-1)^{2k}$).  We define the space $\mathcal{F}(\Gamma,k,\chi)$ as the space of functions $\psi:\mathbb{H}\rightarrow \mathbb{C}$ that satisfy
\begin{align}
\psi(\gamma z)=\chi(\gamma)j_\gamma(z,k)\psi(z)\,, \quad \forall\ \gamma=\begin{pmatrix}
   a&b\\c&d
\end{pmatrix}\in\overline{\Gamma}\,.
\end{align}
If these functions are asked to be smooth (resp. $L^2$ over the fundamental domain of $\Gamma$), the corresponding space is denoted $\mathcal{C}^\infty(\Gamma,k,\chi)$ (resp. $L^2(\Gamma,k,\chi)$).
\end{defn}

\begin{defn}\label{def:weightKautomorphicLaplacian}
Define the weight $2k$ automorphic Laplacian $\Delta_k:\mathcal{C}^\infty(\Gamma,k,\chi)\to \mathcal{C}^\infty(\Gamma,k,\chi)$ as
\begin{equation}
\Delta_{k}\coloneq y^2(\partial^2_x+\partial^2_y)-2 iky\partial_x\,.
\end{equation}
\end{defn}

\begin{rem}\label{remark:1-Laplacian}
The square of the Dirac operator,  $\slashed{D}^2:\mathfrak{C}^\infty(\Gamma,1/2,\chi)\to \mathfrak{C}^\infty(\Gamma,1/2,\chi)$ is given by
\begin{align}
\slashed{D}^2=\begin{pmatrix}-\Delta_{1/2}-\frac{1}{4}&0\\0&-\Delta_{-1/2}-\frac{1}{4}\end{pmatrix}\,.
\end{align}
$\Delta_{1/2}$ is also called $1$-Laplacian. 
\end{rem}

Now the following key proposition relates the spectrum of $\slashed{D}$ to that of $\Delta_{1/2}$.

\begin{prop}
If $\Psi=\begin{pmatrix}\psi_1\\\psi_2\end{pmatrix}$ is an eigenform of $-\slashed{D}$ with eigenvalue $t$, then $\psi_1$ is an eigenform of $-\Delta_{1/2}$ with eigenvalue $\lambda=1/4+t^2$. Conversely, if $\psi$ is an eigenform of $-\Delta_{1/2}$ with eigenvalue $\lambda=1/4+t^2$, then $\Psi=\begin{pmatrix}t\psi\\i\left(iy\partial_x-y\partial_y+\frac{1}{2}\right)\psi\end{pmatrix}$ is an eigenform of $-\slashed{D}$ with eigenvalue $t$.
\end{prop}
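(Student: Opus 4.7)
The key observation is that both directions reduce to manipulating the explicit $2\times 2$ matrix form of $\slashed{D}$, together with the block-diagonal expression for $\slashed{D}^{2}$ recorded in Remark~\ref{remark:1-Laplacian}. Write $\slashed{D}=i\bigl(\begin{smallmatrix}0&A\\ B&0\end{smallmatrix}\bigr)$ with $A=iy\partial_{x}+y\partial_{y}-\tfrac12$ and $B=-iy\partial_{x}+y\partial_{y}-\tfrac12$, so that $\slashed{D}^{2}=-\bigl(\begin{smallmatrix}AB&0\\0&BA\end{smallmatrix}\bigr)$. Comparing with Remark~\ref{remark:1-Laplacian} gives the identities
\begin{equation*}
AB=\Delta_{1/2}+\tfrac14,\qquad BA=\Delta_{-1/2}+\tfrac14,
\end{equation*}
which I would either quote from Remark~\ref{remark:1-Laplacian} or, for completeness, verify in one line by expanding $AB$ and comparing with Definition~\ref{def:weightKautomorphicLaplacian}.

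\textbf{Forward direction.} Assume $-\slashed{D}\Psi=t\Psi$. Squaring, $\slashed{D}^{2}\Psi=t^{2}\Psi$. Reading off the first component using the block-diagonal form,
\begin{equation*}
\bigl(-\Delta_{1/2}-\tfrac14\bigr)\psi_{1}=t^{2}\psi_{1},
\end{equation*}
which is precisely $-\Delta_{1/2}\psi_{1}=(t^{2}+\tfrac14)\psi_{1}=\lambda\psi_{1}$. The automorphic transformation property of $\psi_{1}$ in the space $\mathcal{C}^{\infty}(\Gamma,1/2,\chi)$ is inherited from the first-component transformation rule in the definition of $\mathfrak{C}^{\infty}(\Gamma,1/2,\chi)$, since $J_{\gamma}(z,1/2)$ has $j_{\gamma}(z,1/2)$ in the $(1,1)$ entry. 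So $\psi_{1}$ is indeed a weight-$1/2$ eigenform with eigenvalue $\lambda$.

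\textbf{Converse direction.} Let $\psi\in \mathcal{C}^{\infty}(\Gamma,1/2,\chi)$ satisfy $-\Delta_{1/2}\psi=\lambda\psi$ with $\lambda=\tfrac14+t^{2}$. Define $\psi_{1}:=t\psi$ and $\psi_{2}:=i(iy\partial_{x}-y\partial_{y}+\tfrac12)\psi=-iB\psi$. To check that $\Psi=(\psi_{1},\psi_{2})^{T}\in\mathfrak{C}^{\infty}(\Gamma,1/2,\chi)$, I would verify that $-B$ is a weight-lowering operator sending $\mathcal{C}^{\infty}(\Gamma,1/2,\chi)$ into $\mathcal{C}^{\infty}(\Gamma,-1/2,\chi)$; this is a direct calculation using the transformation rule $\psi(\gamma z)=\chi(\gamma)j_{\gamma}(z,1/2)\psi(z)$ and the chain rule for $\partial_{x},\partial_{y}$ (the same fact that makes Maass weight-shifting operators automorphic). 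Now compute
\begin{equation*}
\slashed{D}\Psi=i\begin{pmatrix}A\psi_{2}\\ B\psi_{1}\end{pmatrix}=i\begin{pmatrix}-iAB\psi\\ tB\psi\end{pmatrix}=\begin{pmatrix}AB\psi\\ itB\psi\end{pmatrix}.
\end{equation*}
Using $AB=\Delta_{1/2}+\tfrac14$ and $\Delta_{1/2}\psi=-\lambda\psi=-(\tfrac14+t^{2})\psi$, the first component equals $-t^{2}\psi=-t\psi_{1}$, while the second component equals $-(-iB\psi)\cdot(-t)=-t\psi_{2}$. Hence $-\slashed{D}\Psi=t\Psi$, as required.

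\textbf{Main obstacle.} There is no real obstacle beyond bookkeeping: the argument is essentially an eigenvalue computation using the square of $\slashed{D}$. The one step that needs mild care is confirming that the second component $\psi_{2}=-iB\psi$ transforms with the correct automorphy factor $j_{\gamma}(z,-1/2)$ so that $\Psi$ genuinely lies in $\mathfrak{C}^{\infty}(\Gamma,1/2,\chi)$; this is the standard fact that $B$ intertwines weights $k$ and $k-1$, and can be verified in a short direct calculation from the definitions.
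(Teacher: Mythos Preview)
Your argument is correct and is the natural direct computation. The paper itself does not write out a proof at all: it simply cites Proposition~1 of \cite{bolte2006selberg}, so your proposal is in fact more explicit than what appears in the paper, and is essentially the computation one finds in that reference.

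One small cosmetic point: in the converse direction, your intermediate expression for the second component, ``$-(-iB\psi)\cdot(-t)$'', does not literally simplify to $-t\psi_{2}$ (it gives $t\psi_{2}$ under either parenthesization). The correct chain is $iB\psi_{1}=itB\psi=t(iB\psi)=t(-\psi_{2})=-t\psi_{2}$, using $\psi_{2}=-iB\psi\Rightarrow iB\psi=-\psi_{2}$. Your final conclusion $-\slashed{D}\Psi=t\Psi$ is right; just clean up that line.
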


\begin{proof}
See Proposition $1$ of \cite{bolte2006selberg} and its proof.
\end{proof}

Thus bounding the Dirac spectrum associated to a given surface $\Sigma$ amounts to bounding the spectra of $\Delta_{\pm1/2}$ on $\Sigma$. Another important feature of the Dirac spectrum is that it has two important symmetries. First, the spectrum of the Dirac operator is symmetric with respect to the origin:

\begin{prop}
If $\begin{pmatrix}\psi_1\\ \psi_2\end{pmatrix}$ is an eigenvector of $\slashed{D}$ with eigenvalue $t$, then $\begin{pmatrix}\psi_1\\-\psi_2\end{pmatrix}$ is an eigenvector of $\slashed{D}$ with eigenvalue $-t$. In particular, the spectrum of $\slashed{D}$ is symmetric with respect to the origin.
\end{prop}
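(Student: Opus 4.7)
The plan is to observe that the stated symmetry is an instance of the Dirac operator anticommuting with the Pauli matrix $\sigma_3 = \begin{pmatrix}1 & 0\\ 0 & -1\end{pmatrix}$, and that the map $\Psi \mapsto \sigma_3 \Psi$ preserves the automorphy class $\mathfrak{C}^\infty(\Gamma, 1/2, \chi)$, so it descends to a well-defined involution on the spin bundle over $\Sigma$.

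First I would verify the anticommutation relation $\{\slashed{D}, \sigma_3\} = 0$ by a direct matrix computation. Writing $\slashed{D} = i\begin{pmatrix}0 & A_+ \\ A_- & 0\end{pmatrix}$ with $A_\pm = \pm iy\partial_x + y\partial_y - \tfrac{1}{2}$, one has $\sigma_3 \slashed{D} = i\begin{pmatrix}0 & A_+ \\ -A_- & 0\end{pmatrix}$ and $\slashed{D}\sigma_3 = i\begin{pmatrix}0 & -A_+ \\ A_- & 0\end{pmatrix}$, so $\sigma_3 \slashed{D} = -\slashed{D}\sigma_3$. Consequently, if $\slashed{D}\Psi = t\Psi$, then $\slashed{D}(\sigma_3 \Psi) = -\sigma_3 \slashed{D}\Psi = -t(\sigma_3 \Psi)$. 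Since $\sigma_3 \begin{pmatrix}\psi_1\\ \psi_2\end{pmatrix} = \begin{pmatrix}\psi_1\\ -\psi_2\end{pmatrix}$, this is exactly the statement of the proposition.

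Next I would confirm that $\sigma_3 \Psi$ genuinely lies in the domain of $\slashed{D}$ on $\Sigma$, that is, in $\mathfrak{C}^\infty(\Gamma, 1/2, \chi)$. Since $J_\gamma(z, 1/2) = \mathrm{diag}(j_\gamma(z, 1/2), j_\gamma(z, -1/2))$ is diagonal, it commutes with $\sigma_3$, so
\begin{equation*}
(\sigma_3 \Psi)(\gamma z) = \sigma_3 \, \chi(\gamma) J_\gamma(z, 1/2) \Psi(z) = \chi(\gamma) J_\gamma(z, 1/2) (\sigma_3 \Psi)(z),
\end{equation*}
showing that the automorphy condition is preserved. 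The last sentence then follows: the map $\Psi \mapsto \sigma_3 \Psi$ is a bijection on the eigenspaces of $\slashed{D}$ sending the eigenvalue $t$ to $-t$, so the spectrum is invariant under $t \mapsto -t$.

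There is no real obstacle here beyond the bookkeeping: the algebraic anticommutation with $\sigma_3$ is the usual chiral-symmetry mechanism for two-dimensional Dirac operators, and the only thing one must check beyond the bare computation is the compatibility with the automorphy factor, which is automatic because the representation $\rho$ of $\mathrm{Spin}(2)$ used to build the spinor bundle acts diagonally on $\mathbb{C}^2$.
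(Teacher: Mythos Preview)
Your proof is correct. The paper states this proposition without proof, treating it as elementary; your argument via the anticommutation $\{\slashed{D},\sigma_3\}=0$ together with the observation that $\sigma_3$ commutes with the diagonal automorphy factor $J_\gamma(z,1/2)$ is exactly the standard verification one would supply, and there is nothing to add.
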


The Dirac operator enjoys one more symmetry leading to what is called \textit{Kramers degeneracy}. This implies that all non-zero eigenvalues of the Dirac operator are at least doubly degenerate,

\begin{prop}[Kramers degeneracy]
\label{prop:kramers}
Let $C$ be the complex conjugation operator, and $\sigma_2:=\begin{pmatrix}0&-i\\i&0\end{pmatrix}$. If $\Psi$ is an eigenvector of $\slashed{D}$ with eigenvalue $t$, then so is $(i\sigma_2 C)\Psi$.
\end{prop}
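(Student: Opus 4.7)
The plan is a direct verification in two stages: first that the antilinear operator $K := i\sigma_2 C$ commutes with $\slashed{D}$, and second that $K$ preserves the automorphic subspace $\mathfrak{C}^\infty(\Gamma,1/2,\chi)$. Set $A := iy\partial_x + y\partial_y - 1/2$ and $B := -iy\partial_x + y\partial_y - 1/2$, so that $\slashed{D} = i\begin{pmatrix} 0 & A \\ B & 0 \end{pmatrix}$. Antilinearity of $C$ together with $C\, y\partial_x = y\partial_x\, C$ and $C\, y\partial_y = y\partial_y\, C$ yields the key intertwining relations $CA = BC$ and $CB = AC$. A short matrix manipulation also gives $K\Psi = \begin{pmatrix} \bar\psi_2 \\ -\bar\psi_1 \end{pmatrix}$.

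For the first stage, I would apply $\slashed{D}$ to $K\Psi$ and simplify. The eigenvalue equation $\slashed{D}\Psi = t\Psi$ reads $iA\psi_2 = t\psi_1$ and $iB\psi_1 = t\psi_2$; since $t\in\mathbb{R}$ by essential self-adjointness, conjugating these produces $A\bar\psi_1 = it\bar\psi_2$ and $B\bar\psi_2 = it\bar\psi_1$. Substituting into $\slashed{D}K\Psi = i\begin{pmatrix} -A\bar\psi_1 \\ B\bar\psi_2 \end{pmatrix}$ gives exactly $t\,K\Psi$, which is the commutation statement at the level of the differential operator.

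For the second stage, I would check that $K\Psi$ still transforms as an element of $\mathfrak{F}(\Gamma,1/2,\chi)$. Two ingredients suffice: $\chi$ is real-valued (its image is $\{\pm 1\}\subset\mathbb{R}$), and the automorphy factors obey $\overline{j_\gamma(z,k)} = j_\gamma(z,-k)$, so complex conjugating the component laws $\psi_1(\gamma z) = \chi(\gamma)\,j_\gamma(z,1/2)\,\psi_1(z)$ and $\psi_2(\gamma z) = \chi(\gamma)\,j_\gamma(z,-1/2)\,\psi_2(z)$ exchanges $j_\gamma(z,+1/2)$ and $j_\gamma(z,-1/2)$. This is precisely the exchange performed by $\sigma_2$ on the two spinor components, so $K\Psi$ transforms with the same multiplier $\chi$ and the same matrix $J_\gamma(z,1/2)$ as $\Psi$.

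The main obstacle is just careful bookkeeping of signs and conjugates; there is no conceptual subtlety. Conceptually, $K$ realizes a quaternionic structure on the spinor bundle: it is antilinear, $K^2 = -\mathrm{Id}$ (using $\sigma_2^2 = \mathrm{Id}$ and $\bar\sigma_2 = -\sigma_2$), and commutes with $\slashed{D}$. The identity $K^2 = -\mathrm{Id}$ is what actually forces the Kramers degeneracy mentioned in the surrounding text: if $K\Psi = c\Psi$ for some scalar $c$, then $K^2\Psi = \bar c \, c\, \Psi = |c|^2\Psi = -\Psi$, a contradiction, so $\Psi$ and $K\Psi$ are linearly independent and every eigenvalue of $\slashed{D}$ has even complex multiplicity.
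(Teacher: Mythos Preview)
Your proof is correct. The paper itself states this proposition without proof, so there is nothing to compare against; you have correctly supplied the verification that $K=i\sigma_2 C$ commutes with $\slashed{D}$ on $\mathfrak{C}^\infty(\Gamma,1/2,\chi)$, and your additional remark that $K^2=-\mathrm{Id}$ forces linear independence of $\Psi$ and $K\Psi$ is exactly the mechanism behind the ``degeneracy'' in the name.
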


In quantum field theory, the operator implementing Kramers degeneracy is usually interpreted as the time reversal operator. 
In Section \ref{sec:step3}, we used Kramers degeneracy in our algorithm to estimate the spectra of Dirac operator on orbifolds and surfaces. More precisely, we noted that since all the non-zero eigenvalues of the Dirac operator are degenerate, we could use Proposition \ref{prop:jt2} instead of Proposition \ref{prop:jt1}, and look for the first nonzero value of $t$ for which $I_r(\lambda)=2$, instead of the first nonzero value of $t$ for which $I_r(\lambda)=1$.

\subsection{Holomorphic Modular forms and spin structure}

A spin structure determines the definition of holomorphic modular forms of odd weight on the surface. Let us recall that for even weights, holomorphic modular forms on a Riemann surface $\Sigma$ are defined independently from any choice of spin structure. More precisely: 

\begin{defn}
Let $\Gamma$ be a cocompact Fuchsian group. A holomorphic $\Gamma$-modular form of weight $2k\in2\mathbb{N}$ is a holomorphic function $f:\mathbb{H}\longrightarrow\mathbb{C}$,
such that for all $\gamma:z\mapsto\frac{az+b}{cz+d}$
in $\Gamma$,
\begin{align}
    f(\gamma(z))=(cz+d)^{2k}f(z),
\end{align}
\end{defn}
Note that since $2k$ is an even number, $(cz+d)^{2k}$ has the same value on the two possible lifts of $\gamma$ inside $\mathrm{SL}(2,\mathbb{R})$, which ensures that the above definition is unambiguous. This changes for odd powers of $cz+d$, because the automorphy factor differs by a sign depending on the choice of lift. Instead, one must use the spin structure $\chi$ as an input to define a modular form of odd weight:

\begin{defn}
Let $\chi:\bar{\Gamma}\rightarrow\mathbb{Z}_2$ be a group homomorphism (or equivalently, a choice of spin structure on $\Sigma$). A holomorphic modular form of weight $k$ for $\bar{\Gamma}$ is a holomorphic function $f:\mathbb{H}\longrightarrow\mathbb{C}$, such that for all $\gamma:z\mapsto\frac{az+b}{cz+d}$ in $\Gamma$,
\begin{align}
    f(\gamma(z))=\chi(\gamma)^k(cz+d)^kf(z).
\end{align}
\end{defn}

In the case of $k$ even, this definition reduces to the previous one and does not depend on the choice of $\chi$, but this is no longer true for $k$ odd.

Our approach will use the number of holomorphic modular forms of a given (even or odd) weight as an input to the set of consistency conditions i.e \textit{bootstrap} constraints. Therefore, it is important to understand what moduli space of surfaces and orbifolds supports a given number of holomorphic modular forms of a given (even or odd) weight. Apart from the case of modular forms of weight 1, this is determined by purely topological data, as shown by the Riemann--Roch theorem, which we now review.

The Riemann--Roch theorem comes from algebraic geometry, and allows to count the number of holomorphic modular forms of weight $\geqslant 2$ on $\Sigma$ only from the information about its topology. 


\begin{thm}[Riemann--Roch for orbifolds]
    Let $n\geqslant 2$ and $n\in\mathbb{Z}$. If an orbifold of genus $g$ with elliptic points $p_1,\dots,p_r$ of orders $k_1,\dots,k_r$ admits a spin structure, then it possesses \begin{align}\ell_{n/2}=(n-1)(g-1)+\sum_{i=1}^r\left\lfloor\frac{n}{2}\left(1-\frac{1}{k_i}\right)\right\rfloor+\delta_{n,2}\end{align}holomorphic modular forms of weight $n$ (regardless of the choice of multiplier), and $g$ holomorphic modular forms of weight 2.
\end{thm}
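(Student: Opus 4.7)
The plan is to recast the computation as an application of the classical Riemann--Roch theorem on the coarse moduli space $|X|$ of the orbifold, after identifying weight-$n$ forms with holomorphic sections of an orbifold line bundle on $X$. For even $n$, a weight-$n$ form $f$ packages into the $\bar{\Gamma}$-invariant tensor $f(z)(dz)^{n/2}$, which descends to a holomorphic section of $L_n := K_X^{n/2}$, the $(n/2)$-th power of the orbifold canonical bundle. For odd $n$, the multiplier $\chi$ selects a theta characteristic $\Theta$ satisfying $\Theta^2 \cong K_X$, and weight-$n$ forms become sections of $L_n := \Theta^n$. The hypothesis that every orbifold order $k_i$ is odd ensures $2$ is invertible mod $k_i$, which is precisely what makes $\Theta$ exist as a genuine line bundle on $X$ and aligns with the existence criterion established earlier in the paper.

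The second step is to push $L_n$ forward to a line bundle on the smooth compact curve $|X|$ of genus $g$. Working in an orbifold chart $z$ with uniformizer $w = z^{k_i}$ at $p_i$ and tracking how an invariant section of $L_n$ expands in $z$, one finds that sections on $X$ correspond to sections on $|X|$ of the integer divisor
\[
(n/2) \widetilde{K}_{|X|} + \sum_i \Big\lfloor (n/2)(1 - 1/k_i) \Big\rfloor [p_i],
\]
where $\widetilde{K}_{|X|}$ denotes either $K_{|X|}$ (for even $n$) or the spin theta characteristic on $|X|$ coming from $\Theta$ (for odd $n$). This divisor has degree $n(g-1) + \sum_i \lfloor (n/2)(1-1/k_i)\rfloor$, so classical Riemann--Roch yields
\[
\ell_{n/2} - h^1(L_n) = (n-1)(g-1) + \sum_i \Big\lfloor (n/2)(1-1/k_i)\Big\rfloor.
\]

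The final step is to establish that $h^1(L_n) = \delta_{n,2}$. Serre duality on the orbifold gives $h^1(L_n) = h^0(K_X \otimes L_n^{-1})$, and $K_X \otimes L_n^{-1}$ has orbifold degree $(1 - n/2)\bigl(2g - 2 + \sum_i (1 - 1/k_i)\bigr)$. The hyperbolicity assumption $2g - 2 + \sum_i(1 - 1/k_i) > 0$ makes this degree strictly negative for $n \geq 3$, forcing $h^0 = 0$. For $n = 2$ we have $L_n = K_X$, so $K_X \otimes L_n^{-1} = \mathcal{O}_X$ with $h^0 = 1$, producing the Kronecker delta. Substituting $n=2$ into the final formula recovers $\ell_1 = (g-1) + 0 + 1 = g$, consistent with the separate claim for weight-$2$ forms.

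The main obstacle is the pushforward computation in the second step, particularly for odd $n$: one must carry out a careful local analysis at each orbifold point to verify that the invariance conditions defining sections of $L_n$ on $X$ descend exactly to the claimed floor-function divisor on $|X|$. The delicate point is the interaction between the fractional part of $(n/2)(1 - 1/k_i)$ and the local weight of $\Theta$ at $p_i$; the odd-order hypothesis must be used at this step to check that these contributions reorganize into a single floor function, uniformly in the choice of spin structure (which is what makes $\ell_{n/2}$ independent of $\chi$).
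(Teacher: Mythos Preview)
The paper does not actually prove this statement; it is quoted as a classical input, with a pointer (in the remark opening the harmonic-spinor subsection) to the sheaf-cohomology picture and to Forster's textbook for the smooth case. Your outline is the standard argument for the dimension formula, rephrased in the language of orbifold line bundles and their pushforwards to the coarse space, and it is correct.

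Two comments on the points you flag as delicate. First, when you pass from classical Riemann--Roch on $|X|$ to Serre duality on the orbifold, you are implicitly identifying $H^1(|X|,\pi_*L_n)$ with $H^1(X,L_n)$; this holds because $\pi_*$ is exact on coherent sheaves for a tame Deligne--Mumford curve over its coarse space (equivalently $R^1\pi_*L_n=0$), and it is worth saying so explicitly, since otherwise the $h^1$ you compute on the orbifold is not obviously the one appearing in Riemann--Roch on $|X|$. Second, your worry about the odd-$n$ local computation dissolves once you observe that the isotropy weight of $\Theta$ at $p_i$ is forced: the congruence $2w_i\equiv k_i-1\pmod{k_i}$ has the unique solution $w_i=(k_i-1)/2$ precisely because $k_i$ is odd, so every spin structure gives $\Theta^n$ the same local weight $n(k_i-1)/2\bmod k_i$. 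The pushforward degree then comes out to $n(g-1)+\sum_i\lfloor (n/2)(1-1/k_i)\rfloor$ uniformly in $\chi$. This is exactly where the odd-order hypothesis is consumed and why the answer is independent of the choice of multiplier, confirming the point you anticipated.
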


\begin{rem}
Note that when $n$ is odd, there can be cases in which there is no spin structure on the space under consideration (if it is an orbifold). The Riemann--Roch theorem requires the existence of a spin structure for odd $n$.
\end{rem}

\begin{rem}
The Riemann--Roch theorem will be the central tool that we will use in order to apply our bootstrap techniques. The number of modular forms of a certain weight will be used as an input in our symmetry constraints, and the Riemann--Roch theorem will enable us to characterize the class of $\Sigma$ supporting enough modular forms in order for our constraints to apply.
\end{rem}

\subsection{Harmonic spinors}

Calculating the number of modular forms of a given weight is straightforward thanks to the Riemann--Roch formula for $n\geqslant 2$: it is a topological invariant of the surfaces or orbifolds under consideration. However, the case $n=1$ is particularly tricky. In particular, for compact orientable smooth surfaces of genus $2$ or more, this number is not topologically invariant, and actually carries some nontrivial information about the surface's geometry and the spin structure on it.

\begin{rem}
The fact that the $n=1$ case behaves differently is most easily understandable in terms of sheaf cohomology. In this language, the Riemann--Roch formula involves the dimension of the zeroth and first cohomology groups of the sheaf under consideration, which is a power of the canonical line bundle determined by $n$ (see for example Theorem 16.9 of \cite{Forster_Gilligan_1999}). It is the dimension of the zeroth cohomology group that gives the number of holomorphic modular forms of weight $n$. If $n>1$, the first sheaf cohomology group automatically vanishes, which allows to directly compute the number of holomorphic modular forms from the Riemann--Roch formula. It is no longer right to ignore the term coming from the first sheaf cohomology group for $n=1$, which explains why the situation is more complicated.
\end{rem}

What remains true in the $n=1$ case is that the genus gives an upper bound on the number of harmonic spinors:
\begin{prop}[Ref.~\cite{HITCHIN19741}]
The number $\ell_{1/2}$ of harmonic spinors on a compact orientable smooth Riemann surface of genus $g$ with a spin structure satisfies
\begin{align}
\label{eq:harmonicspinorbound}
    \ell_{1/2}\leqslant\left\lfloor\frac{g+1}{2}\right\rfloor.
\end{align}
\end{prop}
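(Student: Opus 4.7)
My plan is to translate the statement into algebraic geometry and then deploy Clifford's theorem. The starting observation is that a spin structure on a compact Riemann surface $\Sigma$ of genus $g$ corresponds to a theta characteristic, i.e.\ a holomorphic line bundle $S$ with $S^{\otimes 2} \cong K$, where $K$ is the canonical bundle. In the Kähler setting, harmonic spinors in $\ker\slashed{D}$ are exactly holomorphic sections of $S$ (this is a consequence of the Lichnerowicz/Weitzenböck identification, which in complex dimension one also follows from the decomposition of $\slashed{D}^2$ in Remark~\ref{remark:1-Laplacian}). Thus $\ell_{1/2} = h^0(\Sigma, S)$.

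Next I would compute the degree: since $\deg(K) = 2g-2$ and $S$ is a square root of $K$, we have $\deg(S) = g-1$. The key preparatory step is to show that $S$ is a \emph{special} line bundle whenever it admits any nonzero holomorphic section. For this I would invoke Serre duality, which gives
\begin{equation*}
h^1(\Sigma, S) \;=\; h^0(\Sigma, K \otimes S^{-1}) \;=\; h^0(\Sigma, S),
\end{equation*}
because $K \otimes S^{-1} \cong S$. So if $\ell_{1/2} > 0$, then both $h^0(S) > 0$ and $h^1(S) > 0$, which is precisely the speciality condition.

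The heart of the proof is then an application of Clifford's theorem from the theory of linear series on algebraic curves: if $L$ is a line bundle on a smooth projective curve with $h^0(L) > 0$ and $h^1(L) > 0$, then $h^0(L) \leq \tfrac{1}{2}\deg(L) + 1$. Applied to $L = S$ this yields
\begin{equation*}
\ell_{1/2} \;=\; h^0(\Sigma, S) \;\leq\; \tfrac{1}{2}(g-1) + 1 \;=\; \tfrac{g+1}{2}.
\end{equation*}
Since $\ell_{1/2}$ is an integer, we conclude $\ell_{1/2} \leq \lfloor (g+1)/2 \rfloor$. The case $\ell_{1/2} = 0$ is trivially within the stated bound, so this finishes the argument.

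The main conceptual obstacle is justifying why one is allowed to use Clifford's theorem in this spinorial context; everything hinges on the self-duality $K \otimes S^{-1} \cong S$ that forces speciality, which is the special feature of theta characteristics. A more analytic alternative (closer to Hitchin's original argument) would replace Clifford's theorem by a direct linear-algebra study of the symmetric multiplication map $\mathrm{Sym}^2 H^0(S) \to H^0(K)$ — every nonzero $\psi \in H^0(S)$ gives an injective linear map $\phi \mapsto \psi\phi$ into $H^0(K)$, and refining this via a base-point-free pencil type argument recovers the same bound. Either route works; the algebro-geometric one via Clifford is the shortest.
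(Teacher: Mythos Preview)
Your proof is correct. Note, however, that the paper does not give its own proof of this proposition: it is simply quoted from Hitchin's 1974 paper with the citation \cite{HITCHIN19741}, so there is nothing to compare against on the paper's side. Your route via Clifford's theorem is the standard algebro-geometric argument and is in fact the argument Hitchin uses (his Proposition~2.3): identify the spin structure with a theta characteristic $S$, use the self-Serre-duality $h^0(S)=h^1(S)$ coming from $K\otimes S^{-1}\cong S$, and apply Clifford's inequality. The alternative you sketch via the multiplication map $\mathrm{Sym}^2 H^0(S)\to H^0(K)$ is closely related but would require a base-locus argument to reach the sharp bound; the Clifford route you chose is cleaner and complete as written.
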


The natural next step is to ask when this upper bound is saturated. Here the answer is interesting, and requires the following definition:

\begin{defn}
Let $\Sigma$ be a (smooth) Riemann surface. $\Sigma$ is \textit{hyperelliptic} if it can be realized as a branched double cover $\Sigma\rightarrow \mathbb{C}P^1$.
\end{defn}

\begin{rem}
Hyperelliptic surfaces enjoy particularly nice properties, for example, they can always be described by an algebraic equation of the form 
\begin{align}
y^2=F(x),
\end{align}
where $F$ is a polynomial with distinct roots. At genus $1$ and $2$, every surface is hyperelliptic. However, at genus higher than or equal to $3$, the moduli space of hyperelliptic surfaces is not the entire moduli space and hence is interesting to study.
\end{rem}

For our purposes, the following result is relevant:

\begin{prop}[Ref.~\cite{Martens1968}]
Given $\Sigma$, a smooth surface of genus different from $4$ or $6$, there exists a spin structure on $\Sigma$ for which the bound \eqref{eq:harmonicspinorbound} is saturated if and only if $\Sigma$ is hyperelliptic.
\end{prop}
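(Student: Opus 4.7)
The plan is to translate the question into algebraic geometry by identifying harmonic spinors with holomorphic sections of a theta characteristic. A spin structure on $\Sigma$ corresponds to a holomorphic line bundle $L$ with $L^{\otimes 2}\cong K_\Sigma$, and the space of harmonic spinors is canonically identified with $H^0(\Sigma,L)$, so $\ell_{1/2}=h^0(L)$ and $\deg L = g-1$. In this language, the bound \eqref{eq:harmonicspinorbound} is immediate from Clifford's theorem: because $L$ is self-dual with respect to $K$, one has $h^0(K-L)=h^0(L)$, so $L$ is special, and Clifford gives $h^0(L)\leq \deg(L)/2+1=(g+1)/2$; integrality yields $h^0(L)\leq \lfloor(g+1)/2\rfloor$.

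For the ``if'' direction (hyperelliptic implies the bound is attained), let $H$ be the unique hyperelliptic pencil of degree $2$, pulled back from the double cover $\Sigma\to \mathbb{P}^1$. Standard facts on hyperelliptic curves give $K\sim (g-1)H$ and $h^0(mH)=m+1$ for $0\leq m\leq g-1$. If $g$ is odd, the line bundle $L=\tfrac{g-1}{2}H$ is a theta characteristic with $h^0(L)=(g+1)/2$. If $g$ is even, I would pick a Weierstrass point $P$ (so $2P\sim H$) and set $L=\tfrac{g-2}{2}H+P$; then $2L\sim(g-1)H\sim K$, and using that a section of $\tfrac{g-2}{2}H+P$ must vanish at $P$ together with the structure of the hyperelliptic pencil one gets $h^0(L)=g/2$. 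In both cases, the explicit spin structure saturates the Hitchin bound.

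For the ``only if'' direction, I would analyze the equality case of Clifford. When $g$ is odd and $h^0(L)=(g+1)/2$, this is precisely strict Clifford equality $h^0(L)=\deg(L)/2+1$ with $0<\deg L<2g-2$ and $L\neq 0,K$ (by degree), so the strict part of Clifford's theorem forces $\Sigma$ to be hyperelliptic and $L$ to be a multiple of $H$. When $g$ is even, $h^0(L)=g/2$ is one below strict Clifford equality, so I would invoke Martens' theorem (a refinement of Clifford classifying curves carrying a special divisor $D$ with $h^0(D)$ close to the Clifford maximum): such a $\Sigma$ must be hyperelliptic, trigonal, a smooth plane quintic, or a double cover of an elliptic curve. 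A case analysis, using that the divisor in question is a theta characteristic of degree $g-1$ with $h^0=g/2$, shows that the trigonal possibility is only compatible with $g=4$ and the smooth-plane-quintic (and bielliptic) possibilities only with $g=6$; outside these exceptional genera the only surviving case is hyperelliptic.

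The main obstacle is the even-genus direction: naive Clifford is not tight enough, and one must invoke Martens' structure theorem on curves with many special divisors and carry out the case analysis that singles out $g=4$ and $g=6$ as the genuine exceptions (corresponding respectively to the trigonal $g^1_3$ and the plane-quintic $g^2_5$ yielding effective theta characteristics of the maximal $h^0$ on non-hyperelliptic curves). The odd-genus direction, the construction on hyperelliptic curves, and the Clifford bound itself are all straightforward once the translation to theta characteristics is in place.
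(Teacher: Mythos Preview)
The paper does not actually prove this proposition: it is stated with a bare citation to Martens and no argument is supplied anywhere in the text (Appendix~\ref{app:harmonicspinors} only records the resulting classification case by case). So there is no ``paper's own proof'' to compare against; you have supplied an argument where the paper gives none.

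Your outline is the standard one and is essentially correct. The translation to theta characteristics, the Clifford bound, the explicit hyperelliptic constructions, and the odd-genus ``only if'' via strict Clifford equality are all fine. For the even-genus direction, the cleanest way to phrase what you are doing is via the Clifford index: a theta characteristic $L$ with $h^0(L)=g/2$ has $\mathrm{Cliff}(L)=(g-1)-2(g/2-1)=1$, and since $h^0(L)=h^1(L)=g/2\geq 2$ for $g\geq 4$, it contributes to the Clifford index of the curve, forcing $\mathrm{Cliff}(\Sigma)\leq 1$. The classification of curves of Clifford index $1$ is exactly: trigonal or smooth plane quintic (bielliptic curves have Clifford index $2$ for $g\geq 6$ and should not appear in your list). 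The plane-quintic case is $g=6$; for the trigonal case, one uses that on a non-hyperelliptic trigonal curve of genus $g\geq 5$ every line bundle computing the Clifford index is of the form $m\cdot g^1_3$ or its Serre dual, and then $3m=g-1$ together with $h^0(m\cdot g^1_3)=m+1=g/2$ forces $g=4$. This last step is the one place where your sketch is genuinely thin, but it is a known fact about trigonal curves rather than a gap in the strategy.
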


This shows that apart from the case of genus $4$ and the case of genus $6$,\footnote{It is intriguing to note that these genera are precisely the ones for which bootstrap bounds\cite{Kravchuk:2021akc} for Laplace spectra are weaker than the Yang--Yau bounds \cite{ASNSP_1980_4_7_1_55_0}.} we can use the existence of a maximal number of harmonic spinors to write down the set of consistency conditions, leading to bounds on Laplacian spectra for a genus $g$ hyperelliptic surface. These bounds are refined (and consequently, as we will see later, they are stronger as well), in contrast with the bounds obtained in \cite{Kravchuk:2021akc}, which are valid for all surfaces of genus $g$.


The upper bound on the gap of the Laplacian on some appropriately chosen moduli space of surfaces is likely to be saturated on surfaces with a large automorphism group (i.e. a locally maximal automorphism group). The classification of hyperelliptic surfaces with large automorphism groups is achieved in Table 1 of \cite{https://doi.org/10.48550/arxiv.1711.06599}. More generally, it is interesting to ask for a geometric description of the space of surfaces of a given genus that can carry a given number of harmonic spinors. As we saw, all surfaces of genus $1$ and $2$ are hyperelliptic so such an endeavor is trivial there, but from genus $3$ onwards, the answer starts to give nontrivial information about the geometry of the surfaces. The classification of Riemann surfaces in terms of the possible dimensions of their spaces of harmonic spinors is still open in general, however, for genus $3\leq g\leq 6$, precise results are available. In Appendix \ref{app:harmonicspinors}, we summarize these explicit classifications in low genus. The results collected in this appendix are condensed in Table \ref{tab:mostsymmetric}.

\begin{table}[!ht]
    \centering
    \begingroup
    \renewcommand{\arraystretch}{2.2}
    \scalebox{.8}{\begin{tabular}{ccccc}
    \toprule
$g$  & $\ell_{1/2}^{Max}$ & Moduli space & Most symmetric surface & Automorphism group \\
\midrule
2 & 1 & All & Bolza surface &$GL(2,\mathbb{F}_3)$\\
\cmidrule(lr){1-5}
\multirow{2}{*}{3} & 1 &  Non-hyperelliptic & Klein quartic & $\mathrm{PSL}(2,\mathbb{F}_7)$\\
  & 2 & Hyperelliptic & $y^2=x^8+14x^4+1$ & $\mathbb{Z}_2\times S_4$\\
\cmidrule(lr){1-5}
\multirow{2}{*}{4}  & 1 &
        \pbox{15cm}{Non-hyperelliptic with\\ smooth canonical quadric} & Bring surface & $S_5$\\[+3pt]& 2 & \pbox{15cm}{Hyperelliptic or non-hyperelliptic \\ with singular canonical quadric}& $z^3y^2=x(x^4+y^4)$ & Order $72$\\
\cmidrule(lr){1-5}
\multirow{2}{*}{5}  & 1 or 2 & Non-hyperelliptic & $x_1^2+x_2^2+2x_4^2=x_4^2(x_2^2+x_4^2)-x_3^4=0$ & Order 192\\&3& Hyperelliptic & $y^2=x(x^{10}+11x^5-1)$& $\mathbb{Z}_2\times A_5$\\
\cmidrule(lr){1-5}
\multirow{2}{*}{6}& 1 or 2 & \pbox{15cm}{Non-hyperelliptic and \\ not a smooth quintic} & Wiman sextic & $S_5$\\[+3pt]& 3 & \pbox{15cm}{Hyperelliptic or\\smooth quintic} & Fermat quintic & Order $150$\\
\bottomrule
\end{tabular}}\endgroup
\caption{Table of the moduli spaces of surfaces, satisfying each genus/spin constraint, and of the most symmetric surfaces in each of these moduli spaces. Here $g$ is the genus and $\ell_{1/2}^{Max}$ is the maximal number of harmonic spinors that a surface can carry.}
    \label{tab:mostsymmetric}
\end{table}

\section{Spectral identities for hyperbolic surfaces from associativity}
\label{sec:constraints}
The aim of this section is to derive the spectral identities which we will eventually \textit{bootstrap} to bound the Laplacian and Dirac spectra on a compact connected orientable hyperbolic surfaces and orbifolds. To set the stage, let us consider the upper half plane $\mathbb{H}=\{(x,y):x,y\in\mathbb{R}, y>0\}$, equipped with the usual hyperbolic metric with sectional curvature $-1$ i.e $ds^2=y^{-2}(dx^2+dy^2)$. The orientation preserving isometry group of $\mathbb{H}$ is $\mathrm{PSL}(2,\mathbb{R})$. Given $z\in\mathbb{H}$, $\mathrm{PSL}(2,\mathbb{R})$ acts as follows:
\begin{equation}
    z \mapsto \frac{a z+b}{cz+d}\,, \quad \text{for all}\quad \pm \begin{pmatrix}
        a & b\\
        c & d
    \end{pmatrix}\in \mathrm{PSL}(2,\mathbb{R})\,,\quad a,b,c,d\in \mathbb{R}\,,\quad ad-bc=1\,.
\end{equation}
The upper half plane $\mathbb{H}$ can be identified with $\mathrm{PSL}(2,\mathbb{R})/SO(2)$. The double cover of $\mathrm{PSL}(2,\mathbb{R})$ is $\mathrm{SL}(2,\mathbb{R})$. There is a natural action of $\mathrm{SL}(2,\mathbb{R})$ on $z\in\mathbb{H}$ given by
\begin{equation}
  z\mapsto \frac{a z+b}{cz+d}\,, \quad \text{for all}\quad \begin{pmatrix}
        a & b\\
        c & d
    \end{pmatrix}\in \mathrm{SL}(2,\mathbb{R})\,,\quad a,b,c,d\in \mathbb{R}\,,\quad ad-bc=1\,.
\end{equation}
Note that the center of $\mathrm{SL}(2,\mathbb{R})$ acts trivially on $z\in\mathbb{H}$. 

A compact connected orientable hyperbolic orbifold\footnote{We are following the convention that the manifolds are orbifolds with no orbifold-singularity unless otherwise mentioned.} can be thought of as $\Gamma\backslash (\mathrm{PSL}(2,\mathbb{R})/SO(2))$, where $\Gamma$ is a cocompact Fuchsian group. For a spin orbifold with a given spin structure, there exists $\bar{\Gamma}$, a subgroup of $\mathrm{SL}(2,\mathbb{R})$, such that $\Gamma\simeq \bar{\Gamma}/\mathbb{Z}_2$. This means that $\Gamma$ can be consistently embedded inside $\bar\Gamma$ such that the embedding does not have $-Id$. We will denote this embedding as $\widetilde{\Gamma}$. As a set, we have 
\begin{equation}\label{split}
    \bar\Gamma= \widetilde{\Gamma} \sqcup \left(-\widetilde{\Gamma}\right)\,,
\end{equation}
and $\widetilde{\Gamma}$ is isomorphic to $\Gamma$. Proposition \ref{prop:multiplier} implies that $\chi(\widetilde{\gamma})=\pm1$ for 
$\widetilde{\gamma}\in \pm\widetilde{\Gamma}$ respectively. In other words, different spin structures correspond to different homomorphisms $\chi:\bar{\Gamma}\to \mathbb{Z}_2$, 
which amounts to saying that \eqref{split}, and in particular $\widetilde{\Gamma}$, depends on the spin structure. 
We stress that even though $\Gamma$ is isomorphic to $\widetilde{\Gamma}$, by choosing the embedding $\widetilde{\Gamma}$, we have already committed to a spin structure. Thus it is important to distinguish between $\Gamma$ and $\widetilde{\Gamma}$ even though they are isomorphic.

The upshot of the above is that a compact connected orientable hyperbolic spin orbifold $X$, with a given spin structure can be thought of as $\widetilde{\Gamma}\backslash (\mathrm{SL}(2,\mathbb{R})/\text{Spin}(2))$. The total space for the spin bundle, i.e. $P(X)$, is given by $\widetilde{\Gamma}\backslash \mathrm{SL}(2,\mathbb{R})$.
We formulate the spectral problem for the Laplacian and Dirac operator on these hyperbolic spin orbifolds with a spin structure using the harmonic analysis on $L^2(\widetilde{\Gamma}\backslash G)$, where $G:=\mathrm{SL}(2,\mathbb{R})$. Evidently, a key role is played by irreducible unitary representations of $\mathrm{SL}(2,\mathbb{R})$.

\subsection{Representation theory of $\mathrm{SL}(2,\mathbb{R})$}

\begin{theorem}[See~\cite{Knapp}]\label{thm:SL2Rirreps}
	The unitary irreducible representations of $\mathrm{SL}(2,\R)$ are given up to equivalence by:
\begin{enumerate}
	\item the trivial representation,
 \item the complementary series $\cC_s$ for $s\in (0,\thalf)$,
 \item the principal series $\cP^\pm_{i\nu}$ for $\nu\in \R\setminus\{0\}$,
	\item the holomorphic discrete series $\cD_{n/2}$ and anti-holomorphic discrete series $\bar\cD_{n/2}$, $n\geq 2,\, n\in \Z$,
	\item the limits of discrete series $\cD_{\half}$, $\bar\cD_{\half}$.
\end{enumerate}
	The only equivalence between the representations listed above is the following: $\cP^\pm_{i\nu}\simeq \cP^\pm_{-i\nu}$. 
\end{theorem}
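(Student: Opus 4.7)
The plan is to follow the classical Bargmann/Harish-Chandra route via $(\mathfrak{g},K)$-modules. First, I would use the fact that every irreducible unitary representation of $G=\mathrm{SL}(2,\mathbb{R})$ is admissible (a general fact for real reductive groups, but here it follows directly from the compactness of $K=SO(2)$ and Schur's lemma applied to the matrix coefficients), so that it is determined up to unitary equivalence by its underlying irreducible admissible $(\mathfrak{g},K)$-module $V_K$ of $K$-finite vectors. Thus the theorem splits into two tasks: (i) classify irreducible admissible $(\mathfrak{g},K)$-modules, and (ii) decide which of them admit a positive-definite $\mathfrak{g}$-invariant Hermitian form.

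For (i), I would work in the complexification and pick the standard $\mathfrak{sl}_2$-triple adapted to the compact Cartan: a generator $W$ of $\mathfrak{so}(2)$ together with raising/lowering operators $E^{\pm}$ satisfying $[W,E^{\pm}]=\pm 2E^{\pm}$, $[E^+,E^-]=W$. Since $K$ is compact abelian, Frobenius reciprocity gives a weight decomposition $V_K=\bigoplus_{n\in\mathbb{Z}}V_n$ with $W$ acting by $in$ on $V_n$, and admissibility plus irreducibility force $\dim V_n\le 1$. The Casimir $\Omega$ acts by a scalar, which I will parametrize by $\lambda=s(1-s)$; combined with the identities $E^+E^-=\tfrac{1}{4}(\Omega-W^2-2iW)$ and its partner, this gives an explicit recursion determining the matrix coefficients of $E^\pm$ between adjacent $K$-types. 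The support of $V_K$ inside $\mathbb{Z}$ must be an interval (possibly infinite) of step $2$, and the poles of the recursion tell me exactly which intervals are possible: either all even integers, all odd integers (giving principal/complementary candidates), or a half-line of the form $\{n_0,n_0+2,\ldots\}$ or $\{\ldots,n_0-2,n_0\}$ with $n_0=\pm m$ for $m\ge 1$ (giving discrete-series candidates and their limits). Splitting according to parity of $K$-types records whether the center $\{\pm I\}$ acts trivially or by $-1$, which is the $\pm$ superscript on $\mathcal{P}^{\pm}_{i\nu}$.

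For (ii), an irreducible admissible module carries an invariant Hermitian form iff $V_K\simeq \bar V_K^{\vee}$ as a $(\mathfrak{g},K)$-module; when such a form exists it is unique up to a real scalar, so I only need to compute the signs of its values on each one-dimensional $V_n$ using the recursion for $E^{\pm}$. I would run this sign analysis case by case: for the principal series parameter $s=\tfrac{1}{2}+i\nu$, $\nu\in\mathbb{R}$, the norms come out uniformly positive, giving $\mathcal{P}^{\pm}_{i\nu}$; for $s\in(0,1)$ real with $s\ne \tfrac{1}{2}$, positivity still holds only in the even-$K$-type case and only for $s\in(0,\tfrac{1}{2})$, giving the complementary series $\mathcal{C}_s$ (the odd-$K$-type analogue fails to be unitary); for the lowest/highest weight modules with extremal weight $\pm n$, the recursion yields a positive form precisely when $n\ge 1$, giving $\mathcal{D}_{n/2}$ and $\bar{\mathcal{D}}_{n/2}$ for $n\ge 2$ and the limits $\mathcal{D}_{1/2},\bar{\mathcal{D}}_{1/2}$ at $n=1$; and the trivial representation covers the degenerate case. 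Finally, the equivalences $\mathcal{P}^{\pm}_{i\nu}\simeq \mathcal{P}^{\pm}_{-i\nu}$ follow because the Casimir eigenvalue $\tfrac{1}{4}+\nu^2$ is invariant under $\nu\mapsto -\nu$ and the two resulting modules have the same $K$-spectrum and the same intertwining relations; that no further equivalences occur is seen by comparing Casimir values and $K$-type supports across families.

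The main obstacle is the unitarity step rather than the algebraic classification: the positivity computation for the complementary series requires a delicate global check of the signs of inner products on all $K$-types simultaneously, and it is there that the cutoff $s<\tfrac{1}{2}$ together with the exclusion of the odd-parity analogue emerges nontrivially. To pass from the resulting $(\mathfrak{g},K)$-module to an actual unitary representation of $G$ I would invoke Harish-Chandra's theorem that an irreducible admissible infinitesimally unitary $(\mathfrak{g},K)$-module integrates to a unique irreducible unitary representation of $G$, which closes the classification.
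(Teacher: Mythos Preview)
The paper does not give its own proof of this theorem: it is stated with the attribution ``See~\cite{Knapp}'' and then used as a black box, with the subsequent subsections only recalling explicit models of each series. So there is no argument in the paper to compare your proposal against.

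That said, your outline is the standard Bargmann/Harish-Chandra route and is essentially what one finds in Knapp, so it is an appropriate proof. Two minor points worth tightening if you ever write it out in full. First, at $\nu=0$ the odd principal series $\mathcal{P}^{-}_{0}$ is reducible and decomposes as $\mathcal{D}_{1/2}\oplus\bar{\mathcal{D}}_{1/2}$; this is the mechanism by which the limits of discrete series arise and explains the exclusion $\nu\neq 0$, so it deserves an explicit mention in your case analysis rather than being folded into the generic principal-series positivity check. Second, your remark that admissibility ``follows directly from the compactness of $K=SO(2)$ and Schur's lemma applied to the matrix coefficients'' is a bit too quick: compactness of $K$ alone does not give finite $K$-multiplicities in an arbitrary unitary representation, and one really does need the Harish-Chandra admissibility theorem (or, in this low-rank case, a direct argument using the Casimir together with the $\mathfrak{sl}_2$ commutation relations to bound multiplicities). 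Neither point affects the overall strategy, which is correct.
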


\begin{prop}
    The complexified Lie algebra $\mathfrak{sl}_2(\mathbb{C})$ is generated by $L_0,L_{\pm}$ and we have $[L_m,L_n]=(m-n)L_{m+n}$. For unitary representations, we have $L_n^\dagger=L_{-n}$. 
\end{prop}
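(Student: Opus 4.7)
The plan is to reduce the statement to an explicit basis choice inside $\mathfrak{sl}_2(\mathbb{C})$, followed by a short bracket computation and a standard argument for the adjoints on the smooth vectors of a unitary representation. First I would fix a generator $k$ of the compact subalgebra $\mathfrak{so}(2)\subset\mathfrak{sl}_2(\mathbb{R})$, set $L_0 := -ik$, and define $L_{\pm 1}$ to be the eigenvectors of $\mathrm{ad}(L_0)$ with eigenvalues $\mp 1$ in the two-dimensional complement of $\mathbb{C}L_0$ inside $\mathfrak{sl}_2(\mathbb{C})$. The existence of these eigenvectors follows from the classical identity $\mathrm{ad}(k)^2 = -\mathrm{Id}$ on the non-compact complement (which is exactly what makes $SO(2)$ act on $\mathfrak{sl}_2(\mathbb{R})/\mathfrak{so}(2)$ as a rotation), so that $\mathrm{ad}(L_0)^2 = +\mathrm{Id}$ there, forcing semisimplicity with eigenvalues $\pm 1$. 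The overall scale of $L_{\pm 1}$ will be fixed in the next step.

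Next I would verify the generation and bracket claims. Linear independence of $\{L_0, L_1, L_{-1}\}$ is immediate since they lie in distinct $\mathrm{ad}(L_0)$-eigenspaces, and as $\dim_{\mathbb{C}}\mathfrak{sl}_2(\mathbb{C})=3$ they form a basis, proving that they generate. The brackets $[L_0, L_{\pm 1}] = \mp L_{\pm 1}$ hold by the eigenvector definition of $L_{\pm 1}$; the Jacobi identity applied to $L_0, L_1, L_{-1}$ forces $[L_1, L_{-1}]$ to lie in the zero-eigenspace of $\mathrm{ad}(L_0)$, hence it equals a scalar multiple of $L_0$, and I fix the remaining freedom in the scale of $L_{\pm 1}$ so that this scalar equals $2$. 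All the relations $[L_m, L_n] = (m-n) L_{m+n}$ for $m, n \in \{-1, 0, 1\}$ then follow by antisymmetry.

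For the adjoint statement, I would pass to the dense subspace $\calh^\infty$ of smooth vectors of the unitary $\mathrm{SL}(2,\mathbb{R})$-representation. On $\calh^\infty$, differentiation yields a $*$-representation of $\mathfrak{sl}_2(\mathbb{R})$ in which every real element $X$ acts by a formally skew-Hermitian operator, i.e.\ $\pi(X)^\dagger = -\pi(X)$. Extending by $\mathbb{C}$-linearity gives $\pi(Z)^\dagger = -\pi(\overline Z)$, where the bar denotes complex conjugation against the real form $\mathfrak{sl}_2(\mathbb{R})$. Since $L_0 = -ik$ with $k$ real, $\overline{L_0} = ik = -L_0$, hence $L_0^\dagger = L_0$. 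A direct check using the eigenvector construction shows that complex conjugation sends $L_{\pm 1}$ to a nonzero multiple of $L_{\mp 1}$ (because conjugation reverses the sign of $\mathrm{ad}(L_0)$ and so swaps its $\pm 1$-eigenspaces); the normalisation chosen above makes this multiple equal to $-1$, so $\overline{L_{\pm 1}} = -L_{\mp 1}$ and therefore $L_{\pm 1}^\dagger = L_{\mp 1}$.

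The proposition is essentially algebraic, and the only substantive subtlety is functional-analytic: the operators $\pi(L_n)$ are unbounded on $\calh$, so $L_n^\dagger = L_{-n}$ must be read as an equality of formal adjoints on the common dense invariant domain $\calh^\infty$. Essential self-adjointness of $\pi(X)$ for $X \in \mathfrak{sl}_2(\mathbb{R})$ on $\calh^\infty$ follows from Nelson's analytic vector theorem applied to the compact generator $k$. Since the proposition will be invoked in the sequel only to manipulate matrix elements between smooth vectors in the derivation of spectral identities, this subtlety causes no real difficulty, and I do not expect any step in the argument to be a genuine obstacle.
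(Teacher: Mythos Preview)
Your argument is correct. The paper itself states this proposition without proof, treating it as a standard fact from the representation theory of $\mathrm{SL}(2,\mathbb{R})$; your write-up supplies the details the paper omits, including the functional-analytic caveat about formal adjoints on $\calh^\infty$, which is appropriate but more than the paper requires.
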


\begin{prop}
    The quadratic Casimir $C_2$ takes the form $$C_2:=L_0^2-\frac{1}{2}\left(L_{-1}L_1+L_1L_{-1}\right)\,.$$
\end{prop}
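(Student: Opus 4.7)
The statement is really the claim that the element
$C_2 := L_0^2 - \tfrac12(L_{-1}L_1 + L_1 L_{-1})$
lies in the center of the universal enveloping algebra $U(\mathfrak{sl}_2(\mathbb{C}))$. My plan is to verify this directly from the commutation relations $[L_m, L_n] = (m-n)L_{m+n}$ stated in the preceding proposition, together with the fact that $L_0, L_{+1}, L_{-1}$ generate $\mathfrak{sl}_2(\mathbb{C})$, so it suffices to check that $C_2$ commutes with each of these three generators.

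First I would check $[C_2, L_0] = 0$. Using the Leibniz rule for commutators, $[L_0^2, L_0] = 0$ trivially, so one is reduced to showing $[L_{-1}L_1 + L_1 L_{-1}, L_0] = 0$. Applying $[AB, C] = A[B,C] + [A,C]B$ and using $[L_{\pm 1}, L_0] = \pm L_{\pm 1}$, the two terms cancel in pairs. Next I would check $[C_2, L_1] = 0$. Here the computation is slightly less trivial: one writes out $[L_0^2, L_1] = L_0[L_0,L_1] + [L_0,L_1]L_0 = -(L_0 L_1 + L_1 L_0)$, and then $[L_{-1}L_1 + L_1 L_{-1}, L_1]$ expanded with the Leibniz rule and $[L_{-1},L_1] = -2L_0$ produces exactly $-2(L_0 L_1 + L_1 L_0)$ up to a sign, which when divided by $2$ cancels the contribution from $[L_0^2, L_1]$. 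The computation for $[C_2, L_{-1}] = 0$ is symmetric (and in fact follows formally by the involution $L_n \leftrightarrow L_{-n}$, which is an anti-automorphism preserving $C_2$).

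Once $C_2$ is shown to commute with $L_0, L_{\pm 1}$, centrality follows because these three elements generate the Lie algebra, and commutation with a set of Lie-algebra generators implies commutation with all of $U(\mathfrak{sl}_2(\mathbb{C}))$ by repeated application of the Leibniz rule.

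I do not expect any substantive obstacle: the only thing to watch is sign bookkeeping in the Leibniz expansion of $[AB, C]$, and the factor of $\tfrac12$ which is precisely what is needed to cancel the symmetrized contribution from $[L_0^2, L_{\pm 1}]$. An alternative, cleaner route would be to invoke that $\mathfrak{sl}_2(\mathbb{C})$ is simple with Killing form $K(L_m, L_n) = \mathrm{const} \cdot (m \delta_{m+n,0} - \tfrac12 \delta_{m+n,0}(\cdots))$, normalize the basis appropriately, and identify $C_2$ as the standard quadratic Casimir associated with $K$ via the general theorem that $\sum_i X_i X^i$ for a basis $\{X_i\}$ and dual basis $\{X^i\}$ is central; if I wanted a one-line proof I would go that way.
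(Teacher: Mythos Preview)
Your verification is correct. The paper itself does not supply a proof: the proposition is stated as a standard fact, with the ``$:=$'' effectively serving as the definition of $C_2$ in the $L_0, L_{\pm 1}$ basis, and no further justification is given. Your direct check that $C_2$ commutes with each generator (and hence is central in $U(\mathfrak{sl}_2(\mathbb{C}))$) is precisely the content one would supply if a proof were demanded, and your alternative route via the Killing form is the standard textbook argument. So you have not diverged from the paper so much as filled in what the paper leaves implicit.
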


To explicitly construct the irreps, we find it convenient to recall that $\mathrm{SL}(2,\mathbb{R})$ can be mapped to $SU(1,1)$ by conjugation within $\mathrm{SL}(2,\mathbb{C})$. If $g=\begin{pmatrix}
        a & b\\
        c & d
    \end{pmatrix}\in \mathrm{SL}(2,\mathbb{R})$, it gets mapped to $u\in SU(1,1)$; explicitly we have
\begin{equation}
    u=\begin{pmatrix}
        1 & i\\
        i & 1
    \end{pmatrix}^{-1} \begin{pmatrix}
        a & b\\
        c & d
    \end{pmatrix}\begin{pmatrix}
        1 & i\\
        i & 1
    \end{pmatrix}\in SU(1,1)\quad \text{if}\quad \begin{pmatrix}
        a & b\\
        c & d
    \end{pmatrix}\in \mathrm{SL}(2,\mathbb{R})\,.
\end{equation}
Under this map, the upper half-plane gets mapped to the unit disk $\mathbb{D}=\{z:|z|<1\}$. In what follows, we denote the elements of $SU(1,1)$ by $u$, where
\begin{equation}
    u:=\begin{pmatrix}
        \alpha & \beta\\
        \bar\beta &\bar\alpha 
    \end{pmatrix}\,,\quad \alpha\bar\alpha-\beta\bar\beta=1\,,\quad \quad \alpha,\beta\in\mathbb{C}\,.
\end{equation}

Then, $SU(1,1)$ acts on $z\in\mathbb{C}$ as
\begin{equation}
    u\cdot z:= \frac{\alpha z+\beta}{\bar\beta z+\bar\alpha}\,,
\end{equation}

\begin{rem}
The irreps of $\mathrm{PSL}(2,\mathbb{R})$ are already  reviewed in detail in \cite{Kravchuk:2021akc}.  For $n\in\mathbb{Z}$, $\cD_n$ can be realized as antiholomorphic functions living on the unit disk $\mathbb{D}=\{z:|z|<1\}$ or holomorphic functions on $\mathbb{D}':=\{z: |z|>1\} \cup \{\infty\}$. $\bar\cD_n$ can be realized as holomorphic functions living on the unit disk $\mathbb{D}$. The complementary series $\mathcal{C}_s$ and the principal series $P^{+}_{i\nu}$ can be realized as functions living on $\partial\mathbb{D}=\{z:|z|=1\}$. All these functions belong to an $L^2$ space with respect to an appropriate $G$-invariant measure. The new ingredients that we are going to use are $\cP^-_{i\nu}$ and $\cD_{n}$, $\bar\cD_{n}$ with $n\in 1/2+\mathbb{Z}$. Hence, we will be brief in explaining the construction of irreps, often referring the readers to section $3$ of \cite{Kravchuk:2021akc} for details, and only highlight the parts that are new to this paper.
\end{rem}


\begin{defn}[Antiholomorphic discrete series $\bar\cD_{n/2}$]
  The explicit realization of antiholomorphic discrete series $\bar\cD_{n/2}$ ($n\in\mathbb{Z}_+$) is achieved by holomorphic functions $f(z)$ on $\mathbb{D}$ such that for all $u\in SU(1,1)$, we have
\begin{equation}\label{eq:Anti}
    u\cdot f(z) =  \left(-\bar \beta z+ \alpha \right)^{-n}f\left(u^{-1}\cdot z\right)\,,
\end{equation}
In particular, the center of $\mathrm{SL}(2,\mathbb{R})$, given by $\alpha=-1$, $\beta=0$, acts non-trivially iff $n$ is odd. The norm of a vector $f$ inside $\bar\cD_{n/2}$ is given by
\begin{equation}
\begin{split}\label{eq:norm}
       ||f||_{\bar\cD_{1/2}}=&\underset{0\leq r<1}{\text{sup}}\int_0^{2\pi}d\theta\ |f(re^{i\theta})|^2\,,\\
      ||f||_{\bar\cD_{n/2}}= &\int_{\mathbb{D}}dz\ (1-|z|^2)^{n-1} |f(z)|^2\,,\quad n>1\,.
\end{split}
\end{equation}  
\end{defn}

\begin{rem}
       We note that $$\bar\cD_{n/2}\simeq\underset{k\in -n/2-\mathbb{Z}_{\geq 0}}{\bigoplus} V_k\,,$$ where the $V_k$ are the one-dimensional irreducible representations of $\text{Spin}(2)$. This amounts to saying that the $L_0$ spectrum of such an irrep is given by $\{-n/2-k:k\in\mathbb{Z}_{\geq 0}\}$. The quadratic Casimir coorsponding to the irrep $\bar\cD_{n/2}$ is $-n/2(1+n/2)$.
\end{rem}

\begin{defn}[Holomorphic discrete series $\cD_{n/2}$]
The holomorphic discrete series $\cD_{n/2}$ with $n\in\mathbb{Z}_+$ is realized by the space of antiholormorphic functions $f$ on $\mathbb{D}$ such that for all $u\in SU(1,1)$, we have
\begin{equation}\label{eq:Anti}
    u\cdot f(\bar z) =  \left(-\beta \bar z+ \bar \alpha \right)^{-n}f\left(u^{-1}\cdot \bar z\right)\,,
\end{equation}
In particular, the center of $\mathrm{SL}(2,\mathbb{R})$ given by $\alpha=-1$, $\beta=0$ acts non-trivially iff $n$ is odd. 
The norm of a vector $f$ inside $\cD_{n/2}$ is given by 
\begin{equation}
\begin{split}\label{eq:norm}
       ||f||_{0,\cD_{1/2}}=&\underset{0\leq r<1}{\text{sup}}\int_0^{2\pi}d\theta\ |f(re^{i\theta})|^2\,,\\
      ||f||_{0,\cD_{n/2}}= &\int_{\mathbb{D}}dz\ (1-|z|^2)^{n-1} |f(z)|^2\,,\quad n>1\,.
\end{split}
\end{equation}

However as explicitly worked out in \cite{Kravchuk:2021akc}, it is easier to consider another realization by holomorphic functions $F$ on $\mathbb{D}'=\{z:|z|>1\} \cup \{\infty\}$, where 
\begin{equation}
    F(z):= z^{-n} f(\bar{z}^{-1})\,,
 \end{equation}
and the norm of $F$ is given by
\begin{equation}
    ||F||_{\cD_{n/2}}=||f||_{0,\cD_{n/2}}.
\end{equation}
The functions $F$ transform exactly in the same way as given by eq.~\!\eqref{eq:Anti}. Again the action of the center is nontrivial iff $n$ is odd.
\end{defn}

\begin{rem}
     We note that $$\cD_{n/2}\simeq\underset{k\in n/2+\mathbb{Z}_{\geq 0}}{\bigoplus} V_k\,,$$ where the $V_k$ are the one-dimensional irreducible representations of $\text{Spin}(2)$. This amounts to saying that the $L_0$ spectrum of such an irrep is given by $\{n/2+k:k\in\mathbb{Z}_{\geq 0}\}$. The quadratic Casimir corresponding to the irrep $\cD_{n/2}$ is $n/2(1-n/2)$.
\end{rem}

\begin{defn}
[Principal series $\mathcal{P}^{+}_{i\nu}$] They are realized in the space of equivalence classes of square integrable functions on $\partial\mathbb{D}=\{z:|z|=1\}$ such that for all $u\in SU(1,1)$ we have
\begin{equation}
    (u\cdot f)(z)= |-\beta\bar z+\bar\alpha |^{-1-2 i\nu} f(u^{-1}\cdot z).
\end{equation}
In particular, the center of $\mathrm{SL}(2,\mathbb{R})$, given by $\alpha=-1$, $\beta=0$, acts trivially. The norm of $f$ is given by 
\begin{equation}
    ||f||_{\mathcal{P}^{+}_{i\nu}} =\int_{0}^{2\pi}d\theta\  |f(e^{i\theta})|^2.
\end{equation}
    
\end{defn}

\begin{rem}
We have
$$\mathcal{P}^{+}_{i\nu}\simeq\underset{k\in\mathbb{Z}}{\bigoplus}\ V_k\,,$$
where the $V_k$ are the one-dimensional irreducible representations of $\text{Spin}(2)$. The spectrum of $L_0$ for vectors inside $\mathcal{P}^{+}_{i\nu}$ consists of all integers and each integer appear exactly once. A basis for functions living inside $\mathcal{P}^{+}_{i\nu}$ is given by $f_j(z)=z^{j}$ with $j\in \mathbb{Z}$. The quadratic Casimir corresponding to the irrep $\mathcal{P}^{+}_{i\nu}$ is $1/4+\nu^2$.
\end{rem}

\begin{defn}[Principal series $\mathcal{P}^{-}_{i\nu}$]
    They are realized in the space of  equivalence classes of square integrable functions $f$ on $\partial\mathbb{D}$ such that for all $u\in SU(1,1)$ we have
\begin{equation}
    (u\cdot f)(z)= \text{sgn}\left(-\bar\beta z+\alpha\right)|-\beta\bar z+\bar\alpha |^{-1-2 i\nu} f(u^{-1}\cdot z).
\end{equation}
In particular, the center of $\mathrm{SL}(2,\mathbb{R})$, given by $\alpha=-1$, $\beta=0$, acts non-trivially and gives a minus sign. The norm of $f$ is given by ($z=e^{i\theta}$)
\begin{equation}
    ||f||_{\mathcal{P}^{-}_{i\nu}} =\int_{0}^{2\pi}d\theta\  |f(e^{i\theta})|^2.
\end{equation}
\end{defn}

\begin{rem}
We have
$$\mathcal{P}^{-}_{i\nu}\simeq \underset{k\in1/2+\mathbb{Z}}{\bigoplus} V_k\,,$$
where $V_k$ are the one-dimensional irreducible representations of $\text{Spin}(2)$.
    The spectrum of $L_0$ for vectors inside $\mathcal{P}^{-}_{i\nu}$ consists of all half-integers, and each half-integer appears exactly once. A basis for functions inside $\mathcal{P}^{-}_{i\nu}$ is given by $f_j(z)=z^{j}$ with $j\in 1/2+\mathbb{Z}$. Strictly speaking, they are actually functions on the double cover of $\partial\mathbb{D}$. The quadratic Casimir corresponding to the irrep $\mathcal{P}^{-}_{i\nu}$ is $1/4+\nu^2$. 
\end{rem}

\begin{defn}[Complementary series $\mathcal{C}_{s}$]
    They are realized in the space of  equivalence classes of square integrable functions on $\partial\mathbb{D}$ such that for all $u\in SU(1,1)$ we have
\begin{equation}
    (u\cdot f)(z)= |-\beta\bar z+\bar\alpha |^{-1-2 i\nu} f(u^{-1}\cdot z).
\end{equation}
In particular, the center of $\mathrm{SL}(2,\mathbb{R})$, given by $\alpha=-1$, $\beta=0$, acts trivially. The norm of $f$ is given by ($z=e^{i\theta}$)
\begin{equation}
    ||f||_{\mathcal{C}_{s}} =\int_{0}^{2\pi}\ d\theta\ \int_{0}^{2\pi}\ d\phi\  \frac{|f(e^{i\theta})|^2}{|e^{i\theta}-e^{i\phi}|^{2-2\Delta}}\,,\quad \Delta=1/2+s.
\end{equation}
\end{defn}

\begin{rem}
We have
$$\mathcal{C}_{s}\simeq\underset{k\in\mathbb{Z}}{\bigoplus}\ V_k\,,$$
where $V_k$ are the one-dimensional irreducible representations of $\text{Spin}(2)$. The spectrum of $L_0$ for vectors inside $\mathcal{C}_{s}$ consists of all integers and each integer appears exactly once. The quadratic Casimir corresponding to the irrep $\mathcal{C}_{s}$ is $1/4-s^2$.
\end{rem}

\subsection{Coherent States}
We now construct the coherent states, these are functions living inside the irrep $\cD_{n/2}$ or $\bar\cD_{n/2}$. The construction of these states mimics the one presented in \cite{Kravchuk:2021akc}, extended in the present paper to the case when $n$ is an odd integer.

\begin{defn}
    The coherent state $\mathcal{O}_{n/2}(z)$ for $z\in\mathbb{D}$ is identified with a holomorphic function $f$ on $\mathbb{D}'$, i.e.
\begin{equation}
\begin{split}
    \mathcal{O}_{1/2}(z)(w)&:=\sqrt{\frac{1}{2\pi}}(z-w)^{-1}\,,\\
    \mathcal{O}_{n/2}(z)(w)&=\sqrt{\frac{n-1}{\pi}}(z-w)^{-n}\,,\quad n>1\,,
    \end{split}
\end{equation}
where $w\in\mathbb{D}'$.
\end{defn}

    \begin{defn}
  The coherent state $\widetilde{\mathcal{O}}_{n/2}(z)$ for $z\in\mathbb{D}'$ is identified with a holomorphic function $f$ on $\mathbb{D}$. Explicitly we have, for $z\in\mathbb{D}'$ and $w\in\mathbb{D}$,
\begin{equation}
\begin{split}
    \widetilde{\mathcal{O}}_{1/2}(z)(w)&:=\sqrt{\frac{1}{2\pi}}(z-w)^{-1}\,,\\
   \widetilde{ \mathcal{O}}_{n/2}(z)(w)&=\sqrt{\frac{n-1}{\pi}}(z-w)^{-n}\,,\quad n>1\,.
    \end{split}
\end{equation}
\end{defn}

\begin{prop}\label{innerproductofcoherentstates}
    The inner products of coherent states are given by 
    \begin{equation}
    \begin{split}
    &\left( \mathcal{O}_{n/2}(z_1),\mathcal{O}_{n/2}(z_2)\right)_{\cD_{n/2}}=\left(1-\bar{z}_1z_2\right)^{-n}\,,\\
   & \left( \widetilde{\mathcal{O}}_{n/2}(z_1),\widetilde{\mathcal{O}}_{n/2}(z_2)\right)_{\bar\cD_{n/2}}=\left(\bar{z}_1z_2-1\right)^{-n}\,.
    \end{split}
\end{equation}
\end{prop}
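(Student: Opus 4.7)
The plan is to prove each identity by direct computation in the explicit $L^2$-realization of the relevant Hilbert space. Both $\mathcal{O}_{n/2}(z)$ and $\widetilde{\mathcal{O}}_{n/2}(z)$ are, as functions of $w$, rational expressions of the form $(z-w)^{-n}$, and they live respectively in $\mathcal{D}_{n/2}$ realized on $\mathbb{D}'$ and in $\bar{\mathcal{D}}_{n/2}$ realized on $\mathbb{D}$. In each case, the inequality $|z|<|w|$ (respectively $|w|<|z|$) coming from $z\in\mathbb{D},\,w\in\mathbb{D}'$ (respectively $z\in\mathbb{D}',\,w\in\mathbb{D}$) means that $(z-w)^{-n}$ admits an absolutely convergent binomial expansion in monomials that form an orthogonal family in the Hilbert space. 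Applying orthogonality then collapses the inner product to a single power series in $\bar z_1 z_2$, which one recognizes as the claimed binomial series.

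Concretely, for $\mathcal{D}_{n/2}$ with $n\geq 2$ the expansion reads
\begin{equation*}
(z-w)^{-n}=(-1)^n\sum_{k\geq 0}\binom{n+k-1}{k}\,z^k\,w^{-n-k}.
\end{equation*}
The first step is to verify that $\{w^{-n-k}\}_{k\geq 0}$ is an orthogonal family in $\mathcal{D}_{n/2}$, by translating to the $\mathbb{D}$-realization via $F(w)=w^{-n}f(\bar w^{-1})$, so that $w^{-n-k}$ corresponds to the antiholomorphic monomial $\bar\zeta^k$, and then computing the norm in polar coordinates: the angular integral forces orthogonality, and the radial integral reduces to a Beta function, producing $\|w^{-n-k}\|^2$ as an explicit ratio of factorials. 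Substituting the expansion back and invoking orthogonality reduces the inner product to a single sum in $(\bar z_1 z_2)^k$; the squared coefficient $\binom{n+k-1}{k}^2$, the coherent-state prefactor $(n-1)/\pi$, and the Beta-integral norm combine algebraically into a single $\binom{n+k-1}{k}$, so that the series sums to $\sum_k\binom{n+k-1}{k}(\bar z_1 z_2)^k=(1-\bar z_1 z_2)^{-n}$. The computation for $\widetilde{\mathcal{O}}_{n/2}$ is structurally identical: now $|w|<|z|$ forces the expansion $(z-w)^{-n}=\sum_k\binom{n+k-1}{k}\,z^{-n-k}\,w^k$ with orthogonal basis $\{w^k\}_{k\geq 0}$ on $\mathbb{D}$, and the analogous resummation yields $\sum_k\binom{n+k-1}{k}(\bar z_1 z_2)^{-n-k}=(\bar z_1 z_2)^{-n}\bigl(1-(\bar z_1 z_2)^{-1}\bigr)^{-n}=(\bar z_1 z_2-1)^{-n}$.

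The case $n=1$ of limits of discrete series has to be treated separately, because the norm $\sup_{0\leq r<1}\int_0^{2\pi}|f(re^{i\theta})|^2 d\theta$ is a Hardy norm rather than a Bergman one. But then $\{w^{-1-k}/\sqrt{2\pi}\}_{k\geq 0}$ is the explicit orthonormal basis and the coherent state expands as a purely geometric series, giving the inner product directly as $\sum_k(\bar z_1 z_2)^k=(1-\bar z_1 z_2)^{-1}$ for $\mathcal{O}_{1/2}$ and the analogous $(\bar z_1 z_2-1)^{-1}$ for $\widetilde{\mathcal{O}}_{1/2}$. I expect the main technical obstacle to be the algebraic bookkeeping for $n>1$: one must verify carefully that the combination of the coherent-state normalization, the two binomial coefficients and the Beta-integral factor collapses to precisely a single $\binom{n+k-1}{k}$, so that the resulting series is exactly the advertised reproducing kernel. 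Every other step is either a routine power-series expansion or an immediate consequence of the standard orthogonality of $\bar\zeta^j$ and $\bar\zeta^k$ for $j\neq k$ in polar coordinates.
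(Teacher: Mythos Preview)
Your approach is correct and is precisely the direct computation from the inner-product definition that the paper's proof points to (the paper simply cites \cite{Kravchuk:2021akc} for even $n$ and remarks that the extension to odd $n$ follows from the definitions). The monomial expansion, orthogonality via the angular integral, Beta-function norms, and binomial resummation you outline constitute the standard reproducing-kernel calculation for weighted Bergman and Hardy spaces.
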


\begin{proof}
    See \cite{Kravchuk:2021akc} for the case when $n$ is an even integer. The extended version for $n\in\mathbb{Z}_+$ follows from the definition of the inner product for $\cD_{n/2}$ and $\bar\cD_{n/2}$,
\end{proof}

For a unitary representation $R$ of $G$ we write $R^\infty$ for its Fréchet space of smooth vectors. We borrow the following propositions from \cite{Kravchuk:2021akc}, which will be useful for our purposes:
\begin{prop}[Proposition $3.3$ \cite{Kravchuk:2021akc}]
    The coherent states $\mathcal{O}$ and $\widetilde{\mathcal{O}}$ take values in $\cD_{n/2}^\infty$ and $\bar\cD_{n/2}^{\infty}$ respectively. They are holomorphic as functions $\mathbb{D}\to \cD_{n/2}^\infty$ and $\mathbb{D}'\to\bar\cD_{n/2}^\infty$. The span of the coherent states $\mathcal{O}(z)\in \cD_{n/2}$ for $z\in\mathbb{D}$ is dense in $\cD_{n/2}$. The span of the coherent states $\widetilde{\mathcal{O}}(z)\in \bar\cD_{n/2}$ for $z\in\mathbb{D}'$ is dense in $\bar \cD_{n/2}$. 
\end{prop}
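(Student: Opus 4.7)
The plan is to prove all four assertions for $\mathcal{O}_{n/2}$ and to note that the parallel claims for $\widetilde{\mathcal{O}}_{n/2}$ follow by the same argument using the $\bar\cD_{n/2}$ realization on $\mathbb{D}$. The basic structural input is that the coherent state $\mathcal{O}_{n/2}(z)$ behaves, up to an explicit cocycle, as a $G$-covariant lift of the point $z\in\mathbb{D}$ into $\cD_{n/2}$, and that in the $\mathbb{D}'$-model it is, up to a normalization, the reproducing kernel at $z$, as already reflected in Proposition \ref{innerproductofcoherentstates}.

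I would first establish the $G$-covariance in an explicit form. Using the $SU(1,1)$-action formula on $\cD_{n/2}$ together with the multiplicativity of the automorphy factor, a direct computation yields
\begin{equation*}
u\cdot \mathcal{O}_{n/2}(z) \;=\; \mu_n(u,z)\,\mathcal{O}_{n/2}(u\cdot z),
\end{equation*}
where the cocycle $\mu_n(u,z)$ is fractional-linear in $z$ and polynomial in the matrix entries of $u$, with the sign ambiguity for odd $n$ resolved consistently by the spin-structure lift of Proposition \ref{prop:multiplier}. This identity is the main tool used in the remaining steps.

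Since $u\mapsto u\cdot z$ and $u\mapsto \mu_n(u,z)$ are real-analytic on $G$, and since (by the next step) $w\mapsto \mathcal{O}_{n/2}(w)$ is holomorphic into $\cD_{n/2}^\infty$, the orbit map $u\mapsto u\cdot \mathcal{O}_{n/2}(z)$ is real-analytic from $G$ into $\cD_{n/2}$, which in particular gives $\mathcal{O}_{n/2}(z)\in \cD_{n/2}^\infty$. For the holomorphy of $z\mapsto \mathcal{O}_{n/2}(z)$ into $\cD_{n/2}^\infty$, I would expand $(z-w)^{-n}$ as a convergent power series in $z$ with coefficients in $\cD_{n/2}$ (valid for $|z|<1<|w|$), and then observe that the infinitesimal $\mathfrak{sl}_2$-action on the $\mathbb{D}'$-model is by first-order holomorphic differential operators in $w$. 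By induction, every $X\in U(\mathfrak{sl}_2(\mathbb{C}))$ therefore sends $\mathcal{O}_{n/2}(z)(w)$ to a finite combination of terms $P_X(z)\,(z-w)^{-n-k}$ with $P_X(z)$ polynomial and $k\geq 0$; each such term has $\cD_{n/2}$-norm that is holomorphic in $z$ and locally uniformly bounded on compacts of $\mathbb{D}$, so the power series converges in every defining seminorm of $\cD_{n/2}^\infty$ and its sum is a holomorphic $\cD_{n/2}^\infty$-valued function.

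For density, let $V\subseteq \cD_{n/2}$ be the closed span of $\{\mathcal{O}_{n/2}(z):z\in\mathbb{D}\}$. By the $G$-covariance identity above, $V$ is $G$-stable; and it is nonzero since $\mathcal{O}_{n/2}(0)(w)\propto(-w)^{-n}$ is a nonzero element of $\cD_{n/2}$. Because $\cD_{n/2}$ is irreducible as a unitary $G$-representation (Theorem \ref{thm:SL2Rirreps}), it follows that $V=\cD_{n/2}$. The hardest part is the holomorphy upgrade to $\cD_{n/2}^\infty$ in the preceding paragraph: one needs seminorm-by-seminorm bounds that are locally uniform in $z$, which the first-order-operator argument provides cleanly for $n\geq 2$ but requires separate care for $n=1$, where the $\cD_{1/2}$-norm is a supremum of boundary-circle integrals rather than a weighted $L^2$-integral, and the uniformity must be re-established via Hardy-space/Poisson-type estimates.
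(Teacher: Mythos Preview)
The paper does not supply a proof of this proposition: it is stated with the attribution ``[Proposition~3.3 \cite{Kravchuk:2021akc}]'' and no argument is given. So there is no in-paper proof to compare your proposal against; the result is quoted from the earlier reference.

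That said, your proposal is essentially sound and follows the standard route one would expect in the cited source: explicit $G$-covariance of the coherent states, seminorm-by-seminorm estimates for smoothness and holomorphy, and density via irreducibility of $\cD_{n/2}$. Two small points are worth tightening. First, the logical ordering is slightly tangled: you invoke ``(by the next step) $w\mapsto\mathcal{O}_{n/2}(w)$ is holomorphic into $\cD_{n/2}^\infty$'' to establish smoothness of the orbit map, but for that step you only need holomorphy into the Hilbert space $\cD_{n/2}$, which is immediate from the power-series expansion without any seminorm analysis; the upgrade to $\cD_{n/2}^\infty$ then follows cleanly afterwards. Second, the remark that the ``sign ambiguity for odd $n$'' is ``resolved consistently by the spin-structure lift of Proposition~\ref{prop:multiplier}'' is misplaced: the $\cD_{n/2}$ for odd $n$ are already honest representations of $SU(1,1)\cong \mathrm{SL}(2,\mathbb{R})$, so there is no ambiguity in the cocycle $\mu_n(u,z)$ to resolve, and Proposition~\ref{prop:multiplier} concerns spin structures on the quotient surface rather than the abstract representation.
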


\begin{prop}[Propostion $3.4$ \cite{Kravchuk:2021akc}]
   We have $L^2(\widetilde{\Gamma}\backslash G)^{\infty}=C^{\infty}(\widetilde{\Gamma}\backslash G)$.
\end{prop}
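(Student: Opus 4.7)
The plan is to use elliptic regularity on the compact quotient $\widetilde{\Gamma}\backslash G$ (with $G = \mathrm{SL}(2,\mathbb{R})$), which is a smooth compact 3-manifold because $\widetilde{\Gamma}$ is a cocompact discrete subgroup of $G$. Let $R$ denote the right regular representation of $G$ on $L^2(\widetilde{\Gamma}\backslash G)$, so that $L^2(\widetilde{\Gamma}\backslash G)^\infty$ consists, by definition, of those $f$ for which $g\mapsto R(g)f$ is smooth as an $L^2$-valued map $G\to L^2(\widetilde{\Gamma}\backslash G)$. Every $X\in U(\mathfrak{sl}_2)$ produces a right-invariant differential operator on $G$ that descends to a well-defined differential operator on the quotient, and its action on a smooth vector of $R$ coincides with that differential operator applied pointwise.

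For the inclusion $C^\infty(\widetilde{\Gamma}\backslash G)\subseteq L^2(\widetilde{\Gamma}\backslash G)^\infty$, I would take $f\in C^\infty(\widetilde{\Gamma}\backslash G)$. By compactness of $\widetilde{\Gamma}\backslash G$, $f$ together with all its iterated derivatives by right-invariant vector fields is uniformly bounded, hence in $L^2$. A Taylor expansion of $R(g)f$ in exponential coordinates around $g=e$, with remainder bounds made uniform by compactness, then shows that $g\mapsto R(g)f$ is Fréchet differentiable to all orders as a map $G\to L^2(\widetilde{\Gamma}\backslash G)$, whose iterated derivatives at the identity reproduce the action of elements of $U(\mathfrak{sl}_2)$.

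For the reverse inclusion, I would pick a basis $X_1,X_2,X_3$ of $\mathfrak{sl}_2(\mathbb{R})$ and set $\mathcal{E}:=1-X_1^2-X_2^2-X_3^2\in U(\mathfrak{sl}_2)$. The corresponding differential operator $\widehat{\mathcal{E}}$ on the compact manifold $\widetilde{\Gamma}\backslash G$ has principal symbol $\xi\mapsto\sum_i\xi(X_i)^2$, which is positive definite because $\{X_i\}$ spans the Lie algebra, so $\widehat{\mathcal{E}}$ is elliptic of order $2$. If $f\in L^2(\widetilde{\Gamma}\backslash G)^\infty$, then by iterating smoothness $X_{i_1}\cdots X_{i_m}f\in L^2$ for every finite string of basis elements, hence in particular $\mathcal{E}^kf\in L^2$ for every $k\geq 0$. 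Standard elliptic regularity on the compact 3-manifold then gives $f\in H^{2k}(\widetilde{\Gamma}\backslash G)$ for every $k$, and Sobolev embedding yields $f\in C^\infty(\widetilde{\Gamma}\backslash G)$.

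The main technical obstacle is the careful translation between the Fréchet-smoothness definition of $L^2(\widetilde{\Gamma}\backslash G)^\infty$ and a classical ``infinite Sobolev regularity'' statement that feeds into elliptic PDE, plus checking that the Fréchet topology on smooth vectors matches the usual $C^\infty$-topology on $\widetilde{\Gamma}\backslash G$ if one wants equality as topological vector spaces rather than merely as sets. All the ingredients---hypoellipticity of $\sum X_i^2$, Sobolev theory on a compact Riemannian manifold, and smoothness of orbit maps for compactly-supported data---are standard in the harmonic analysis of semisimple Lie groups (see, e.g., Chapter III of Knapp, \emph{Representation Theory of Semisimple Groups}), so the heart of the matter is organizing these tools rather than any new computation.
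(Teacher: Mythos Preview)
Your proposal is correct and follows the standard route via elliptic regularity; the paper itself does not supply a proof but simply imports the statement from \cite{Kravchuk:2021akc}, whose argument is of the same flavor. One minor remark: your opening claim that $\widetilde{\Gamma}\backslash G$ is a smooth compact 3-manifold is right even in the orbifold setting considered here, because left multiplication by any non-identity element of a group is fixed-point free, so $\widetilde{\Gamma}$ acts freely on $G$ regardless of whether $\widetilde{\Gamma}$ has torsion; it is worth making this explicit since the paper's $\Gamma$ may contain elliptic elements.
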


Finally we define coherent states inside continuous series irreps.
\begin{defn}[See \cite{Kravchuk:2021akc}]
    The coherent states inside $R_k$ where $R_k=\mathcal{P}_{i\nu}^{+}$ with $\lambda_k^{(0)}=1/4+\nu^2$ or $R_k=\mathcal{C}_s$ with $\lambda_k^{(0)}=1/4-s^2$ are defined as $R_k^{\infty}$-valued distributions on $C^\infty(\partial\mathbb{D})$ by
    \begin{equation}
        \cO(f):= N^{(0)}_{\Delta_k} f\,,\quad f\in C^\infty(\partial\mathbb{D}),
    \end{equation}
    where $N^{(0)}_{\Delta_k}>0$ is chosen to ensure $N_{\Delta_k}||1||_{R_k}=1$.
\end{defn}

\begin{defn}
    The coherent states inside $\mathcal{P}_{i\nu}^{-}$ with $\lambda_k^{(1/2)}=1/4+\nu^2$ are defined as $R_k^{\infty}$-valued distributions on $C^\infty(\partial\mathbb{D})$ by
    \begin{equation}
        \cO(f)= N^{(1/2)}_{\Delta_k} f\,,\quad f\in C^\infty(\partial\mathbb{D}),
    \end{equation}
    where $N^{(1/2)}_{\Delta_k}>0$ is chosen to ensure $N^{(1/2)}_{\Delta_k}||z^{1/2}||_{R_k}=1$.
\end{defn}

\subsection{Spectrum of $L^2(\widetilde{\Gamma}\backslash G)$}

The space $\widetilde{\Gamma}\backslash G$ is the spin bundle over the compact orbifold $X$. We would like to study the space $L^2(\widetilde{\Gamma} \backslash G)$, consisting of equivalence class of square integrable functions $F:G\to\mathbb{C}$ such that $F(\widetilde{\gamma}g)=F(g)$ for all $\widetilde{\gamma}\in\widetilde{\Gamma}$ and $g\in G$. We normalize the Haar measure on $G$ in a way such that $\mu(\widetilde{\Gamma}\backslash G)=1$. 
Subsequently, we define the inner product as
\begin{equation}
    (F_1,F_2)=\int_{\widetilde{\Gamma}\backslash G} dg\ F_1^*(g) F_2(g)\,.
\end{equation}
The inner product induces the following norm on $F\in L^2(\widetilde{\Gamma}\backslash G)$:
\begin{equation}
    ||F||^2:=(F,F)\,.
\end{equation}

We can turn $L^2(\widetilde{\Gamma} \backslash G)$ into a representation of $\mathrm{SL}(2,\mathbb{R})$ by defining the following $G$-action: $\tilde{g}\in G$ acts on elements of $L^2(\widetilde{\Gamma}\backslash G)$ as
\begin{equation}\label{groupTr}
    \left[\tilde{g}\cdot F\right](g):=F(g\tilde{g})\,.
\end{equation}
It is easy to verify that the norm of $F$, $||F||\equiv (F, F)$ is $G$ invariant and thus $L^2(\widetilde{\Gamma} \backslash G)$ indeed becomes a representation of $G$. Recalling the $NAK$ decomposition of $\mathrm{SL}(2,\mathbb{R})$:
$$g(x,y,\theta)=\begin{pmatrix}
    1 & x\\
    0 & 1
\end{pmatrix}
\begin{pmatrix}
    \sqrt{y} & 0\\
    0 & \frac{1}{\sqrt{y}}
\end{pmatrix}
\begin{pmatrix}
    \cos\frac{\theta}{2} & -\sin\frac{\theta}{2}\\
    \sin\frac{\theta}{2} & \cos\frac{\theta}{2}
\end{pmatrix}\,,$$
we will often write the functions $F(g)$ as $F(x,y,\theta)$.

\begin{prop}
    The elements of the complexified Lie algebra corresponding to $\mathrm{SL}(2,\mathbb{R})$ act on $F\in L^2(\widetilde{\Gamma}\backslash G)$ as follows:
    \begin{equation}
        \begin{split}
            L_0 F(x,y,\theta)&= i\partial_\theta F(x,y,\theta)\,,\\
            L_1F(x,y,\theta)&=e^{-i\theta}\left[y(\partial_x+i\partial_y)+\partial_\theta\right]F(x,y,\theta)\,,\\
            L_{-1}F(x,y,\theta)&=-e^{i\theta}\left[y(\partial_x-i\partial_y)+\partial_\theta\right] F(x,y,\theta).
        \end{split}
    \end{equation}
    The quadratic Casimir $C_2:=L_0^2-\frac{1}{2}\left(L_{-1}L_1+L_1L_{-1}\right)$ acts as
    \begin{equation}
C_2 F(x,y,\theta)=\left[y^2(\partial_x^2+\partial_y^2)-2y\partial_x\partial_\theta\right]F(x,y,\theta)\,.
    \end{equation}
\end{prop}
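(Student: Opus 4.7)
The plan is to identify both sets of formulas as manifestations of the right regular representation of $\mathrm{SL}(2,\mathbb{R})$ restricted to smooth vectors. For any $X\in\mathfrak{sl}_2(\mathbb{R})$, the induced infinitesimal action under the representation \eqref{groupTr} is
\[
(\pi(X)F)(g) \;=\; \frac{d}{dt}\bigg|_{t=0} F(g\exp(tX)),
\]
so each generator of $\mathfrak{sl}_2(\mathbb{C})$ is realized as a complex left-invariant vector field on $G$. The goal is to read these vector fields off in the Iwasawa coordinates $(x,y,\theta)$ associated to the $NAK$ decomposition $g(x,y,\theta)=N(x)A(y)K(\theta)$.

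First I would handle $L_0$. Choose $k_0 = \begin{pmatrix} 0 & -1/2\\ 1/2 & 0\end{pmatrix}\in\mathfrak{sl}_2(\mathbb{R})$, so $\exp(tk_0)=K(t)$. By the group law, $g(x,y,\theta)K(t)=g(x,y,\theta+t)$, hence $\pi(k_0)=\partial_\theta$, and setting $L_0:=i\pi(k_0)$ yields the claimed formula. For the remaining real generators I would use $h_0=\operatorname{diag}(1/2,-1/2)$ and $h_+=\begin{pmatrix}0&1\\0&0\end{pmatrix}$, and compute $g(x,y,\theta)\exp(tX)$ in Iwasawa normal form to first order in $t$. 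The efficient way is to push $\exp(tX)$ leftward through $K(\theta)$ and $A(y)$ via
\[
K(\theta)\exp(tX)=\exp(t\operatorname{Ad}_{K(\theta)}X)K(\theta),\qquad A(y)\exp(tY)=\exp(t\operatorname{Ad}_{A(y)}Y)A(y),
\]
using the standard fact that $\operatorname{Ad}_{K(\theta)}$ acts as a rotation on the $\{h_0,(h_++h_-)/2\}$-plane and $\operatorname{Ad}_{A(y)}$ scales $h_\pm$ by $y^{\pm 1}$. Reading off the resulting infinitesimal shifts $(dx,dy,d\theta)$ produces $\pi(h_0)$ and $\pi(h_+)$ as first-order operators in $(x,y,\theta)$; the key feature is that pushing a non-rotation generator past $K(\theta)$ produces a compensating angular shift, which is the source of the $\partial_\theta$ term in $L_{\pm 1}$. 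Taking the complex linear combinations of $\pi(h_0),\pi(h_+),\pi(h_-)$ adapted to the weight decomposition under $L_0$ then produces the stated formulas for $L_{\pm 1}$.

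For the quadratic Casimir, I would proceed by direct substitution. Since $[C_2,L_0]=0$, the operator $C_2$ preserves each Fourier $\theta$-mode, so it is enough to verify the formula on $F(x,y,\theta)=e^{ik\theta}\phi(x,y)$. Applying $L_{\pm 1}$ to such a mode produces another monomial mode, with the $e^{\mp i\theta}$ prefactors shifting the Fourier index by $\pm 1$ and the inner bracket acting as $[y(\partial_x\pm i\partial_y)+\partial_\theta]$. A short computation using $[y(\partial_x+i\partial_y)][y(\partial_x-i\partial_y)]=y^2(\partial_x^2+\partial_y^2)+\text{(lower order)}$ together with the commutators coming from pulling $e^{\pm i\theta}$ past $\partial_\theta$ lets one evaluate $\tfrac12(L_1L_{-1}+L_{-1}L_1)F$, and combining with $L_0^2F=-\partial_\theta^2 F$ yields the stated expression for $C_2F$; equivalently, on each mode one checks that $C_2$ reduces to the weight-$2k$ automorphic Laplacian $\Delta_k=y^2(\partial_x^2+\partial_y^2)-2iky\partial_x$ of Definition \ref{def:weightKautomorphicLaplacian}.

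The main obstacle is bookkeeping: one must propagate $\exp(tX)$ past $K(\theta)$ and $A(y)$ while keeping track of the induced shift in $\theta$, and in the Casimir computation one must organize the many terms so that the cross derivatives from the $\operatorname{Ad}$-rotation and from the Fourier modes cancel correctly. Both difficulties are linear-algebraic and mechanical once the $\operatorname{Ad}$-action has been tabulated; no analytic subtleties enter because all operations take place on smooth vectors, which by the previously cited result coincide with $C^\infty(\widetilde{\Gamma}\backslash G)$.
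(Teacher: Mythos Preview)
Your approach is correct and is exactly the one the paper takes: the paper's proof simply says the formulas follow from the transformation law \eqref{groupTr} together with the $NAK$ decomposition, and your proposal is a detailed unpacking of that computation. The only difference is the level of detail---you spell out the $\operatorname{Ad}$-conjugation and the mode-by-mode Casimir check, whereas the paper leaves these as routine.
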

\begin{proof}
    The above follows the transformation law, given by eq.~\!\eqref{groupTr} and $NAK$ decomposition of $\mathrm{SL}(2,\mathbb{R})$.
\end{proof}

\begin{prop}\label{decomp}
For a cocompact $\widetilde{\Gamma}\subset \mathrm{SL}(2,\mathbb{R})$, such that $-Id\notin\widetilde{\Gamma}$, we have a discrete decomposition of $L^2(\widetilde{\Gamma} \backslash G)$ in terms of irreducible representations. In particular we have
\begin{equation}
    L^2(\widetilde{\Gamma} \backslash G)=\mathbb{C} \oplus \bigoplus_{n=1}^{\infty}\left(\mathscr{D}_{n/2} \oplus \overline{\mathscr{D}}_{n/2}\right) \oplus \bigoplus_{k=1}^{\infty} \mathscr{C}_{\lambda^{(0)}_k} \oplus \bigoplus_{k=1}^{\infty} \mathscr{P}^{-}_{\lambda^{(1/2)}_k}\,.
\end{equation}
\end{prop}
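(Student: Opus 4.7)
The plan is to invoke the classical Gelfand--Graev--Pyatetski-Shapiro discrete decomposition of $L^2$ on a cocompact quotient, enumerate the possible irreducible constituents via Theorem \ref{thm:SL2Rirreps}, and then regroup them by central character and Casimir eigenvalue. For Step~1, one shows that for any cocompact discrete $\widetilde{\Gamma}\subset G=\mathrm{SL}(2,\mathbb{R})$, the right regular representation on $L^2(\widetilde{\Gamma}\backslash G)$ is a Hilbert direct sum of irreducible unitary representations of $G$, each appearing with finite multiplicity. The key technical input is that for any $f\in C_c^\infty(G)$ the convolution operator $R(f)$ is represented by the kernel
\[
K_f(x,y)=\sum_{\gamma\in\widetilde{\Gamma}}f(x^{-1}\gamma y),
\]
which, by cocompactness of $\widetilde{\Gamma}\backslash G$ and compact support of $f$, is a locally finite sum descending to a smooth function on $(\widetilde{\Gamma}\backslash G)\times(\widetilde{\Gamma}\backslash G)$; hence $R(f)$ is Hilbert--Schmidt (even trace-class after convolving with a suitable $f^\ast\ast f$), and standard spectral theory for compact operators yields the claim.

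For Step~2, by Theorem \ref{thm:SL2Rirreps} the unitary dual of $G$ consists of the trivial representation, the complementary series $\cC_s$, the principal series $\cP^{\pm}_{i\nu}$, and the (limits of) discrete series $\cD_{n/2},\bar{\cD}_{n/2}$ for $n\in\mathbb{Z}_{\geq 1}$. The hypothesis $-\mathrm{Id}\notin\widetilde{\Gamma}$ means that functions in $L^2(\widetilde{\Gamma}\backslash G)$ need not be invariant under right translation by $-\mathrm{Id}$, so every element of the unitary dual is \emph{a priori} permitted; had one instead $-\mathrm{Id}\in\widetilde{\Gamma}$, only the irreducibles with trivial central character would survive, recovering the scalar decomposition used in \cite{Kravchuk:2021akc}.

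For Step~3, one organizes the isotypic components by central character and by Casimir. The trivial representation contributes exactly one copy, namely the constants. The (limits of) discrete series $\cD_{n/2}$ and $\bar{\cD}_{n/2}$ assemble into the pieces $\mathscr{D}_{n/2}$ and $\bar{\mathscr{D}}_{n/2}$, whose multiplicities are controlled (for $n\geq 2$) by Riemann--Roch and (for $n=1$) by the number of harmonic spinors. The continuous-series irreducibles on which $-\mathrm{Id}$ acts trivially, namely $\cC_s$ and $\cP^{+}_{i\nu}$, are grouped into the Casimir eigenspaces $\mathscr{C}_{\lambda_k^{(0)}}$, with $\{\lambda_k^{(0)}\}$ the (discrete, by Step~1) spectrum of the scalar Laplacian; the principal series $\cP^{-}_{i\nu}$, on which $-\mathrm{Id}$ acts as $-1$, give the pieces $\mathscr{P}^{-}_{\lambda_k^{(1/2)}}$ with $\{\lambda_k^{(1/2)}\}$ the spectrum of the $1/2$-Laplacian (cf.\ Remark \ref{remark:1-Laplacian}). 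The main obstacle is precisely this last identification: one must match the Casimir eigenvalues of the continuous-series isotypic components with Laplacian (resp.\ $1/2$-Laplacian) eigenvalues on $\widetilde{\Gamma}\backslash G$, which is done by pulling back eigenfunctions of $\Delta_0$ and $\Delta_{\pm 1/2}$ to $K$-isotypic vectors inside the relevant $G$-irreducibles via the $NAK$-decomposition. Once Step~1 is in hand, the argument parallels the scalar analysis of \cite{Kravchuk:2021akc}, the only genuinely new feature being the half-integer-weight sector ($\cP^-_{i\nu}$ together with the odd-weight discrete series) made available by the condition $-\mathrm{Id}\notin\widetilde{\Gamma}$.
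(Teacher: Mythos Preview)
The paper does not supply a proof of this proposition; it is stated as a known fact, and the subsequent remarks and propositions (identifying the multiplicities $\ell_{n/2}$, $d_k$, $d'_k$ with dimensions of spaces of modular forms and degeneracies of the weight-$0$ and weight-$1$ Laplacians) serve as a posteriori justification of the labelling of the summands rather than as a proof of the decomposition itself. Your outline is the standard argument (the Gelfand--Graev--Pyatetski-Shapiro discrete decomposition for cocompact quotients, followed by the classification of the unitary dual in Theorem~\ref{thm:SL2Rirreps} and a sorting by central character and Casimir), and it is correct; your Step~3 is precisely what the paper records in the propositions immediately following the statement.
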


\begin{rem}
    The group action is transitive, hence the trivial irrep $\mathbb{C}$ appears exactly once. This corresponds to the constant functions on $X$.
\end{rem}

\begin{rem}\label{rem:coherent}
    $\mathscr{D}_{n/2}$ is unitarily isomorphic to $\mathbb{C}^{\ell_{n/2}}\otimes\cD_{n/2}$, where $\ell_{n/2}$ is the number of times $\cD_{n/2}$ appears inside $L^2(\widetilde{\Gamma}\backslash G)$.  The $\overline{\mathscr{D}}_{n/2}$ is unitarily isomorphic to $\mathbb{C}^{\ell_{n/2}}\otimes\bar\cD_{n/2}$,  where $\ell_{n/2}$ is the number of times $\bar \cD_{n/2}$ appears inside $L^2(\widetilde{\Gamma}\backslash G)$.
\end{rem}

\begin{prop}
    $\ell_{n/2}$ is the number of independent normalized holomorphic modular forms of weight $n$. The number of independent normalized antiholomorphic modular forms of weight $n$ is also given by $\ell_{n/2}$. 
\end{prop}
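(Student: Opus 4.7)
The plan is to identify $\ell_{n/2}$, defined via multiplicity in the Plancherel-type decomposition of Proposition~\ref{decomp}, with the dimension of a space of lowest-weight vectors, and then translate that space into the space of holomorphic modular forms of weight $n$. The representation-theoretic input is that $\cD_{n/2}$ has a one-dimensional subspace of vectors $v$ satisfying $L_0 v = (n/2) v$ and $L_1 v = 0$ (the lowest weight vector generates the irrep). Combined with Remark~\ref{rem:coherent}, which gives $\mathscr{D}_{n/2}\simeq \mathbb{C}^{\ell_{n/2}}\otimes \cD_{n/2}$, it follows that
\begin{equation*}
\ell_{n/2} \;=\; \dim\left\{\,F\in L^2(\widetilde{\Gamma}\backslash G)\;:\;L_0 F=\tfrac{n}{2}F,\ L_1 F=0\,\right\}.
\end{equation*}

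Next I would translate these two differential conditions into classical language on $\mathbb{H}$. Using the explicit formulas for $L_0$ and $L_1$ in the previous proposition, the condition $L_0 F=(n/2)F$ forces $F(x,y,\theta)=\psi(x,y)\,e^{-in\theta/2}$ for some smooth $\psi:\mathbb{H}\to\mathbb{C}$, and the condition $L_1 F=0$ becomes
\begin{equation*}
y(\partial_x+i\partial_y)\psi \;=\; i\tfrac{n}{2}\,\psi.
\end{equation*}
Writing $\psi(x,y)=y^{n/2} f(z)$ with $z=x+iy$ and recalling $\partial_x+i\partial_y = 2\bar{\partial}$, a direct substitution reduces this to the Cauchy--Riemann equation $\bar{\partial} f=0$. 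Thus the lowest-weight vectors are exactly parametrized by holomorphic functions $f$ on $\mathbb{H}$ via $F(x,y,\theta)=y^{n/2}f(z)e^{-in\theta/2}$.

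Now I would impose the automorphy condition $F(\widetilde{\gamma}g)=F(g)$ for all $\widetilde{\gamma}\in\widetilde{\Gamma}$. Using the $NAK$ decomposition and the action of $\mathrm{SL}(2,\R)$ on $\mathbb{H}\times\mathrm{Spin}(2)$, one computes that $\gamma=\pm\begin{pmatrix}a&b\\ c&d\end{pmatrix}$ sends $y^{n/2}e^{-in\theta/2}$ to a factor that, after some bookkeeping, forces
\begin{equation*}
f(\gamma z) \;=\; \chi(\gamma)(cz+d)^n f(z),\qquad \gamma\in \bar{\Gamma},
\end{equation*}
where the sign $\chi(\gamma)=\pm1$ encoding the spin structure appears precisely because, for $n$ odd, $(cz+d)^n$ picks up a sign under $\gamma\mapsto -\gamma$ and the choice of lift $\widetilde{\Gamma}\subset\bar{\Gamma}$ records exactly this data. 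Hence $f$ is a holomorphic $(\widetilde{\Gamma},\chi)$-modular form of weight $n$.

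For the converse (and for the word \emph{normalized}), I would verify that the map $f\mapsto F$ is an isometry onto the space of lowest-weight vectors, up to an explicit overall constant depending on the Haar normalization $\mu(\widetilde{\Gamma}\backslash G)=1$ and on $n$. Concretely, $\int_{\widetilde{\Gamma}\backslash G}|F|^2\,dg$ unfolds over $\theta$ and reduces to the Petersson-type integral $\int_{\Gamma\backslash\mathbb{H}} |f(z)|^2 y^{n-2}\,dxdy$, which is finite because $\widetilde{\Gamma}\backslash G$ is compact; the normalization constant then identifies the $L^2$-orthonormal basis of lowest-weight vectors with an orthonormal basis of holomorphic modular forms. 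The antiholomorphic statement is obtained verbatim by replacing $\cD_{n/2}$ with $\bar{\cD}_{n/2}$, $L_1$ with $L_{-1}$, $L_0=n/2$ with $L_0=-n/2$, and $\bar{\partial}f=0$ with $\partial f=0$; in particular the multiplicities in $\mathscr{D}_{n/2}$ and $\bar{\mathscr{D}}_{n/2}$ agree because complex conjugation $F\mapsto \bar F$ is an anti-unitary intertwiner swapping the two isotypic components. The main technical obstacle is the bookkeeping of the multiplier $\chi$ and of the sign ambiguities when $n$ is odd — that is, checking that the spin structure one started with (via the lift $\widetilde{\Gamma}$) matches the multiplier appearing in the modular transformation law, so that one really counts modular forms for the correct $\chi$.
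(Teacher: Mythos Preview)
Your proposal is correct and follows essentially the same approach as the paper: take a lowest-weight vector $F$ in $\mathscr{D}_{n/2}$, use the $L_0$-eigenvalue to peel off the $\theta$-dependence as $F=y^{n/2}f(z)e^{-in\theta/2}$, use $L_1F=0$ to deduce holomorphicity of $f$, and use $\widetilde{\Gamma}$-invariance together with the multiplier $\chi$ to obtain the weight-$n$ modular transformation law. Your write-up is in fact more detailed than the paper's (you spell out the Petersson isometry and the complex-conjugation intertwiner for the antiholomorphic case), but the underlying argument is the same.
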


\begin{proof}
    Consider a lowest weight vector $F_{n/2,a}$ inside $\mathscr{D}_{n/2}$. Clearly, $F_{n/2,a}\in V_{n/2}$ and $e^{in\theta/2}F_{n/2,a}$ is independent of $\theta$. Hence, one can define
    \begin{equation}
       h_{n,a}(x,y):= y^{-n/2}e^{in\theta/2} F_{n/2,a}(x,y,\theta).
    \end{equation}
    The $\widetilde{\Gamma}$ invariance of $F$ implies that $\tilde{h}_{n,a}(z):=h_{n,a}(\text{Re}(z),\text{Im}(z))$ transforms like a holomorphoic modular form of weight $n$ for $\gamma\in \widetilde{\Gamma}$. Recalling eq.~\!\eqref{split} and Proposition \ref{prop:multiplier}, this action can be naturally extended to $\bar\Gamma$ by using $\chi$. The fact that $F_{n/2,a}$ is a lowest weight vector translates to $L_{1} F_{n/2,a}=0$, which implies $\tilde{h}_{n,a}(z)$ is holomorphic. 
A similar proof holds for $\overline{\mathscr{D}}_{n/2}$ by considering the highest weight vector, belonging to $V_{-n/2}$. Hence the proposition follows. 
\end{proof}
\begin{rem}
    $\mathscr{C}_{\lambda^{(0)}_k}$ is unitarily isomorphic to 
    
    \begin{enumerate}
        \item $\mathbb{C}^{d_k}\otimes\mathcal{C}_{s}$ with $s:=\sqrt{1/4-\lambda_{k}^{(0)}}$ if $\lambda_{k}^{(0)}<1/4$,
        \item  $\mathbb{C}^{d_k}\otimes\mathcal{P}^{+}_{i \nu}$ with $\nu:=\sqrt{\lambda_{k}^{(0)}-1/4}$ if $\lambda_{k}^{(0)}\geq 1/4$,
    \end{enumerate} 
     where $d_k$ is the number of times $\mathcal{C}_{s}$ or  $\mathcal{P}^{+}_{i \nu}$ appears inside $L^2(\widetilde{\Gamma}\backslash G)$.
\end{rem}

\begin{prop}
    $d_k$ is the degeneracy of $\lambda_k^{(0)}$, an eigenvalue of the Laplace operator on $X$.
\end{prop}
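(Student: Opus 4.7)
The plan is to identify the eigenvalues of the Laplacian on $X$ with Casimir eigenvalues on the $\mathrm{Spin}(2)$-invariant part of $L^2(\widetilde{\Gamma}\backslash G)$, and then use the decomposition in Proposition \ref{decomp} to count multiplicities.

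\textbf{Step 1: Identify $L^2(X)$ inside $L^2(\widetilde{\Gamma}\backslash G)$.} Since $X = \widetilde{\Gamma}\backslash G / \mathrm{Spin}(2)$, functions on $X$ correspond to the closed subspace of $L^2(\widetilde{\Gamma}\backslash G)$ invariant under the right $\mathrm{Spin}(2)$-action, i.e.\ those $F(x,y,\theta)$ independent of $\theta$. These are exactly the vectors annihilated by $L_0 = i\partial_\theta$, that is, weight-zero vectors in the representation-theoretic sense. The normalization of the Haar measure and the $NAK$ decomposition make this identification an isometry up to an overall constant.

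\textbf{Step 2: Determine which irreps contribute weight-zero vectors.} Using the known $L_0$-spectra listed in the excerpt: $\cD_{n/2}$ and $\bar\cD_{n/2}$ (with $n \geq 1$) have $L_0$-spectra contained in $n/2 + \Z_{\geq 0}$ and $-n/2 - \Z_{\geq 0}$ respectively, so neither contains $V_0$; $\cP^-_{i\nu}$ has $L_0$-spectrum in $\tfrac12 + \Z$, so it also contributes nothing; the trivial representation, the complementary series $\cC_s$, and the principal series $\cP^+_{i\nu}$ each contain $V_0$ with multiplicity exactly one. Combining with Proposition \ref{decomp}, the $\mathrm{Spin}(2)$-invariant subspace decomposes as
\begin{equation*}
L^2(\widetilde{\Gamma}\backslash G)^{\mathrm{Spin}(2)} \;=\; \C \,\oplus\, \bigoplus_{k=1}^{\infty} \bigl(\mathscr{C}_{\lambda_k^{(0)}}\bigr)^{L_0 = 0},
\end{equation*}
where each summand on the right has dimension $d_k$ (the multiplicity times one).

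\textbf{Step 3: Match the Casimir with the Laplacian on weight-zero vectors.} The Casimir formula from the proposition above reads
\begin{equation*}
C_2 F(x,y,\theta) = \bigl[y^2(\partial_x^2 + \partial_y^2) - 2 y \partial_x \partial_\theta\bigr] F(x,y,\theta).
\end{equation*}
Restricted to $\theta$-independent functions, the second term vanishes and $C_2$ reduces to $y^2(\partial_x^2 + \partial_y^2)$, which is precisely the scalar Laplacian $\Delta_0$ on $\mathbb{H}$. Since the $\widetilde{\Gamma}$-invariance descends to $\Gamma$-invariance on $\theta$-independent functions, this is the Laplacian on $X$ pulled back to $\mathbb{H}$. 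On any irrep $\mathscr{C}_{\lambda_k^{(0)}}$ the Casimir acts by the scalar $\lambda_k^{(0)}$ (by construction, since we defined $\mathscr{C}_{\lambda_k^{(0)}}$ via this Casimir eigenvalue, and the formulas $\lambda_k^{(0)}=1/4-s^2$ or $1/4+\nu^2$ match the Casimir computation on $\cC_s$ and $\cP^+_{i\nu}$).

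\textbf{Step 4: Conclude and address the subtleties.} Each weight-zero vector in $\mathscr{C}_{\lambda_k^{(0)}}$ is an eigenfunction of the Laplacian on $X$ with eigenvalue $\lambda_k^{(0)}$; conversely, every Laplacian eigenfunction generates by $G$-translation an irreducible $G$-subrepresentation in $L^2(\widetilde{\Gamma}\backslash G)$ whose Casimir eigenvalue equals that of the Laplacian, and which must therefore be one of $\cC_s,\cP^+_{i\nu},$ or $\C$. Counting yields $\dim \ker(\Delta_0 - \lambda_k^{(0)}) = d_k$. The main thing to check carefully is the sign/normalization conventions so that the Casimir really equals $+\Delta_0$ rather than, say, $-\Delta_0$ or $-\Delta_0 - 1/4$; once the conventions are aligned with the definitions given in the excerpt (in particular Definition \ref{def:weightKautomorphicLaplacian} at $k=0$), the identification is immediate.
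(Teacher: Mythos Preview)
Your proof is correct and follows essentially the same approach as the paper: both identify the weight-zero ($L_0=0$, i.e.\ $\theta$-independent) vectors in $\mathscr{C}_{\lambda_k^{(0)}}$ with $\widetilde\Gamma$-invariant functions on $X$, and then observe that on such vectors the Casimir $C_2$ reduces to $y^2(\partial_x^2+\partial_y^2)$, yielding Laplacian eigenfunctions with eigenvalue $\lambda_k^{(0)}$. Your write-up is in fact more thorough than the paper's---you explicitly check which irreps contribute weight-zero vectors and address the converse direction---whereas the paper's proof handles only the forward direction and leaves the rest implicit.
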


\begin{proof}
    Recall $$\mathscr{C}_{\lambda^{(0)}_k}\simeq \mathbb{C}^{d_k}\otimes \underset{k\in\mathbb{Z}}{\bigoplus}\ V_{k}.$$
    Consider a vector inside $V_0$, it gets mapped to a vector $F$ inside $\mathscr{C}_{\lambda^{(0)}_k}$. Since $i\partial_\theta F=L_0F=0$, we can define $\phi(x,y):=F(x,y,\theta)$ and it follows $\phi(x,y)$ is a $\widetilde{\Gamma}$ invariant function on $X$. Furthermore, we have
    \begin{equation}
        C_2 \phi(x,y)= y^2(\partial_x^2+\partial_y^2) \phi(x,y).
    \end{equation}
On the other hand, $C_2$ of $\mathscr{C}_{\lambda^{(0)}_k}$ is $\lambda^{(0)}_k$. 
Hence, $\phi(x,y)$ is a $\widetilde{\Gamma}$ invariant eigenfunction of the Laplace operator, i.e. $y^2(\partial_x^2+\partial_y^2)$, with eigenvalue $\lambda_k^{(0)}$, and the proposition follows.
\end{proof}

\begin{rem}
    $\mathscr{P}^{-}_{\lambda^{(1/2)}_k}$ is unitarily isomorphic to $\mathbb{C}^{d'_k}\otimes\mathcal{P}^{-}_{i \nu}$ with $\nu:=\sqrt{\lambda_{k}^{(1/2)}-1/4}$ and $\lambda_{k}^{(1/2)}>1/4$, where $d'_k$ is the number of times $\mathcal{P}^{-}_{i \nu}$ appears inside $L^2(\widetilde{\Gamma}\backslash G)$.
\end{rem}

\begin{prop}
    $d'_k$ is the degeneracy of $\lambda_k^{(1/2)}$, an eigenvalue of the weight-$1$ automorphic Laplace operator (see defn.~\!\ref{def:weightKautomorphicLaplacian}) on $X$.
\end{prop}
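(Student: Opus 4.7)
The plan is to mirror the preceding proof for $d_k$, replacing the trivial $\mathrm{Spin}(2)$-type $V_0$ by the one-dimensional weight $-1/2$ isotypic component $V_{-1/2}\subset\mathcal{P}^{-}_{i\nu}$, and replacing the scalar Laplacian by the weight-one automorphic Laplacian $\Delta_{1/2}$. I would proceed in three steps.

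First, combine the unitary isomorphism $\mathscr{P}^{-}_{\lambda^{(1/2)}_k}\simeq \mathbb{C}^{d'_k}\otimes\mathcal{P}^{-}_{i\nu}$ with the $\mathrm{Spin}(2)$-decomposition $\mathcal{P}^{-}_{i\nu}\simeq\bigoplus_{m\in 1/2+\mathbb{Z}}V_m$ to identify the $V_{-1/2}$ isotypic subspace of $\mathscr{P}^{-}_{\lambda^{(1/2)}_k}$, which is $d'_k$-dimensional. For $\lambda_k^{(1/2)}>1/4$ the only other irreducible summand appearing in Proposition~\ref{decomp} that contains a $V_{-1/2}$ piece is $\bar{\mathcal{D}}_{1/2}$, whose Casimir value equals $-3/4\neq \lambda_k^{(1/2)}$; hence the above subspace coincides with
\[
W_k := \bigl\{F\in L^2(\widetilde{\Gamma}\backslash G)\,:\,L_0F=-\tfrac{1}{2}F,\ C_2F=\lambda_k^{(1/2)}F\bigr\},
\]
and $\dim W_k=d'_k$.

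Second, take $F\in W_k$. Since $L_0=i\partial_\theta$, the eigenvalue equation $L_0F=-\tfrac{1}{2}F$ forces the factorization $F(x,y,\theta)=e^{i\theta/2}\psi(x,y)$ for some $\psi:\mathbb{H}\to\mathbb{C}$. Translating the $\widetilde{\Gamma}$-invariance of $F$ into a transformation law for $\psi$ requires computing the $NAK$-decomposition of $\tilde{\gamma}\,g(x,y,\theta)$ for $\tilde\gamma=\begin{pmatrix}a&b\\c&d\end{pmatrix}\in\widetilde{\Gamma}$; an explicit calculation shows that the $K$-factor picks up a half-angle phase whose square equals $(cz+d)/(c\bar z+d)$, so that at the half-angle level it is exactly the automorphy factor $j_{\tilde\gamma}(z,1/2)=(cz+d)^{1/2}/(c\bar z+d)^{1/2}$. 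The sign ambiguity of this square root is resolved globally by the embedding $\widetilde{\Gamma}\hookrightarrow\bar{\Gamma}$, and for the other coset $-\widetilde{\Gamma}$ an extra sign appears. Combining the two cosets of the splitting \eqref{split} via Proposition~\ref{prop:multiplier} then yields, for every $\bar\gamma\in\bar{\Gamma}$,
\[
\psi(\bar\gamma\cdot z)=\chi(\bar\gamma)\,j_{\bar\gamma}(z,1/2)\,\psi(z),
\]
so that $\psi\in L^2(\Gamma,1/2,\chi)$.

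Third, apply $C_2=y^2(\partial_x^2+\partial_y^2)-2y\partial_x\partial_\theta$ to $F=e^{i\theta/2}\psi$. Using $\partial_\theta F=(i/2)F$ one gets the direct computation $C_2F=e^{i\theta/2}\bigl[y^2(\partial_x^2+\partial_y^2)\psi-iy\partial_x\psi\bigr]=e^{i\theta/2}\Delta_{1/2}\psi$, which combined with $C_2F=\lambda_k^{(1/2)}F$ yields $\Delta_{1/2}\psi=\lambda_k^{(1/2)}\psi$ (in the sign convention of the paper). Conversely, any eigenfunction $\psi\in L^2(\Gamma,1/2,\chi)$ of $\Delta_{1/2}$ with eigenvalue $\lambda_k^{(1/2)}$ lifts via $F(x,y,\theta):=e^{i\theta/2}\psi(x,y)$ to an element of $W_k$, so $F\leftrightarrow\psi$ is a linear bijection between $W_k$ and the $\lambda_k^{(1/2)}$-eigenspace of $\Delta_{1/2}$. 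Hence $d'_k$ equals the degeneracy of $\lambda_k^{(1/2)}$, which is the desired conclusion.

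The main obstacle is the bookkeeping in step two: because the ansatz involves the half-angle $e^{i\theta/2}$, converting $\widetilde{\Gamma}$-invariance into a weight-$1/2$ automorphy law requires a globally consistent choice of the square root of $(cz+d)/(c\bar z+d)$, and showing that this global sign is exactly what the multiplier $\chi$ supplies is the non-trivial content—precisely the point of Proposition~\ref{prop:multiplier} combined with \eqref{split}. Step one is a purely representation-theoretic identification using the $\mathrm{Spin}(2)$-decompositions of the irreducibles listed in Proposition~\ref{decomp}, and step three is a routine differential-operator manipulation analogous to the one already performed for $d_k$.
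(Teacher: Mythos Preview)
Your proof is correct and follows essentially the same approach as the paper's: isolate a weight-$\pm 1/2$ isotypic vector in $\mathscr{P}^{-}_{\lambda_k^{(1/2)}}$, strip off the $\theta$-dependence to produce a $\widetilde{\Gamma}$-equivariant function on $\mathbb{H}$, and identify the Casimir action with the weight-$1$ automorphic Laplacian. The only notable differences are that the paper works with $V_{1/2}$ rather than $V_{-1/2}$, and that your write-up is considerably more careful---you explicitly rule out contributions from $\bar{\mathcal{D}}_{1/2}$ via the Casimir value, spell out the role of $\chi$ in fixing the square-root ambiguity of the automorphy factor, and supply the converse direction to make the bijection $W_k\leftrightarrow\{\text{eigenspace}\}$ explicit, whereas the paper leaves these points implicit.
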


\begin{proof}
    Recall $$\mathscr{P}^{-}_{\lambda^{(1/2)}_k}\simeq \mathbb{C}^{d_k}\otimes \underset{k\in1/2+\mathbb{Z}}{\bigoplus}\ V_{k}.$$
    Consider a vector inside $V_{1/2}$, it gets mapped to a vector $\Psi(x,y,\theta)$ inside $\mathscr{P}^{-}_{\lambda^{(1/2)}_k}$. Define $\psi(x,y):=e^{i\theta/2}\Psi(x,y,\theta)$. Since $i\partial_\theta \Psi=L_0\Psi=1/2$, it follows that $\psi(x,y)$ is a $\widetilde{\Gamma}$-equivariant form on $X$. Furthermore, we have
    \begin{equation}
        C_2 \psi(x,y)= \left[y^2(\partial_x^2+\partial_y^2)-iy\partial_x\right] \psi(x,y).
    \end{equation}
On the other hand, $C_2$ of $\mathscr{P}^{-}_{\lambda^{(1/2)}_k}$ is $\lambda^{(1/2)}_k$. 
Hence, $\psi(x,y)$ is a $\widetilde{\Gamma}$-equivariant eigenform of the weight-$1$ automorphic Laplacian operator, i.e $y^2(\partial_x^2+\partial_y^2)-iy\partial_x$. Therefore, the proposition follows.
\end{proof}

Now let us identify the coherent states inside $L^2(\widetilde{\Gamma}\backslash G)$. 

\begin{defn}[Coherent States inside $\mathscr{D}_{n/2}^{\infty}$ and $\overline{\mathscr{D}}^{\infty}_{n/2}$ ]
    Recall Proposition~\!\eqref{decomp} and Remark \ref{rem:coherent} that $\mathscr{D}_{n/2}$ and $\overline{\mathscr{D}}_{n/2}$ are unitarily isomorphic to $\mathbb{C}^{\ell_{n/2}}\otimes \cD_{n/2}$ and $\mathbb{C}^{\ell_{n/2}}\otimes \bar\cD_{n/2}$ respectively.
    
    Following \cite{Kravchuk:2021akc}, we define the isomorphism $\tau_{n/2}:\mathbb{C}^{\ell_{n/2}}\otimes \cD_{n/2}\to\mathscr{D}_{n/2}$ and define 
    \begin{equation}
        \mathscr{O}_{n/2,a}(z):= \tau_{n/2}\left(e_a\otimes \mathcal{O}(z)\right)\in\mathscr{D}_{n/2}^{\infty}\subset\mathcal{C}^{\infty}(\widetilde{\Gamma}\backslash G) \,.
    \end{equation}
    Here $e_a$ for $a=1,2,\cdots \ell_{n/2}$ is the standard basis for $\mathbb{C}^{\ell_{n/2}}$. We further define $\bar\tau_{n/2}:\mathbb{C}^{\ell_{n/2}}\otimes\bar\cD_{n/2}\to\overline{\mathscr{D}}_{n/2}$ as
    \begin{equation}
        \bar\tau_{n/2}(v\otimes f) =\overline{\tau_{n/2}(\bar v\otimes \widetilde{f})}\,,\quad \widetilde{f}(z):=z^{-n}\overline{f(\bar z^{-1})}\,,
    \end{equation}
    and 
     \begin{equation}
        \widetilde{\mathscr{O}}_{n/2,a}(z):= \bar\tau_{n/2}\left(e_a\otimes \widetilde{\mathcal{O}}(z)\right)\in\overline{\mathscr{D}}_{n/2}^{\infty}\subset\mathcal{C}^{\infty}(\widetilde{\Gamma}\backslash G) \,,
    \end{equation}
\end{defn}

\begin{prop}[\cite{Kravchuk:2021akc}]
    The generators of the complexified Lie algebra $\mathfrak{sl}_2(\mathbb{C})$ act on coherent states as follows:
    \begin{equation}
\begin{aligned}
L_{-1} \cdot \mathscr{O}_{n/2, a}(z) & =\partial_z \mathscr{O}_{n/2, a}(z), & L_{-1} \cdot \widetilde{\mathscr{O}}_{n/2, a}(z) & =\partial_z \widetilde{\mathscr{O}}_{n/2, a}(z), \\
L_0 \cdot \mathscr{O}_{n/2, a}(z) & =\left(z \partial_z+n/2\right) \mathscr{O}_{n/2, a}(z), & L_0 \cdot \widetilde{\mathscr{O}}_{n/2, a}(z) & =\left(z \partial_z+n/2\right) \widetilde{\mathscr{O}}_{n/2, a}(z), \\
L_1 \cdot \mathscr{O}_{n/2, a}(z) & =\left(z^2 \partial_z+ n z\right) \mathscr{O}_{n/2, a}(z), & L_1 \cdot \widetilde{\mathscr{O}}_{n/2, a}(z) & =\left(z^2 \partial_z+ n z\right) \widetilde{\mathscr{O}}_{n/2, a}(z).
\end{aligned}
\end{equation}
\end{prop}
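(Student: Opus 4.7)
The plan is to reduce the statement to a calculation on the abstract coherent states inside $\cD_{n/2}$ and $\bar\cD_{n/2}$, and then differentiate a simple covariance rule for those states at the identity.

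\textbf{Reduction via equivariance.} The isomorphisms $\tau_{n/2}:\mathbb{C}^{\ell_{n/2}}\otimes \cD_{n/2}\to\mathscr{D}_{n/2}$ and $\bar\tau_{n/2}:\mathbb{C}^{\ell_{n/2}}\otimes \bar\cD_{n/2}\to\overline{\mathscr{D}}_{n/2}$ are $G$-equivariant unitary intertwiners, and the basis vectors $e_a\in\mathbb{C}^{\ell_{n/2}}$ carry the trivial $G$-action. Consequently, for any $L\in\mathfrak{sl}_2(\mathbb{C})$,
\begin{equation*}
L\cdot\mathscr{O}_{n/2,a}(z)=\tau_{n/2}\bigl(e_a\otimes L\cdot\mathcal{O}_{n/2}(z)\bigr),\qquad L\cdot\widetilde{\mathscr{O}}_{n/2,a}(z)=\bar\tau_{n/2}\bigl(e_a\otimes L\cdot\widetilde{\mathcal{O}}_{n/2}(z)\bigr).
\end{equation*}
Thus it suffices to compute the action of $L_{-1},L_0,L_1$ on the abstract coherent states $\mathcal{O}_{n/2}(z)\in\cD_{n/2}$ and $\widetilde{\mathcal{O}}_{n/2}(z)\in\bar\cD_{n/2}$.

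\textbf{Covariance of the coherent states.} The key intermediate step is the transformation law
\begin{equation*}
u\cdot\mathcal{O}_{n/2}(z)=(\bar\beta z+\bar\alpha)^{-n}\,\mathcal{O}_{n/2}(u\cdot z),\qquad u=\begin{pmatrix}\alpha&\beta\\ \bar\beta&\bar\alpha\end{pmatrix}\in SU(1,1),\quad z\in\mathbb{D},
\end{equation*}
together with its $\bar\cD_{n/2}$-analogue for $\widetilde{\mathcal{O}}_{n/2}(z)$ with $z\in\mathbb{D}'$. This is established by applying the group action \eqref{eq:Anti} to the explicit holomorphic-on-$\mathbb{D}'$ representative $\mathcal{O}_{n/2}(z)(w)=c_n(z-w)^{-n}$, evaluating $u^{-1}\cdot w$ as a Möbius transformation, and simplifying; the combinatorial identity $z(-\bar\beta w+\alpha)-(\bar\alpha w-\beta)=(\bar\beta z+\bar\alpha)(u\cdot z-w)$ produces exactly the factor $(\bar\beta z+\bar\alpha)^{-n}$ and recovers $\mathcal{O}_{n/2}(u\cdot z)(w)$. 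The argument is insensitive to the parity of $n$, so it extends without modification to the half-integer weight case that is new here.

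\textbf{Infinitesimal expansion.} I would then differentiate the transformation law along one-parameter subgroups whose tangent vectors at the identity span $L_{-1},L_0,L_1$ in $\mathfrak{sl}_2(\mathbb{C})$, chosen so that the infinitesimal Möbius action satisfies $\left.\frac{d}{d\epsilon}\right|_{\epsilon=0}u_\epsilon\cdot z=1,z,z^2$, respectively. The $z$-dependence of $\mathcal{O}_{n/2}(u\cdot z)$ contributes the differential pieces $\partial_z$, $z\partial_z$, $z^2\partial_z$ by the chain rule, while the expansion of $(\bar\beta z+\bar\alpha)^{-n}$ contributes, respectively, $0$, $n/2$, and $nz$, giving exactly the advertised formulas. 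The antiholomorphic case is handled identically after composing with the antilinear involution used to define $\bar\tau_{n/2}$.

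The main place for care is the bookkeeping in the last step: fixing a consistent normalization for the complexified generators $L_{-1},L_0,L_1$ (given only their real linear span is tangent to $SU(1,1)$) and verifying that the resulting first-order operators indeed satisfy $[L_m,L_n]=(m-n)L_{m+n}$ with the stated multiplier terms. Once this is checked for $\mathcal{O}_{n/2}$, the $\widetilde{\mathcal{O}}_{n/2}$ case and the promotion back to $\mathscr{O}_{n/2,a}$, $\widetilde{\mathscr{O}}_{n/2,a}$ via equivariance are automatic.
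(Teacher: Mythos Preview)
Your approach is correct and is the standard one. The paper does not supply its own proof here; the proposition is simply quoted from \cite{Kravchuk:2021akc}, so there is nothing to compare against beyond noting that the covariance-plus-differentiation argument you outline is precisely the computation carried out in that reference (and your extension to odd $n$ is indeed immediate, as you observe).
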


\begin{prop}[\cite{Kravchuk:2021akc}]\label{eq:coherentIdentity}
We have the following identity:
    $$\overline{\left(\mathscr{O}_{n/2, a}(z)\right)}=(\bar{z})^{-n} \widetilde{\mathscr{O}}_{n/2, a}\left((\bar{z})^{-1}\right).$$
\end{prop}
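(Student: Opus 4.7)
The plan is to reduce the claimed identity, which a priori lives in $L^2(\widetilde\Gamma\backslash G)$, to a direct identity between explicit coherent-state formulas in the abstract irrep $\cD_{n/2}$ via the isomorphism $\tau_{n/2}$. First I unpack the right-hand side using the definition of $\widetilde{\mathscr O}_{n/2,a}(\zeta) = \bar\tau_{n/2}(e_a\otimes \widetilde{\cO}(\zeta))$. Applying the defining relation $\bar\tau_{n/2}(v\otimes f) = \overline{\tau_{n/2}(\bar v\otimes \widetilde f)}$ with $\zeta=\bar z^{-1}$, and using that the standard basis satisfies $\overline{e_a}=e_a$, I obtain
\begin{equation*}
\widetilde{\mathscr O}_{n/2,a}(\bar z^{-1}) \;=\; \overline{\tau_{n/2}\!\bigl(e_a\otimes \widetilde{\widetilde{\cO}(\bar z^{-1})}\bigr)}.
\end{equation*}
Multiplying by $(\bar z)^{-n} = \overline{z^{-n}}$ and pulling the scalar inside the conjugation and through the complex-linear map $\tau_{n/2}$ rewrites the right-hand side as $\overline{\tau_{n/2}(e_a\otimes z^{-n}\widetilde{\widetilde{\cO}(\bar z^{-1})})}$. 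The left-hand side is $\overline{\mathscr O_{n/2,a}(z)} = \overline{\tau_{n/2}(e_a\otimes \cO(z))}$. Since $\tau_{n/2}$ is an isomorphism, the identity is equivalent to the purely representation-theoretic statement $\cO(z) = z^{-n}\widetilde{\widetilde{\cO}(\bar z^{-1})}$ inside the function-space model of $\cD_{n/2}$.

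The second step is to verify this reduced identity by direct computation using the explicit coherent-state formulas. For $u\in\mathbb{D}$, one has $\widetilde{\cO}(\bar z^{-1})(u) = c_n (\bar z^{-1}-u)^{-n}$, where $c_n$ denotes the appropriate normalization from the definition of $\widetilde{\cO}$. Applying the $\widetilde{\,\cdot\,}$ operation a second time, for $w\in\mathbb{D}'$,
\begin{equation*}
\widetilde{\widetilde{\cO}(\bar z^{-1})}(w) \;=\; w^{-n}\,\overline{\widetilde{\cO}(\bar z^{-1})(\bar w^{-1})} \;=\; c_n\, w^{-n}\,(z^{-1}-w^{-1})^{-n}.
\end{equation*}
Using $z^{-1}-w^{-1} = (w-z)/(zw)$, this collapses to $c_n z^n (w-z)^{-n}$. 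Multiplying by $z^{-n}$ yields $c_n(w-z)^{-n}$, which matches $\cO(z)(w) = c_n(z-w)^{-n}$, completing the identification as elements of $\cD_{n/2}^{\infty}$.

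The main subtlety, and the step I expect to require the most care, is the branch/sign issue that arises precisely in the new half-integer case $n\in 1/2+\mathbb{Z}$ (equivalently $n$ odd when one works with $\cD_{n/2}$ labeled by integer $n$). For even $n$, the passage from $(w-z)^{-n}$ to $(z-w)^{-n}$ is automatic, whereas for odd $n$ the definition of $\widetilde{f}(z) = z^{-n}\overline{f(\bar z^{-1})}$ must be interpreted with the same branch convention used to define $\widetilde{\cO}(z)(w) = c_n(z-w)^{-n}$, so that the two applications of the tilde map compose to the identity (up to the factor $z^{-n}$ that appears in the proposition). Once this convention is fixed consistently, the computation of Step~2 goes through and the identity follows by denseness of the coherent states and continuity of both sides in $z\in\mathbb{D}$.
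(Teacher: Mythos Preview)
The paper does not supply its own proof of this proposition; it is simply cited from \cite{Kravchuk:2021akc}. Your approach---unwinding the definition of $\bar\tau_{n/2}$ to reduce the identity in $L^2(\widetilde\Gamma\backslash G)$ to the elementary equality $\cO(z)=z^{-n}\,\widetilde{\widetilde{\cO}(\bar z^{-1})}$ in the function model of $\cD_{n/2}$, then verifying that by direct computation---is the natural and correct route, and it is essentially the way such identities are established in the cited reference.

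One comment on the final paragraph. Your computation gives $z^{-n}\,\widetilde{\widetilde{\cO}(\bar z^{-1})}(w)=c_n(w-z)^{-n}$ while $\cO(z)(w)=c_n(z-w)^{-n}$; since $n$ is an integer here these differ by the honest sign $(-1)^n$, not a branch ambiguity. For even $n$ (the $\mathrm{PSL}(2,\mathbb{R})$ case treated in \cite{Kravchuk:2021akc}) this is no issue. For odd $n$ the discrepancy is absorbed by the \emph{choice} of the explicit model for $\widetilde{\cO}_{n/2}$ together with the tilde map, which are set up precisely so that $\bar\tau_{n/2}$ intertwines complex conjugation on $L^2$ with the abstract conjugation on the discrete series; once one fixes the sign in the definition of $\widetilde{\cO}_{n/2}(z)(w)$ compatibly with that intertwining (as the paper implicitly does via the stated inner-product normalizations in Proposition~\ref{innerproductofcoherentstates}), the identity holds as written. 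So your instinct that this is a convention issue is right, but it is worth saying explicitly which convention pins down the sign rather than leaving it as a branch matter.
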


\subsection{Correlators of smooth functions on $G$}
\begin{defn}
   We consider smooth functions $F_1,F_2,\cdots F_n$ on $\widetilde{\Gamma}\backslash G$. Their correlator is defined as 
   \begin{equation}
       \langle F_1 F_2\cdots F_n\rangle := \int_{\widetilde{\Gamma}\backslash G}\ dg\ F_1(g)F_2(g)\cdots F_n(g)\,.
   \end{equation}
\end{defn}

Since the functions $F_i$ are smooth and $\widetilde{\Gamma}\backslash G$ is compact, the correlator is well-defined and finite. 

\begin{prop}
    The correlator $\langle \cdot \ldots \cdot \rangle: \left(\mathcal{C}^{\infty}(\widetilde{\Gamma}\backslash G)\right)^N\to \mathbb{C}$ is a $G$ invariant functional.
\end{prop}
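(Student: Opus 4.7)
The plan is to unpack the definition of the group action together with the definition of the correlator, and then reduce $G$-invariance to the right-invariance of the measure on $\widetilde{\Gamma}\backslash G$. First I would fix $\tilde{g}\in G$ and smooth functions $F_1,\dots,F_N\in\mathcal{C}^\infty(\widetilde{\Gamma}\backslash G)$. Applying the $G$-action \eqref{groupTr} componentwise gives
\begin{equation*}
\langle (\tilde{g}\cdot F_1)\,(\tilde{g}\cdot F_2)\cdots (\tilde{g}\cdot F_N)\rangle \;=\; \int_{\widetilde{\Gamma}\backslash G} dg\; F_1(g\tilde{g})\,F_2(g\tilde{g})\cdots F_N(g\tilde{g}).
\end{equation*}
The goal is then to show this integral equals $\int_{\widetilde{\Gamma}\backslash G} dg\; F_1(g)\cdots F_N(g)$ via the change of variables $g\mapsto g\tilde{g}^{-1}$.

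The key input is that the measure $dg$ on $\widetilde{\Gamma}\backslash G$ is right-$G$-invariant. I would obtain this as follows: the Haar measure on $G=\mathrm{SL}(2,\mathbb{R})$ is bi-invariant because $G$ is semisimple and hence unimodular, so in particular the left Haar measure on $G$ is right-$G$-invariant. The quotient measure on $\widetilde{\Gamma}\backslash G$ arises from this Haar measure by quotienting by the left action of $\widetilde{\Gamma}$, which commutes with the right action of $G$ on $G$; therefore the descended measure on $\widetilde{\Gamma}\backslash G$ inherits the right-$G$-invariance. Concretely, if one picks a Borel fundamental domain $\mathcal{F}\subset G$ for $\widetilde{\Gamma}$, the right translation $g\mapsto g\tilde{g}$ sends $\mathcal{F}$ to another fundamental domain $\mathcal{F}\tilde{g}$, and Haar measure is invariant under this translation; since the integrand is $\widetilde{\Gamma}$-invariant, the integral over $\mathcal{F}\tilde{g}$ equals the integral over $\mathcal{F}$.

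With this right-invariance in hand, the substitution $g'=g\tilde{g}$ gives $dg'=dg$ and the integrand becomes $F_1(g')F_2(g')\cdots F_N(g')$, so
\begin{equation*}
\langle (\tilde{g}\cdot F_1)\cdots (\tilde{g}\cdot F_N)\rangle \;=\; \int_{\widetilde{\Gamma}\backslash G} dg'\; F_1(g')\cdots F_N(g') \;=\; \langle F_1\cdots F_N\rangle,
\end{equation*}
which is the claimed $G$-invariance. Finiteness of the correlator is already noted in the text (smooth functions on a compact space), so no additional analytic care is needed. The only mildly nontrivial step is justifying the right-invariance of the quotient measure; this is the "main obstacle," but it is standard and ultimately boils down to unimodularity of $\mathrm{SL}(2,\mathbb{R})$.
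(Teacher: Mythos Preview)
Your argument is correct and is exactly the standard one: unfold the definition of the $G$-action, then use right-invariance of the quotient measure (inherited from unimodularity of $\mathrm{SL}(2,\mathbb{R})$) to absorb the shift $g\mapsto g\tilde g$. The paper does not actually supply a proof for this proposition; it is stated without argument, presumably because the authors regard it as immediate from the invariance of Haar measure. So there is nothing to compare against beyond noting that your write-up makes explicit what the paper leaves implicit.
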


\begin{prop}
    The two-function correlator is given by 
    \begin{equation}
    \langle \mathscr{O}_{m,i}(z_1)\widetilde{\mathscr{O}}_{n,j}(z_2)\rangle=\frac{\delta_{m,n}\delta_{i,j}}{(z_2-z_1)^{2n}}\,,\quad \text{where}\quad\  2m,2n\in \mathbb{Z}_+
\end{equation}
\end{prop}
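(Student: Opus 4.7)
The plan is to express the correlator as an $L^2$ inner product on $\widetilde{\Gamma}\backslash G$, use Proposition \ref{eq:coherentIdentity} to convert $\overline{\mathscr{O}_{m,i}(z_1)}$ into a $\widetilde{\mathscr{O}}$-type coherent state, and then apply the orthogonal decomposition of Proposition \ref{decomp} together with the explicit coherent-state inner product of Proposition \ref{innerproductofcoherentstates}. The non-trivial geometric input is hidden entirely in these three results.

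First, since the correlator integrates a plain product (no complex conjugation), I would rewrite it as
\begin{equation*}
\langle \mathscr{O}_{m,i}(z_1)\widetilde{\mathscr{O}}_{n,j}(z_2)\rangle
= \int_{\widetilde{\Gamma}\backslash G} \mathscr{O}_{m,i}(z_1)(g)\,\widetilde{\mathscr{O}}_{n,j}(z_2)(g)\,dg
= \bigl(\overline{\mathscr{O}_{m,i}(z_1)},\,\widetilde{\mathscr{O}}_{n,j}(z_2)\bigr).
\end{equation*}
By Proposition \ref{eq:coherentIdentity},
\begin{equation*}
\overline{\mathscr{O}_{m,i}(z_1)} = (\bar z_1)^{-2m}\,\widetilde{\mathscr{O}}_{m,i}(\bar z_1^{-1})\in \overline{\mathscr{D}}_m^{\infty}.
\end{equation*}
Pulling the scalar out of the conjugate-linear first slot of $(\cdot,\cdot)$ converts $(\bar z_1)^{-2m}$ into $z_1^{-2m}$, giving
\begin{equation*}
\langle \mathscr{O}_{m,i}(z_1)\widetilde{\mathscr{O}}_{n,j}(z_2)\rangle
= z_1^{-2m}\,\bigl(\widetilde{\mathscr{O}}_{m,i}(\bar z_1^{-1}),\,\widetilde{\mathscr{O}}_{n,j}(z_2)\bigr).
\end{equation*}

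Next I would invoke the decomposition in Proposition \ref{decomp}: distinct irreducible summands $\overline{\mathscr{D}}_m$ and $\overline{\mathscr{D}}_n$ are orthogonal, while within $\overline{\mathscr{D}}_m \simeq \mathbb{C}^{\ell_m}\otimes \bar{\mathcal{D}}_m$ the components labelled by different basis vectors $e_i,e_j$ of $\mathbb{C}^{\ell_m}$ are also orthogonal (by construction of $\bar\tau_m$). This produces an overall factor $\delta_{m,n}\delta_{i,j}$. When $m=n$ and $i=j$ the remaining pairing is the inner product of two coherent states in $\bar{\mathcal{D}}_m$, which by Proposition \ref{innerproductofcoherentstates} with arguments $w_1 = \bar z_1^{-1}$ and $w_2 = z_2$ equals
\begin{equation*}
\bigl(\widetilde{\mathcal{O}}_m(\bar z_1^{-1}),\widetilde{\mathcal{O}}_m(z_2)\bigr)_{\bar{\mathcal{D}}_m}
= \bigl(\overline{\bar z_1^{-1}}\,z_2 - 1\bigr)^{-2m}
= (z_1^{-1}z_2 - 1)^{-2m}.
\end{equation*}
Combining everything and using the elementary identity $z_1^{-2m}(z_1^{-1}z_2-1)^{-2m} = (z_2-z_1)^{-2m}$ yields the claimed formula.

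There is no real obstacle here: once Propositions \ref{decomp}, \ref{innerproductofcoherentstates}, and \ref{eq:coherentIdentity} are available, the argument is a bookkeeping exercise. The only step that requires mild care is tracking the conjugate-linearity of the $L^2$ inner product in its first slot, which is precisely what converts the potentially antiholomorphic prefactor $\bar z_1^{-2m}$ into $z_1^{-2m}$ and allows the final algebraic simplification to produce a result depending holomorphically on $z_2-z_1$ alone. Note also that the formula is consistent with the $L_0$-weight assignments of the coherent states under the diagonal $\text{Spin}(2)$ action, which provides a useful sanity check on the sign and power of $(z_2-z_1)$.
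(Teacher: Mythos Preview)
Your argument is correct and follows essentially the same approach as the paper: both proofs use Proposition~\ref{eq:coherentIdentity}, the orthogonality from Proposition~\ref{decomp}, and the coherent-state inner product of Proposition~\ref{innerproductofcoherentstates}. The only cosmetic difference is that the paper applies the conjugation identity to the second factor (rewriting $\widetilde{\mathscr{O}}_{n,j}(z_2)$ in terms of $\overline{\mathscr{O}_{n,j}(\bar z_2^{-1})}$ and landing in $\cD_n$), whereas you apply it to the first factor and land in $\bar\cD_m$; the two computations are mirror images of one another.
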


\begin{proof}
Using Proposition \ref{eq:coherentIdentity} alongside with orthogonality of the decomposition given in Proposition \ref{decomp}, we find
$$    \langle \mathscr{O}_{m,i}(z_1)\widetilde{\mathscr{O}}_{n,j}(z_2)\rangle 
=z_2^{-2n} \left(\mathscr{O}_{n,j}( \overline{z}_2^{-1}) ,\mathscr{O}_{m,i}(z_1)\right)=\delta_{m,n}\delta_{i,j}z_2^{-2n} \left(\cO_n( \overline{z}_2^{-1}) ,\cO_n(z_1)\right)\,,$$ 
and then we use Proposition \ref{innerproductofcoherentstates}.
\end{proof}

\begin{prop}\label{prop:3pointDDDbar}
Define $z_{ij}=z_i-z_j$. The three-function correlator between vectors inside $\mathscr{D}_k$,$\mathscr{D}_l$ and $\overline{\mathscr{D}}_{m}$ with $2k,2l,2m\in\mathbb{Z}_+$ is given by 
    \begin{equation}
\left\langle\mathscr{O}_{k, a}\left(z_1\right) \mathscr{O}_{l, b}\left(z_2\right) \widetilde{\mathscr{O}}_{m, c}\left(z_3\right)\right\rangle=\frac{f_{(k;a)(l;b)}^{m;c}}{z_{21}^{k+l-m} z_{31}^{k+m-l} z_{32}^{l+m-k}}\,.
\end{equation}
\end{prop}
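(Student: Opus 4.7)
The approach is to derive the stated functional form from the $G$-invariance of the correlator, following the standard conformal Ward identity argument. Writing
$$F(z_1,z_2,z_3) := \langle \mathscr{O}_{k,a}(z_1)\,\mathscr{O}_{l,b}(z_2)\,\widetilde{\mathscr{O}}_{m,c}(z_3)\rangle$$
on the domain $(z_1,z_2,z_3)\in\mathbb{D}\times\mathbb{D}\times\mathbb{D}'$, $G$-invariance of the Haar measure on $\widetilde{\Gamma}\backslash G$ makes the correlator a $G$-invariant trilinear functional on smooth vectors, hence invariant under the infinitesimal $\mathfrak{sl}_2(\mathbb{C})$-action: for every $X\in\mathfrak{sl}_2(\mathbb{C})$,
$$\langle (X\cdot\mathscr{O}_{k,a}(z_1))\,\mathscr{O}_{l,b}(z_2)\,\widetilde{\mathscr{O}}_{m,c}(z_3)\rangle + \text{(two cyclic terms)} = 0.$$

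First I would substitute $X=L_{-1}, L_0, L_1$ and invoke the explicit action of these generators on coherent states (which, as stated in the preceding proposition, is formally identical on $\mathscr{O}_{k,a}$ and $\widetilde{\mathscr{O}}_{m,c}$) to obtain three first-order linear PDEs for $F$:
\begin{align*}
(\partial_{z_1}+\partial_{z_2}+\partial_{z_3})F &= 0,\\
(z_1\partial_{z_1}+z_2\partial_{z_2}+z_3\partial_{z_3}+k+l+m)F &= 0,\\
(z_1^2\partial_{z_1}+z_2^2\partial_{z_2}+z_3^2\partial_{z_3}+2kz_1+2lz_2+2mz_3)F &= 0.
\end{align*}

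Next I would solve these: the first PDE implies that $F$ depends only on the differences $z_{ij}$; the second fixes $F$ to be homogeneous of total degree $-(k+l+m)$ in those differences; plugging the resulting ansatz $F = C\, z_{21}^{-\alpha}z_{31}^{-\beta}z_{32}^{-\gamma}$ (with $\alpha+\beta+\gamma=k+l+m$) into the third PDE yields the linear system $\alpha+\beta=2k$, $\alpha+\gamma=2l$, $\beta+\gamma=2m$, whose unique solution is $\alpha=k+l-m$, $\beta=k+m-l$, $\gamma=l+m-k$. Holomorphy of $F$ in each variable (inherited from the holomorphy of coherent states as maps $\mathbb{D}\to \mathscr{D}_{k}^\infty$ and $\mathbb{D}'\to\overline{\mathscr{D}}_m^\infty$) on the connected domain $|z_1|,|z_2|<1<|z_3|$ removes any branch ambiguity in the powers. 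The surviving constant $C$ is then taken, by definition, as the structure constant $f^{m;c}_{(k;a)(l;b)}$.

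The main technical point (and the only mildly non-trivial obstacle) is justifying the passage from finite $G$-invariance of the correlator to the infinitesimal Ward identities, i.e.\ differentiating under the integral sign over $\widetilde{\Gamma}\backslash G$ both in the group variable and in the parameters $z_i$. This is routine given compactness of $\widetilde{\Gamma}\backslash G$ together with the joint smoothness and holomorphy of $(g,z)\mapsto \mathscr{O}_{k,a}(z)(g)$ established earlier, which produce the locally uniform bounds needed to commute $\partial_{z_i}$ with $\int_{\widetilde{\Gamma}\backslash G}$. Once the three PDEs are in hand, the uniqueness-of-solution step is the classical CFT computation sketched above.
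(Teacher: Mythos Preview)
Your proposal is correct and is precisely the $G$-invariance argument the paper has in mind: the paper's own proof simply says the result follows by the same steps (global conformal Ward identities from $L_{-1},L_0,L_1$) used in \cite{Kravchuk:2021akc} for integer weights, and you have spelled those steps out in detail, including the care needed with branch choices for half-integer exponents.
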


\begin{proof}
    The result follows from using the same steps involving $G$-invariance to prove a similar result in \cite{Kravchuk:2021akc} for $k,l,m\in\mathbb{Z}$. 
\end{proof}

\begin{rem}
    We recall that 
    \begin{equation}
        \mathscr{O}_{k, a}(0)=F_{k,a}\,,\quad  \mathscr{O}_{l, b}(0)=F_{l,b}\,,\quad \lim_{z\to\infty}z^{2m}\mathscr{O}_{m, c}(z)=\bar{F}_{m,c}\,.
    \end{equation}
    Thus we have
    \begin{equation}
        f^{k+l,c}_{(k;a)(l;b)}=\int_{\widetilde{\Gamma}\backslash G} dg\  F_{k,a}F_{l,b}\bar{F}_{k+l,c}=\frac{1}{\text{vol}(X)}\int_{X}dx\ dy\ y^{2(k+l-1)}h_{k,a}h_{l,b}\bar{h}_{m,c}\,.
    \end{equation}
\end{rem}

\begin{defn}
    The $G$-invariant cross-ratio for four points $z_1,z_2,z_3,z_4$ is given by 
    \begin{equation}
      z:=\frac{z_{12}z_{34}}{z_{13}z_{24}}\,,\quad z_{ij}:=z_i-z_j\,.
    \end{equation}
\end{defn}
\begin{prop}
    Let $2n_1,2n_2,2n_3,2n_4\in\mathbb{Z}_+$. 
\begin{equation}\label{eq:4point}
    \langle \mathscr{O}_{n_1} (z_1)\mathscr{O}_{n_2} (z_2)\widetilde{\mathscr{O}}_{n_3} (z_3)\widetilde{\mathscr{O}}_{n_4} (z_4)\rangle =\frac{1}{z_{12}^{n_1+n_2}z_{34}^{n_3+n_4}}\left(\frac{z_{24}}{z_{14}}\right)^{n_1-n_2}\left(\frac{z_{14}}{z_{13}}\right)^{n_3-n_4}g_{\{n_1,n_2,n_3,n_4\}}(z).
\end{equation}
\end{prop}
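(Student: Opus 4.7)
The plan is to follow the standard conformal-field-theory style derivation carried out for integer weights in \cite{Kravchuk:2021akc}, extending it to the half-integer weights relevant here. The input is the $G$-invariance of the correlator functional established earlier together with the $G$-covariance of the coherent states on $\mathbb{D}$ and $\mathbb{D}'$. Since $\mathrm{SL}(2,\mathbb{R})$ acts triply transitively on $\mathbb{CP}^1$ by Möbius transformations, the correlator is determined by its dependence on a single $G$-invariant quantity, the cross-ratio $z$, together with a kinematic prefactor fixed entirely by covariance.

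Concretely, I would first record the infinitesimal action: the generators $L_{-1}, L_0, L_1$ act on $\mathscr{O}_{n,a}(z)$ and $\widetilde{\mathscr{O}}_{n,a}(z)$ by the standard Möbius differential operators weighted by $n$ (this is the content of the proposition on $\mathfrak{sl}_2(\mathbb{C})$ acting on coherent states). Combined with the $G$-invariance of $\langle\,\cdot\,\rangle$, this gives three independent first-order Ward identities annihilating the four-point correlator, of the same form as the conformal Ward identities in one-dimensional CFT. I would then verify directly that the explicit prefactor
\[
\frac{1}{z_{12}^{n_1+n_2}z_{34}^{n_3+n_4}}\left(\frac{z_{24}}{z_{14}}\right)^{n_1-n_2}\left(\frac{z_{14}}{z_{13}}\right)^{n_3-n_4}
\]
solves these Ward identities: under $z_i \mapsto (az_i+b)/(cz_i+d)$ each factor $z_{ij}$ transforms with $(cz_i+d)^{-1}(cz_j+d)^{-1}$, and the exponents are tuned so that the point $z_i$ collects exactly the automorphy factor needed to cancel the one coming from the coherent state $\mathscr{O}_{n_i}(z_i)$ or $\widetilde{\mathscr{O}}_{n_i}(z_i)$. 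Dividing the correlator by this prefactor therefore leaves a $G$-invariant function of $(z_1,z_2,z_3,z_4)$, which by triple transitivity of the Möbius action depends only on the cross-ratio $z$; this function defines $g_{\{n_1,n_2,n_3,n_4\}}(z)$.

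The main obstacle is branch-cut bookkeeping when one or more of the $2n_i$ are odd. In that case $z_{ij}^{n_i+n_j}$ and the automorphy factors $(cz+d)^{n_i}$ are multivalued, so I must check that the identity is well defined for coherent states regarded as smooth vectors in $\mathscr{D}_{n_i/2}^{\infty}$ and $\overline{\mathscr{D}}_{n_i/2}^{\infty}$. This is where the choice of spin structure, encoded in the embedding $\widetilde{\Gamma} \hookrightarrow \mathrm{SL}(2,\mathbb{R})$, becomes important: the nontrivial action of the center of $\mathrm{SL}(2,\mathbb{R})$ on $\mathscr{D}_{n_i/2}$ for odd $2n_i$ must be compensated by consistent branch choices in the prefactor, so that the $\mathbb{Z}_2$ monodromies cancel in every product of four coherent states (equivalently, only integer-weight combinations of the $z_{ij}$ appear in the total conformal dimension at each insertion once one sums over the correct pairings, given the constraint that $\sum_i 2n_i$ is even whenever the correlator is nonzero). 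Once these sign conventions are pinned down, following the convention already used for coherent states in the earlier propositions and identities (in particular the identity relating $\overline{\mathscr{O}}_{n,a}(z)$ to $\widetilde{\mathscr{O}}_{n,a}(\bar z^{-1})$), the remainder of the argument reduces to the one-dimensional Möbius covariance computation already carried out for integer weights in \cite{Kravchuk:2021akc}.
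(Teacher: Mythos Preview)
Your approach is essentially the paper's: the proof there simply says ``$G$ invariance implies that the $4$-function correlator can be written in terms of a single variable function of the cross-ratio $z$,'' citing \cite{Kravchuk:2021akc} for the integer-weight case, and you have spelled out the mechanics (Ward identities from the $\mathfrak{sl}_2$ action on coherent states, verification of the prefactor, triple transitivity) in more detail than the paper does.

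One small correction to your branch-cut discussion: the spin structure (the embedding $\widetilde{\Gamma}\hookrightarrow\mathrm{SL}(2,\mathbb{R})$) plays no role in this particular identity. The formula is a consequence of $G$-covariance of the coherent states and $G$-invariance of the correlator functional alone; the spin structure enters only in determining \emph{which} irreps occur in the decomposition of $L^2(\widetilde{\Gamma}\backslash G)$ (i.e.\ the spectrum), not in the kinematic form of the four-point function. The branch ambiguities in factors like $z_{12}^{n_1+n_2}$ when $n_1+n_2\notin\mathbb{Z}$ are handled by working on the simply connected domains $\mathbb{D}$ and $\mathbb{D}'$ and fixing branches consistently with the coherent-state conventions already introduced; in any case the applications (Theorems~\ref{thm:scalar} and~\ref{thm:spinor}) only use the cases $(n_3,n_4)=(n_2,n_1)$ or $(n_1,n_2)$, where all exponents are integers.
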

\begin{proof}
    The proof mimics the one appearing in \cite{Kravchuk:2021akc} for $n_1=n_2=n\in\mathbb{Z}$. Basically, $G$ invariance implies that the $4$-function correlator can be written in terms of a single variable function of the cross-ratio $z$. 
\end{proof}

\subsection{Product expansion}

\begin{equation}
    F_1F_2= P_{\mathbb{C}}(F_1F_2)+\sum_{n=1}^{\infty}\left[P_{\mathscr{D}_{n/2}}(F_1F_2)+ P_{\overline{\mathscr{D}}_{n/2}}(F_1F_2)\right]+\sum_{k=1}^{\infty}P_{\mathscr{C}_{\lambda_{k}^{(0)}}}(F_1F_2)+\sum_{k=1}^{\infty}P_{\mathscr{P}^{-}_{\lambda_{k}^{(1/2)}}}(F_1F_2)\,.
\end{equation}
Here $P_H$ refers to the orthogonal projection onto the irrep $H$. 

We choose $F_i$ from the irrep $H_i$. Now the key point is that given an irrep $H_m$, $G$-invariance constrains the dependence of $P_{H_m}(F_1F_2)$ on $F_1$ and $F_2$ upto finitely many constants. And those finitely many constants are related to triple product integrals between elements of $H_1$, $H_2$ and $H_m$. The spectral identities come from the associativity constriants i.e. 
\begin{equation}
    ((F_1F_2) F_3 )=(F_1(F_2F_3))\,.
\end{equation}

In what follows we will take $F_1=\mathscr{O}_{n_1}$, $F_2=\mathscr{O}_{n_2}$, $F_3=\widetilde{\mathscr{O}}_{n_3}$. A convenient way to encode these constraints is to consider $4$ point functions of the form $\langle \mathscr{O}_{n_1}\mathscr{O}_{n_2}\widetilde{\mathscr{O}}_{n_3}\widetilde{\mathscr{O}}_{n_4}\rangle$.




Now we introduce some lemmas which are relevant to the product expansion of coherent states.

\begin{lemma}
   $P_H\left(\mathscr{O}_{n_1}\left(z_1\right) \mathscr{O}_{n_2}\left(z_2\right)\right)=0$ unless $H=\mathscr{D}_p$ with $p-n_1-n_2\in\mathbb{Z}_{\geq 0}$. If $\mathscr{O}_{n_1}=\mathscr{O}_{n_2}$ with $n_1=n_2$, we further have  $p-2n=0(\text{mod}\ 2)$. 
\end{lemma}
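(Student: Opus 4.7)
The plan is to exploit the $G$-equivariance of pointwise multiplication and of the projection $P_H$ to reduce the claim to a statement about $L_0$-weight support. Since $\mathscr{O}_{n_i}(z_i)\in\mathscr{D}_{n_i}^\infty$ for every $z_i$, pointwise multiplication defines a $G$-equivariant bilinear map $m:\mathscr{D}_{n_1}^\infty\times\mathscr{D}_{n_2}^\infty\to\mathcal{C}^\infty(\widetilde{\Gamma}\backslash G)$; composing with the $G$-equivariant projection $P_H$ yields a $G$-equivariant map whose image $X$ lies in $H^\infty\subseteq H$. The question becomes for which $H$ this image can be nonzero.

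The key input is the $L_0$-weight content. Since $L_0=i\partial_\theta$ acts as a derivation on pointwise products and each $\mathscr{D}_{n_i}$ has $L_0$-spectrum $\{n_i,n_i+1,n_i+2,\ldots\}$, a product $f_1 f_2$ with $f_i\in\mathscr{D}_{n_i}^\infty$ decomposes as a sum of $L_0$-eigenvectors with eigenvalues in $\{n_1+n_2,n_1+n_2+1,\ldots\}$. Hence $X\subseteq \bigoplus_{\lambda\geq n_1+n_2}H_\lambda$, where $H_\lambda$ denotes the $L_0=\lambda$ eigenspace of $H$. Now $X$ is $G$-invariant, so either $X=\{0\}$ or its closure equals $H$ by irreducibility. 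Since $\bigoplus_{\lambda\geq n_1+n_2}H_\lambda$ is closed in $H$, the second case requires $H=\bigoplus_{\lambda\geq n_1+n_2}H_\lambda$, i.e.\ $H$ has no $L_0$-weight strictly below $n_1+n_2$. Among the irreducibles listed in Proposition~\ref{decomp}, this excludes $\mathbb{C}$ (only weight $0$), $\overline{\mathscr{D}}_m$ (weights $\leq -m<0$), and both continuous series $\mathscr{C}_\lambda$ and $\mathscr{P}^{-}_\lambda$ (weights unbounded below). Only $\mathscr{D}_p$ with $p\geq n_1+n_2$ survives; matching the fractional parts of the $L_0$-spectra additionally forces $p-n_1-n_2\in\mathbb{Z}$, so $p-n_1-n_2\in\mathbb{Z}_{\geq 0}$.

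For the symmetry assertion, the product $\mathscr{O}_{n,a}(z_1)\mathscr{O}_{n,a}(z_2)$ is manifestly symmetric in $(z_1,z_2)$, and so is every three-function correlator obtained by pairing it with $\widetilde{\mathscr{O}}_{p,c}(z_3)$. Applying Proposition~\ref{prop:3pointDDDbar} with $k=l=n$, $m=p$ gives
\begin{equation*}
\left\langle \mathscr{O}_{n,a}(z_1)\,\mathscr{O}_{n,a}(z_2)\,\widetilde{\mathscr{O}}_{p,c}(z_3)\right\rangle=\frac{f^{p;c}_{(n;a)(n;a)}}{z_{21}^{2n-p}\,z_{31}^{p}\,z_{32}^{p}}.
\end{equation*}
The swap $z_1\leftrightarrow z_2$ sends $z_{21}\mapsto -z_{21}$ and exchanges $z_{31}\leftrightarrow z_{32}$, leaving $z_{31}^{p}z_{32}^{p}$ unchanged; hence the right-hand side is multiplied by $(-1)^{2n-p}$. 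Symmetry of the left-hand side then forces $(-1)^{2n-p}=1$, i.e.\ $p-2n\equiv 0\pmod 2$ --- equivalently, $f^{p;c}_{(n;a)(n;a)}=0$ whenever $p-2n$ is odd.

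The most delicate step is the second paragraph's extraction of the $L_0$-support constraint from irreducibility of $H$: one must confirm that a nonzero $G$-invariant subspace $X\subseteq H$ cannot sit inside the proper closed spectral subspace $\bigoplus_{\lambda\geq n_1+n_2}H_\lambda$ unless the complementary spectral subspace is trivial. An equivalent hands-on formulation is that $\mathscr{D}_{n_1}\otimes\mathscr{D}_{n_2}$ is generated under $G$ by its $L_1$-kernel (with weights $\{n_1+n_2+k:k\geq 0\}$) while continuous-series irreps contain no $L_1$-annihilated vectors --- i.e.\ the classical Repka decomposition $\mathscr{D}_{n_1}\otimes\mathscr{D}_{n_2}\cong\bigoplus_{k\geq 0}\mathscr{D}_{n_1+n_2+k}$ in disguise.
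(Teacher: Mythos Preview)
Your argument is correct and is precisely the standard $L_0$-weight argument that the paper defers to (Lemma~3.9 of \cite{Kravchuk:2021akc}): the product carries only $L_0$-weights in $n_1+n_2+\mathbb{Z}_{\geq 0}$, which eliminates every isotypic component except $\mathscr{D}_p$ with $p-n_1-n_2\in\mathbb{Z}_{\geq 0}$, and the $z_1\leftrightarrow z_2$ symmetry of the three-point function from Proposition~\ref{prop:3pointDDDbar} handles the parity refinement. One cosmetic point: since $H=\mathscr{D}_p\cong\mathbb{C}^{\ell_p}\otimes\cD_p$ is only isotypic, not irreducible, the phrase ``its closure equals $H$ by irreducibility'' should read ``its closure, being a nonzero closed $G$-invariant subspace of $\mathbb{C}^{\ell_p}\otimes\cD_p$, contains a full copy of $\cD_p$ and hence every $L_0$-weight of $\cD_p$''---the conclusion is unaffected.
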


\begin{proof}
    The proof is similar to that of Lemma $3.9$ of \cite{Kravchuk:2021akc} with minor modifications to include the cases when $n_1$ or $n_2$ are half-integers and possibly different.
\end{proof}

\begin{prop}
\begin{equation}
    \mathscr{O}_{n_1,a_1}(z_1)\mathscr{O}_{n_2,a_2}(z_2)= \sum_{p-n_1-n_2\in\mathbb{Z}_{\geq 0}}\sum_{a=1}^{\ell_p} f_{(n_1;a_1)(n_2;a_2)}^{p;a}\tau_p\left(e_a\otimes C_p(z_1,z_2) \right).
\end{equation}
Here $C_p(z_1,z_2)\in\cD_p$ is defined as 
\begin{equation}
    C_p(z_1,z_2)(z):= \sqrt{\frac{2p-1}{\pi}} \frac{1}{z_{12}^{n_1+n_2-p} z_{13}^{n_1+p-n_2} z_{23}^{n_2+p-n_1}}.
\end{equation}
If $n_1=n_2=n$, $f_{(n_1;a_1)(n_2;a_2)}^{p;a}$ vanishes for $p=2n+1$.
\end{prop}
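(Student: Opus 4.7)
The approach is to adapt the strategy from \cite{Kravchuk:2021akc}, originally formulated for integer weights, to the half-integer case needed in the spinor setting.

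First, I would use the preceding lemma, which restricts the spectral decomposition of Proposition~\ref{decomp} applied to $\mathscr{O}_{n_1, a_1}(z_1)\mathscr{O}_{n_2, a_2}(z_2)$ to the irreducibles $\mathscr{D}_p$ with $p - n_1 - n_2 \in \mathbb{Z}_{\geq 0}$. For each such $p$, using the unitary isomorphism $\mathscr{D}_p \cong \mathbb{C}^{\ell_p} \otimes \cD_p$ (cf.\ Remark~\ref{rem:coherent}), I would parameterize the projection as
\begin{equation*}
P_{\mathscr{D}_p}\bigl(\mathscr{O}_{n_1, a_1}(z_1)\mathscr{O}_{n_2, a_2}(z_2)\bigr) = \sum_{a=1}^{\ell_p} \tau_p\bigl(e_a \otimes V_a^{(p)}(z_1, z_2)\bigr)
\end{equation*}
for some $\cD_p$-valued functions $V_a^{(p)}(z_1, z_2)$ of $(z_1, z_2) \in \mathbb{D} \times \mathbb{D}$.

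Next, I would determine $V_a^{(p)}(z_1, z_2)$ using $G$-equivariance. The coherent states $\mathscr{O}_{n_i, a_i}(z_i)$ transform under $G$ via their respective representations, so both sides of the OPE must transform identically under the simultaneous $G$-action on $z_1, z_2$ and on the internal variable $z$ of $\cD_p$. At the level of the explicit holomorphic realization on $\mathbb{D}'$, this translates into first-order differential constraints (from the action of $L_{-1}, L_0, L_1$ computed earlier) whose unique solution up to an overall constant is the cross-ratio-like expression appearing in the statement of $C_p(z_1,z_2)(z)$. The normalization $\sqrt{(2p-1)/\pi}$ is then fixed by matching inner products of coherent states: computing $(\mathcal{O}_p(z_3), C_p(z_1, z_2))_{\cD_p}$ via Proposition~\ref{innerproductofcoherentstates} should reproduce the three-point function of Proposition~\ref{prop:3pointDDDbar} with the correct prefactor. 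The case $p=1/2$ requires a separate treatment, because the inner product on $\cD_{1/2}$ uses a boundary sup-norm rather than a bulk integral, but the computation is analogous and yields the factor $\sqrt{1/(2\pi)}$.

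Third, to identify the coefficients multiplying $\tau_p(e_a \otimes C_p(z_1, z_2))$ with the three-point structure constants $f^{p;a}_{(n_1; a_1)(n_2; a_2)}$, I would compute the three-function correlator $\langle \mathscr{O}_{n_1, a_1}(z_1)\mathscr{O}_{n_2, a_2}(z_2) \widetilde{\mathscr{O}}_{p, a}(z_3)\rangle$ in two ways: once by substituting the OPE into the correlator and using the two-function formula $\langle \mathscr{O}_{m,i}(z)\widetilde{\mathscr{O}}_{n,j}(w)\rangle = \delta_{m,n}\delta_{i,j}(w-z)^{-2n}$, and once directly via Proposition~\ref{prop:3pointDDDbar}. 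Matching the two expressions yields the identification.

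Finally, the vanishing of $f^{p;a}_{(n;a_1)(n;a_2)}$ at $p = 2n+1$ when $n_1 = n_2 = n$ follows from the parity assertion in the preceding lemma: since both coherent states lie in the same representation $\mathscr{D}_n$, the lemma forces the projection onto $\mathscr{D}_p$ with $p - 2n$ odd to vanish. An independent (and consistent) check uses commutativity of pointwise multiplication, $\mathscr{O}_{n,a_1}(z_1)\mathscr{O}_{n,a_2}(z_2) = \mathscr{O}_{n,a_2}(z_2)\mathscr{O}_{n,a_1}(z_1)$, together with the transformation $C_p(z_2, z_1) = (-1)^{p-2n}\, C_p(z_1, z_2)$, which is immediate from the explicit form of $C_p$. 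The main obstacle is the careful bookkeeping of normalization constants and the verification that the $G$-equivariance argument is robust when the weights are half-integer, so that the center $-\mathrm{Id} \in \mathrm{SL}(2,\mathbb{R})$ acts nontrivially on coherent states; this subtlety is however already absorbed into the constructions of $\cD_{n/2}$ and $\bar\cD_{n/2}$ earlier in this section, making the extension from the integer-weight case of \cite{Kravchuk:2021akc} largely mechanical.
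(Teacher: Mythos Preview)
Your proposal is correct and follows essentially the same approach as the paper, which states the proposition without proof and implicitly defers (as with the surrounding lemmas) to the construction of \cite{Kravchuk:2021akc} extended to half-integer weights. Your outline---restrict via the preceding lemma, fix the form of $C_p$ by $G$-equivariance, normalize against the coherent-state inner product, and identify the coefficients through the three-point correlator of Proposition~\ref{prop:3pointDDDbar}---is exactly the argument one finds there. One minor remark: your discussion of the $p=1/2$ normalization is not needed here since $p\geq n_1+n_2\geq 1$, and the vanishing claim at $p=2n+1$ (more generally $p-2n$ odd) strictly requires $a_1=a_2$, matching the hypothesis $\mathscr{O}_{n_1}=\mathscr{O}_{n_2}$ of the preceding lemma; your commutativity argument yields only $f^{p;a}_{(n;a_1)(n;a_2)}=(-1)^{p-2n}f^{p;a}_{(n;a_2)(n;a_1)}$ in general.
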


\begin{lemma}
   $P_H\left(\mathscr{O}_{n_1}\left(z_1\right) \widetilde{\mathscr{O}}_{n_1}\left(z_2\right)\right)=0$ unless $H=\mathbb{C}$ or  $H=\mathscr{C}_{\lambda^{0}_k}$.
\end{lemma}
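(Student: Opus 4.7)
The plan is to combine a $\mathrm{Spin}(2)$-weight analysis with a three-point function calculation to eliminate every irrep appearing in the decomposition of Proposition~\ref{decomp} other than $\mathbb{C}$ and $\mathscr{C}_{\lambda^{(0)}_k}$. First I observe that since the right regular $G$-action on $C^\infty(\widetilde{\Gamma}\backslash G)$ is by algebra automorphisms, the generator $L_0$ acts on a pointwise product as a derivation, and hence the $\mathrm{Spin}(2)$-weight of a pointwise product of weight vectors is the sum of their weights. From the construction of the discrete series, the smooth vector $\mathscr{O}_{n_1, a}(z_1) \in \mathscr{D}_{n_1}^\infty$ has $\mathrm{Spin}(2)$-weights in $n_1 + \mathbb{Z}_{\geq 0}$ while $\widetilde{\mathscr{O}}_{n_1, b}(z_2) \in \overline{\mathscr{D}}_{n_1}^\infty$ has weights in $-n_1 - \mathbb{Z}_{\geq 0}$, so the pointwise product has $\mathrm{Spin}(2)$-content contained in $\mathbb{Z}$. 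This immediately rules out $\mathscr{P}^-_{\lambda^{(1/2)}_k}$, whose $\mathrm{Spin}(2)$-weights are strictly half-integral, and likewise eliminates $\mathscr{D}_p$ and $\overline{\mathscr{D}}_p$ for half-integer $p$.

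It remains to rule out $H = \mathscr{D}_p$ and $H = \overline{\mathscr{D}}_p$ with $p \in \mathbb{Z}_+$. By density of coherent states in $\mathscr{D}_p$, it suffices to show that $(\mathscr{O}_{p, c}(z_3),\, \mathscr{O}_{n_1, a}(z_1)\widetilde{\mathscr{O}}_{n_1, b}(z_2)) = 0$ for every $z_3 \in \mathbb{D}$ and every $c \in \{1,\ldots,\ell_p\}$. Expressing this inner product as an integral over $\widetilde{\Gamma}\backslash G$ and using Proposition~\ref{eq:coherentIdentity} to identify $\overline{\mathscr{O}_{p, c}(z_3)}$ as a nonzero scalar multiple of $\widetilde{\mathscr{O}}_{p, c}(\bar z_3^{-1}) \in \overline{\mathscr{D}}_p^\infty$, the inner product reduces, up to a nonvanishing prefactor, to
\begin{equation*}
\bigl(\overline{\mathscr{O}_{n_1, a}(z_1)},\ \widetilde{\mathscr{O}}_{p, c}(\bar z_3^{-1})\,\widetilde{\mathscr{O}}_{n_1, b}(z_2)\bigr).
\end{equation*}
Proposition~\ref{eq:coherentIdentity} also places $\overline{\mathscr{O}_{n_1, a}(z_1)}$ in $\overline{\mathscr{D}}_{n_1}^\infty$. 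The antiholomorphic mirror of the preceding lemma — proved either by running the same argument on $\widetilde{\mathscr{O}}\widetilde{\mathscr{O}}$ or by conjugating the preceding lemma through Proposition~\ref{eq:coherentIdentity} — forces $\widetilde{\mathscr{O}}_{p, c}(\bar z_3^{-1})\widetilde{\mathscr{O}}_{n_1, b}(z_2) \in \bigoplus_{q \geq p + n_1} \overline{\mathscr{D}}_q$. Since $p + n_1 > n_1$, orthogonality of distinct $\overline{\mathscr{D}}_q$ summands in Proposition~\ref{decomp} makes the inner product vanish. The case $H = \overline{\mathscr{D}}_p$ is handled symmetrically by pairing against a coherent state $\widetilde{\mathscr{O}}_{p, c}(z_3)$ and invoking the preceding lemma directly on the resulting $\mathscr{O}\mathscr{O}$ factor.

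The main obstacle is establishing the antiholomorphic mirror of the preceding lemma; this is not recorded explicitly in the excerpt but is essentially automatic once one observes that Proposition~\ref{eq:coherentIdentity} provides an antilinear $G$-intertwiner $\mathscr{D}_p \to \overline{\mathscr{D}}_p$ carrying coherent states to coherent states. Once this bookkeeping is granted, the entire argument reduces to orthogonality of the summands in Proposition~\ref{decomp}, and no additional analytic input is needed.
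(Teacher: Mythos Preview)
Your proof is correct and follows essentially the same approach as the paper. Your $\mathrm{Spin}(2)$-weight analysis is equivalent to the paper's observation that the center of $\mathrm{SL}(2,\mathbb{R})$ acts trivially on $\mathscr{O}_{n_1}\widetilde{\mathscr{O}}_{n_1}$ but nontrivially on $\mathscr{P}^{-}_{i\nu}$ (the center acts on a weight-$k$ vector by $(-1)^{2k}$), and your explicit three-point reduction to the preceding $\mathscr{O}\mathscr{O}$ lemma is precisely the content of the referenced Lemma~3.11 of \cite{Kravchuk:2021akc}, which the paper simply cites rather than spelling out.
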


\begin{proof}
    The proof is similar to that of Lemma $3.11$ of \cite{Kravchuk:2021akc} with minor modifications. $\mathscr{P}^{-}_{i\nu}$ does not appear since the center of $\mathrm{SL}(2,\mathbb{R})$ acts trivially on $\mathscr{O}_{n_1}(z_1) \widetilde{\mathscr{O}}_{n_1}(z_2)$ while the center acts nontrivially on vectors inside $\mathscr{P}^{-}_{i\nu}$.
\end{proof}
\begin{prop}[See Lemma 3.12 of \cite{Kravchuk:2021akc}]
Let $n_2-n_1\in\mathbb{Z}$, we have
\begin{equation}
P_{\mathscr{C}_{\lambda_{k}^{(0)}}}\left(\mathscr{O}_{n_1,a}(z_1)\widetilde{\mathscr{O}}_{n_2,b}(z_2)\right)=\frac{\delta_{n_1,n_2}\delta_{a,b}}{(z_2-z_1)^{2n}}+ \sum_{r=1}^{d_k} c_{(n_1;a)(n_2;b)}^{k;r}\kappa^{(0)}_k\left(e_r \otimes\widetilde{C}^{(0)}_{k}(z_1,z_2)\right).
\end{equation}
   Here  $\kappa^{(0)}_k$ is the unitary isomorphism between $\mathbb{C}^{d_k}\otimes R_k$ and $\mathscr{C}_{\lambda_{k}^{(0)}}$, where $R_k=\mathcal{P}^{+}_{i\nu}$ with $\lambda_{k}^{(0)}=1/4+\nu^2$ or $R_k=\mathcal{C}_{s}$ with $\lambda_{k}^{(0)}=1/4-s^2$. Furthermore, $\widetilde{C}^{(0)}_{k}(z_1,z_2)\in R_k$, and is given by 
   \begin{equation*}
       \widetilde{C}^{(0)}_{k}(z_1,z_2)(z_0):=\frac{N_{\Delta_k}z_2^{-2n_2}z_0^{n_2-n_1}}{\left(1-z_1z_2^{-1}\right)^{n_1+n_2-\Delta_k}\left(1-z_1z_0^{-1}\right)^{n_1-n_2+\Delta_k}\left(1-z_2^{-1}z_0\right)^{n_2-n_1+\Delta_k}}\,,\quad |z_0|=1.
   \end{equation*}
\end{prop}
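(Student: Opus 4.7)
The plan is to extend the argument of Lemma 3.12 in \cite{Kravchuk:2021akc} to the case where $n_1, n_2$ may be half-integers (with $n_2 - n_1 \in \mathbb{Z}$, as forced by the previous lemma together with $\mathbb{Z}$-graded $L_0$-spectrum of $\mathscr{C}_{\lambda_k^{(0)}}$). First I would note that the projection onto the trivial representation $\mathbb{C}$ yields exactly the two-function correlator $\delta_{n_1,n_2}\delta_{a,b}/(z_2-z_1)^{2n}$; this accounts for the first term on the right-hand side (included here by convention, grouped with the $\mathscr{C}_{\lambda_k^{(0)}}$ summands). From now on I focus on the nontrivial piece.

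The central step is to use the unitary isomorphism $\kappa_k^{(0)}:\mathbb{C}^{d_k}\otimes R_k \to \mathscr{C}_{\lambda_k^{(0)}}$ to write
\begin{equation*}
P_{\mathscr{C}_{\lambda_k^{(0)}}}\bigl(\mathscr{O}_{n_1,a}(z_1)\widetilde{\mathscr{O}}_{n_2,b}(z_2)\bigr) \;=\; \kappa_k^{(0)}\!\Bigl(\sum_{r=1}^{d_k} e_r\otimes F_{a,b}^{\,r}(z_1,z_2)\Bigr),
\end{equation*}
with each $F_{a,b}^{\,r}(z_1,z_2)\in R_k^\infty$. I would then pair both sides against the coherent state $\kappa_k^{(0)}(e_s\otimes \mathcal{O}(\eta))$ for arbitrary $\eta\in C^\infty(\partial\mathbb{D})$. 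On one hand this pairing reduces to the inner product of $F_{a,b}^{\,s}(z_1,z_2)$ with $N_{\Delta_k}\eta$ in $R_k$; on the other hand, unfolding the inner product gives the three-function correlator $\langle \mathscr{O}_{n_1,a}(z_1)\widetilde{\mathscr{O}}_{n_2,b}(z_2)\,\overline{\kappa_k^{(0)}(e_s\otimes \mathcal{O}(\eta))}\rangle$. Decomposing this into a sum of terms indexed by $r$ and using the $R_k$-coherent-state identity $\mathcal{O}(\eta)=N_{\Delta_k}\eta$ shows that $F_{a,b}^{\,r}(z_1,z_2)=c^{k;r}_{(n_1;a)(n_2;b)}\,\widetilde{C}^{(0)}_{k}(z_1,z_2)$ for universal constants $c^{k;r}_{(n_1;a)(n_2;b)}$ independent of the coherent-state arguments, where $\widetilde{C}^{(0)}_{k}(z_1,z_2)\in R_k^\infty$ only depends on $z_1,z_2$.

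To pin down $\widetilde{C}^{(0)}_{k}(z_1,z_2)(z_0)$ as a function of $z_0\in\partial\mathbb{D}$, I would invoke $SU(1,1)$-covariance of the three-function kernel. By the transformation laws \eqref{eq:Anti} and the analogue for $R_k$ coherent states, the kernel must transform with weights $(n_1,n_2,\Delta_k)$ at $(z_1,z_2,z_0)$; the unique (up to a constant) $SU(1,1)$-covariant expression with these weights, and the right analyticity/boundary behavior at $z_1\in\mathbb{D}$, $z_2\in\mathbb{D}'$, $z_0\in\partial\mathbb{D}$, is exactly the one in the statement — this is the standard three-point conformal building block, with the prefactor $z_2^{-2n_2}z_0^{n_2-n_1}$ arising from the choice of charts (the extra $z_2^{-2n}$ was already seen in Proposition \ref{eq:coherentIdentity}, and $z_0^{n_2-n_1}$ is well-defined on $\partial\mathbb{D}$ since $n_2-n_1\in\mathbb{Z}$). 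The overall multiplicative constant is fixed by matching one reference value of the three-point function against the three-point formula of Proposition \ref{prop:3pointDDDbar} applied to $\mathscr{O}_{n_1}$, $\widetilde{\mathscr{O}}_{n_2}$ and a representative of $R_k$, with the factor $N_{\Delta_k}$ encoding the coherent-state normalization in $R_k$.

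The main obstacle I anticipate is bookkeeping of signs and branch choices for the factors $(1-z_1 z_2^{-1})^{n_1+n_2-\Delta_k}$ etc.\ when $n_1,n_2$ are half-integers and unequal; one must verify that the principal-branch choice is consistent with the $G$-action and with the $\pm$ sign coming from the multiplier $\chi$ on $\bar{\Gamma}$ (which was absorbed by our commitment to the embedding $\widetilde{\Gamma}$). Since $n_2-n_1\in\mathbb{Z}$ and $\Delta_k=1/2+i\nu$ or $\Delta_k=1/2+s$ in the principal/complementary cases, all exponents of the form $n_1-n_2\pm \Delta_k$ differ from a half-integer by $\Delta_k-1/2$, so the branch structure is the same as in the integer case treated in \cite{Kravchuk:2021akc}; once this observation is in place, the rest of the proof is a direct transcription of that lemma.
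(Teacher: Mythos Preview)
Your proposal is correct and follows precisely the approach of Lemma~3.12 in \cite{Kravchuk:2021akc}, which is what the paper itself defers to rather than supplying an independent proof. The only point worth flagging is that the identity term $\delta_{n_1,n_2}\delta_{a,b}/(z_2-z_1)^{2n}$ on the right-hand side is really the projection onto $\mathbb{C}$ rather than onto $\mathscr{C}_{\lambda_k^{(0)}}$, as you correctly note; this is a quirk of the paper's statement, not of your argument.
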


\begin{lemma}
Let $n_1>n_2$ such that $2(n_1+n_2)=0\,(\text{mod}\ 2)$.
   $P_H\left(\mathscr{O}_{n_1}\left(z_1\right) \widetilde{\mathscr{O}}_{n_2}\left(z_2\right)\right)=0$ unless $H=\mathscr{C}_{\lambda^{(0)}_k}$ or $H=\mathscr{D}_{m}$ with $1\leq m\leq  n_1-n_2$ and $m\in\mathbb{Z}$.
\end{lemma}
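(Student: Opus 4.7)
The plan is to use the standard two-pronged analysis: (i) determine which central character of $\mathrm{SL}(2,\R)$ the product $\mathscr{O}_{n_1}(z_1)\widetilde{\mathscr{O}}_{n_2}(z_2)$ carries, and (ii) compute the $L_0$-weight of this product at the distinguished configuration $z_1=0,\, z_2\to\infty$ and exploit $G$-equivariance to propagate the vanishing of the projection. This mirrors the strategy used for the preceding two lemmas (cf.\ Lemmas 3.9 and 3.11--3.12 of \cite{Kravchuk:2021akc}), with the usual care to accommodate half-integer weights.

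First I would observe that $-\mathrm{Id}\in\mathrm{SL}(2,\R)$ acts on $\mathscr{O}_{n_1}(z_1)$ as $(-1)^{2n_1}$ and on $\widetilde{\mathscr{O}}_{n_2}(z_2)$ as $(-1)^{2n_2}$, so it acts on the product as $(-1)^{2(n_1+n_2)}=+1$ under the hypothesis that $2(n_1+n_2)\equiv 0\pmod 2$. Hence $P_H(\mathscr{O}_{n_1}(z_1)\widetilde{\mathscr{O}}_{n_2}(z_2))=0$ whenever the center acts nontrivially on $H$. This immediately rules out every summand $\mathscr{P}^-_{\lambda^{(1/2)}_k}$ and every $\mathscr{D}_m$ or $\overline{\mathscr{D}}_m$ with $2m$ odd, leaving only $\mathbb{C}$, the $\mathscr{C}_{\lambda^{(0)}_k}$, and $\mathscr{D}_m,\overline{\mathscr{D}}_m$ with $m\in\Z_{\ge 1}$.

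Next I would exploit the fact that $P_H$ is a $G$-equivariant map from smooth vectors, so the function $(z_1,z_2)\mapsto P_H(\mathscr{O}_{n_1}(z_1)\widetilde{\mathscr{O}}_{n_2}(z_2))$ is determined by its value at a single orbit representative. Evaluating at $z_1=0$ and taking $\lim_{z_2\to\infty}z_2^{2n_2}\widetilde{\mathscr{O}}_{n_2}(z_2)$ produces a vector that is a simultaneous $L_0$-eigenvector with eigenvalue $n_1-n_2$ (here $\mathscr{O}_{n_1}(0)$ is the lowest-weight vector of $\mathscr{D}_{n_1}$ with $L_0=n_1$, and the rescaled limit gives the highest-weight vector of $\overline{\mathscr{D}}_{n_2}$ with $L_0=-n_2$; both facts follow from the $L_0$-action formulas). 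Consequently $P_H$ can be nonzero only if the $L_0$-spectrum of $H$ contains $n_1-n_2$. Cross-checking against the $L_0$-spectra listed in the representation-theory section: $\mathbb{C}$ has spectrum $\{0\}$ but $n_1-n_2>0$, so it is excluded; $\overline{\mathscr{D}}_m$ has spectrum $\{-m,-m-1,\dots\}\subset\Z_{\le -1}$, so it is excluded; $\mathscr{D}_m$ has spectrum $\{m,m+1,\dots\}$, forcing $m\le n_1-n_2$; and $\mathscr{C}_{\lambda^{(0)}_k}$ has spectrum $\Z$, which contains the integer $n_1-n_2\in\Z$ (an integer because $n_1+n_2\in\Z$ implies $n_1-n_2\in\Z$) and places no restriction.

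Combining the two filters leaves exactly $H=\mathscr{C}_{\lambda^{(0)}_k}$ or $H=\mathscr{D}_m$ with $m\in\Z$ and $1\le m\le n_1-n_2$, which is the claim. The only subtle step is justifying that the nonvanishing of $P_H$ at the specialization $(z_1,z_2)=(0,\infty)$ is equivalent to its nonvanishing as a function of $(z_1,z_2)$: this is where $G$-equivariance enters, together with the density of the $G$-orbit through $(0,\infty)$ in the configuration space (equivalently, irreducibility of $H$ and the fact that a nonzero $G$-equivariant section cannot vanish on an open set). I expect this equivariance/density check, and the careful treatment of the $z_2\to\infty$ rescaling when $n_2$ is a half-integer, to be the main technical point; the rest is bookkeeping with $L_0$-spectra and central characters.
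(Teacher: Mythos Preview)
Your argument is correct and is essentially the same as the paper's, which simply defers to the proof of Lemma~3.11 in \cite{Kravchuk:2021akc}: first use the central character to eliminate $\mathscr{P}^-$ and discrete series with half-integer index, then use the $L_0$-weight $n_1-n_2$ of the product at $(z_1,z_2)=(0,\infty)$ together with $G$-equivariance to rule out $\mathbb{C}$, all $\overline{\mathscr{D}}_m$, and $\mathscr{D}_m$ with $m>n_1-n_2$. Your remark that the only delicate point is the equivariance/density justification for propagating from the distinguished configuration is exactly right, and that is handled in the cited reference.
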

\begin{proof}
    The proof is similar to that of Lemma $3.11$ of \cite{Kravchuk:2021akc}, with minor modifications like in the previous lemma. 
\end{proof}

\begin{lemma}
Let $n_1<n_2$ such that $2(n_1+n_2)=0(\text{mod}\ 2)$.
   $P_H\left(\mathscr{O}_{n_1}\left(z_1\right) \widetilde{\mathscr{O}}_{n_2}\left(z_2\right)\right)=0$ unless $H=\mathscr{C}_{\lambda^{(0)}_k}$ or $H=\bar{\mathscr{D}}_{m}$ with $1\leq m\leq  n_2-n_1$ and $m\in\mathbb{Z}$.
\end{lemma}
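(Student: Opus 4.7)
The proof plan mirrors the preceding lemma with the roles of holomorphic and antiholomorphic discrete series interchanged, so I focus on the points where the arguments actually differ. The goal is to rule out the trivial representation $\mathbb{C}$, the half-integer-weight series $\mathscr{P}^-_{\lambda_k^{(1/2)}}$, the entire family $\mathscr{D}_m$, and $\bar{\mathscr{D}}_m$ for $m>n_2-n_1$. The series $\mathscr{P}^-_{i\nu}$ is excluded by the central character: $-\mathrm{Id}\in\mathrm{SL}(2,\mathbb{R})$ acts on each coherent state $\mathscr{O}_{n_i}$ or $\widetilde{\mathscr{O}}_{n_i}$ by $(-1)^{2n_i}$, hence on the product by $(-1)^{2(n_1+n_2)}=+1$ under the stated parity hypothesis, whereas it acts by $-1$ on every vector of $\mathscr{P}^-_{i\nu}$. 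The trivial representation is ruled out by the two-function correlator $\langle\mathscr{O}_{n_1}(z_1)\widetilde{\mathscr{O}}_{n_2}(z_2)\rangle\propto\delta_{n_1,n_2}$, which vanishes for $n_1\neq n_2$.

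For the discrete series I would expand
\begin{equation*}
P_H\!\left[\mathscr{O}_{n_1}(z_1)\widetilde{\mathscr{O}}_{n_2}(z_2)\right]=\sum_{k,l\geq 0}V_{k,l}\,z_1^{\,k}\,z_2^{-l-2n_2},\qquad V_{k,l}\in H^{\infty},
\end{equation*}
so that the coherent-state $L_0$ formulas force $L_0 V_{k,l}=(n_1-n_2+k-l)V_{k,l}$. The leading coefficient $V_{0,0}$ is, up to normalization, the projection onto $H$ of $F_{n_1}\bar F_{n_2}$, where $F_{n_1}=\mathscr{O}_{n_1}(0)$ is the lowest-weight vector of $\mathscr{D}_{n_1}$ and $\bar F_{n_2}=\lim_{z_2\to\infty}z_2^{2n_2}\widetilde{\mathscr{O}}_{n_2}(z_2)$ is the highest-weight vector of $\bar{\mathscr{D}}_{n_2}$; it therefore carries $L_0$-weight $n_1-n_2<0$. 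The $L_0$-spectrum $\{m,m+1,\dots\}$ of $\mathscr{D}_m$ admits no negative weight, so $V_{0,0}=0$ for every integer $m\geq 1$; the $L_0$-spectrum $\{-m,-m-1,\dots\}$ of $\bar{\mathscr{D}}_m$ forces $V_{0,0}=0$ whenever $n_1-n_2>-m$, i.e.\ $m>n_2-n_1$.

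To propagate $V_{0,0}=0$ to every $(k,l)$ I would use the $G$-equivariance of $P_H$ together with the coherent-state formulas for $L_{\pm 1}$. These yield the two recursions
\begin{equation*}
L_1 V_{k,l}=(k-1+2n_1)V_{k-1,l}-(l+1)V_{k,l+1},\qquad L_{-1}V_{k,l}=(k+1)V_{k+1,l}+(1-l-2n_2)V_{k,l-1},
\end{equation*}
with $V_{-1,l}=V_{k,-1}=0$. A double induction — outer on $l$, with the $l=0$ column handled by iterating the identity $V_{k+1,0}=L_{-1}V_{k,0}/(k+1)$ obtained from the $L_{-1}$ recursion, and the inductive step $l\to l+1$ handled by solving the $L_1$ recursion as $V_{k,l+1}=\bigl[(k-1+2n_1)V_{k-1,l}-L_1 V_{k,l}\bigr]/(l+1)$ — then forces $V_{k,l}=0$ for all $(k,l)\in\mathbb{Z}_{\geq 0}^{\,2}$. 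The step requiring the most care is justifying that $F_{n_1}\bar F_{n_2}$ is a bona fide $L_0$-eigenvector in $L^2(\widetilde{\Gamma}\backslash G)$, so that the weight-based argument for $V_{0,0}=0$ is legitimate; this follows from the smoothness and density properties of coherent states already recalled from \cite{Kravchuk:2021akc}. The surviving irreps are therefore $\mathscr{C}_{\lambda_k^{(0)}}$ and $\bar{\mathscr{D}}_m$ with $1\leq m\leq n_2-n_1$, as claimed.
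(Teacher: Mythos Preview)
Your proof is correct and follows essentially the same approach as the paper, which simply says ``the proof is similar to the above'' and ultimately defers to Lemma~3.11 of \cite{Kravchuk:2021akc}; you have spelled out the $L_0$-weight and $L_{\pm 1}$-recursion argument that underlies that reference. One small point: you should make explicit that the same central-character argument eliminating $\mathscr{P}^-_{i\nu}$ also eliminates the half-integer discrete series $\mathscr{D}_m$, $\bar{\mathscr{D}}_m$ with $m\in\tfrac{1}{2}+\mathbb{Z}$, so that only integer $m$ survives before you invoke the $L_0$-spectrum analysis.
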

\begin{proof}
    The proof is similar to the above. 
\end{proof}

\begin{prop}\label{prop:discrete-t}
    Let $n_1>n_2$ such that $2(n_1+n_2)=0(\text{mod}\ 2)$. We have for $p\leq n_1-n_2$ and $p\in\mathbb{Z}_{\geq 0}$,
        \begin{equation}
  P_{\mathscr{D}_p} \left( \mathscr{O}_{n_1,a_1}(z_1)\widetilde{\mathscr{O}}_{n_2,a_2}(z_2)\right)= \sum_{a=1}^{\ell_p} f_{(n_1;a_1)(p;a)}^{n_2;a_2}\tau_p\left(e_a\otimes \widetilde{C}_p(z_1,z_2) \right).
\end{equation}
Here $\widetilde{C}_p(z_,z_2)\in\cD_p$ is defined as 
\begin{equation}
    \widetilde{C}_p(z_1,z_2)(z_3):= \sqrt{\frac{2p-1}{\pi}} \frac{1}{z_{13}^{n_1+n_2-p} z_{12}^{n_1+p-n_2} z_{32}^{n_2+p-n_1}}.
\end{equation}
\end{prop}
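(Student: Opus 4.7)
The plan is to mirror the structure of the analogous proof for the integer, equal-weight case (e.g.\ Lemma 3.12 of \cite{Kravchuk:2021akc}), extended here to possibly unequal and possibly half-integer weights. By the preceding lemma and Proposition~\ref{decomp}, the projection $P_{\mathscr{D}_p}(\mathscr{O}_{n_1,a_1}(z_1)\widetilde{\mathscr{O}}_{n_2,a_2}(z_2))$ lies in $\mathscr{D}_p \cong \mathbb{C}^{\ell_p}\otimes \cD_p$, so via $\tau_p$ it can be written uniquely as $\sum_{a=1}^{\ell_p} c_a\,\tau_p(e_a \otimes \phi(z_1,z_2))$ for some $\cD_p^{\infty}$-valued map $\phi$ and scalars $c_a=c_a(n_1,a_1,n_2,a_2;p)$. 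Conceptually, the map $(z_1,z_2)\mapsto P_{\mathscr{D}_p}(\mathscr{O}_{n_1,a_1}(z_1)\widetilde{\mathscr{O}}_{n_2,a_2}(z_2))$ is a $G$-equivariant intertwiner, and the $\cD_p$-factor of this intertwiner is unique up to a scalar by Schur-type arguments applied to the branching of $\cD_{n_1}\otimes\bar\cD_{n_2}$.

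First, I would determine the functional form of $\phi(z_1,z_2)$ using $G$-equivariance alone. The product $\mathscr{O}_{n_1,a_1}(z_1)\widetilde{\mathscr{O}}_{n_2,a_2}(z_2)$ transforms under the diagonal $G$-action with the automorphy factors dictated by the definitions of $\mathcal{O}_{n_1/2}$ and $\widetilde{\mathcal{O}}_{n_2/2}$, while $\tau_p(e_a \otimes f)$ transforms according to the $G$-action on $f \in \cD_p$. Matching both sides, the unique (up to normalization) $G$-covariant candidate for $\phi(z_1,z_2)(z_3)$ is
\begin{equation*}
\sqrt{\tfrac{2p-1}{\pi}}\,\frac{1}{z_{13}^{n_1+n_2-p}\,z_{12}^{n_1+p-n_2}\,z_{32}^{n_2+p-n_1}}=\widetilde{C}_p(z_1,z_2)(z_3).
\end{equation*}
The hypothesis $p\leq n_1-n_2$ guarantees $n_2+p-n_1\leq 0$, so the only candidate singularity of $\widetilde{C}_p(z_1,z_2)$ in $z_3\in\mathbb{D}'$ (at $z_3=z_2$) is actually a zero; this ensures $\widetilde{C}_p(z_1,z_2)\in \cD_p^{\infty}$, and the normalization $\sqrt{(2p-1)/\pi}$ is chosen so that the inner product on $\cD_p$ from Proposition~\ref{innerproductofcoherentstates} normalizes the coherent state identification correctly.

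Next, I would identify the scalars $c_a$ with the three-point structure constants $f^{n_2;a_2}_{(n_1;a_1)(p;a)}$. I pair both sides of the desired identity with the coherent state $\mathscr{O}_{p,a}(z_3)$ in the $L^2(\widetilde{\Gamma}\backslash G)$ inner product. Using Proposition~\ref{eq:coherentIdentity} to rewrite $\overline{\mathscr{O}_{p,a}(z_3)}=\bar z_3^{-2p}\,\widetilde{\mathscr{O}}_{p,a}(\bar z_3^{-1})$, the LHS becomes $\bar z_3^{-2p}\,\langle\widetilde{\mathscr{O}}_{p,a}(\bar z_3^{-1})\,\mathscr{O}_{n_1,a_1}(z_1)\,\widetilde{\mathscr{O}}_{n_2,a_2}(z_2)\rangle$; this three-function correlator is fixed by $G$-invariance (by the same argument as Proposition~\ref{prop:3pointDDDbar}, applied in the $\mathscr{O}\widetilde{\mathscr{O}}\widetilde{\mathscr{O}}$ channel) up to the scalar $f^{n_2;a_2}_{(n_1;a_1)(p;a)}$. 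The RHS, by $\tau_p$-orthogonality and Proposition~\ref{innerproductofcoherentstates}, reduces to $c_a$ times the inner product of $\mathcal{O}_p(z_3)$ with $\widetilde{C}_p(z_1,z_2)$, which is a rational expression in $(z_1,z_2,\bar z_3)$ computed explicitly using the formula for $\widetilde{C}_p$. The $z_3$-dependence matches automatically by Step~1, and reading off the overall constant yields $c_a=f^{n_2;a_2}_{(n_1;a_1)(p;a)}$.

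The main obstacle will be the careful bookkeeping of half-integer-weight phases: when any of $n_1,n_2,p$ is a half-integer, the center $-\mathrm{Id}$ acts nontrivially on the relevant irreps and the automorphy factors become multivalued, so one must choose branches consistently across $\mathscr{O}$, $\widetilde{\mathscr{O}}$, the three-point correlator, and $\widetilde{C}_p$. The hypothesis $2(n_1+n_2)\equiv 0\pmod 2$ is precisely what makes the total exponent of $\widetilde{C}_p$ compatible with an honest (smooth, $\widetilde{\Gamma}$-equivariant) vector, and the multiplier $\chi$ from Proposition~\ref{prop:multiplier} determines the spin-structure-dependent sign choices that must close up for the identity to hold.
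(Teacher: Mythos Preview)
Your proposal is correct and follows essentially the same approach the paper implicitly adopts: the paper does not spell out a proof here but refers throughout to the analogous results in \cite{Kravchuk:2021akc}, and the Remark immediately following the proposition confirms precisely your Step~2 identification (the structure constant $f_{(n_1;a_1)(p;a)}^{n_2;a_2}$ is the one appearing in the three-point correlator $\langle\mathscr{O}_{n_1,a_1}(z_1)\mathscr{O}_{p,a}(z_3)\widetilde{\mathscr{O}}_{n_2,a_2}(z_2)\rangle$). One small clarification: under the hypothesis $2(n_1+n_2)\equiv 0\pmod 2$ the index $p$ is always an integer, so the ``half-integer phase bookkeeping'' concern you flag is less severe than you suggest---the multivaluedness only enters through $n_1$ and $n_2$ individually, and these cancel in the product $\mathscr{O}_{n_1}\widetilde{\mathscr{O}}_{n_2}$ since the center acts trivially on it.
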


\begin{rem}
    The structure constant $ f_{(n_1;a_1)(p;a)}^{n_2;a_2}$ appearing here is the same as the one appearing in $\langle\mathscr{O}_{n_1,a_1}(z_1)\mathscr{O}_{p,a}(z_3)\widetilde{\mathscr{O}}_{n_2,a_2}(z_2)\rangle$.
\end{rem}
\begin{rem}
    A similar statement can be made for $n_1<n_2$.
\end{rem}

\begin{lemma}
Let $n_1>n_2$ such that $2(n_1+n_2)=1(\text{mod}\ 2)$.
   $P_H\left(\mathscr{O}_{n_1}\left(z_1\right) \widetilde{\mathscr{O}}_{n_2}\left(z_2\right)\right)=0$ unless $H=\mathscr{P}^{-}_{\lambda^{(1/2)}_k}$  or $H=\mathscr{D}_{m}$ with $1/2\leq m\leq  n_1-n_2$ and $m\in 1/2+ \mathbb{Z}$.
\end{lemma}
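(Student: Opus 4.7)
The plan is to follow exactly the two–step template used to prove the previous lemma in this excerpt (which itself echoes Lemma 3.11 of \cite{Kravchuk:2021akc}): first a parity argument coming from the action of the center $-\mathrm{Id}\in\mathrm{SL}(2,\mathbb{R})$, and then an $L_0$--weight counting argument applied to a boundary limit of the coherent-state product. The hypothesis $2(n_1+n_2)\equiv 1\ (\mathrm{mod}\ 2)$ is doing the entire work in the first step, while $n_1>n_2$ is what gives the numerical bound on $m$ in the second.

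For the parity step, the coherent-state transformation law $u\cdot f(z)=(-\bar\beta z+\alpha)^{-n}f(u^{-1}\cdot z)$ applied to $-\mathrm{Id}$ (i.e.\ $\alpha=-1$, $\beta=0$) shows that $\mathscr{O}_{n_1}(z_1)$ picks up the sign $(-1)^{2n_1}$ and $\widetilde{\mathscr{O}}_{n_2}(z_2)$ the sign $(-1)^{2n_2}$; since $2(n_1+n_2)$ is odd, the product transforms by $-1$. Consulting Proposition \ref{decomp} together with the remarks recording how the center acts inside each irreducible piece, this excludes the trivial summand, every $\mathscr{C}_{\lambda_k^{(0)}}$, and every $\mathscr{D}_{n/2},\overline{\mathscr{D}}_{n/2}$ with $n$ even. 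Surviving candidates are precisely $\mathscr{P}^{-}_{\lambda_k^{(1/2)}}$ and $\mathscr{D}_m,\overline{\mathscr{D}}_m$ with $m\in\tfrac12+\mathbb{Z}_{\geq 0}$.

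For the weight step, I specialize to $z_1=0$ and take the rescaled limit $\lim_{z_2\to\infty}z_2^{2n_2}\,\mathscr{O}_{n_1,a_1}(0)\widetilde{\mathscr{O}}_{n_2,a_2}(z_2)$. By construction $\mathscr{O}_{n_1,a_1}(0)\in V_{n_1}$ is annihilated by $L_1$, and $\lim_{z_2\to\infty}z_2^{2n_2}\widetilde{\mathscr{O}}_{n_2,a_2}(z_2)\in V_{-n_2}$ is annihilated by $L_{-1}$, so their product is an $L_0$ eigenvector of weight $n_1-n_2>0$. Since $\mathscr{D}_m$ has $L_0$ spectrum $\{m,m+1,\ldots\}$, a nonzero projection onto $\mathscr{D}_m$ requires $m\leq n_1-n_2$; combined with $m\in\tfrac12+\mathbb{Z}_{\geq 0}$ from the parity step, this yields $m\in\{\tfrac12,\tfrac32,\ldots,n_1-n_2\}$. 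For $\overline{\mathscr{D}}_m$ the $L_0$ spectrum is $\{-m,-m-1,\ldots\}\subset\mathbb{R}_{<0}$, which cannot contain $n_1-n_2>0$, so the projection vanishes at this boundary point. Because the $G$-orbit of $(\mathscr{O}_{n_1}(0),\widetilde{\mathscr{O}}_{n_2}(\infty))$ spans the full family $\{(\mathscr{O}_{n_1}(z_1),\widetilde{\mathscr{O}}_{n_2}(z_2))\}$ and each $P_H$ is $G$-equivariant, vanishing at one configuration propagates to vanishing identically.

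The main technical subtlety is making the $z_2\to\infty$ limit and the ``spreading from one point'' step fully rigorous rather than heuristic. The cleanest way to dispose of this is to translate the vanishing of $P_{\mathscr{D}_m}(\mathscr{O}_{n_1}\widetilde{\mathscr{O}}_{n_2})$ into the vanishing of the three-point structure constants, using the same analysis that produced Proposition \ref{prop:discrete-t}: $G$-invariance pins down these three-point functions to a single power-law ansatz whose conformal-weight exponents are consistent only in the range $\tfrac12\leq m\leq n_1-n_2$, $m\in\tfrac12+\mathbb{Z}$, and the $\overline{\mathscr{D}}_m$ channel has no $G$-invariant three-point function at all when $n_1>n_2$. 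This reduction to three-point structure constants is routine but is the step that needs to be written out carefully, exactly as in the integer-weight companion lemma.
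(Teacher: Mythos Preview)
Your approach is correct and matches the paper's: the parity argument via the center of $\mathrm{SL}(2,\mathbb{R})$ is exactly what the paper invokes to rule out $\mathbb{C}$ and $\mathscr{C}_{\lambda_k^{(0)}}$, and the remainder of the argument is precisely the analog of Lemma~3.11 of \cite{Kravchuk:2021akc} that the paper cites.

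One small correction: your claim that ``the $G$-orbit of $(\mathscr{O}_{n_1}(0),\widetilde{\mathscr{O}}_{n_2}(\infty))$ spans the full family'' is not literally true as an orbit statement---$G$ is three-dimensional while $\mathbb{D}\times\mathbb{D}'$ is four-dimensional, and the stabilizer of $(0,\infty)$ is $U(1)$, so the orbit has real codimension two. You already anticipate this, and your fallback to three-point structure constants is indeed the clean way to finish: the vanishing of $P_H$ on the product is equivalent to the vanishing of every $G$-invariant trilinear form $\mathscr{D}_{n_1}\times\overline{\mathscr{D}}_{n_2}\times \bar H\to\mathbb{C}$, and the explicit power-law ansatz forced by $G$-invariance (as in Proposition~\ref{prop:3pointDDDbar} and its $t$-channel analog Proposition~\ref{prop:discrete-t-half}) is consistent only for $m\in\tfrac12+\mathbb{Z}$ with $\tfrac12\le m\le n_1-n_2$, and admits no solution in the $\overline{\mathscr{D}}_m$ channel when $n_1>n_2$. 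That route is both rigorous and exactly in the spirit of Lemma~3.11 of \cite{Kravchuk:2021akc}.
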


\begin{proof}
    $\mathbb{C}$ and $\mathscr{C}_{\lambda_k^{(0)}}$ do not appear because the center acts nontrivially on $\mathscr{O}_{n_1}\left(z_1\right) \widetilde{\mathscr{O}}_{n_2}\left(z_2\right)$ for $2(n_1+n_2)=1(\text{mod}\ 2)$. The rest of the proof is similar to that of Lemma $3.11$ of \cite{Kravchuk:2021akc}.
 \end{proof}
 
\begin{lemma}
Let $n_1<n_2$ such that $2(n_1+n_2)=1(\text{mod}\ 2)$.
   $P_H\left(\mathscr{O}_{n_1}\left(z_1\right) \widetilde{\mathscr{O}}_{n_2}\left(z_2\right)\right)=0$ unless $H=\mathscr{P}^{-}_{\lambda^{(1/2)}_k}$  or $H=\bar{\mathscr{D}}_{m}$ with $1/2\leq m\leq  n_2-n_1$ and $m\in 1/2+ \mathbb{Z}$.
\end{lemma}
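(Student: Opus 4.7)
The plan is to mirror the proof of the preceding lemma, combining two independent selection rules: one from the action of the center $\{\pm I\}$ of $\mathrm{SL}(2,\mathbb{R})$, and one from the weight-structure/three-function-correlator constraints inherent in the discrete series.

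\textbf{Step 1 (center argument).} Since $2(n_1+n_2)$ is odd, the element $-I$ acts on $\mathscr{O}_{n_1}(z_1)$ as $(-1)^{2n_1}$ and on $\widetilde{\mathscr{O}}_{n_2}(z_2)$ as $(-1)^{2n_2}$, so it acts on their pointwise product as $(-1)^{2(n_1+n_2)}=-1$. Therefore $P_H=0$ for every irrep $H$ on which $-I$ acts trivially. Scanning Theorem~\ref{thm:SL2Rirreps} and the remarks following each definition of the irreps realized in $L^2(\widetilde\Gamma\backslash G)$, this excludes $\mathbb{C}$, all $\mathscr{C}_{\lambda^{(0)}_k}$, and $\mathscr{D}_m,\bar{\mathscr{D}}_m$ with $m\in\mathbb{Z}_{\geq 1}$. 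The only surviving candidates are $\mathscr{P}^{-}_{\lambda^{(1/2)}_k}$ and the half-integer-weight discrete series $\mathscr{D}_m,\bar{\mathscr{D}}_m$ with $m\in\tfrac12+\mathbb{Z}_{\geq 0}$.

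\textbf{Step 2 (ruling out $\mathscr{D}_m$).} To probe $P_{\mathscr{D}_m}$, pair with a vector in $\bar{\mathscr{D}}_m$, which by Proposition~\ref{eq:coherentIdentity} is a coherent state $\widetilde{\mathscr{O}}_m(z_3)$. The non-vanishing of $P_{\mathscr{D}_m}$ is equivalent to the non-vanishing (for some $z_1,z_2,z_3$) of the three-function correlator
\begin{equation*}
\bigl\langle \mathscr{O}_{n_1,a}(z_1)\,\widetilde{\mathscr{O}}_{n_2,b}(z_2)\,\widetilde{\mathscr{O}}_{m,c}(z_3)\bigr\rangle .
\end{equation*}
By the anti-holomorphic mirror of Proposition~\ref{prop:3pointDDDbar}, the structure constant of this correlator coincides (up to conjugation) with the coefficient appearing in the product expansion of $\widetilde{\mathscr{O}}_{n_2}\widetilde{\mathscr{O}}_m$, which has isotypic label $\geq n_2+m$. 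Equating this with $n_1$ gives $n_1\geq n_2+m$, which is impossible since $n_1<n_2$ and $m\geq\tfrac12$. Hence $P_{\mathscr{D}_m}=0$ for every admissible $m$.

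\textbf{Step 3 (restricting $\bar{\mathscr{D}}_m$).} To probe $P_{\bar{\mathscr{D}}_m}$, pair instead with $\mathscr{O}_m(z_3)\in\mathscr{D}_m$ to form
\begin{equation*}
\bigl\langle \mathscr{O}_{n_1,a}(z_1)\,\widetilde{\mathscr{O}}_{n_2,b}(z_2)\,\mathscr{O}_{m,c}(z_3)\bigr\rangle .
\end{equation*}
After reordering the two holomorphic coherent states, Proposition~\ref{prop:3pointDDDbar} applies directly, and the structure constant is $f^{n_2;b}_{(n_1;a)(m;c)}$: the very coefficient controlling the appearance of the isotypic component $\mathscr{D}_{n_2}$ in the product expansion of $\mathscr{O}_{n_1}\mathscr{O}_m$. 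By the corresponding product-expansion proposition (the holomorphic--holomorphic case), $\mathscr{D}_p$ with $p=n_2$ can occur only when $n_2\geq n_1+m$, i.e.\ $m\leq n_2-n_1$. Combined with $m\geq\tfrac12$ and $m\in\tfrac12+\mathbb{Z}$, this yields the stated range $1/2\leq m\leq n_2-n_1$ with $m\in 1/2+\mathbb{Z}$.

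The only real bookkeeping issue, and hence the main pitfall, is keeping the half-integer/parity conventions consistent when invoking the holomorphic--holomorphic product expansion for non-integer weights: the statement of that expansion must be valid for $n_1\in\tfrac12+\mathbb{Z}_{\geq 0}$ and $m\in\tfrac12+\mathbb{Z}_{\geq 0}$ with $n_1+m\in\mathbb{Z}$, so that the target label $p=n_2\in\mathbb{Z}$ has the correct parity. No computation beyond $G$-invariance of correlators is required.
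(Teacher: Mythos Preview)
Your proof is correct and follows essentially the same approach as the paper: the center argument to eliminate $\mathbb{C}$, $\mathscr{C}_{\lambda_k^{(0)}}$, and integer-weight discrete series, followed by the three-function-correlator/$G$-invariant-functional analysis (the content of Lemma~3.11 in \cite{Kravchuk:2021akc}) to rule out $\mathscr{D}_m$ and restrict the range of $m$ for $\bar{\mathscr{D}}_m$. One minor remark: in your final paragraph you only discuss the case $n_1\in\tfrac12+\mathbb{Z}$, $n_2\in\mathbb{Z}$, but the hypothesis $2(n_1+n_2)\equiv 1\pmod 2$ also allows $n_1\in\mathbb{Z}$, $n_2\in\tfrac12+\mathbb{Z}$; the parity check $n_2-n_1-m\in\mathbb{Z}$ goes through identically in that case as well.
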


\begin{proof}
    The proof is similar to the above one.
 \end{proof}

\begin{prop}
\begin{equation}
P_{\mathscr{P}^{-}_{\lambda_{k}^{(1/2)}}}\left(\mathscr{O}_{n_1,a}(z_1)\widetilde{\mathscr{O}}_{n_2,b}(z_2)\right)= \sum_{r=1}^{d'_k} s_{(n_1;a)(n_2;b)}^{k;r}\kappa^{(1/2)}_k\left(e_r \otimes\widetilde{C}^{(1/2)}_{k}(z_1,z_2)\right).
\end{equation}
   Here  $\kappa^{(1/2)}_k$ is the unitary isomorphism between $\mathbb{C}^{d'_k}\otimes \mathcal{P}^{-}_{i\nu}$ and $\mathscr{P}^{-}_{\lambda_{k}^{(1/2)}}$, where $\lambda_{k}^{(1/2)}=1/4+\nu^2$ and $\widetilde{C}^{(1/2)}_{k}(z_1,z_2)\in \mathcal{P}^{-}_{i\nu}$ is given by 
   \begin{equation}
       \widetilde{C}^{(1/2)}_{k}(z_1,z_2)(z):=\frac{N_{\Delta_k}z_2^{-2n}z_0^{n_2-n_1}}{\left(1-z_1z_2^{-1}\right)^{n_1+n_2-\Delta_k}\left(1-z_1z_0^{-1}\right)^{n_1-n_2+\Delta_k}\left(1-z_2^{-1}z_0\right)^{n_2-n_1+\Delta_k}}.
   \end{equation}
\end{prop}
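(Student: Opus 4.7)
The strategy mirrors the proof of the analogous statement for $P_{\mathscr{C}_{\lambda_k^{(0)}}}$ (Lemma 3.12 of \cite{Kravchuk:2021akc}), with the essential twist that the center of $\mathrm{SL}(2,\mathbb{R})$ acts as $-1$ on the product $\mathscr{O}_{n_1,a}(z_1)\widetilde{\mathscr{O}}_{n_2,b}(z_2)$ precisely when $2(n_1+n_2)$ is odd. This nontrivial central character isolates the half-integer principal series $\mathscr{P}^{-}_{\lambda_k^{(1/2)}}$: both $\mathbb{C}$ and $\mathscr{C}_{\lambda_k^{(0)}}$ carry a trivial central character and are therefore killed by the projection. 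Combined with the preceding lemma, which excludes all other continuous-series contributions, one may write
\begin{equation*}
P_{\mathscr{P}^{-}_{\lambda_{k}^{(1/2)}}}\!\bigl(\mathscr{O}_{n_1,a}(z_1)\widetilde{\mathscr{O}}_{n_2,b}(z_2)\bigr)=\sum_{r=1}^{d'_k}\kappa^{(1/2)}_k\!\bigl(e_r\otimes v_r(z_1,z_2)\bigr),
\end{equation*}
with unknown smooth vector $v_r(z_1,z_2)\in \mathcal{P}^{-,\infty}_{i\nu}$.

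The next step is to pin down $v_r$ by $G$-equivariance. Because $\kappa^{(1/2)}_k$ intertwines the $G$-actions and $P_{\mathscr{P}^{-}_{\lambda_k^{(1/2)}}}$ commutes with $G$, the behavior of $v_r(z_1,z_2)$ under any $g\in G$ is completely determined by the known transformation laws of the coherent states $\mathscr{O}_{n_1,a}(z_1)$ and $\widetilde{\mathscr{O}}_{n_2,b}(z_2)$ together with the explicit $G$-action on $\mathcal{P}^{-}_{i\nu}$ recalled in the definitions above. Evaluating $v_r(z_1,z_2)$ pointwise at $z_0\in\partial\mathbb{D}$ yields a function of three points $(z_1,z_2,z_0)\in\mathbb{D}\times\mathbb{D}'\times\partial\mathbb{D}$ on which $G$ acts with open free orbits, so the functional form is uniquely fixed up to an overall scalar $s^{k;r}_{(n_1;a)(n_2;b)}$. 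A direct check then verifies that
\begin{equation*}
\widetilde{C}^{(1/2)}_k(z_1,z_2)(z_0)=\frac{N_{\Delta_k}\,z_2^{-2n_2}\,z_0^{n_2-n_1}}{\left(1-z_1z_2^{-1}\right)^{n_1+n_2-\Delta_k}\left(1-z_1z_0^{-1}\right)^{n_1-n_2+\Delta_k}\left(1-z_2^{-1}z_0\right)^{n_2-n_1+\Delta_k}}
\end{equation*}
carries the correct pattern of weights at $z_1$, $z_2$, and $z_0$, with the normalization $N_{\Delta_k}$ matching the one used to define the coherent states in $\mathcal{P}^{-}_{i\nu}$.

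The main obstacle I anticipate is the careful bookkeeping of branches and signs. Since $\mathcal{P}^{-}_{i\nu}$ is realized by sections on the double cover of $\partial\mathbb{D}$, with basis vectors $z_0^{j}$ indexed by $j\in \tfrac12+\mathbb{Z}$, the three factors in $\widetilde{C}^{(1/2)}_k$ carry half-integer exponents and require a consistent branch choice — for instance, via analytic continuation from the configuration $z_1\to 0$, $z_2\to\infty$. One must then verify that $-\mathrm{Id}\in G$ acts as $-1$ on the resulting distribution so that it indeed lies in $\mathcal{P}^{-}_{i\nu}$ and not in $\mathcal{P}^{+}_{i\nu}$; this is precisely the consistency condition $2(n_1+n_2)\equiv 1\pmod 2$ that selects $\mathscr{P}^{-}_{\lambda_k^{(1/2)}}$ in the first place. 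Once this sign accounting is in place, the remaining verification — that the prescribed $v_r$ is a smooth vector, that pairing against $C^{\infty}(\partial\mathbb{D})$ is well-defined, and that the extracted constants $s^{k;r}_{(n_1;a)(n_2;b)}$ agree with the three-point structure constants appearing in $\langle\mathscr{O}_{n_1,a}(z_1)\widetilde{\mathscr{O}}_{n_2,b}(z_2)\,\mathscr{P}^{-}_{\lambda_k^{(1/2)}}\rangle$ — runs in direct parallel with the $\mathscr{C}_{\lambda_k^{(0)}}$ case.
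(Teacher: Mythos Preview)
Your approach is correct and follows exactly the route the paper implicitly relies on: the proposition is stated without proof, as a direct analogue of the $\mathscr{C}_{\lambda_k^{(0)}}$ case (Lemma~3.12 of \cite{Kravchuk:2021akc}), with the only new ingredient being the nontrivial central character that selects $\mathcal{P}^{-}_{i\nu}$ over $\mathcal{P}^{+}_{i\nu}$ and $\mathcal{C}_s$. Your careful attention to the half-integer branch choices and the sign under $-\mathrm{Id}$ is precisely the bookkeeping the paper leaves to the reader.
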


\begin{prop}\label{prop:discrete-t-half}
    Let $n_1>n_2$ such that $2(n_1+n_2)=1(\text{mod}\ 2)$. We have for $p\leq n_1-n_2$ and $p\in1/2+\mathbb{Z}_{\geq 0}$
        \begin{equation}
  P_{\mathscr{D}_p} \left( \mathscr{O}_{n_1,a_1}(z_1)\widetilde{\mathscr{O}}_{n_2,a_2}(z_2)\right)= \sum_{a=1}^{\ell_p} f_{(n_1;a_1)(p;a)}^{n_2;a_2}\tau_p\left(e_a\otimes \widetilde{C}_p(z_1,z_2) \right).
\end{equation}
Here $\widetilde{C}_p(z_,z_2)\in\cD_p$ is defined as 
\begin{equation}
\begin{split}
      \widetilde{C}_p(z_1,z_2)(z)&:= \sqrt{\frac{2p-1}{\pi}} \frac{1}{z_{13}^{n_1+n_2-p} z_{12}^{n_1+p-n_2} z_{32}^{n_2+p-n_1}}\,,\quad\ p>1/2\\
       \widetilde{C}_{1/2}(z_1,z_2)(z)&:= \sqrt{\frac{1}{2\pi}} \frac{1}{z_{13}^{n_1+n_2-1/2} z_{12}^{n_1+1/2-n_2} z_{32}^{n_2+1/2-n_1}}\,.
\end{split}
\end{equation}
\end{prop}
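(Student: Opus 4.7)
The plan is to mirror the proof of Proposition \ref{prop:discrete-t}, adapting the argument to the half-integer parity case $2(n_1+n_2)\equiv 1\pmod 2$ and handling the limit-of-discrete-series case $p=1/2$ separately. By the preceding lemma, the only discrete-series irreducibles contributing to the product $\mathscr{O}_{n_1,a_1}(z_1)\widetilde{\mathscr{O}}_{n_2,a_2}(z_2)$ are the $\mathscr{D}_p$ with $p\in 1/2+\mathbb{Z}_{\geq 0}$ and $1/2\leq p\leq n_1-n_2$, so I may restrict attention to such $p$.

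To extract $P_{\mathscr{D}_p}$, I will use the isomorphism $\mathscr{D}_p\simeq \mathbb{C}^{\ell_p}\otimes \cD_p$ and reconstruct the projection from its pairings with a dense subspace of $\overline{\mathscr{D}}_p^{\infty}$, spanned by the coherent states $\widetilde{\mathscr{O}}_{p,a}(z_3)$. By Proposition \ref{prop:3pointDDDbar}, the three-function correlator
\begin{equation*}
\langle \mathscr{O}_{n_1,a_1}(z_1)\,\widetilde{\mathscr{O}}_{n_2,a_2}(z_2)\,\mathscr{O}_{p,a}(z_3)\rangle
\end{equation*}
is fixed by $G$-invariance up to a single structure constant, which after relabelling of indices is precisely $f_{(n_1;a_1)(p;a)}^{n_2;a_2}$; the half-integer parity hypothesis together with $p\in 1/2+\mathbb{Z}_{\geq 0}$ ensures $n_1+p+n_2\in\mathbb{Z}$, so the center of $\mathrm{SL}(2,\mathbb{R})$ acts trivially on the product and the correlator is not forced to vanish.

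The final step is to identify $\widetilde{C}_p(z_1,z_2)\in \cD_p$ as the unique vector in $\cD_p$ whose inner product against the coherent state $\mathcal{O}_p(z_3)$ reproduces the kinematic kernel $z_{13}^{-(n_1+n_2-p)}\, z_{12}^{-(n_1+p-n_2)}\, z_{32}^{-(n_2+p-n_1)}$. For $p>1/2$ this is a standard weighted Bergman reproducing-kernel computation on $\mathbb{D}'$, using the norm \eqref{eq:norm}, and yields the normalization $\sqrt{(2p-1)/\pi}$ exactly as in the proof of Proposition \ref{prop:discrete-t}. The main obstacle, and the only genuinely new feature, is the case $p=1/2$: the norm on $\cD_{1/2}$ is a Hardy-type supremum-over-radii norm rather than a Bergman integral, so the reproducing-kernel identity must instead be derived via the Cauchy integral formula on $\partial\mathbb{D}$ (equivalently, the Szeg\H{o} kernel), producing the prefactor $\sqrt{1/(2\pi)}$ in place of the naive, and vanishing, analytic continuation of $\sqrt{(2p-1)/\pi}$. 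Once this reproducing-kernel check is established uniformly in $p$, the decomposition of the projection into the $\ell_p$ copies of $\cD_p$ inside $\mathscr{D}_p$ yields the claimed formula.
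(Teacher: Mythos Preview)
Your proposal is correct and is precisely the argument the paper has in mind: the paper states Proposition \ref{prop:discrete-t-half} without proof, treating it as the half-integer parity analogue of Proposition \ref{prop:discrete-t}, which in turn is established by the coherent-state and three-point-function machinery of \cite{Kravchuk:2021akc}. Your explicit handling of the $p=1/2$ case via the Hardy/Szeg\H{o} kernel rather than the Bergman kernel is exactly the one new ingredient needed, and correctly explains the change in normalization from $\sqrt{(2p-1)/\pi}$ to $\sqrt{1/(2\pi)}$; there is one minor slip in that you say you pair with $\widetilde{\mathscr{O}}_{p,a}(z_3)\in\overline{\mathscr{D}}_p$ but then write the correlator with $\mathscr{O}_{p,a}(z_3)$, though Proposition \ref{eq:coherentIdentity} makes these equivalent up to the factor $\bar{z}_3^{-2p}$.
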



Equipped with the above lemmas, we can study 
correlators of the form $\langle \mathscr{O}_{n_1} \mathscr{O}_{n_2} \widetilde{\mathscr{O}}_{n_2} \widetilde{\mathscr{O}}_{n_1} \rangle$. They are studied in \cite{Kravchuk:2021akc} for $n\in\mathbb{Z}_+$. They can easily be extended for half-integer $n_i$. In particular we have

\begin{thm}[Theorem $2.3$ \cite{Kravchuk:2021akc}: Extended]\label{thm:scalar}
Let $g_0(z):=g_{\{n_1,n_2,n_2,n_1\}}(z)$ be as in eq.~\eqref{eq:4point}. 
\begin{enumerate}
\item $g_0(z)$ has the following expansion, known as $s$-channel expansion:
    \begin{equation}
    g_0(z)=\sum_{p=n_1+n_2} \sum_{a=1}^{\ell_p}|f^{p,a}_{n_1,n_2}|^2 \mathcal{G}_p(n_1,n_2,n_2,n_1;z)\,,
\end{equation}
where
\begin{equation}
\mathcal{G}_p(n_1,n_2,n_2,n_1;z):=z^{p}{}_2F_{1}\left(p-n_{12},p-n_{12},2p,z\right).
\end{equation}

The $s$-channel sum and its derivatives converge uniformly on compact subsets of $\mathbb{C}\setminus(1,\infty)$. Furthermore, $f^{p,a}_{n_1,n_2}\in\mathbb{C}$ and satisfies 
\begin{equation}
    f^{p}_{n,n}=0\,,\ \text{if}\ p-2n=1(\text{mod}\ 2).
\end{equation}
\item $g_0(z)$ has the following expansion, known as $t$-channel expansion:
\begin{equation}
    g_0(z)=\frac{z^{n_1+n_2}}{(1-z)^{2n_2}}\left(1+\sum_{k=1}^{\infty}\sum_{a=1}^{d_k} c_{n_1,n_1}^{k;a}c_{n_2,n_2}^{k;a}\mathcal{H}_{\Delta_k^{(0)}}\left(n_1,n_2,n_2,n_1;z\right)\right),
\end{equation}
where
\begin{equation}
\mathcal{H}_{\Delta}\left(n_1,n_2,n_2,n_1;z\right):={ }_2 F_1\left(\Delta,1-\Delta, 1, \frac{z}{z-1}\right),
\end{equation}
and 
 $c_{n_i,n_i}^{k;a}\in\mathbb{R}$, $\Delta_k^{(0)}=\frac{1}{2}+i\sqrt{\lambda_k^{(0)}-1/4}$.
The $t$-channel sum and its derivatives converge uniformly on compact subsets of $\mathbb{C}\setminus(1,\infty)$.
\end{enumerate}
\end{thm}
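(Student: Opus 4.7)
My plan is to mirror the proof of Theorem 2.3 in \cite{Kravchuk:2021akc}, which handles the integer-weight case, and check that every ingredient carries over to half-integer $n_1, n_2$. At the outset I would observe that the 4-point function has total spin zero (two $\mathscr{O}$'s and two $\widetilde{\mathscr{O}}$'s of matching weights), so it is a well-defined $G$-invariant object and eq.~\eqref{eq:4point} applies; the problem reduces to computing $g_0(z)$.

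For the $s$-channel, I would insert the product expansion of $\mathscr{O}_{n_1, a_1}(z_1)\mathscr{O}_{n_2, a_2}(z_2)$ derived earlier: only the discrete irreps $\mathscr{D}_p$ with $p - n_1 - n_2 \in \mathbb{Z}_{\geq 0}$ appear, with coefficients $f^{p;a}_{(n_1;a_1)(n_2;a_2)}$ matching those of the three-point function in Proposition \ref{prop:3pointDDDbar}. Using orthogonality of the decomposition in Proposition \ref{decomp}, the 4-point function reduces to a sum over $p$ and multiplicity index $a$ of $|f|^2$ times an inner product of coherent states inside $\mathcal{D}_p$. Stripping the kinematic prefactor from eq.~\eqref{eq:4point} leaves the conformal block $\mathcal{G}_p$; to identify it with $z^p {}_2F_1(p - n_{12}, p - n_{12}, 2p, z)$, I would resum descendants in the $L_0$-eigenbasis of $\mathcal{D}_p$ using the action of $L_{\pm 1}$ on coherent states. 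The selection rule $f^p_{n,n} = 0$ when $p - 2n$ is odd follows from the exchange symmetry $z_1 \leftrightarrow z_2$: the kernel $C_p(z_1, z_2)$ picks up a sign $(-1)^{2n-p}$ while the left-hand side is manifestly symmetric, forcing odd parities to vanish.

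For the $t$-channel, I would instead pair $\mathscr{O}_{n_1}(z_1)$ with $\widetilde{\mathscr{O}}_{n_1}(z_4)$ and $\mathscr{O}_{n_2}(z_2)$ with $\widetilde{\mathscr{O}}_{n_2}(z_3)$. Both OPEs have weight zero and the center of $\mathrm{SL}(2, \mathbb{R})$ acts trivially on each, so the relevant lemma allows only the trivial irrep and the scalar continuous series $\mathscr{C}_{\lambda_k^{(0)}}$; crucially, $\mathscr{P}^{-}_{\lambda_k^{(1/2)}}$ is excluded because its center acts by $-1$. Orthogonality of the $\mathscr{C}_{\lambda_k^{(0)}}$ summands then yields the bilinear expression in $c_{n_1, n_1}^{k;a} c_{n_2, n_2}^{k;a}$, and the descendant sum resums to $\mathcal{H}_{\Delta_k^{(0)}}$ exactly as in the integer case. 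Reality of $c^{k;a}_{n_i, n_i}$ follows by conjugating the projection $P_{\mathscr{C}_{\lambda_k^{(0)}}}(\mathscr{O}_{n_i, a}(z_1)\widetilde{\mathscr{O}}_{n_i, a}(z_2))$ using Proposition \ref{eq:coherentIdentity} and choosing a real orthonormal basis in the multiplicity space $\mathbb{C}^{d_k}$; the kernel $\widetilde{C}^{(0)}_k$ satisfies the matching reality property, so the coefficient equals its own conjugate.

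Convergence with all derivatives on compact subsets of $\mathbb{C} \setminus (1, \infty)$ is inherited from convergence of the OPE in the Fr\'echet topology of smooth vectors in $L^2(\widetilde{\Gamma} \backslash G)$, an argument in \cite{Kravchuk:2021akc} that uses only compactness of $\widetilde{\Gamma} \backslash G$ and the Casimir-eigenvalue structure, both unchanged by the passage to half-integer weights. The main technical step I would expect to require care is the sign bookkeeping induced by the nontrivial action of the center on half-integer-weight coherent states, especially when verifying the exchange-symmetry selection rule and when choosing a consistent reality structure on the multiplicity spaces; beyond this, no substantively new analytic difficulty appears compared to the integer-weight proof.
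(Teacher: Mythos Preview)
Your proposal is correct and matches the paper's approach: the paper does not spell out a proof but labels the theorem ``[Theorem 2.3 \cite{Kravchuk:2021akc}: Extended]'', indicating precisely the strategy you describe---rerun the original argument using the product-expansion lemmas and coherent-state machinery already assembled in Section~\ref{sec:constraints}, and verify that the half-integer case introduces only sign bookkeeping from the center action. Your identification of the key checkpoints (exclusion of $\mathscr{P}^{-}$ in the $t$-channel via the center, the exchange-symmetry selection rule, reality of $c^{k;a}_{n_i,n_i}$, and Fr\'echet convergence) is exactly what the extension requires.
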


\begin{thm}[Theorem $2.3$ \cite{Kravchuk:2021akc}: Extended]\label{thm:spinor}
Let $g_1(z):=g_{\{n_1,n_2,n_1,n_2\}}(z)$ be as in eq.~\!\eqref{eq:4point} with $n_1> n_2$ and $n_1\in\mathbb{Z}_+$, $2n_2=1(\text{mod}\ 2)$.
\begin{enumerate}
\item $g_1(z)$ has the following expansion, known as $s$-channel expansion:
    \begin{equation}
    g_1(z)=\sum_{p=n_1+n_2} \sum_{a=1}^{\ell_p}(-1)^{p-n_1-n_2}|f^{p,a}_{n_1,n_2}|^2 \mathcal{G}_p(n_1,n_2,n_1,n_2;z)\,,
\end{equation}
where
\begin{equation}
\mathcal{G}_p(n_1,n_2,n_1,n_2;z):=z^{p}{}_2F_{1}\left(p-n_{12},p+n_{12},2p,z\right).
\end{equation}

The $s$-channel sum and its derivatives converge uniformly on compact subsets of $\mathbb{C}\setminus(1,\infty)$. Furthermore, $f^{p,a}_{n_1,n_2}\in\mathbb{C}$ and satisfies 
\begin{equation}
    f^{p}_{n,n}=0\,,\ \text{if}\ p-2n=1(\text{mod}\ 2)\,. 
\end{equation}
\item $g_1(z)$ has the following expansion, known as $t$-channel expansion:
\begin{equation}
\begin{split}
    g_1(z)&=\frac{z^{n_1+n_2}}{(1-z)^{n_1+n_2}}\bigg(\sum_{k=1}^{\infty}\sum_{a=1}^{d'_k} |s_{n_1,n_2}^{k;a}|^2\mathcal{H}_{\Delta_k^{(1/2)}}\left(n_1,n_2,n_1,n_2;z\right)+ \\
&+\sum_{0<m\leq n_1-n_2}\ \sum_{a=1}^{\ell_m}\ |f^{n_1}_{n_2,(m;a)}|^2\mathcal{H}_{m}\left(n_1,n_2,n_1,n_2;z\right)\bigg),
    \end{split}
\end{equation}
where
\begin{equation}
\mathcal{H}_{\Delta}\left(n_1,n_2,n_1,n_2;z\right):=(1-z)^{n_{12}}{ }_2 F_1\left(\Delta+n_{21},1-\Delta+n_{21} , 1 ,\frac{z}{z-1}\right),
\end{equation}
and 
 $s_{n_1,n_2}^{k;a}, f^{n_1}_{n_2,(m;a)}\in\mathbb{C}$, $\Delta_k^{(1/2)}=\frac{1}{2}+i\sqrt{\lambda_k^{(1/2)}-1/4}$.
The $t$-channel sum and its derivatives converge uniformly on compact subsets of $\mathbb{C}\setminus(1,\infty)$.
\end{enumerate}
\end{thm}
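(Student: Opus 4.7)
The plan is to adapt the proof strategy of Theorem 2.3 of \cite{Kravchuk:2021akc} to the present mixed integer/half-integer setting, tracking where the new ingredients (the half-integer principal series $\mathscr{P}^{-}$ and discrete series with half-integer weights) enter. Since Theorem \ref{thm:spinor} differs from the original statement only in the arithmetic of the weights and the ordering of the operators, I expect the structural skeleton of the argument to be unchanged, with the modifications being essentially combinatorial.

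For the $s$-channel, the first step is to substitute the product expansion of $\mathscr{O}_{n_1,a_1}(z_1)\mathscr{O}_{n_2,a_2}(z_2)$ into the four-point function. Each term is labelled by a holomorphic discrete series $\mathscr{D}_p$ with $p\geq n_1+n_2$ and yields, via the kernel $C_p(z_1,z_2)$, a three-point function of $\mathscr{O}_{p,a}$ with $\widetilde{\mathscr{O}}_{n_1,a_1'}(z_3)$ and $\widetilde{\mathscr{O}}_{n_2,a_2'}(z_4)$ of the type of Proposition \ref{prop:3pointDDDbar}. After stripping the kinematic prefactor from eq.~\eqref{eq:4point}, this produces the claimed $\mathcal{G}_p(n_1,n_2,n_1,n_2;z)$. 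The appearance of $p+n_{12}$ (rather than $p-n_{12}$) in one of the hypergeometric parameters, when compared to Theorem \ref{thm:scalar}, traces back to the reordering of the antiholomorphic operators $\widetilde{\mathscr{O}}_{n_1}(z_3)\widetilde{\mathscr{O}}_{n_2}(z_4)$. The sign $(-1)^{p-n_1-n_2}$ and the vanishing constraint on $f^p_{n,n}$ come from the parity action of the center of $\mathrm{SL}(2,\mathbb{R})$ on vectors in $\mathscr{D}_p$, which is nontrivial for odd $2p$ and thus forces the $\mathscr{O}_{n,a}\mathscr{O}_{n,a}$ OPE to project trivially onto such $\mathscr{D}_p$.

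For the $t$-channel, I would instead apply an OPE to $\mathscr{O}_{n_1}(z_1)\widetilde{\mathscr{O}}_{n_2}(z_4)$. Since $2(n_1+n_2)\equiv 1\,(\mathrm{mod}\,2)$, the lemmas above restrict the supported irreducibles to $\mathscr{P}^{-}_{\lambda_k^{(1/2)}}$ and to $\mathscr{D}_m$ with $m\in 1/2+\mathbb{Z}$, $1/2\leq m\leq n_1-n_2$. Inserting the respective projection formulas into the four-point function yields the two terms in the $t$-channel expansion of $g_1$, with the prefactor $(1-z)^{n_{12}}$ arising from converting the kinematic factor of eq.~\eqref{eq:4point} into the $t$-channel form, and with the parameters $\Delta+n_{21}, 1-\Delta+n_{21}$ in $\mathcal{H}_\Delta$ reflecting the unequal weights of the two operators in the OPE pair. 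The coefficients $|s_{n_1,n_2}^{k;a}|^2$ and $|f^{n_1}_{n_2,(m;a)}|^2$ are manifestly non-negative since they equal squared norms of OPE coefficients pulled back through the unitary isomorphisms $\kappa_k^{(1/2)}$ and $\tau_m$. Uniform convergence on compact subsets of $\mathbb{C}\setminus(1,\infty)$, for both channels and their derivatives, follows from the Cauchy--Schwarz argument of \cite{Kravchuk:2021akc}: partial sums of the spectral decomposition are dominated by $\|F_1F_2\|_{L^2}$, which is finite because $F_1F_2\in C^\infty(\widetilde{\Gamma}\backslash G)\subset L^2$, and derivative bounds come from the holomorphy of the coherent-state map into the Fréchet space of smooth vectors.

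The main obstacle is bookkeeping: one must carefully track how signs, phases, and shifted hypergeometric parameters arise through the unitary isomorphisms $\tau_p$, $\bar\tau_p$, $\kappa_k^{(1/2)}$ when $n_1\neq n_2$ and when half-integer weights are allowed, and verify that the discrete sum over $1/2\leq m\leq n_1-n_2$ in the $t$-channel exactly matches the finite-dimensional obstruction identified in the lemma preceding Proposition \ref{prop:discrete-t-half}. A direct computation at the distinguished configuration $z_1=0$, $z_3=\infty$, $z_4=1$ (sending the cross-ratio to $z=z_2$) reduces each block to an explicit ${}_2F_1$ through the integral representation of the OPE kernel, and comparison with the Casimir eigenvalue equation for the four-point function fixes the remaining constants, completing the proof.
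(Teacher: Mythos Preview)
Your overall strategy matches the paper's. The paper does not write out a self-contained proof of Theorem \ref{thm:spinor}; it assembles the relevant product-expansion lemmas (the vanishing of $P_H(\mathscr{O}_{n_1}\widetilde{\mathscr{O}}_{n_2})$ for $2(n_1+n_2)\equiv 1\pmod 2$, the projection onto $\mathscr{P}^-$, and Proposition \ref{prop:discrete-t-half}) and leaves the remaining manipulations to the proof of Theorem~2.3 in \cite{Kravchuk:2021akc}. Your sketch follows exactly this route: expand $\mathscr{O}_{n_1}\mathscr{O}_{n_2}$ in $\mathscr{D}_p$ for the $s$-channel, expand $\mathscr{O}_{n_1}\widetilde{\mathscr{O}}_{n_2}$ in $\mathscr{P}^-$ and finitely many half-integer $\mathscr{D}_m$ for the $t$-channel, and invoke the $L^2$/Fr\'echet argument for convergence.

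There is one genuine error in your bookkeeping. The sign $(-1)^{p-n_1-n_2}$ in the $s$-channel and the vanishing $f^p_{n,n}=0$ for $p-2n$ odd do \emph{not} come from the center of $\mathrm{SL}(2,\mathbb{R})$: the center acts on $\mathscr{D}_p$ as $(-1)^{2p}$, which is trivial for integer $p$, yet the vanishing persists there. Both facts come from the swap symmetry of the three-point function. Since the correlator is a commutative pointwise product, Proposition \ref{prop:3pointDDDbar} with $(z_1,k,a)\leftrightarrow(z_2,l,b)$ forces
\[
f^{p}_{(n_1;a)(n_2;b)}=(-1)^{n_1+n_2-p}\,f^{p}_{(n_2;b)(n_1;a)}.
\]
For identical operators this gives the vanishing directly. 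For $g_1$, the operators at positions $3,4$ are $(\widetilde{\mathscr{O}}_{n_1},\widetilde{\mathscr{O}}_{n_2})$ rather than $(\widetilde{\mathscr{O}}_{n_2},\widetilde{\mathscr{O}}_{n_1})$ as in $g_0$, so the $s$-channel coefficient is $f^p_{n_1,n_2}\,\overline{f^p_{n_2,n_1}}=(-1)^{p-n_1-n_2}|f^p_{n_1,n_2}|^2$, which is the origin of the sign.
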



\subsection{Spectral Identities and Linear Programming}

\begin{defn}\label{def:kernel}
Define $r:=1-m-n_1-n_2$ and $\Delta:=1/2+i\sqrt{\lambda-1/4}$. We further define 
     \begin{equation}
     \begin{split}
&\mathcal{F}_m^{(n_1,n_2,n_3,n_4)}(\lambda):=\\
&\oint \frac{dz}{2\pi i} z^{-2}\mathcal{G}_{r}(-n_{1},-n_2,-n_3,-n_4;z)\frac{z^{n_1+n_2}}{(1-z)^{n_2+n_3}}\mathcal{H}_{\Delta}(n_1,n_2,n_3,n_4;z) \,.
     \end{split}
 \end{equation}
\end{defn}

\begin{defn} 
We define $S_{p;(n_1,n_2)}:=\sum_{a=1}^{\ell_p}|f^{p,a}_{n_1,n_2}|^2$.
\end{defn}

\begin{prop}\label{prop:scalar}
Let $m\in\mathbb{Z}_{\geq 0}$. The consistency condition coming from the four function correlator, $\langle \mathscr{O}_{n_1} (z_1)\mathscr{O}_{n_2} (z_2)\widetilde{\mathscr{O}}_{n_2} (z_3)\widetilde{\mathscr{O}}_{n_1}(z_4) \rangle$ is given by 
 \begin{equation}
 S_{n_1+n_2+m;(n_1,n_2)}= \left(\mathcal{F}_m^{(n_1,n_2,n_2,n_1)}(0)+\sum_{k=1}^{\infty}\left[\sum_{a=1}^{d_k}c_{n_1,n_1}^{k;a}c_{n_2,n_2}^{k;a}\right]\mathcal{F}_m^{(n_1,n_2,n_2,n_1)}(\lambda_k^{(0)})\right).
\end{equation}
\end{prop}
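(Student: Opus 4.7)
The plan is to apply the linear functional $g \mapsto \oint \frac{dz}{2\pi i}\, z^{-2}\, \mathcal{G}_r(-n_1,-n_2,-n_2,-n_1;z)\, g(z)$, with $r = 1 - m - n_1 - n_2$, to the crossing equation that arises by equating the two expansions of $g_0(z) = g_{\{n_1,n_2,n_2,n_1\}}(z)$ provided by Theorem~\ref{thm:scalar}. First I would multiply the crossing equation by the above kernel and integrate on a small contour around $z=0$, using the uniform convergence on compact subsets of $\mathbb{C}\setminus(1,\infty)$ (Theorem~\ref{thm:scalar}) to swap the contour integral with the $s$-channel sum on one side and the $t$-channel sum on the other.

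On the $t$-channel side the identification is immediate from Definition~\ref{def:kernel}: the prefactor $\frac{z^{n_1+n_2}}{(1-z)^{2n_2}}$ is precisely the $(n_2+n_3) = 2n_2$ prefactor appearing in $\mathcal{F}_m^{(n_1,n_2,n_2,n_1)}$; the identity term corresponds to $\Delta=0$, i.e.\ $\lambda=0$, where $\mathcal{H}_0(\cdot;z)\equiv 1$, so it contributes $\mathcal{F}_m^{(n_1,n_2,n_2,n_1)}(0)$; and each spectral block contributes $c^{k;a}_{n_1,n_1}c^{k;a}_{n_2,n_2}\,\mathcal{F}_m^{(n_1,n_2,n_2,n_1)}(\lambda_k^{(0)})$. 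This reproduces exactly the right-hand side of the proposition.

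The nontrivial step is the $s$-channel side, where I need the orthogonality
\[
\oint \frac{dz}{2\pi i}\, z^{-2}\, \mathcal{G}_r(-n_1,-n_2,-n_2,-n_1;z)\, \mathcal{G}_p(n_1,n_2,n_2,n_1;z) \;=\; \delta_{p,\,n_1+n_2+m}.
\]
Two ranges are handled by power-counting alone, since each block is $z^{\text{leading}}$ times a power series with constant term $1$: for $p > n_1+n_2+m$ the integrand is holomorphic at the origin, so the integral vanishes; for $p = n_1+n_2+m$ the leading Laurent coefficient of the product is $z^{-1}$ with coefficient $1$. The delicate range is $n_1+n_2 \le p < n_1+n_2+m$, where I must show that the higher coefficients of the hypergeometric product conspire to kill the residue. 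I plan to argue this via a Casimir/shadow-duality argument: both $\mathcal{G}_p(n_i;z)$ and $\mathcal{G}_r(-n_i;z)$ solve the hypergeometric ODE attached to the $s$-channel quadratic Casimir with eigenvalues $p(p-1)$ and $r(r-1)$, which agree iff $p+r=1$, i.e.\ $p=n_1+n_2+m$; the bilinear form $(f,g)\mapsto \oint (2\pi i)^{-1} z^{-2} fg\,dz$ makes this Casimir operator symmetric after an integration by parts (boundary terms vanish on the closed contour), so distinct eigenvalues are orthogonal. As a sanity check, for $m=1$, $n_1=n_2=1$ one verifies directly that $(1-z+\tfrac{z^2}{6})(1+z+\tfrac{9}{10}z^2+\cdots)$ has vanishing coefficient of $z$, so the residue indeed cancels.

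The main obstacle is this Casimir self-adjointness verification: one has to check carefully that the weight $z^{-2}$, together with the shift $n_i\mapsto -n_i$ in the parameters of $\mathcal{G}_r$, is exactly what is needed to turn the second-order hypergeometric operator into its formal adjoint with no boundary contributions on the small circle about $z=0$. Once this orthogonality is in hand, combining it with the $t$-channel computation above yields the claimed identity for $S_{n_1+n_2+m;(n_1,n_2)}$.
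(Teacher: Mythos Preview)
Your approach is essentially identical to the paper's: equate the $s$- and $t$-channel expansions of Theorem~\ref{thm:scalar}, integrate against $z^{-2}\mathcal{G}_r(-n_1,-n_2,-n_2,-n_1;z)$, swap sum and integral by uniform convergence, and invoke the orthogonality relation $\oint \frac{dz}{2\pi i}\,z^{-2}\,\mathcal{G}_{1-m_1-n_1-n_2}\,\mathcal{G}_{m_2+n_1+n_2}=\delta_{m_1,m_2}$. The only difference is that the paper simply quotes this orthogonality as a known identity, whereas you sketch a proof of it via the Casimir self-adjointness argument; your sketch is correct in spirit and is the standard CFT justification for such relations.
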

\begin{proof}
We recall Theorem~\!\ref{thm:scalar} and equate the s-channel and t-channel expansions and integrate against $\mathcal{G}_r$ introduced in \ref{def:kernel}, followed by exchanging the sum and integral using uniform convergence of the expansion. Noting that 
$$ 
 \oint \frac{dz}{2\pi i}z^{-2}\mathcal{G}_{1-m_1-n_1-n_2}(-n_1,-n_2,-n_3,-n_4)\mathcal{G}_{m_2+n_1+n_2}(n_1,n_2,n_3,n_4)=\delta_{m_1,m_2}\,,$$ the proposition follows. 
\end{proof}

\begin{prop}\label{prop:spinor}
Let $m\in\mathbb{Z}_{\geq 0}$, $n_1\in\mathbb{N}$ and $2n_2=1(\text{mod}\ 2)$. The consistency condition coming from $\langle \mathscr{O}_{n_1} (z_1)\mathscr{O}_{n_2} (z_2)\widetilde{\mathscr{O}}_{n_1}(z_3) \widetilde{\mathscr{O}}_{n_2}(z_4) \rangle$ is given by 
 \begin{equation}
 \begin{split}
     (-1)^{m} S_{n_1+n_2+m;(n_1,n_2)}&= \bigg(\sum_{k=1}^{\infty}\left[\sum_{a=1}^{d'_k}|s_{n_1,n_2}^{k;a}|^2\right]\mathcal{F}_m^{(n_1,n_2,n_1,n_2)}(\lambda_k^{(1/2)}) +\\
&+\sum_{\underset{q\in 1/2+\mathbb{Z}_{\geq 0}}{1/2\leq q\leq n_1-n_2}}\ \sum_{a=1}^{\ell_q}\ |f^{n_1}_{n_2,(q;a)}|^2\mathcal{F}_m^{(n_1,n_2,n_1,n_2)}(q(1-q))\bigg).
 \end{split}
\end{equation}
\end{prop}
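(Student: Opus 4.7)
The proof plan is to mirror the strategy used for Proposition \ref{prop:scalar}, adapting it to the mixed-weight correlator $\langle\mathscr{O}_{n_1}(z_1)\mathscr{O}_{n_2}(z_2)\widetilde{\mathscr{O}}_{n_1}(z_3)\widetilde{\mathscr{O}}_{n_2}(z_4)\rangle$ via Theorem \ref{thm:spinor} instead of Theorem \ref{thm:scalar}. The idea is: (i) write both s-channel and t-channel expansions for $g_1(z)$, (ii) equate them, (iii) integrate against a suitable dual kernel that diagonalizes the s-channel, (iv) exchange sum and integral using the uniform convergence guaranteed by Theorem \ref{thm:spinor}.

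Concretely, I would take the s-channel expansion
\[
g_1(z)=\sum_{p\in n_1+n_2+\mathbb{Z}_{\geq 0}}(-1)^{p-n_1-n_2}S_{p;(n_1,n_2)}\,\mathcal{G}_p(n_1,n_2,n_1,n_2;z)
\]
and the t-channel expansion
\[
g_1(z)=\frac{z^{n_1+n_2}}{(1-z)^{n_1+n_2}}\Bigl(\sum_{k}\sum_{a}|s_{n_1,n_2}^{k;a}|^{2}\,\mathcal{H}_{\Delta_k^{(1/2)}}(\cdots;z)+\!\!\sum_{\substack{1/2\leq q\leq n_1-n_2\\ q\in1/2+\mathbb{Z}_{\geq 0}}}\sum_{a}|f^{n_1}_{n_2,(q;a)}|^{2}\,\mathcal{H}_{q}(\cdots;z)\Bigr),
\]
equate them, multiply through by $z^{-2}\mathcal{G}_{1-m-n_1-n_2}(-n_1,-n_2,-n_1,-n_2;z)$, and integrate over a small positively oriented contour around $z=0$. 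Invoking the uniform convergence on compact subsets stated in Theorem \ref{thm:spinor} allows the exchange of sum and contour integral on each side. On the s-channel side, the orthogonality relation used in the proof of Proposition \ref{prop:scalar},
\[
\oint\frac{dz}{2\pi i}z^{-2}\mathcal{G}_{1-m_1-n_1-n_2}(-n_1,-n_2,-n_1,-n_2;z)\,\mathcal{G}_{m_2+n_1+n_2}(n_1,n_2,n_1,n_2;z)=\delta_{m_1,m_2},
\]
then picks out the single term $p=n_1+n_2+m$, producing $(-1)^{m}S_{n_1+n_2+m;(n_1,n_2)}$ on the left. On the t-channel side, the integral of each $\mathcal{H}_{\Delta}$ block against the same kernel gives precisely $\mathcal{F}_m^{(n_1,n_2,n_1,n_2)}(\lambda)$, with $\lambda=\lambda_k^{(1/2)}$ for the continuous $\mathscr{P}^{-}$ contribution and $\lambda=q(1-q)$ for the discrete $\mathscr{D}_q$ contribution (since the Casimir of $\mathscr{D}_q$ is $q(1-q)$, so $\Delta=1/2+i\sqrt{q(1-q)-1/4}=q$ reproduces the correct conformal block $\mathcal{H}_q$ appearing in Theorem \ref{thm:spinor}).

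The computation is almost entirely parallel to Proposition \ref{prop:scalar}, so the only genuinely new features are the sign $(-1)^{p-n_1-n_2}=(-1)^m$ coming from the alternating s-channel of Theorem \ref{thm:spinor} and the presence of finitely many extra discrete terms in the t-channel. I expect the main technical care to be in (a) verifying that the contour can be chosen so that it encloses $z=0$ while avoiding $z=1$ and the branch cut $(1,\infty)$, so that both the s-channel and t-channel expansions converge uniformly on it, justifying term-by-term integration, and (b) checking that the discrete t-channel blocks $\mathcal{H}_q$ really do arise as $\mathcal{F}_m^{(n_1,n_2,n_1,n_2)}(q(1-q))$ under the substitution $\Delta=q$ in Definition \ref{def:kernel}. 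Neither is a serious obstacle: the former follows from Theorem \ref{thm:spinor}, and the latter is a direct algebraic check using $\Delta(1-\Delta)=q(1-q)$.
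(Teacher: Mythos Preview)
Your proposal is correct and follows essentially the same approach as the paper, which simply states that the result follows from Theorem \ref{thm:spinor} with a proof similar to that of Proposition \ref{prop:scalar}. You have in fact supplied more detail than the paper does, correctly identifying the alternating sign $(-1)^m$ from the s-channel, the additional discrete t-channel terms, and the justification via uniform convergence and the orthogonality relation.
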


\begin{proof}
     It follows from Theorem \ref{thm:spinor}. The proof is similar to that of Proposition \ref{prop:scalar}.
\end{proof}

 In what follows, we consider a hyperbolic spin orbifold supporting homolomorphic modular forms of weight $n$ and $2n$ such that $n=1(\text{mod } 2)$. If $\ell_{n/2}>1$ and/or $\ell_n>1$, we choose a particular $\mathscr{O}_{n} $ and a particular $\mathscr{O}_{n/2} $  such that $f_{n/2,n/2}^n\neq 0$. One can always consider a bigger system correlator using explicitly the value of $\ell_n$ and $\ell_{n/2}$, however, in order to find out a universal bound irrespective of $\ell_n$ and $\ell_{n/2}$, we choose not to do so. This leads us to consider the system of following correlators $\langle \mathscr{O}_{n} \mathscr{O}_{n} \widetilde{\mathscr{O}}_{n}\widetilde{\mathscr{O}}_{n}\rangle$ , $\langle \mathscr{O}_{\frac{n}{2}} \mathscr{O}_{\frac{n}{2}} \widetilde{\mathscr{O}}_{\frac{n}{2}}\widetilde{\mathscr{O}}_{\frac{n}{2}} \rangle$ , $\langle \mathscr{O}_{n} \mathscr{O}_{\frac{n}{2}} \widetilde{\mathscr{O}}_{\frac{n}{2}}\widetilde{\mathscr{O}}_{n}\rangle$ and $\langle \mathscr{O}_{n} \mathscr{O}_{\frac{n}{2}} \widetilde{\mathscr{O}}_{n}\widetilde{\mathscr{O}}_{\frac{n}{2}} \rangle$, where we have suppressed the degeneracy index $a$ in $(n,a)$ and so on. The explicit crossing equations read
 \begin{prop}\label{crossingEqs}
 Let $m\in \mathbb{Z}_{\geq 0}$, $n\in\mathbb{N}$. We have
      \begin{equation*}
      \begin{split}
           S_{2n+2m;(n,n)}&= \left(\mathcal{F}_{2m}^{(n,n,n,n)}(0)+\sum_{k=1}^{\infty}\left[\sum_{a=1}^{d_k}c_{n,n}^{k;a}c_{n,n}^{k;a}\right]\mathcal{F}_{2m}^{(n,n,n,n)}(\lambda_k^{(0)})\right),\\
           0&= \left(\mathcal{F}_{2m+1}^{(n,n,n,n)}(0)+\sum_{k=1}^{\infty}\left[\sum_{a=1}^{d_k}c_{n,n}^{k;a}c_{n,n}^{k;a}\right]\mathcal{F}_{2m+1}^{(n,n,n,n)}(\lambda_k^{(0)})\right),\\
           S_{n+2m;(n/2,n/2)}&= \left(\mathcal{F}_{2m}^{(n/2,n/2,n/2,n/2)}(0)+\sum_{k=1}^{\infty}\left[\sum_{a=1}^{d_k}c_{n/2,n/2}^{k;a}c_{n/2,n/2}^{k;a}\right]\mathcal{F}_{2m}^{(n/2,n/2,n/2,n/2)}(\lambda_k^{(0)})\right),\\ 
           0&= \left(\mathcal{F}_{2m+1}^{(n/2,n/2,n/2,n/2)}(0)+\sum_{k=1}^{\infty}\left[\sum_{a=1}^{d_k}c_{n/2,n/2}^{k;a}c_{n/2,n/2}^{k;a}\right]\mathcal{F}_{2m+1}^{(n/2,n/2,n/2,n/2)}(\lambda_k^{(0)})\right),\\ 
              S_{3n/2+m;(n,n/2)}&= \left(\mathcal{F}_m^{(n,n/2,n/2,n)}(0)+\sum_{k=1}^{\infty}\left[\sum_{a=1}^{d_k}c_{n/2,n/2}^{k;a}c_{n,n}^{k;a}\right]\mathcal{F}_m^{(n,n/2,n/2,n)}(\lambda_k^{(0)})\right),\\
           (-1)^{m} S_{3n/2+m;(n,n/2)}&= \bigg(\sum_{k=1}^{\infty}\left[\sum_{a=1}^{d'_k}|s_{n,n/2}^{k;a}|^2\right]\mathcal{F}_m^{(n,n/2,n,n/2)}(\lambda_k^{(1/2)}) +\\
&+\sum_{0<q\leq n/2}\ \sum_{a=1}^{\ell_q}\ |f^{n}_{n/2,(q;a)}|^2\mathcal{F}_m^{(n,n/2,n,n/2)}(q(1-q))\bigg).
      \end{split}
\end{equation*}
 \end{prop}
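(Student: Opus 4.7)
The plan is to deduce each of the six listed identities by direct application of Proposition~\ref{prop:scalar} or Proposition~\ref{prop:spinor} to one of the four four-point functions of coherent states, using the selection rules recorded in the preceding lemmas to identify which representations contribute in each channel. For the two ``diagonal'' correlators $\langle \mathscr{O}_n \mathscr{O}_n \widetilde{\mathscr{O}}_n \widetilde{\mathscr{O}}_n\rangle$ and $\langle \mathscr{O}_{n/2} \mathscr{O}_{n/2} \widetilde{\mathscr{O}}_{n/2} \widetilde{\mathscr{O}}_{n/2}\rangle$, Proposition~\ref{prop:scalar} applies with $(n_1,n_2)=(n,n)$ and $(n/2,n/2)$ respectively. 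This yields equations for $S_{2n+m;(n,n)}$ and $S_{n+m;(n/2,n/2)}$ for every $m\in\mathbb{Z}_{\geq 0}$. Since Proposition~\ref{prop:scalar} further asserts that $f^p_{n,n}=0$ whenever $p-2n$ is odd, the left-hand side vanishes identically when $m$ is odd; separating $m=2m'$ and $m=2m'+1$ therefore produces the first four equations of the proposition.

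For the mixed correlator $\langle \mathscr{O}_n \mathscr{O}_{n/2} \widetilde{\mathscr{O}}_{n/2} \widetilde{\mathscr{O}}_n\rangle = g_{\{n,n/2,n/2,n\}}$, the $t$-channel OPE pairs $\mathscr{O}_n(z_1)$ with $\widetilde{\mathscr{O}}_n(z_4)$ and $\mathscr{O}_{n/2}(z_2)$ with $\widetilde{\mathscr{O}}_{n/2}(z_3)$; both pairs carry equal weight, so the centre of $\mathrm{SL}(2,\mathbb{R})$ acts trivially in this channel and only the trivial representation together with the continuous series $\mathscr{C}_{\lambda^{(0)}_k}$ (scalar Laplacian modes) can appear. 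Hence Proposition~\ref{prop:scalar} applies verbatim with $(n_1,n_2)=(n,n/2)$, and substituting directly produces the fifth equation, whose $t$-channel structure constants are the product $c^{k;a}_{n,n}c^{k;a}_{n/2,n/2}$ coming from the two independent OPE pairs.

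Finally, for $\langle \mathscr{O}_n \mathscr{O}_{n/2} \widetilde{\mathscr{O}}_n \widetilde{\mathscr{O}}_{n/2}\rangle = g_{\{n,n/2,n,n/2\}}$, the $t$-channel OPE now pairs $\mathscr{O}_n(z_1)$ with $\widetilde{\mathscr{O}}_{n/2}(z_4)$; the weight difference $n_1 - n_2 = n/2$ is half-integer, so the centre acts non-trivially and the only representations that can appear are the $\mathscr{P}^-_{\lambda^{(1/2)}_k}$ modes (Dirac spectrum) together with the half-integer-weight discrete series $\mathscr{D}_q$ in the range $1/2 \leq q \leq n/2$ with $q\in\tfrac12+\mathbb{Z}_{\geq 0}$, as dictated by the lemma preceding Proposition~\ref{prop:discrete-t-half}. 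Since $2n_2 = n$ is odd by hypothesis, Proposition~\ref{prop:spinor} is the relevant identity, and substituting $(n_1,n_2) = (n, n/2)$ delivers the sixth equation. The main subtlety throughout is the bookkeeping of the selection rules, ensuring that spurious $\mathscr{C}_{\lambda^{(0)}_k}$ or $\bar{\mathscr{D}}$ contributions do not appear where the centre-action argument forbids them; once these exclusions are verified, the six identities follow without further calculation, since Propositions~\ref{prop:scalar} and \ref{prop:spinor} have already performed the contour-integral projection onto the basis of blocks $\{\mathcal{G}_{n_1+n_2+m}\}$.
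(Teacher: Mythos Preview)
Your proof is correct and follows exactly the approach the paper takes: the paper's own proof simply states that the identities follow from Propositions~\ref{prop:scalar} and~\ref{prop:spinor} by choosing appropriate values of $n_1$ and $n_2$, and you have spelled out precisely those choices together with the selection-rule reasoning that splits the even/odd $m$ cases. One tiny attribution slip: the vanishing $f^p_{n,n}=0$ for $p-2n$ odd is recorded in Theorem~\ref{thm:scalar} rather than in Proposition~\ref{prop:scalar} itself, but this does not affect the argument.
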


 \begin{proof}
     This follows from Propositions \ref{prop:scalar} and \ref{prop:spinor} by choosing appropriate values of $n_1$ and $n_2$.
 \end{proof}

Based on the above crossing equations, let us formulate the following linear programming problems.\\

\textbf{Notation:} Given a hyperbolic spin orbifold $X$, we consider the Laplacian spectra $\{0\}\cup\{\lambda_{k}^{(0)}(X): \lambda_{k}^{(0)}(X)>0,\ k\in\mathbb{N}\}$, and the weight-$1$ automorphic Laplacian spectrum (not including the eigenvalue corresponding to a harmonic spinor if there is any) $\{\lambda_{k}^{(1/2)}(X): \lambda_{k}^{(1/2)}(X)>1/4,\ k\in\mathbb{N}\}$. Furthermore, without loss of generality, let us assume $\lambda_{k+1}^{(0)}(x)>\lambda_k^{(0)}(X)$ and $\lambda_{k+1}^{(1/2)}(X)>\lambda_k^{(1/2)}(X)$ for all $k\in\mathbb{N}$. 

\begin{linpro}[Spectral bound on Laplacian]\label{linpro0}
Given $\Lambda\in\mathbb{N}$, $\lambda_*^{(0)}\in\mathbb{R}_+$ and a hyperbolic spin orbifold 
with a holomorphic modular form of weight $n$ such that $n=1(\text{mod} 2)$, if there exists $\alpha\in \mathbb{R}^{\Lambda+1}$ with such that 
\begin{enumerate}
     \item 
$\alpha_{2m}\leq 0\,,\ \forall\ m\in\mathbb{Z}_{\geq 0}$,   \\ 
\item $\sum_{m=0}^\Lambda \alpha_{m}\mathcal{F}_m^{(n/2,n/2,n/2,n/2)}(0)=1$,\\

\item $\forall\  \lambda_k^{(0)}\geq \lambda_*^{(0)}$ we have 
\begin{equation}\label{eq:0}
    \sum_{m=0}^\Lambda \alpha_{m}\mathcal{F}_m^{(n/2,n/2,n/2,n/2)}(\lambda_k^{(0)}) \geq 0\,,
\end{equation}
\end{enumerate}
we must have $\lambda_1^{(0)}<\lambda_*^{(0)}$.
\end{linpro}
\begin{proof}
    By taking an appropriate linear combination of 3rd and 4th equation of \ref{crossingEqs}, we obtain
    \begin{equation*}
\sum_{k=1}^{\infty}\sum_{m=0}^\Lambda \alpha_{m}\mathcal{F}_m^{(n/2,n/2,n/2,n/2)}(\lambda_k^{(0)}) =-1+\sum_{m=0}^\Lambda \alpha_{2m}S_{n+2m;(n/2,n/2)}.
    \end{equation*}
     Now using the properties of $\alpha$, we show that the R.H.S of the above is strictly negative, hence so is the L.H.S. Using \eqref{eq:0}, the proposition follows.
\end{proof}
 \begin{SDP}[Spectral bound on Laplacian and Dirac: I]\label{linpro1}
Let us consider a hyperbolic spin orbifold 
, for which, $\exists\  n\in\mathbb{N}$ with $n=1(\text{mod}\ 2)$ such that the orbifold does not support any holomorphic modular form of weight strictly below $n$.\\

Given $\Lambda\in\mathbb{N}$, $\lambda_*^{(0)},\lambda_*^{(1/2)}\in\mathbb{R}_+$, if there exists $\alpha_i\in \mathbb{R}^{\Lambda+1}$ with $i=1,2,3,4$ such that 
 \begin{enumerate}
     \item 
$\alpha_{1;2m}\leq 0\,,\ \forall\ m\in\mathbb{Z}_{\geq 0}$,     \\
\item $\sum_{m=0}^{\Lambda} \left(\alpha_{1;m}\mathcal{F}_{m}^{(n,n,n,n)}(0)+\alpha_{2;m}\mathcal{F}_{m}^{(n/2,n/2,n/2,n/2)}(0)+\alpha_{3;m}\mathcal{F}_{m}^{(n,n/2,n/2,n)}(0)\right)=1$,\\
\item $\alpha_{2;2m}\leq 0\,,\ \forall\ m\in\mathbb{Z}_{\geq 0}\,,$\\
\item $\alpha_{3;m}+(-1)^m\alpha_{4;m} \leq 0\,,\ \forall\ m\in\mathbb{Z}_{\geq 0}\,,$\\
\item $\sum_{m'=0}^{\Lambda}\alpha_{4;m'} \mathcal{F}_{m'}^{(n,n/2,n,n/2)}\left(\frac{n}{2}(1-\frac{n}{2})\right)\geq 0\,,\ \forall\ m\in\mathbb{Z}_{\geq 0}\,,$\\
\item $\forall\  \lambda_k^{(1/2)}\geq \lambda_*^{(1/2)}$, we have \begin{equation}\label{eq:1p}
    \sum_{m=0}^\Lambda \alpha_{4;m}\mathcal{F}_m^{(n,n/2,n,n/2)}(\lambda_k^{(1/2)})\geq 0, \ \ 
\end{equation}
\item $\forall\  \lambda_k^{(0)}\geq \lambda_*^{(0)}$ we have 
\begin{equation}\label{eq:2p}
M(\lambda_k^{(0)}):=\begin{pmatrix}\sum_{m=0}^\Lambda \alpha_{1;m}\mathcal{F}_m^{(n,n,n,n)}(\lambda_k^{(0)})&\frac{1}{2}\sum_{m=0}^\Lambda \alpha_{3;m}\mathcal{F}_m^{(n,n/2,n/2,n)}(\lambda_k^{(0)})\\\frac{1}{2}\sum_{m=0}^\Lambda \alpha_{3;m}\mathcal{F}_m^{(n,n/2,n/2,n)}(\lambda_k^{(0)}) & \sum_{m=0}^\Lambda \alpha_{2;m}\mathcal{F}_m^{(n/2,n/2,n/2,n/2)}(\lambda_k^{(0)}) \end{pmatrix}\succeq 0,\ \ 
 \end{equation}
 \end{enumerate}
 we must have either $\lambda_1^{(0)}<\lambda_*^{(0)}$ or $\lambda_1^{(1/2)}<\lambda_*^{(1/2)}$.
 \end{SDP}

\begin{proof}
Taking appropriate linear combinations of \eqref{crossingEqs} and using the nonexistence of holomorphic modular form of weight strictly below $n$ for the orbifold under consideration, we can derive 
 \begin{equation*}
 \begin{split}
   &\sum_{k=1}^{\infty}\sum_{a=1}^{d_k} \begin{pmatrix} c^{k;a}_{n,n}, c^{k;a}_{n/2,n/2}\end{pmatrix} M(\lambda_k^{(0)})\begin{pmatrix} c^{k;a}_{n,n}\\ c^{k;a}_{n/2,n/2}\end{pmatrix}+\sum_{k=1}^{\infty}\sum_{a=1}^{d'_k}|s^{k;a}_{n,n/2}|^2\sum_{m=0}^\Lambda \alpha_{4;m}\mathcal{F}_m^{(n,n/2,n,n/2)}(\lambda_k^{(1/2)})\\
&=-1+ \sum_{m=0}^\Lambda\bigg(\alpha_{1,2m}S_{2n+2m;(n,n)}+ \alpha_{2;2m}S_{n+2m;(n/2,n/2)}-\\
&-\sum_{m'=0}^{\Lambda}\alpha_{4;m'} \mathcal{F}_{m'}^{(n,n/2,n,n/2)}\left(\frac{n}{2}\left(1-\frac{n}{2}\right)\right)\sum_{a=1}^{\ell_{n/2}}\ |f^{n}_{n/2,(n/2;a)}|^2 
+\left(\alpha_{3;m}+(-1)^m\alpha_{4;m}\right)S_{3n/2+m;(n,n/2)}\bigg)\,.\\
    \end{split}
 \end{equation*}
 Now using the properties of $\alpha$, we show that the R.H.S of the above is strictly negative, hence so is the L.H.S. Using \eqref{eq:1p} and \eqref{eq:2p}, the proposition follows.    
\end{proof}

\begin{rem}
    When $\ell_n=\ell_{n/2}=1$, the structure constant $\sum_{a=1}^{\ell_{n/2}} |f^{n}_{n/2,(n/2;a)}|^2$ appearing in the $t$-channel expansion in the last equation of \eqref{crossingEqs} is exactly same as $S_{n;(n/2,/2)}$. Thus we can formulate a stronger \texttt{SDP} problem, which we state below.
\end{rem}

\begin{SDP}[Spectral bound on Laplacian and Dirac: II]\label{linpro2}
Let us consider a hyperbolic spin orbifold 
, for which, $\exists\  n\in\mathbb{N}$ with $n=1(\text{mod}\ 2)$ such that the orbifold does not support any holomorphic modular form of weight strictly below $n$.
Finally, assume that the multiplicity of holomorphic modular forms of weight $n$ and $2n$ are respectively given by $\ell_{n/2}=1$ and $\ell_n=1$.  \\

Given $\Lambda\in\mathbb{N}$, $\lambda_*^{(0)},\lambda_*^{(1/2)}\in\mathbb{R}_+$, if there exists $\alpha_i\in \mathbb{R}^{\Lambda+1}$ with $i=1,2,3,4$ such that 
 \begin{enumerate}
     \item 
$\alpha_{1;2m}\leq 0\,,\ \forall\ m\in\mathbb{Z}_{\geq 0}$     \\
\item $\sum_{m=0}^{\Lambda} \left(\alpha_{1;m}\mathcal{F}_{m}^{(n,n,n,n)}(0)+\alpha_{2;m}\mathcal{F}_{m}^{(n/2,n/2,n/2,n/2)}(0)+\alpha_{3;m}\mathcal{F}_{m}^{(n,n/2,n/2,n)}(0)\right)=1$\\
\item $\alpha_{2;2m}-\delta_{2m,0}\sum_{m'=0}^{\Lambda}\alpha_{4;m'} \mathcal{F}_{m'}^{(n,n/2,n,n/2)}\left(\frac{n}{2}(1-\frac{n}{2})\right)\leq 0\,,\ \forall\ m\in\mathbb{Z}_{\geq 0}\,,$\\
\item $\alpha_{3;m}+(-1)^m\alpha_{4;m} \leq 0\,,\ \forall\ m\in\mathbb{Z}_{\geq 0}\,,$\\
\item $\forall\  \lambda_k^{(1/2)}\geq \lambda_*^{(1/2)}$, we have \begin{equation}\label{eq:1prime}
    \sum_{m=0}^\Lambda \alpha_{4;m}\mathcal{F}_m^{(n,n/2,n,n/2)}(\lambda_k^{(1/2)})\geq 0, \ \ 
\end{equation}
\item $\forall\  \lambda_k^{(0)}\geq \lambda_*^{(0)}$ we have 
\begin{equation}\label{eq:2prime}
M(\lambda_k^{(0)}):=\begin{pmatrix}\sum_{m=0}^\Lambda \alpha_{1;m}\mathcal{F}_m^{(n,n,n,n)}(\lambda_k^{(0)})&\frac{1}{2}\sum_{m=0}^\Lambda \alpha_{3;m}\mathcal{F}_m^{(n,n/2,n/2,n)}(\lambda_k^{(0)})\\\frac{1}{2}\sum_{m=0}^\Lambda \alpha_{3;m}\mathcal{F}_m^{(n,n/2,n/2,n)}(\lambda_k^{(0)}) & \sum_{m=0}^\Lambda \alpha_{2;m}\mathcal{F}_m^{(n/2,n/2,n/2,n/2)}(\lambda_k^{(0)}) \end{pmatrix}\succeq 0,\ \ 
 \end{equation}
 \end{enumerate}
 we must have either $\lambda_1^{(0)}<\lambda_*^{(0)}$ or $\lambda_1^{(1/2)}<\lambda_*^{(1/2)}$.
 \end{SDP}

\begin{proof}
Taking appropriate linear combinations of \eqref{crossingEqs} and using the nonexistence of holomorphic modular forms of weight strictly below $n$ for the orbifold under consideration, we can derive 
 \begin{equation*}
 \begin{split}
   &\sum_{k=1}^{\infty}\sum_{a=1}^{d_k} \begin{pmatrix} c^{k;a}_{n,n}, c^{k;a}_{n/2,n/2}\end{pmatrix} M(\lambda_k^{(0)})\begin{pmatrix} c^{k;a}_{n,n}\\ c^{k;a}_{n/2,n/2}\end{pmatrix}+\sum_{k=1}^{\infty}\sum_{a=1}^{d'_k}|s^{k;a}_{n,n/2}|^2\sum_{m=0}^\Lambda \alpha_{4;m}\mathcal{F}_m^{(n,n/2,n,n/2)}(\lambda_k^{(1/2)})\\
&=-1+ \sum_{m=0}^{\floor{\Lambda/2}}\bigg(\alpha_{1,2m}S_{2n+2m;(n,n)}+ \bigg(\alpha_{2;2m}-\delta_{2m,0}\sum_{m'=0}^{\Lambda}\alpha_{4;m'} \mathcal{F}_{m'}^{(n,n/2,n,n/2)}\left(\frac{n}{2}\left(1-\frac{n}{2}\right)\right)\bigg)\times\\
&\quad\quad\times S_{n+2m;(n/2,n/2)}\bigg)
+\sum_{m=0}^{\Lambda}\left(\alpha_{3;m}+(-1)^m\alpha_{4;m}\right)S_{3n/2+m;(n,n/2)}\,.\\
    \end{split}
 \end{equation*}
 Now using the properties of $\alpha$, we show that the R.H.S of the above is strictly negative, hence so is the L.H.S. Using \eqref{eq:1prime} and \eqref{eq:2prime}, the proposition follows.    
\end{proof}


In what follows, we will use $\mathscr{O}_n$ and $\mathscr{O}_{n/2}$ and assume that there is no holomorphic modular form of weight strictly below $n$ on the orbifold under consideration. This is clearly true for $n=1$. Furthermore, we will use a clever trick introduced in \cite{Kravchuk:2021akc} to reduce the size of the problem. The idea is to consider the correlators and do a Haar integral over $U(\ell_{n})$ and $U(\ell_{n/2})$.

\begin{prop}
For a unitary group $U(\ell)$, we have 
    \begin{equation*}
\begin{split}
\int_{U(\ell)} dU \ U_{a_1}^{a_1'}(\bar{U})_{a_2}^{a_2'}&=\frac{1}{\ell}\delta_{a_1,a_2}\delta_{a_1',a_2'},\\
\int_{U(\ell)} dU \ U_{a_1}^{a_1'}U_{a_2}^{a_2'}(\bar{U})_{a_3}^{a_3'}(\bar{U})_{a_4}^{a_4'}&=\frac{1}{\ell^2-1}\Big(\delta_{a_1,a_3}\delta_{a_2,a_4}\delta^{a_1',a_3'}\delta^{a_2',a_4'}+\delta_{a_1,a_4}\delta_{a_2,a_3}\delta^{a_1',a_4'}\delta^{a_2',a_3'}\Big)\\&\ \ -\frac{1}{\ell(\ell^2-1)}\left(\delta_{a_1,a_3}\delta_{a_2,a_4}\delta^{a_1',a_4'}\delta^{a_2',a_3'}+\delta_{a_1,a_4}\delta_{a_2,a_3}\delta^{a_1',a_3'}\delta^{a_2',a_4'}\right).
\end{split}
\end{equation*}
\end{prop}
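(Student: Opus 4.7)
The plan is to exploit the bi-invariance of the Haar measure on $U(\ell)$ under left and right translations $U\mapsto VUW$. Concretely, both integrals are instances of
\begin{equation*}
I^{i_1\ldots i_p\,j_1\ldots j_q}_{i_1'\ldots i_p'\,j_1'\ldots j_q'}=\int_{U(\ell)} U^{i_1'}_{i_1}\cdots U^{i_p'}_{i_p}\,\bar{U}^{j_1'}_{j_1}\cdots \bar{U}^{j_q'}_{j_q}\,dU,
\end{equation*}
and bi-invariance forces $I$ to lie in the $U(\ell)\times U(\ell)$-invariant subspace of the corresponding tensor power of the fundamental representation and its dual. By standard invariant theory (Schur--Weyl duality), this subspace vanishes unless $p=q$, and when $p=q$ is spanned by tensors labelled by pairs of permutations $(\sigma,\tau)\in S_p\times S_p$, where $\sigma$ pairs each $U$-upper-index with a $\bar U$-upper-index and $\tau$ does the same for the lower indices. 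The proof therefore reduces to writing each integral as an unknown linear combination of these basis tensors and solving a small linear system obtained by contracting index pairs.

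For the first integral ($p=q=1$) the invariant subspace is one-dimensional, so the integral equals $c\,\delta_{a_1,a_2}\delta^{a_1',a_2'}$ for some constant $c$. Contracting both index pairs and summing yields $\int_{U(\ell)}\operatorname{tr}(U\,U^\dagger)\,dU=\ell$ on the left and $c\,\ell^2$ on the right, forcing $c=1/\ell$.

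For the second integral ($p=q=2$) the invariant subspace is four-dimensional, spanned precisely by the four tensors displayed in the statement (corresponding to the four choices $(\sigma,\tau)\in S_2\times S_2$). The $\mathbb{Z}_2$ symmetry of permuting the two $U$-factors (or equivalently the two $\bar U$-factors) groups these tensors into the two symmetric combinations appearing in the formula, leaving only two undetermined coefficients $A$ and $B$. These are fixed by two independent trace contractions. The ``untwisted'' contraction $a_3=a_1$, $a_4=a_2$, $a_3'=a_1'$, $a_4'=a_2'$, summed, evaluates to
\begin{equation*}
\int_{U(\ell)}\Big(\sum_{a,a'}|U^{a'}_a|^2\Big)^{2} dU=\ell^2,
\end{equation*}
producing a first linear relation $(\ell^2+1)A+2\ell B=1$. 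A ``twisted'' contraction that swaps the lower-index pairing uses the unitarity identity $\sum_{a}U^{a'}_a\bar U^{b'}_a=\delta^{a'b'}$ to reduce the left side to $\ell$, giving a second relation $2\ell^2 A+(\ell^3+\ell)B=\ell$. Solving this $2\times 2$ system yields $A=1/(\ell^2-1)$ and $B=-1/[\ell(\ell^2-1)]$, reproducing the stated formula.

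The only real obstacle is careful index bookkeeping together with the check that the two trace relations form a non-degenerate system for $\ell\geq 2$ (which holds since the associated $2\times 2$ determinant is proportional to $(\ell^2-1)^2$). Beyond that the result follows purely from bi-invariance of the Haar measure, Schur--Weyl duality, and unitarity of $U$; no deeper Weingarten combinatorics is required, since only the cases $p=1,2$ are at stake.
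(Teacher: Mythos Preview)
Your approach is correct and is the standard derivation of these low-order Weingarten integrals. The paper does not actually supply a proof of this proposition; it is quoted as a known tool, so there is no competing argument to compare against.

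One arithmetic slip to fix: your second (``twisted'') trace relation should read
\[
2\ell^2 A + (\ell^3+\ell)B = 1,
\]
not $=\ell$. You can see this directly by plugging your claimed solution $A=1/(\ell^2-1)$, $B=-1/[\ell(\ell^2-1)]$ into the left side, which yields $1$. With $=\ell$ on the right, the two equations become symmetric under $A\leftrightarrow B$ and force $A=B=1/(\ell+1)^2$, which is wrong. The contraction itself is fine: summing the lower indices in pairs and using $\sum_a U^{a'}_a\bar U^{b'}_a=\delta^{a'b'}$ collapses the integrand to $\delta^{a_1',a_3'}\delta^{a_2',a_4'}$, and then the twisted upper-index trace gives $\ell$ on the left; on the right the full sum produces $2\ell^3 A + (\ell^4+\ell^2)B$, so after dividing by $\ell$ you get the corrected relation above. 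With that correction the $2\times2$ system has determinant $(\ell^2-1)^2$ and solves to the stated $A,B$.
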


Next the idea is to consider the symmetrized set of correlators:

\begin{defn}
The symmetrized  correlators are defined by
\begin{equation}
    \begin{split}
        \langle \mathscr{O}_{n} \mathscr{O}_{n} \widetilde{\mathscr{O}}_{n}\widetilde{\mathscr{O}}_{n}\rangle_{\texttt{SYM}}:=&\int_{U(\ell_n)} dU \ U_{a_1}^{a_1'}U_{a_2}^{a_2'}(\bar{U})_{a_3}^{a_3'}(\bar{U})_{a_4}^{a_4'}\ \langle \mathscr{O}_{n,a_1} \mathscr{O}_{n,a_2} \widetilde{\mathscr{O}}_{n,a_3}\widetilde{\mathscr{O}}_{n,a_4}\rangle,\\ 
        \langle \mathscr{O}_{\frac{n}{2}} \mathscr{O}_{\frac{n}{2}} \widetilde{\mathscr{O}}_{\frac{n}{2}}\widetilde{\mathscr{O}}_{\frac{n}{2}} \rangle_{\texttt{SYM}}:=&\int_{U(\ell_{n/2})} dU \ U_{b_1}^{b_1'}U_{b_2}^{b_2'}(\bar{U})_{b_3}^{b_3'}(\bar{U})_{b_4}^{b_4'}\ \langle \mathscr{O}_{\frac{n}{2},b_1} \mathscr{O}_{\frac{n}{2},b_2} \widetilde{\mathscr{O}}_{\frac{n}{2},b_3}\widetilde{\mathscr{O}}_{\frac{n}{2},b_4} \rangle,\\
        \langle \mathscr{O}_{n} \mathscr{O}_{\frac{n}{2}} \widetilde{\mathscr{O}}_{\frac{n}{2}}\widetilde{\mathscr{O}}_{n}\rangle_{\texttt{SYM}}:=&\int_{U(\ell_n)} dU \ U_{a_1}^{a_1'}(\bar{U})_{a_2}^{a_2'}\int_{U(\ell_{n/2})} dU \ U_{b_1}^{b_1'}(\bar{U})_{b_2}^{b_2'}\ \langle \mathscr{O}_{n,a_1} \mathscr{O}_{\frac{n}{2},b_1} \widetilde{\mathscr{O}}_{\frac{n}{2},b_2}\widetilde{\mathscr{O}}_{n,a_2}\rangle,\\
        \langle \mathscr{O}_{n} \mathscr{O}_{\frac{n}{2}} \widetilde{\mathscr{O}}_{n}\widetilde{\mathscr{O}}_{\frac{n}{2}} \rangle_{\texttt{SYM}}:=&\int_{U(\ell_n)} dU \ U_{a_1}^{a_1'}(\bar{U})_{a_2}^{a_2'}\int_{U(\ell_{n/2})} dU \ U_{b_1}^{b_1'}(\bar{U})_{b_2}^{b_2'}\ \langle \mathscr{O}_{n,a_1} \mathscr{O}_{\frac{n}{2},b_1} \widetilde{\mathscr{O}}_{n,a_2}\widetilde{\mathscr{O}}_{\frac{n}{2},b_2} \rangle.
    \end{split}
\end{equation}
\end{defn}

Following \cite{Kravchuk:2021akc}, we can argue that the symmetrized system of correlators has the same spectral gap as the original one. Thus any bound obtained from the symmetrized system is applicable to the unsymmetrized system of correlators. Furthermore, if we can find a functional for the unsymmetrized system of correlators, that would also work for the symmetrized one. Hence, it suffices to analyze the symmetrized system of correlators.  

\begin{defn}
Let us define the following quantities: 
\begin{equation}
\begin{aligned}
\strangefont{S}_{p;n}&:=\frac{1}{\ell_n(\ell_n+1)}\sum_{i=1}^{\ell_p}\sum_{a_1,a_2=1}^{\ell_n}\frac{1}{4}\left|f^{(p;i)}_{(n;a_1),(n;a_2)}+f^{(p;i)}_{(n;a_2),(n;a_1)}\right|^2,\\
\strangefont{A}_{p;n}&:=\frac{1}{\ell_n(\ell_n-1)}\sum_{i=1}^{\ell_p}\sum_{a_1,a_2=1}^{\ell_n}\frac{1}{4}\left|f^{(p;i)}_{(n;a_1),(n;a_2)}-f^{(p;i)}_{(n;a_2),(n;a_1)}\right|^2,\\
\strangefont{T}_{k;n}&:=\frac{1}{\ell_n^2-1}\sum_{i=1}^{d_k}\sum_{a_1,a_2=1}^{\ell_n}\left[c^{(k;i)}_{(n;a_1),(n;a_2)}(c^{(k;i)}_{(n;a_1),(n;a_2)})^*-\frac{1}{\ell_n}c^{(k;i)}_{(n;a_1),(n;a_1)}(c^{(k;i)}_{(n;a_2),(n;a_2)})^*\right],\\
\strangefont{Q}_{k;n}&:=\frac{1}{\ell_n}\sum_{i=1}^{d_k}c^{(k;i)}_{(n;a),(n;a)},\\
\strangefont{B}_{p;(n_1,n_2)}&:=\frac{1}{\ell_{n_1}\ell_{n_2}}\sum_{i=1}^{\ell_p}\sum_{a=1}^{\ell_{n_1}}\sum_{b=1}^{\ell_{n_2}}|f^{(p;i)}_{(n_1;a),(n_2;b)}|^2,\\
\strangefont{P}_{k;(n_1,n_2)}&:=\frac{1}{\ell_{n_1}\ell_{n_2}}\sum_{i=1}^{d_k}\sum_{a=1}^{\ell_{n_1}}\sum_{b=1}^{\ell_{n_2}}|s^{(p;i)}_{(n_1;a),(n_2;b)}|^2.  
\end{aligned}
\end{equation}
\end{defn}
\begin{rem}
    Except for $\strangefont{Q}_{k;n}$, all quantities defined above are positive semi-definite combinations of the OPE coefficients.
\end{rem}
\begin{prop}\label{crossEqnMultiple}
   Given a hyperbolic spin orbifold, for which, $\exists\ n\in\mathbb{N}$ with $n=1(\text{mod}\ 2)$ such that the orbifold does not support holomorphic modular forms with weight strictly smaller than $n$,
    the spectral identities derived from the set of symmetrized correlators defined above 
    are  given by:
    \newline $\forall m\in \mathbb{Z}_{\geqslant0}:$
    \begin{align*}
    \strangefont{S}_{2m+2n;n}&=\sum_{k=1}^\infty \strangefont{T}_{k;n}\mathcal{F}_{2m}^{(n,n,n,n)}(\lambda_k^{(0)}),\\
    - \strangefont{A}_{2m+1+2n;n}&=\sum_{k=1}^\infty \strangefont{T}_{k;n}\mathcal{F}_{2m+1}^{(n,n,n,n)}(\lambda_k^{(0)}),\\
\strangefont{S}_{2m+2n;n}&=\mathcal{F}_{2m}^{(n,n,n,n)}(0)+\sum_{k=1}^{\infty}\left(\strangefont{Q}_{k;n}^2-\frac{1}{\ell_n}\strangefont{T}_{k;n}\right)\mathcal{F}_{2m}^{(n,n,n,n)}(\lambda_k^{(0)}),\\
\strangefont{A}_{2m+1+2n;n}&=\mathcal{F}_{2m+1}^{(n,n,n,n)}(0)+\sum_{k=1}^{\infty}\left(\strangefont{Q}_{k;n}^2-\frac{1}{\ell_n}\strangefont{T}_{k;n}\right)\mathcal{F}_{2m+1}^{(n,n,n,n)}(\lambda_k^{(0)}),\\
 \strangefont{S}_{2m+n;n/2}&=\sum_{k=1}^\infty \strangefont{T}_{k;n/2}\ \mathcal{F}_{2m}^{(n/2,n/2,n/2,n/2)}(\lambda_k^{(0)}),\\
  - \strangefont{A}_{2m+1+n;n/2}&=\sum_{k=1}^\infty \strangefont{T}_{k;n/2}\ \mathcal{F}_{2m+1}^{(n/2,n/2,n/2,n/2)}(\lambda_k^{(0)}),\\
\strangefont{S}_{2m+n;n/2}&=\mathcal{F}_{2m}^{(n/2,n/2,n/2,n/2)}(0)+\sum_{k=1}^{\infty}\left(\strangefont{Q}_{k;n/2}^2-\frac{1}{\ell_{n/2}}\strangefont{T}_{k;n/2}\right)\mathcal{F}_{2m}^{(n/2,n/2,n/2,n/2)}(\lambda_k^{(0)}),\\
\strangefont{A}_{2m+1+n;n/2}&=\mathcal{F}_{2m+1}^{(n/2,n/2,n/2,n/2)}(0)+\sum_{k=1}^{\infty}\left(\strangefont{Q}_{k;n/2}^2-\frac{1}{\ell_{n/2}}\strangefont{T}_{k;n/2}\right)\mathcal{F}_{2m+1}^{(n/2,n/2,n/2,n/2)}(\lambda_k^{(0)}),\\
\strangefont{B}_{m+3n/2;(n,n/2)}&=\mathcal{F}_{m}^{(n,n/2,n/2,n)}(0)+\sum_{k=1}^{\infty}\strangefont{Q}_{k;n}\strangefont{Q}_{k;n/2}\ \mathcal{F}_{m}^{(n,n/2,n/2,n)}(\lambda_k^{(0)}),\\
(-1)^m\strangefont{B}_{m+3n/2;(n,n/2)}&=\sum_{k=1}^{\infty}\strangefont{P}_{k;(n,n/2)}\ \mathcal{F}_{m}^{(n,n/2,n,n/2)}(\lambda_k^{(1/2)})\\
&\quad\quad +\frac{\ell_{n/2}+1}{\ell_n}\strangefont{S}_{n;n/2}\ \mathcal{F}_{m}^{(n,n/2,n,n/2)}\left(\frac{n}{2}\left(1-\frac{n}{2}\right)\right).\\
\end{align*}
\end{prop}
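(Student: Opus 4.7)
The plan is to derive each identity by starting from the unsymmetrized crossing equations in Proposition \ref{crossingEqs}, re-established with all degeneracy indices kept explicit, and then Haar-averaging over $U(\ell_n) \times U(\ell_{n/2})$ using the formulas stated just above. The analytic content---pointwise convergence of both channels on compact subsets of $\mathbb{C}\setminus(1,\infty)$ and orthogonality against the kernel $\mathcal{G}_r$---has already been set up in Theorems \ref{thm:scalar}--\ref{thm:spinor} and Proposition \ref{prop:scalar}, so the work reduces to careful index bookkeeping. A key preliminary observation, obtained by comparing $\langle \mathscr{O}_{k,a}(z_1)\mathscr{O}_{l,b}(z_2)\widetilde{\mathscr{O}}_{m,c}(z_3)\rangle$ with its $(1\leftrightarrow 2)$ swap via commutativity of pointwise multiplication and Proposition \ref{prop:3pointDDDbar}, is the permutation symmetry
\[
f^{(m;c)}_{(l;b)(k;a)} = (-1)^{k+l-m}\,f^{(m;c)}_{(k;a)(l;b)}.
\]
At $k=l=n$ with $p=2n+r$, this shows that $f^{(p;i)}_{(n;a_1)(n;a_2)}$ is $(a_1,a_2)$-symmetric when $r$ is even and antisymmetric when $r$ is odd, which is precisely what distinguishes $\strangefont{S}$ from $\strangefont{A}$ in the final identities.

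For the diagonal correlators, I would expand $\langle \mathscr{O}_{n,a_1}\mathscr{O}_{n,a_2}\widetilde{\mathscr{O}}_{n,a_3}\widetilde{\mathscr{O}}_{n,a_4}\rangle$ in the $s$- and $t$-channels with index structures $f^{(p;i)}_{(n;a_1)(n;a_2)}\overline{f^{(p;i)}_{(n;a_4)(n;a_3)}}$ and $c^{(k;j)}_{(n;a_1)(n;a_4)}\overline{c^{(k;j)}_{(n;a_2)(n;a_3)}}$ respectively. Applying the rank-$4$ Haar formula on $U(\ell_n)$ and integrating against $\mathcal{G}_r$ yields, for each fixed parity of $m$, one scalar identity. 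On the $s$-channel side the two terms produced by the Haar integral recombine, via the $(-1)^{p-2n}$ symmetry above, into $\strangefont{S}_{2n+2m;n}$ for even $r=2m$ and $\strangefont{A}_{2n+2m+1;n}$ for odd $r=2m+1$. On the $t$-channel side, the swap-minus-trace combination produced by the same rank-$4$ formula is exactly $\strangefont{T}_{k;n}$, while the identity-exchange piece yields $\strangefont{Q}_{k;n}^2$, giving the first four identities. The same argument with $n \mapsto n/2$ yields the next four.

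For the mixed correlators the Haar integral factorizes over $U(\ell_n)$ and $U(\ell_{n/2})$, and only the rank-$2$ formula is needed on each factor. In $\langle \mathscr{O}_n\mathscr{O}_{n/2}\widetilde{\mathscr{O}}_{n/2}\widetilde{\mathscr{O}}_n\rangle$ both channels receive only diagonal contractions, producing $\strangefont{B}_{m+3n/2;(n,n/2)}$ on the $s$-channel side and $\strangefont{Q}_{k;n}\strangefont{Q}_{k;n/2}$ on the $t$-channel side. In $\langle \mathscr{O}_n\mathscr{O}_{n/2}\widetilde{\mathscr{O}}_n\widetilde{\mathscr{O}}_{n/2}\rangle$, Proposition \ref{prop:discrete-t-half} controls the $t$-channel and the Haar average of $|s^{(k;j)}_{(n;a)(n/2;b)}|^2$ is $\strangefont{P}_{k;(n,n/2)}$. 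Under the hypothesis that no holomorphic modular form of weight strictly below $n$ exists, the only discrete $t$-channel contribution is $q=n/2$, and using the realness identity $|f^{(n/2;b)}_{(n;c)(n/2;a)}|^2 = |f^{(n;c)}_{(n/2;a)(n/2;b)}|^2$ (which follows from complex conjugation together with Proposition \ref{eq:coherentIdentity}), the Haar-averaged square of this structure constant becomes
\[
\tfrac{1}{\ell_n\ell_{n/2}}\sum_{a,b,c}\bigl|f^{(n;c)}_{(n/2;a)(n/2;b)}\bigr|^2 \;=\; \tfrac{\ell_{n/2}(\ell_{n/2}+1)}{\ell_n\ell_{n/2}}\,\strangefont{S}_{n;n/2} \;=\; \tfrac{\ell_{n/2}+1}{\ell_n}\,\strangefont{S}_{n;n/2},
\]
where the middle step uses that $p=n$ at weight $n/2$ gives $r=0$ (even), so $f$ is $(a,b)$-symmetric and the normalization in $\strangefont{S}_{n;n/2}$ applies.

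The main obstacle is the combinatorial bookkeeping: determining in each case which pair of external indices is paired diagonally versus via the swap in the Haar identity, and thereby pinning down the numerical coefficients---in particular the $-\tfrac{1}{\ell_n}\strangefont{T}_{k;n}$ subtraction in the second pair of identities and the prefactor $(\ell_{n/2}+1)/\ell_n$ in the last. Once this is tabulated, each of the ten equations drops out directly by pairing the $s$- and $t$-channel expansions from Propositions \ref{prop:scalar} and \ref{prop:spinor} and integrating against the appropriate $\mathcal{G}_r$ kernel.
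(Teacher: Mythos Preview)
Your proposal is correct and follows exactly the approach the paper intends: Haar-averaging the fully indexed unsymmetrized crossing equations of Proposition \ref{crossingEqs} over $U(\ell_n)\times U(\ell_{n/2})$, then reading off the coefficients of the independent Kronecker-delta tensor structures and integrating against the kernel $\mathcal{G}_r$. The paper does not spell out a proof for this proposition, but the machinery set up immediately before it (the Haar identities and the definitions of $\strangefont{S},\strangefont{A},\strangefont{T},\strangefont{Q},\strangefont{B},\strangefont{P}$) makes clear this is the intended route, and your bookkeeping of the permutation symmetry $f^{(m;c)}_{(l;b)(k;a)}=(-1)^{k+l-m}f^{(m;c)}_{(k;a)(l;b)}$ and the $(\ell_{n/2}+1)/\ell_n$ prefactor is accurate.
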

\begin{SDP}[Spectral bound on Laplacian and Dirac: III]\label{linpro3}
Let us consider a hyperbolic spin orbifold which supports $\ell_n$ different holomorphic modular forms with weight $2n$ and $\ell_{n/2}$ different holomorphic modular forms with weight $n$. Assume that the orbifold does not support any holomorphic modular form with weight strictly smaller than $n$.\\

Given $\Lambda\in\mathbb{N}$, $\lambda_*^{(0)},\lambda_*^{(1/2)}\in\mathbb{R}_{+}$, if there exist $\alpha_i\in\mathbb{R}^{\Lambda+1}$ with $i=1,2,3,4,5,6$ such that
\\
\begin{enumerate}
\item 
$\sum_{m=0}^\Lambda \alpha_{2;m}\ \mathcal{F}_m^{(n,n,n,n)}(0)+\alpha_{4;m}\ \mathcal{F}_m^{(n/2,n/2,n/2,n/2)}(0)+\alpha_{5;m}\ \mathcal{F}_m^{(n,n/2,n/2,n)}(0)=1$,
\\
\item 
$\alpha_{1;2m}+\alpha_{2;2m}\leqslant 0, \ \ \forall m\in \mathbb{Z}_{\geqslant 0},2m\leqslant \Lambda$,
\\
\item 
$\alpha_{3;2m}+\alpha_{4;2m}-\delta_{m,0}\frac{\ell_{n/2}+1}{\ell_n}\sum_{m'=0}^{\Lambda}\alpha_{6;m'}\ \mathcal{F}_{m'}^{(n,n/2,n,n/2)}\left(\frac{n}{2}\left(1-\frac{n}{2}\right)\right)\leqslant 0,\\
\ \ \ \ \ \ \forall m\in \mathbb{Z}_{\geqslant 0},2m\leqslant\Lambda$,
\\
\item 
$\alpha_{2;2m+1}-\alpha_{1;2m+1}\leqslant0,\ \ \forall m\in \mathbb{Z}_{\geqslant 0},2m+1\leqslant \Lambda$,
\\
\item 
$\alpha_{4;2m+1}-\alpha_{3;2m+1}\leqslant0,\ \ \forall m\in \mathbb{Z}_{\geqslant 0},2m+1\leqslant \Lambda$,
\\
\item 
$ \alpha_{5;m}+(-1)^m\alpha_{6;m}\leqslant0,\ \ \forall m\in \mathbb{Z}_{\geqslant 0},m\leqslant \Lambda$,
\\
\item 
\begin{equation}\label{positiveTChannel1}\sum_{m=0}^\Lambda\left(\alpha_{1;m}-\frac{1}{\ell_n}\alpha_{2;m}\right)\ \mathcal{F}_m^{(n,n,n,n)}(\lambda_k^{(0)})\geqslant 0,\ \ \forall \lambda_k^{(0)}\geqslant\lambda_*^{(0)},\end{equation}
\\
\item 
\begin{equation}\label{positiveTChannel2}\sum_{m=0}^\Lambda\left(\alpha_{3;m}-\frac{1}{\ell_{n/2}}\alpha_{4;m}\right)\ \mathcal{F}_m^{(n/2,n/2,n/2,n/2)}(\lambda_k^{(0)})\geqslant 0,\ \ \forall \lambda_k^{(0)}\geqslant\lambda_*^{(0)},\end{equation}
\\
\item 
\begin{equation}\label{positiveTChannel3}\sum_{m=0}^\Lambda\alpha_{6;m}\ \mathcal{F}_m^{(n,n/2,n,n/2)}(\lambda_k^{(1/2)})\geqslant 0, \ \ \forall \lambda_k^{(1/2)}\geqslant\lambda_*^{(1/2)},\end{equation}
\\
\item 

$\forall \lambda_k^{(0)}\geqslant \lambda_*^{(0)},$\\

\begin{equation}\label{positiveTChannel4}
M'(\lambda_k^{(0)}):=\begin{pmatrix} 
\sum_{m=0}^\Lambda \alpha_{2;m} \mathcal{F}_m^{(n,n,n,n)}(\lambda_k^{(0)})&\frac{1}{2}\sum_{m=0}^\Lambda \alpha_{5;m} \mathcal{F}_m^{(n,n/2,n/2,n)}(\lambda_k^{(0)})\
\\
\frac{1}{2}\sum_{m=0}^\Lambda \alpha_{5;m} \mathcal{F}_m^{(n,n/2,n/2,n)}(\lambda_k^{(0)})&
\sum_{m=0}^\Lambda \alpha_{4;m} \mathcal{F}_m^{(n,n,n,n)}(\lambda_k^{(0)})\end{pmatrix} \succeq 0,\end{equation}
\end{enumerate}
then at least one of the following statements must be true:
\\
\begin{enumerate}

    \item  $\lambda_1^{(0)}<\lambda_*^{(0)}$,
    \\
     \item $\lambda_1^{(1/2)}<\lambda_*^{(1/2)}$.
\end{enumerate}
\end{SDP}
\begin{proof}
Taking appropriate linear combinations of \eqref{crossEqnMultiple} and using the non-existence of holomorphic modular forms with weight strictly below $n$, we can derive the follwoing identity: 
\begin{align*}
    \sum_{m=0}^{\lfloor \Lambda/2\rfloor} \Big[&(\alpha_{1;2m}+\alpha_{2;2m})\strangefont{S}_{2m+2n;n}\\
    &+\big(\alpha_{3;2m}+\alpha_{4;2m}-\delta_{2m,0}\frac{\ell_{n/2}+1}{\ell_n}
    \times \sum_{m'=0}^{\Lambda}\alpha_{6;m'}\mathcal{F}_{m'}^{(n,n/2,n,n/2)}\big(\frac{n}{2}(1-\frac{n}{2})\big)\big)\strangefont{S}_{2m+n;n/2}\\
    &+(\alpha_{2;2m+1}-\alpha_{1;2m+1})\strangefont{A}_{2m+1+2n;n}+(\alpha_{4;2m+1}-\alpha_{3;2m+1})\strangefont{A}_{2m+1+n;n/2}\Big]\\
    &+\sum_{m=0}^{\Lambda}\left(\alpha_{5;m}+(-1)^m \alpha_{6;m}\right)\strangefont{B}_{m+3n/2;(n,n/2)}\\
    &-\sum_{m=0}^\Lambda \left( \alpha_{2;m}\ \mathcal{F}_m^{(n,n,n,n)}(0)+\alpha_{4;m}\ \mathcal{F}_m^{(n/2,n/2,n/2,n/2)}(0)+\alpha_{5;m}\ \mathcal{F}_m^{(n,n/2,n/2,n)}(0)\right)\\
&=\sum_{m=0}^\Lambda \sum_{k=1}^{\infty} \Big[\strangefont{T}_{k,n} \big(\alpha_{1;m}-\frac{1}{\ell_n}\alpha_{2;m}\big)\mathcal{F}_m^{(n,n,n,n)}(\lambda_k^{(0)})\\&+\strangefont{T}_{k;n/2}\left(\alpha_{3;m}-\frac{1}{\ell_{n/2}}\alpha_{4;m}\right)\mathcal{F}_m^{(n/2,n/2,n/2,n/2)}(\lambda_k^{(0)})\\
&+\strangefont{P}_{k;(n,n/2)}\ \alpha_{6;m}\ \mathcal{F}_m^{(n,n/2,n,n/2)}(\lambda_k^{(1/2)})+\begin{pmatrix}\strangefont{Q}_{k;n}&\strangefont{Q}_{k;n/2}\end{pmatrix}M'(\lambda_k^{(0)})\begin{pmatrix}\strangefont{Q}_{k;n}\\\strangefont{Q}_{k;n/2}\end{pmatrix}
\Big].
\end{align*}
Now using the properties of $\alpha$, the L.H.S of the above identity is strictly negative, so should be the R.H.S. Using \eqref{positiveTChannel1}
, \eqref{positiveTChannel2}, \eqref{positiveTChannel3}, \eqref{positiveTChannel4}, the proposition follows.
\end{proof}

\section{Estimates of spectral gaps from the Selberg trace formula}
\label{sec:selberg}

In this section, we use the Selberg trace formula to estimate the spectral gap of a given surface $X$. 
The standard approach for numerically computing the operator spectrum on a hyperbolic surface is the finite element method. Thanks to the \texttt{FreeFEM++} package, the spectrum of Laplacian operator, with periodic and twisted periodic boundary conditions (see supplementary material of \cite{HyperbolicBandTheory}), is known for many surfaces. For our purposes, we look at a different approach to compute the gaps of the Laplacian and the Dirac operator of explicit orbifolds and surfaces. This will allow us to compare these spectra to the previously established bootstrap bounds. The key idea is to use the Selberg trace formula, with the geodesic spectrum as an input, to rule out points on the real line that are not in the eigenvalue spectrum. Our algorithm can be divided into three steps:
\begin{itemize}
    \item Find a Dirichlet domain for the surface of interest and explicit generators for the fundamental group. 
    \item Use this Dirichlet domain to produce a complete non-redundant list of conjugacy classes of geodesics on the surface whose length is bounded by a given value $L$.
    \item Choose a compactly supported test function that yields an estimate of the first eigenvalue of the Laplace/Dirac spectrum when substituted into the Selberg trace formula. 
\end{itemize}
Note that the first two steps of this algorithm work in the same way in order to produce the Laplace and Dirac spectra. Only the explicit form of the Selberg trace formula used in the last step differs.

\subsection{Step 1: Constructing the Dirichlet domain}
We start by reviewing some basic definitions.
\begin{defn}(Dirichlet domain of a Fuchsian group) Let $\Gamma$ be a Fuchsian group and let $x\in \mathbb{H}$ be a point not fixed by any element of $\Gamma$ other than the identity. A \textit{Dirichlet domain with base point $x$} is the set
\begin{equation}
D(x):=\left\{ p\in \mathbb{H}\ |\ \forall \gamma\in \Gamma\backslash\{e\},d(x,p)<d(x,\gamma p) \right\}.
\end{equation}
\end{defn}
For a cocompact Fuchsian group, its Dirichlet domain is a hyperbolic polygon with finite area. The boundary of this hyperbolic polygon consists of finitely many geodesic segments. We will denote this set of geodesic segments by $S(D)$ and it satisfies the  property \cite{voight2009computing} that for every $s\in S(D)$, there exists a unique $s'\in S(D)$ and a unique $\gamma_s \in \Gamma$ such that $s'=\gamma_s\ s$. This is known as the \textit{side-pairing} property. The segment $s$ is the perpendicular bisector of the geodesic between $x$ and $\gamma_s\ x$. The set $G(S):=\{\gamma_s|s\in S(D)\}$ is a generating set for $\Gamma$ and we will refer to its elements as \textit{side-paring generators}. See \cite{voight2009computing} for a systematic algorithm computing $G(S)$ and its application to arithmetic Fuchsian groups.
\begin{defn}(Maximin edge distance of the Dirichlet domain)
 Let $\Gamma$ be a cocompact Fuchsian group with Dirichlet domain $D(x)$ where $x$ is the base point. Let $S(D)$ be the set of geodesic segments forming the boundary of $D(x)$. The \textit{maximin edge distance $R(D)$} is defined as the maximum over all the domain's edges of the minimum distance from the edge to the base-point $x$:
    \begin{equation}
   R(D):= \max_{s\in S(D)}\left(\inf_{p\in s} d(x,p)\right).
    \end{equation}
\end{defn}

We briefly summarize the Dirichlet domain and the corresponding side-paring generators for surfaces and orbifolds relevant to us:
\begin{enumerate}
    \item Hyperbolic triangles:
    \\
   As shown on Figure \ref{fig:Dirichlet}, the Dirichlet domain of a genus-0 hyperbolic triangle $[0;p,q,r]$ consists of two geodesic triangles $\Delta(p,q,r)$ with the four vertices located at $\hat{s}(1/p,1/q,1/r;0)$, $\hat{s}(1/p,1/q,1/r;1)$ and $\pm \hat{s}(1/p,1/q,1/r;\infty)$, where $\hat{s}(1/p,1/q,1/r;z)$ is the rescaled Schwarz function \cite{harmer_2005}
\begin{align}
   \hat{s}(\alpha,\beta,\gamma;z)=\nu s(\alpha,\beta,\gamma;z),\ \  s(\alpha,\beta,\gamma;z)=z^\alpha \frac{_2F_1(a^\prime,b^\prime,c^\prime;z)}{_2F_1(a,b,c;z)},
\end{align}
\begin{align}
   \nu=\sqrt{\frac{\cos (\pi \alpha+\pi \beta)+\cos (\pi \gamma)}{\cos (\pi \alpha-\pi \beta)+\cos (\pi \gamma)} \cdot \frac{\cos (\pi \alpha-\pi \beta-\pi \gamma)+1}{\cos (\pi \alpha+\pi \beta+\pi \gamma)+1}} \cdot \frac{\Gamma\left(a^{\prime}\right) \Gamma\left(b^{\prime}\right)}{\Gamma\left(c^{\prime}\right)} \cdot \frac{\Gamma(c)}{\Gamma(a) \Gamma(b)},
\end{align}
with 
\begin{align}
    a=\frac{1-\alpha-\beta-\gamma}{2},\;\;\;b=\frac{1-\alpha+\beta-\gamma}{2},\;\;\;c=1-\alpha,
\end{align}
and
\begin{align}
    a^\prime=\frac{1+\alpha-\beta-\gamma}{2},\;\;\;b^\prime=\frac{1+\alpha+\beta-\gamma}{2},\;\;\;c^\prime=1+\alpha.
\end{align}
\item The Bolza surface:
\\
The Dirichlet domain of the Bolza surface is a regular octagon. The side-paring generators are $\gamma_1,\gamma_2,\gamma_3,\gamma_4,\gamma_1^{-1},\gamma_2^{-1},\gamma_3^{-1},\gamma_4^{-1}$ with 
\\
\begin{align}
\gamma_k=
\pm \begin{pmatrix}
 \cosh \left(l/2\right) & e^{\frac{\mathrm{i} \pi  k}{4}} \sinh \left(l/2\right)
   \\
 e^{-\frac{\mathrm{i} \pi k}{4}} \sinh \left(l/2\right) & \cosh
   \left(l/2\right) \\
\end{pmatrix}
,\ \ \ l=2 \cosh ^{-1}\left(\cot \left(\frac{\pi }{8}\right)\right).
\end{align}
\\
Here we used ``$\pm$'' to stress the fact that $\gamma_k$'s are elements in $\mathrm{PSU}(1,1)$. The sign ambiguity will be removed once the surface is given a spin structure.
\item The most symmetric point in the moduli space with signature [1;3]:
\\
The Dirichlet domain for the one-punctured torus is an octagon with side-paring generators being $\gamma_1,\gamma_2,\gamma_1 \gamma_2,\gamma_2\gamma_1,\gamma_1^{-1},\gamma_2^{-1},\gamma_1^{-1} \gamma_2^{-1},\gamma_2^{-1}\gamma_1^{-1}$ where
\begin{equation}\gamma_1=
\begin{pmatrix}
\frac{1-x^2 y^2}{\sqrt{\left(1-x^2\right) \left(1-y^2\right) \left(x^2 y^2+1\right)}} &
   \frac{-x \left(1-y^2\right)+\mathrm{i} \left(1-x^2\right) y}{\sqrt{\left(1-x^2\right)
   \left(1-y^2\right) \left(x^2 y^2+1\right)}} \\
   \\
 \frac{-x \left(1-y^2\right)-\mathrm{i} \left(1-x^2\right) y}{\sqrt{\left(1-x^2\right)
   \left(1-y^2\right) \left(x^2 y^2+1\right)}} & \frac{1-x^2
   y^2}{\sqrt{\left(1-x^2\right) \left(1-y^2\right) \left(x^2 y^2+1\right)}} \\
\end{pmatrix},
\end{equation}
\\
\begin{equation}\gamma_2=
\begin{pmatrix}
\frac{1-x^2 y^2}{\sqrt{\left(1-x^2\right) \left(1-y^2\right) \left(x^2 y^2+1\right)}} &
   \frac{x \left(1-y^2\right)+\mathrm{i} \left(1-x^2\right) y}{\sqrt{\left(1-x^2\right)
   \left(1-y^2\right) \left(x^2 y^2+1\right)}} \\
   \\
 \frac{x \left(1-y^2\right)-\mathrm{i} \left(1-x^2\right) y}{\sqrt{\left(1-x^2\right)
   \left(1-y^2\right) \left(x^2 y^2+1\right)}} & \frac{1-x^2
   y^2}{\sqrt{\left(1-x^2\right) \left(1-y^2\right) \left(x^2 y^2+1\right)}} \\
\end{pmatrix},
\end{equation}
with 
\begin{equation}
x=\tanh \left[\frac{1}{2} \cosh ^{-1}\left(\frac{\cos \left(\pi /9 \right)}{ \sin
   \left(\pi/18\right)}\right)\right], \ \ y=\tanh \left[\frac{1}{2} \cosh ^{-1}\left(\frac{\cos \left(\pi /18 \right)} {\sin
   \left(\pi/9 \right)}\right)\right].
\end{equation}
\end{enumerate}


We conclude this subsection with a few comments on generalizations to hyperbolic surfaces with higher genus. We will sketch a general method for constructing a fundamental domain and to look for explicit  expressions of the generators. The surfaces or orbifolds that are most likely to saturate the numerical bounds are conjectured to always possess a large number of symmetries, as we already observed in genus 1 and genus 2. For these special surfaces, the following result holds:

\begin{prop}[\cite{https://doi.org/10.48550/arxiv.1711.06599}]
Let $\Sigma$ be a compact hyperbolic Riemann surface of genus $g>1$. $\Sigma$ realizes a local maximum of the number of automorphisms on the moduli space of Riemann surfaces of genus $g$ if and only if it is isomorphic to a quotient of the upper-half plane by a torsion-free normal subgroup of a cocompact Fuchsian triangle group.
\end{prop}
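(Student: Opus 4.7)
The plan is to combine the uniformization theorem with Teichmüller theory of 2-orbifolds, identifying deformations of $\Sigma$ that preserve its automorphism group with deformations of an orbifold quotient, and reducing the "local maximum" condition to rigidity of that orbifold.

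First, I would uniformize: write $\Sigma = \Gamma\backslash\mathbb{H}$, where $\Gamma$ is a torsion-free cocompact Fuchsian group, unique up to conjugation in $\mathrm{PSL}(2,\mathbb{R})$. By a standard fact, $\mathrm{Aut}(\Sigma)$ is identified with $N/\Gamma$, where $N:=N_{\mathrm{PSL}(2,\mathbb{R})}(\Gamma)$ is the normalizer; in particular $N$ is itself a cocompact Fuchsian group containing $\Gamma$ as a normal subgroup of finite index. Set $X := N\backslash\mathbb{H}$, a hyperbolic $2$-orbifold of some signature $[g_0;k_1,\dots,k_r]$. This rewrites the theorem as: $\Sigma$ is a local maximum of $|\mathrm{Aut}|$ on $\mathcal{M}_g$ if and only if $X$ has signature $[0;k_1,k_2,k_3]$, i.e.\ $N$ is a triangle group.

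Next, I would invoke the orbifold version of Teichmüller theory: the inclusion $\Gamma\subset N$ induces a natural holomorphic map $\mathcal{T}(X)\hookrightarrow\mathcal{T}(\Sigma)$ whose image is precisely the locus of complex structures on $\Sigma$ for which the finite group $G:=N/\Gamma$ continues to act by automorphisms. The real dimension of $\mathcal{T}(X)$ is $6g_0-6+2r$, which vanishes if and only if $g_0=0$ and $r=3$. Thus "$X$ is a triangle orbifold" is equivalent to "$\Sigma$ is rigid among surfaces admitting $G$ as an automorphism group," which is the geometric input that will drive both implications.

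For the forward direction, suppose $X$ is not a triangle orbifold. Then $\mathcal{T}(X)$ is positive-dimensional, so we obtain a nontrivial real-analytic family $\Sigma_t$ in $\mathcal{M}_g$ passing through $\Sigma=\Sigma_0$ with $G\hookrightarrow\mathrm{Aut}(\Sigma_t)$ for all $t$. Since $|\mathrm{Aut}(\Sigma_t)|\ge|G|=|\mathrm{Aut}(\Sigma)|$ along this family, $\Sigma$ is not isolated among surfaces whose automorphism count is at least that of $\Sigma$. For the reverse direction, assume $X$ is a triangle orbifold. Rigidity of $X$ makes $[\Sigma]$ an isolated point of the stratum $\mathcal{M}_g^{\ge G}$ of surfaces admitting $G$ as a subgroup of automorphisms. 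To conclude a true local maximum, one combines this with (i) the Hurwitz bound $|\mathrm{Aut}(\Sigma')|\le 84(g-1)$, which makes the set of relevant automorphism groups finite, (ii) upper semicontinuity of $|\mathrm{Aut}|$ on $\mathcal{M}_g$, which confines jumps to nowhere-dense closed substrata, and (iii) the observation that every stratum $\mathcal{M}_g^{\ge H}$ through $[\Sigma]$ with $|H|>|G|$ would correspond to a larger normalizer $N'\supsetneq N$ inside $\mathrm{PSL}(2,\mathbb{R})$ and therefore reduce (by maximality of $N$) to $[\Sigma]$ itself, ruling out such nearby surfaces.

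The main obstacle I expect is the last point of the reverse direction: showing that no nearby surface in $\mathcal{M}_g$ can acquire a \emph{larger} automorphism group than $\Sigma$, merely from knowing that $X$ is rigid. Rigidity immediately gives that $[\Sigma]$ is isolated in $\mathcal{M}_g^{\ge G}$, but one must also exclude "accidental" sporadic symmetry enhancements in arbitrarily small neighborhoods. The cleanest way I see to handle this is to argue by contradiction: any such enhancement would produce a sequence $[\Sigma_n]\to[\Sigma]$ with $\mathrm{Aut}(\Sigma_n)\supsetneq G$, hence a sequence of triangle (or higher-signature) Fuchsian groups $N_n\supsetneq N$ converging in the Chabauty topology; by discreteness of the set of signatures with bounded order, one extracts a limit $N_\infty\supseteq N$ properly containing $N$, contradicting that $N$ is the full normalizer of $\Gamma$.
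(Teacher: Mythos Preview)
The paper does not prove this proposition: it is stated with a citation to \cite{https://doi.org/10.48550/arxiv.1711.06599} and used as a black box to justify that the surfaces of interest can be tiled by geodesic triangles. There is therefore no ``paper's own proof'' to compare your attempt against.

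That said, your outline is the standard classical argument (essentially the one going back to Singerman and Greenberg): identify $\mathrm{Aut}(\Sigma)$ with $N_{\mathrm{PSL}(2,\mathbb{R})}(\Gamma)/\Gamma$, pass to the quotient orbifold $X=N\backslash\mathbb{H}$, and use that the Teichm\"uller space of a $2$-orbifold of signature $[g_0;k_1,\dots,k_r]$ has real dimension $6g_0-6+2r$, which vanishes exactly when $X$ is a triangle orbifold. This is the right skeleton, and your identification of the delicate point---excluding accidental symmetry enhancement near $\Sigma$ in the reverse direction---is accurate. One clarification worth making explicit: your forward-direction argument shows that if $X$ is not a triangle orbifold then $[\Sigma]$ is \emph{not isolated} in the locus $\{|\mathrm{Aut}|\ge |G|\}$, which is the correct negation only if ``local maximum'' is read as ``strict local maximum'' (equivalently, isolated among surfaces with at least as many automorphisms). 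Under the weak reading $|\mathrm{Aut}(\Sigma')|\le|\mathrm{Aut}(\Sigma)|$ nearby, a positive-dimensional $G$-equivariant family with generic automorphism group exactly $G$ would not contradict maximality. The cited result is indeed meant in the strict/isolated sense, so your argument goes through, but you should state this convention explicitly.
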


This result means that in the relevant cases for our purposes, all fundamental domains can be constructed from gluing geodesic triangles $\Delta(p,q,r)$\footnote{The geodesic triangles are also the fundamental domains of the triangle group $T(p,q,r)=\left\langle a, b, c \mid a^2=b^2=c^2=(a b)^p=(b c)^q=(c a)^r=1\right\rangle$. The generators $a$, $b$, $c$ are reflections against the three edges.} together.
Hence, the very general problem of finding an explicit fundamental domain for a given Riemann surface maps to a much simpler one in our case: the problem of 

\begin{enumerate}
\item Constructing a tiling of the hyperbolic plane by triangles of given angles, and 
\item Gluing geodesic triangles to obtain fundamental domains of the surface/orbifold of interest.
\end{enumerate}



\begin{figure}[!ht]
    \centering
    \subfloat[\centering ]{{\includegraphics[width=6cm]{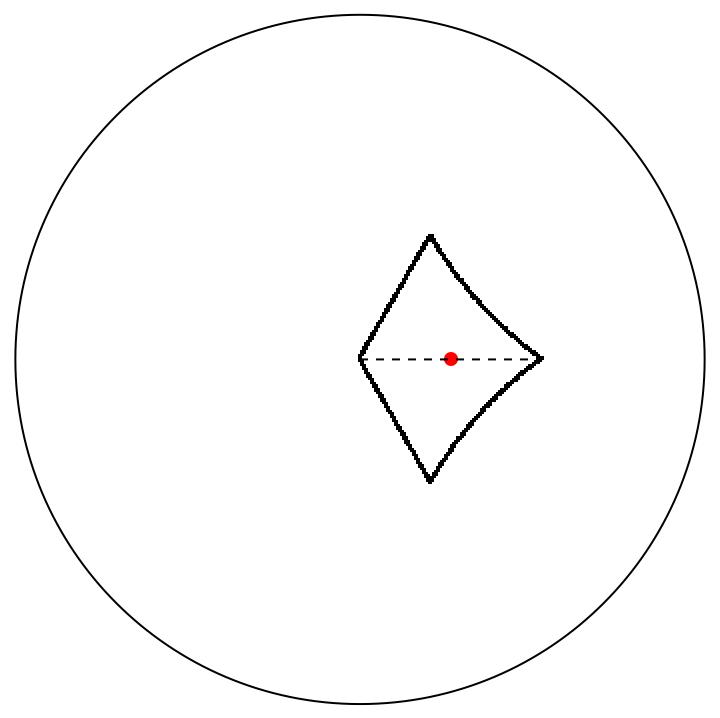} }}%
    \qquad
    \subfloat[\centering ]{{\includegraphics[width=6cm]{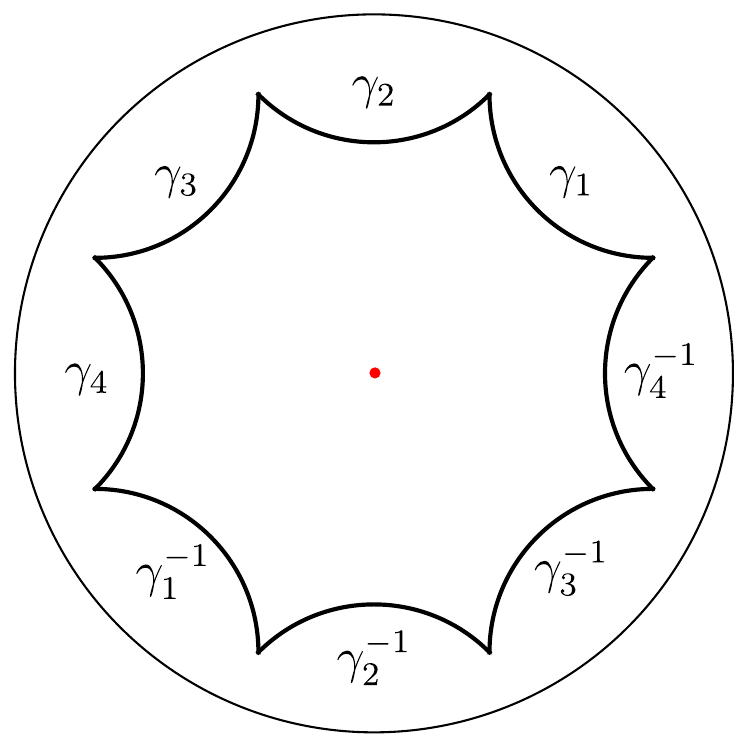} }}%
    \qquad
    \subfloat[\centering ]{{\includegraphics[width=6.5cm]{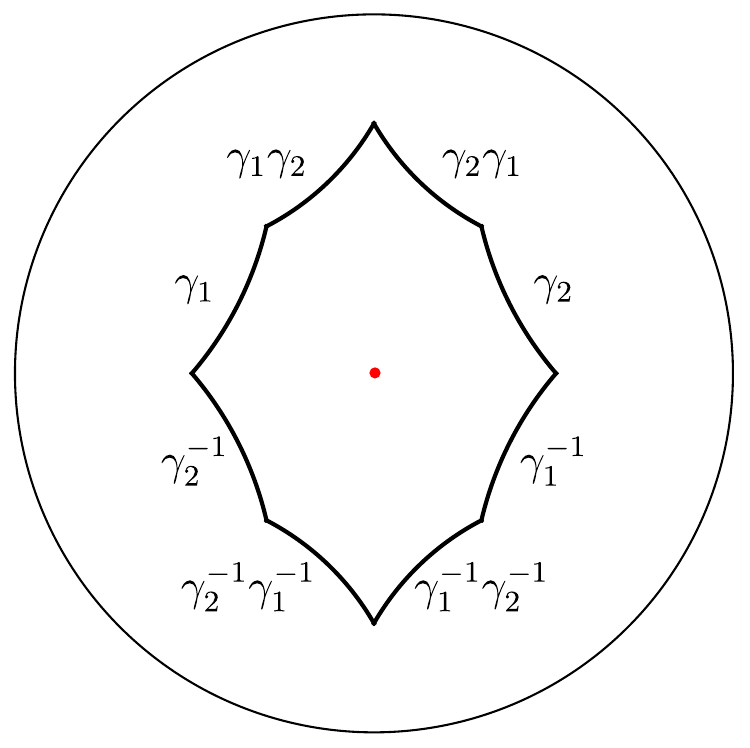} }}%
    \caption{(A) The Dirichlet domain for the hyperbolic triangle $[0;3,3,5]$ consists of two geodesic triangles $\Delta(3,3,5)$ (B) The Dirichlet domain for the Bolza surface. Each side of the Dirichlet domain is labeled by the corresponding side-pairing generator. (C) The Dirichlet domain of the most symmetric point in the moduli space of one punctured torus with signatrue [1;3]. Each side of the Dirichlet domain is labeled by the corresponding side-pairing generator.
     The red dots in the figures are the corresponding base points.}
    \label{fig:Dirichlet}
\end{figure}

To find explicit expressions for the generators, the simplest method would be to directly search for a presentation of a normal subgroup $N$ of the triangle group $T(p,q,r)$ such that 
\begin{align}
\mathrm{Aut}(\Sigma)\cong T(p,q,r)/N.
\end{align}
This can in principle be done using the GAP software, although there could be cases in which there is more than one possible choice. We reserve a more careful examination of this problem to future work. 

Once this subgroup is found, one can construct a fundamental domain by singling out one representative triangle per coset in $T/N$ and taking the union of all such representatives. Note that the obtained fundamental domain is made of exactly $|\mathrm{Aut}(\Sigma)|$ triangles.

\subsection{Step 2: Enumerating the closed geodesics of length bounded by $L$}

For this second step, we will mostly adapt the method proposed in \cite{em/1048515809} to our case. Before stating the algorithm, we first summarize the ideas behind it:
\begin{itemize}
\item In every hyperbolic conjugacy class, there is at least one representative whose axis\footnote{Recall that a hyperbolic element $\gamma$ can be specified by the geodesic it fixes. This geodesic is known as the axis of $\gamma$.} passes within a distance $R(D)$ of the basepoint, where $R(D)$ is the maximin edge distance of the Dirichlet domain. (See Propositions 3.2 and 3.3 from \cite{em/1048515809}.)
\item Working with such representatives, it is intuitive to see that we only need to tile the Poincaré disk $\mathbb{D}$ up to a certain radius in order to exhaust all hyperbolic conjugacy classes below a certain length cutoff. This radius depends both on the length cutoff and the size of the Dirichlet domain $D$. To be more precise, it suffices to find all translates $\gamma D$ such that $d(x,\gamma x)\leqslant 2 \cosh^{-1}(\cosh{R(D)}\cosh{\frac{L}{2}})$. (see Proposition 3.5 of \cite{em/1048515809}.)
\item If two hyperbolic elements with  geodesic length $L$ are conjugate to each other and both of their axes pass within a distance $R(D)$ from the base point, then the conjugating element $\gamma$ must satisfy $d(x,\gamma x)\leqslant 2 \cosh^{-1}(\cosh{R(D)}\cosh{\frac{L}{4}})$ (see Proposition 3.7 of \cite{em/1048515809}.)
\end{itemize}
With these basic ideas in mind, we state the algorithm:
\begin{enumerate}
\item (Initialize) Set $T$ and $T_h$ to be two empty lists. Given a spin structure, write all side-paring generators $\tilde{\gamma}_i$ of $\widetilde{\Gamma}$ as $2\times 2$ matrices .
\item (Enumerate translates) Let $T_n$ be the list of irreducible $n$-letter words formed by the $\tilde{\gamma}_i$. Remove from $T_n$ all elements $\tilde{\tau}$ such that $d(x,\tilde{\tau} x)>2 \cosh^{-1}(\cosh R(D)\cosh \frac{L}{2})$. Here $x$ is the base point of the Dirichilet domain. If $T_n$ becomes an empty list, terminate this step. Otherwise append $T_n$ to $T$. To generate $T_{n+1}$, multiply every element in $T_n$ by side-paring generators $\tilde{\gamma}_i$ and remove the redundant matrices that are already in $T$.\footnote{The enumeration procedure would fail if there exist some translate $\gamma D$ such that $\gamma x$ is closer to $x$ than all of its neighboring tiles, but this won't happen if we work with Dirichlet domain instead of an arbitrary fundamental domain. See Proposition 3.1 of \cite{em/1048515809}.} This procedure terminates after a finite number of steps. 
\item (Count hyperbolic conjugacy classes) Given the list $T$, let $T_h$ be the subset of hyperbolic elements whose axis passes within a distance $R(D)$ from the base point. Group $T_h$ by 
geodesic length (or equivalently, absolute value of the traces of the corresponding $2\times 2$ matrices):
\begin{equation}
T_h=\bigsqcup_l T_h^{(l)},\ \ T_h^{(l)}=\{\tilde{\gamma} \in T_h \ |\ 2\cosh^{-1}\left(|\trace(\tilde{\gamma})|/2\right)=l\}.
\end{equation}
For all sublists of $T_h$ sharing the same geodesic length $l$, select all $\tilde{\tau}\in T$ such that $d(x,\tilde{\tau} x)\leqslant 2 \cosh^{-1}(\cosh R(D) \cosh \frac{l}{4})$:
\begin{equation}
C_l:=\left\{\tilde{\tau}\in T\ \Big |\ d(x,\tilde{\tau} x)\leqslant 2\cosh^{-1}\left(\cosh{R}\cosh \frac{l}{4}\right) \right\},
\end{equation}
and use these elements to partition the sublist into conjugacy classes:
\begin{equation}
\text{Orb}_l(\tilde{\gamma}):=\{\widetilde{\gamma}' \in T_h^{(l)}\ | \  \exists \tilde{\tau} \in C_l, \tilde{\tau} \tilde{\gamma} \tilde{\tau}^{-1}=\pm \widetilde{\gamma}' \},\ \ \ T_h^{(l)}=\bigsqcup_{i}\text{Orb}_l(\tilde{\gamma}_i).
\end{equation}
Note that if the orbifold $\Gamma \backslash \mathbb{H}$ is spin, the traces of all matrices in the same conjugacy class should share the same sign. This sign is exactly the multiplier $\chi$ that enters the sum over hyperbolic elements in the Selberg trace formula for the 1-Laplacian. 
\item (Compute the winding) Compute the winding number of each conjugacy class by checking whether it could be powers of other classes.
\end{enumerate}

\subsection{Step 3: Applying the Selberg trace formula to a well-chosen test function}
\label{sec:step3} 

Now that our algorithm can find all the geodesics up to a certain length, we can plug them into the geometric side of the Selberg trace formula. The Selberg trace formulae for the Laplacian and the squared Dirac operator (which is related to 1-Laplacian, see the remark \ref{remark:1-Laplacian}) are respectively (see Theorem 5.1 of \cite{hejhal2006selberg1} and equation 6.56 of \cite{hejhal2006selberg2}):
\begin{align*}
    \sum_{n=0}^\infty \widehat{h}(\sct_n)=\frac{V(\Sigma)}{4\pi}\int_{-\infty}^\infty t\widehat{h}(t)\mathrm{tanh}(\pi t) dt + \sum_{\{\gamma_h\}}\frac{l(\gamma_{h;0})}{2\,\mathrm{sinh}(l(\gamma_h)/2)}h(l(\gamma_h))\\+\sum_{\{\gamma_e\}}\frac{ie^{-i\theta}}{4M_{\gamma_e}\mathrm{sin}\theta}\int_{-\infty}^\infty\frac{h(u)e^{-\frac{u}{2}}(e^u-e^{2i\theta})}{\mathrm{cosh}u-\mathrm{cosh}(2\theta)}du,
\end{align*}
and
\begin{align*}
    \sum_{n=0}^\infty \widehat{h}(\spt_n)=\frac{V(\Sigma)}{4\pi}\int_{-\infty}^\infty t\widehat{h}(t)\frac{\mathrm{sinh}(2\pi t)}{\mathrm{cosh}(2\pi t)-1} dt + \sum_{\{\gamma_h\}, \mathrm{Tr}\,\gamma_h>2}\frac{\chi(\gamma_h)l(\gamma_{h;0})}{2\,\mathrm{sinh}(l(\gamma_h)/2)}h(l(\gamma_h))\\+\sum_{\{\gamma_e\}, \pi>\theta(\gamma_e)>0}\frac{i\chi(\gamma_e)}{4M_{\gamma_e}\mathrm{sin}\theta}\int_{-\infty}^\infty\frac{h(u)(e^u-e^{2i\theta})}{\mathrm{cosh}u-\mathrm{cosh}(2\theta)}du.
\end{align*}

Let us explain the notations in these formulas:
\begin{itemize}
    \item $h$ is an acceptable compactly supported test function such that $\widehat{h}$ is even.
    \item The $\sct_n$ and $\spt_n$ are equal to the $\sqrt{\sclambda_n-1/4}$ and $\sqrt{\splambda_n-1/4}$, where the $\sclambda_n$ and $\splambda_n$ are the eigenvalues of the Laplacian and 1-Laplacian, respectively.
    \item $V(\Sigma)$ is the volume of $\Sigma$.
    \item The sum over $\{\gamma_h\}$ is a sum over conjugacy classes of hyperbolic elements in $\pi_1(\Sigma)$, or of its double cover in $\mathrm{SL}(2,\mathbb{R})$ in the case of the 1-Laplacian. 
   
    \item For a given hyperbolic element $\gamma_h$, $l(\gamma_h)=2 \cosh^{-1}(|\trace(\gamma_h)|/2)$ is the length of the corresponding geodesic.
     \item For a given hyperbolic element $\gamma_h$, $\gamma_{h;0}$ denotes the primitive hyperbolic element associated with $\gamma_h$, that is, $\gamma_{h;0}$ is the element with shortest geodesic length such that $\gamma_h$ can be expressed as a power of $\gamma_{h;0}$.
    \item The sum over $\{\gamma_e\}$ is a sum over conjugacy classes of elliptic elements in $\pi_1(\Sigma)$, or of its double cover in $\mathrm{SL}(2,\mathbb{R})$ in the case of the 1-Laplacian. 
    \item For a given elliptic element $\gamma_e$, $M_{\gamma_e}$ is the order of $\gamma_e$. 
    \item $\chi$ is a multiplier on the double cover of $\pi_1(\Sigma)$ in $\mathrm{SL}(2,\mathbb{R})$ implementing a choice of spin structure. 
    \item $\theta(\gamma_e)$ is the angle of the rotation matrix $\gamma_e$. 
    \item A hat denotes a Fourier transform.
\end{itemize}

Our strategy will closely follow the reference \cite{lin2022seiberg}. The idea is to apply the Selberg trace formula to functions whose Fourier transform  that appears on the geometric side of the Selberg trace formula has compact support - so that only a finite number of terms contribute to the sum over hyperbolic elements, and that we can find all of them using the procedure outlined in the previous subsection.

Let $h_1,\dots,h_n$ be such test functions with support in $[-r,r]$ and $\lambda_i$ be the eigenvalues of the operator of interest (it can be the Laplacian or the squared Dirac operator). It will also be useful to introduce extra parameters $t_i$, related to the eigenvalues through
\begin{align}
\lambda_i:=\frac{1}{4}+t_i^2.
\end{align}
Define a matrix $A$ whose entries are given by \begin{align}A_{ab}=\sum_j h_a(t_j)h_b(t_j).\end{align}If $c_t$ is the column vector \begin{align} c_t=\begin{bmatrix}h_1(t)\\ \vdots\\h_n(t)\end{bmatrix},\end{align}
let \begin{align}I_r(\lambda):=\underset{\braket{c_t,x}=1}{\mathrm{inf}}\braket{Ax,x}.\end{align}
$I_r(\lambda)$ can be calculated by introducing a Lagrange multiplier, and is equal to
\begin{align}
    I_r(\lambda)=\frac{1}{\braket{A^{-1}c_t,c_t}}.
\end{align}
This quantity can be evaluated explicitly using the geometric side of the Selberg trace formula applied to the functions $h_ah_b$. 
One can then prove (see \cite{lin2022seiberg}): 
\begin{prop}
\label{prop:jt1}
    If $I_r(\lambda)<1$, then $\lambda$ is not an eigenvalue of the operator of interest.
\end{prop}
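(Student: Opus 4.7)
The plan is to argue by contrapositive: assume $\lambda$ is an eigenvalue of the operator of interest and deduce that $I_r(\lambda) \geq 1$. The essential observation is that the matrix $A$ admits the manifestly sum-of-squares representation
\begin{equation*}
\langle A x, x\rangle \;=\; \sum_{a,b} \Bigl(\sum_j h_a(t_j) h_b(t_j)\Bigr) x_a x_b \;=\; \sum_j \Bigl(\sum_a h_a(t_j)\,x_a\Bigr)^{\!2} \;=\; \sum_j \langle c_{t_j}, x\rangle^{2},
\end{equation*}
where the sum over $j$ runs over the spectral parameters of the operator. In particular $A$ is positive semidefinite, and the infimum defining $I_r(\lambda)$ is well-posed (we may add a harmless regularization or work on the quotient by $\ker A$ if $A$ is only positive semidefinite; since the $h_a$ are chosen so that $A$ is strictly positive, which is the case of interest, we proceed assuming $A$ is invertible so that the quoted closed-form expression is valid).

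Now suppose $\lambda$ is an eigenvalue, and set $t := \sqrt{\lambda - 1/4}$. Then $t = t_{j_0}$ for some index $j_0$ in the spectrum. For any admissible $x$ satisfying the constraint $\langle c_t, x\rangle = 1$, the sum-of-squares identity above yields
\begin{equation*}
\langle A x, x\rangle \;=\; \sum_j \langle c_{t_j}, x\rangle^{2} \;\geq\; \langle c_{t_{j_0}}, x\rangle^{2} \;=\; \langle c_t, x\rangle^{2} \;=\; 1.
\end{equation*}
Taking the infimum over admissible $x$ gives $I_r(\lambda) \geq 1$, contradicting the hypothesis $I_r(\lambda) < 1$. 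Hence $\lambda$ cannot be an eigenvalue.

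The closed-form expression $I_r(\lambda) = 1/\langle A^{-1} c_t, c_t\rangle$ quoted in the statement follows by a standard Lagrange multiplier computation: the stationarity condition for minimizing $\langle A x, x\rangle$ subject to $\langle c_t, x\rangle = 1$ gives $A x = \mu\, c_t$, so $x = \mu A^{-1} c_t$ with $\mu = 1/\langle A^{-1} c_t, c_t\rangle$, and the minimum value equals $\mu$. This makes the criterion effectively computable: each entry $A_{ab}$ can be evaluated from the geometric side of the Selberg trace formula applied to $h_a h_b$, using only finitely many conjugacy classes thanks to compact support, and the resulting scalar $\langle A^{-1} c_t, c_t\rangle$ can be compared to unity. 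The only real subtlety to watch in writing the full proof is the $\lambda \leftrightarrow \pm t$ correspondence and the even/odd symmetry of the test functions, which must be set up so that $c_t$ and $c_{-t}$ represent the same constraint; this is why one takes $\widehat{h}$ even in the Selberg trace formula. No genuine obstacle arises beyond these bookkeeping issues.
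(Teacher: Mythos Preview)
Your proof is correct and rests on the same core observation as the paper's: if $\lambda$ is an eigenvalue, then the term indexed by $j_0$ in a sum of nonnegative quantities is forced to equal $1$ by the constraint, so the whole sum is $\geq 1$. Your route is slightly more direct than the paper's. The paper introduces an auxiliary quantity $I^\bullet_r(\lambda)$, the infimum of $\sum_j \widehat{h}(t_j)$ over \emph{all} admissible compactly supported test functions with $\widehat{h}(t)=1$, shows $I^\bullet_r(\lambda)\geq 1$ when $\lambda$ is an eigenvalue, and then separately argues $I^\bullet_r(\lambda)\leq I_r(\lambda)$ by recognizing $I_r$ as the same infimum restricted to the smaller set $\{h\ast h : h=\sum x_i h_i\}$. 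You bypass this detour by writing $\langle Ax,x\rangle = \sum_j \langle c_{t_j},x\rangle^2$ directly and isolating the $j_0$ term. The paper's two-step version has the conceptual benefit of exhibiting $I_r$ as an upper approximation to an ``ideal'' infimum, which motivates why enlarging the basis $\{h_a\}$ improves the method; your version is cleaner as a bare proof of the proposition. The Lagrange-multiplier paragraph and the remarks on the $t\leftrightarrow -t$ symmetry are correct but extraneous to the statement being proved.
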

\begin{proof}
A test function $h$ with support on $[-r,r]$ is called \textit{admissible} if $\widehat{h}\geq 0$ and the Selberg trace formula of interest can be applied to it. For $t\in[-r,r]$, define the quantity
\begin{align}
I^\bullet_r(\lambda):=\underset{\begin{subarray}{c}
  h\;\mathrm{admissible}\\\mathrm{supp}\,h\subset[-r,r]\\\widehat{h}(t)=1\end{subarray}}{\mathrm{inf}}\sum_j \widehat{h}(t_j).
\end{align}
Then, if there exists $j_0$ such that $t=t_{j_0}$, then for any test function $\widehat{h}$, we have \begin{align}1=\widehat{h}(t)\leq\sum_j \widehat{h}(t_j).\end{align}This is just because a sum is larger than or equal to all of its summands if the summands are non-negative. Hence,  
\begin{equation}
I^\bullet_r(\lambda) \geqslant 1.
\end{equation}
By contrapositive, we deduce that if $I^\bullet_r(\lambda)<1$, then $t$ cannot be one of the $t_j$. All that now remains to be shown is that for all $t$, $I^\bullet_r(\lambda)\leq I_r(\lambda)$.
Now, $I_r(\lambda)$ can itself be reexpressed as an infimum: let
\begin{align}
S:=\left\{h\ast h, \mathrm{supp}\; h\subset\left[-\frac{r}{2},\frac{r}{2}\right],h=\sum x_i h_i\; \mathrm{for}\;x_i\in\mathbb{R}\right\}.
\end{align}
We have 
\begin{align}
\underset{\begin{subarray}{c}
  H=h\ast h\in S\\\widehat{h}(t)=1\end{subarray}}{\mathrm{inf}}\sum_j \widehat{h}(t_j)=I_r(\lambda).
\end{align}
But now, $I^\bullet_r(\lambda)$ is also an infimum, over a larger set of functions. We deduce
\begin{align}
I^\bullet_r(\lambda)\leq I_r(\lambda),
\end{align}
which completes the proof of the proposition.
\end{proof}
There is a straightforward generalization of the proof of the above result to the case in which the eigenvalue of the operator of interest is degenerate:
\begin{prop}
\label{prop:jt2}
Let $k\in\mathbb{N}$. If $I_r(\lambda)<k$, then $\lambda$ is not an eigenvalue of multiplicity $k$ or higher of the operator of interest.
\end{prop}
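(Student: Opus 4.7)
The plan is to generalize the argument from the proof of Proposition \ref{prop:jt1}, taking multiplicity into account. As before, I would introduce the auxiliary quantity
\begin{align*}
I^\bullet_r(\lambda):=\underset{\begin{subarray}{c}
  h\;\mathrm{admissible}\\\mathrm{supp}\,h\subset[-r,r]\\\widehat{h}(t)=1\end{subarray}}{\mathrm{inf}}\sum_j \widehat{h}(t_j),
\end{align*}
where $t=\sqrt{\lambda-1/4}$ and admissibility includes the non-negativity $\widehat{h}\geq 0$. The inequality $I^\bullet_r(\lambda)\leq I_r(\lambda)$ established in the proof of Proposition \ref{prop:jt1} carries over verbatim, since it only uses that the set of convolution squares $h\ast h$, with $h$ a real linear combination of $h_1,\ldots,h_n$ supported in $[-r/2,r/2]$, is contained in the class of admissible test functions supported in $[-r,r]$.

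The new ingredient is a strengthened lower bound on $I^\bullet_r(\lambda)$. If $\lambda$ is an eigenvalue of multiplicity at least $k$, then there exist $k$ distinct indices $j_1,\ldots,j_k$ with $\lambda_{j_a}=\lambda$, which by the relation $\lambda_j=1/4+t_j^2$ forces $t_{j_a}=\pm t$. Since $\widehat{h}$ is even (a standing admissibility condition of the Selberg trace formula used here), $\widehat{h}(t_{j_a})=\widehat{h}(t)=1$ for every $a$, and non-negativity of $\widehat{h}$ on the remaining summands yields
\begin{align*}
\sum_j \widehat{h}(t_j) \;\geq\; \sum_{a=1}^{k} \widehat{h}(t_{j_a}) \;=\; k.
\end{align*}
Taking the infimum over all admissible $h$ with $\widehat{h}(t)=1$ then gives $I^\bullet_r(\lambda)\geq k$. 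Combining with $I^\bullet_r(\lambda)\leq I_r(\lambda)$ and contraposing yields the desired statement: if $I_r(\lambda)<k$, then $\lambda$ cannot be an eigenvalue of multiplicity $k$ or higher.

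The argument is essentially a bookkeeping extension of the $k=1$ case. The only place that requires a moment of care is the sign ambiguity between $t_j$ and $-t_j$ coming from the quadratic relation between $t_j$ and $\lambda_j$, which is handled by the evenness of $\widehat{h}$. No further analytic difficulty is anticipated; conceptually, the statement simply records that each eigenvalue of multiplicity $k$ contributes $k$ copies of the same peak to the spectral sum probed by the admissible test functions, and the functional $I_r(\lambda)$ detects exactly this multiplicity.
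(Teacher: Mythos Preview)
Your proposal is correct and follows exactly the approach the paper intends: the paper does not spell out a separate proof but simply states that Proposition~\ref{prop:jt2} is a ``straightforward generalization of the proof of the above result to the case in which the eigenvalue of the operator of interest is degenerate,'' and your argument---reusing $I^\bullet_r(\lambda)\leq I_r(\lambda)$ and observing that $k$ repeated eigenvalues contribute $k$ copies of $\widehat{h}(t)=1$ to the spectral sum---is precisely that generalization. Your remark about handling the sign ambiguity $t_j=\pm t$ via the evenness of $\widehat{h}$ is a nice point of care that the paper leaves implicit.
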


These results provide a method to numerically estimate a lower bound on the gap of the Laplace and Dirac operators on a hyperbolic surface or orbifold. The method works as follows:
\begin{enumerate}
    \item Calculate the matrix $A_{ab}$ for some compactly supported test functions $h_a$ and $h_b$ using the Selberg trace formula and the geodesic spectrum generated in the previous step.
    \item Deduce the value of $I_r(\lambda)$ as a function of $t=\sqrt{\lambda-\frac{1}{4}}$.
    \item For Laplace operators, find the first interval $[a_1,b_1]\subset\mathbb{R}_{+}$ such that $I_r(\lambda)\geq 1$ when $\lambda\in [a_1,b_1]$. This interval $[a_1,b_1]$ provides an estimate for $\lambda_1^{(0)}$ of the surface or orbifold under consideration .
    In the case of the 1-Laplacian, all eigenvalues above $1/4$ must have even multiplicity due to Kramers degeneracy, see Proposition \ref{prop:kramers}. Hence, the interval containing $\lambda_1^{(1/2)}$ is the first interval $[c_1,d_1]\in \mathbb{R}_{+}$ such that $I_r(\lambda)\geq 2$ when $\lambda\in [c_1,d_1]$.
\end{enumerate}

The accuracy of our method is dependent on the choice of test functions $h_k$. In order to get a good bound, we need to slightly modify the choices of test functions made in \cite{lin2022seiberg}. We now explain in more detail how we choose these test functions. 

For our application, we define our test functions $h_a$ in terms of three parameters : $n$, $m$ and $L$. Let
\begin{align}
\delta=\frac{L}{2m+2n},
\end{align}
introduce
\begin{align}
h:=\left(\frac{1}{2\delta}\mathbbm{1}_{[-\delta,\delta]}\right)^{\ast m}.
\end{align}
Then our test functions $h_a$ are defined for $a=1,\dots,n$ by
\begin{align}
h_a(x):=\frac{1}{2}(h(x+a\delta)+h(x-a\delta)).
\end{align}

In this formula:
\begin{itemize}
\item $L$ is the length of the longest geodesic that appears on the right hand side of the Selberg trace formula. Numerically, we observe that when $L$ gets larger, $I_r(\lambda)$ gets more and more relatively sharp peaks, that is, there are more intervals $[a_i,b_i]$ and $[c_i,d_i]$ such that $b_i-a_i<\epsilon$, $d_i-c_i<\epsilon$ where $\epsilon$ is a parameter characterising the precision of the estimation. The more information we have on the geodesic spectrum, the more eigenvalues we can estimate with a reasonably high precision.
\item $n$ is the size of the $A$ matrix. Numerically, we observe that increasing $n$ makes the peaks of $I_r(\lambda)$ sharper. However,  the integral following $V(\Sigma)/4\pi$ on the geometric side of the Selberg trace formulae needs to be evaluated numerically. A very large value of $n$ produces very fast oscillations in the integrand, making the numerical integration difficult to handle. 
\item $m$ is the number of convolutions. Increasing it also makes the peaks of $I_r(\lambda)$ sharper, but we face a similar problem related to fast oscillations.
\end{itemize}

Figure \ref{fig:peaks} shows the results given by this method when applied to the case of the $[0;3,3,5]$ orbifold. In Appendix \ref{app:tableOfLambdaOne} we tabulate the numerical estimates of $\lambda^{(0)}_1$ and $\lambda_1^{(1/2)}$ for various surfaces and orbifolds. 
\begin{figure}[!ht]
    \centering
    \includegraphics[width=12cm]{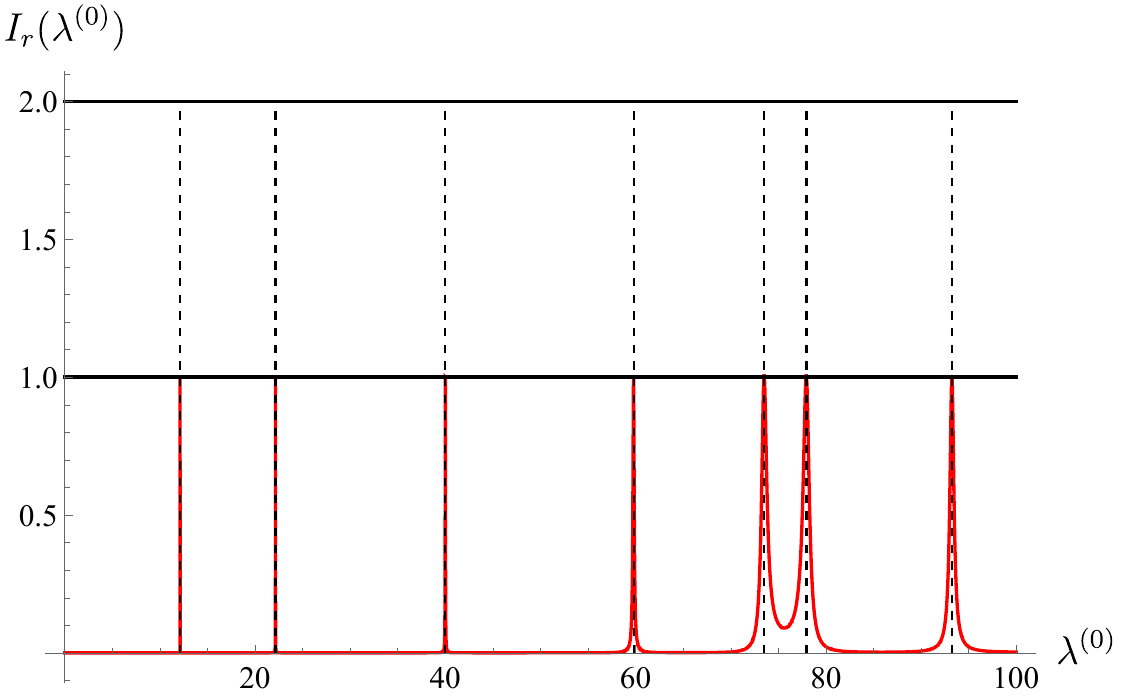}
    \caption{The spectral exclusion plot for the Laplacian spectrum of the hyperbolic triangle [0;3,3,5] with $m=4$, $n=24$ and $L=10.9$. The dashed vertical lines correspond to the first few eigenvalues computed using the finite element method.}
    \label{fig:3,3/2}
\end{figure}
\begin{figure}%
    \centering
    \subfloat[\centering ]{{\includegraphics[width=12cm]{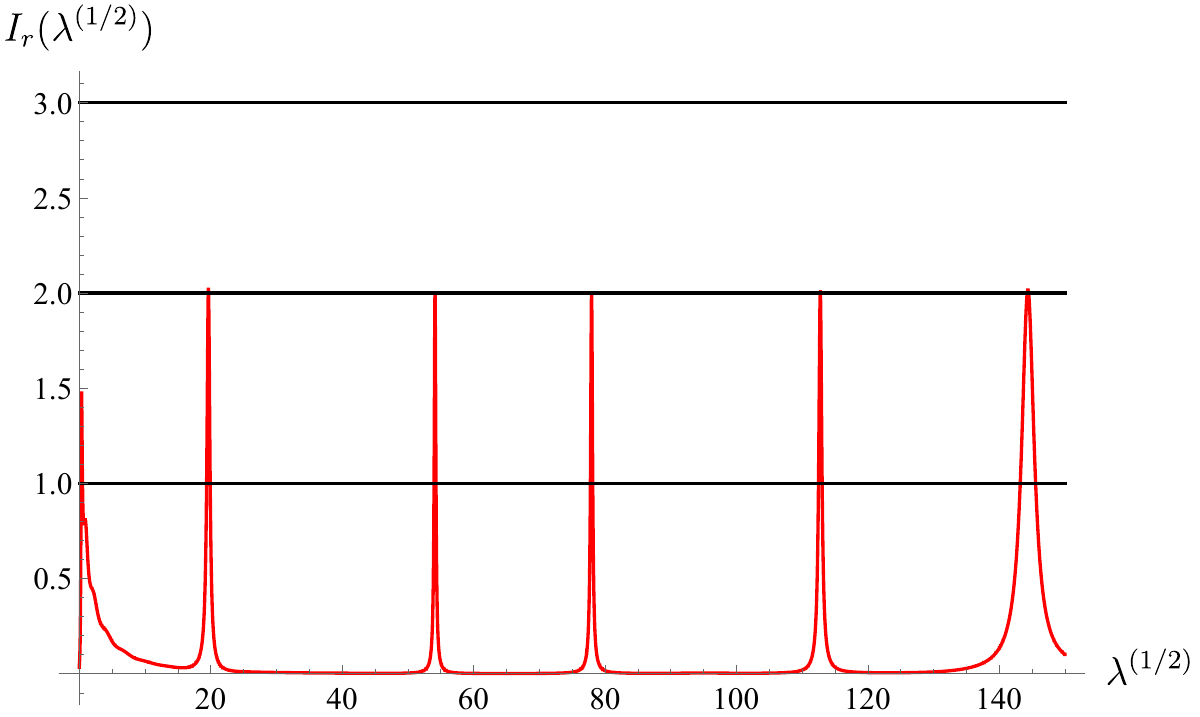} }}%
    \qquad
    \subfloat[\centering ]{{\includegraphics[width=12cm]{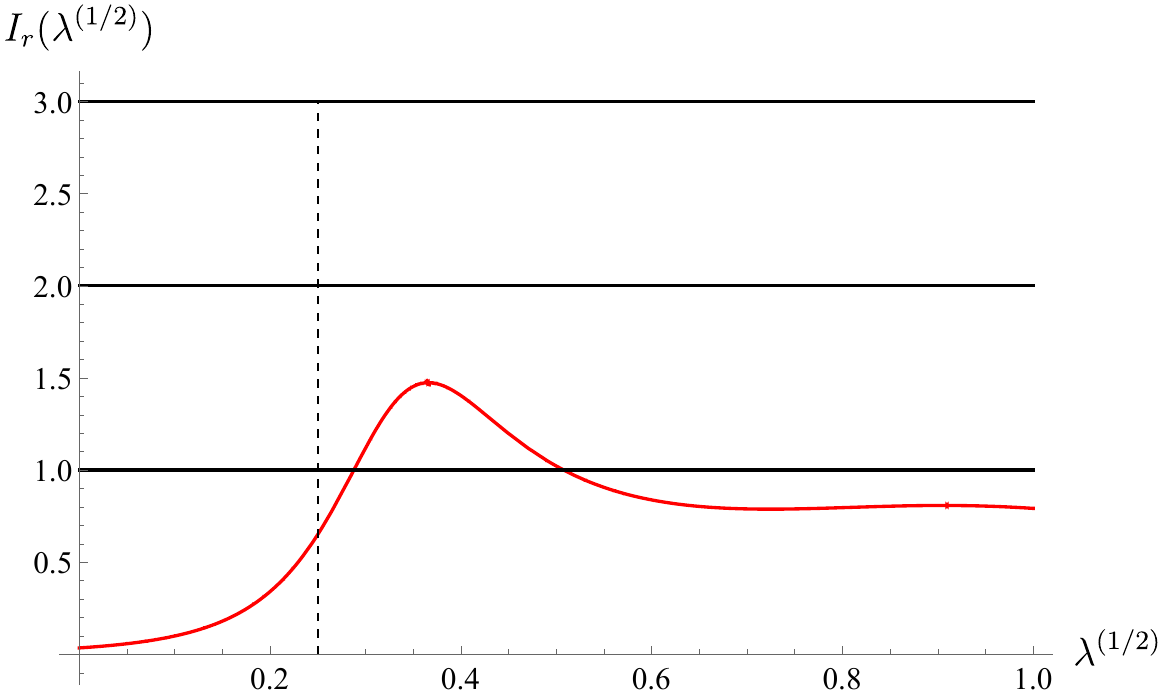} }}
    \caption{(A) Spectral exclusion plot for the 1-Laplacian spectrum of the hyperbolic triangle [0;3,3,5] with $m=4$,$n=24$ and $L=10.9$. The leftmost peak does not imply the existence of any eigenvalues. The curve is below 1 at $\lambda^{(1/2)}=1/4$ therefore it does not indicate the existence of a harmoinc spinor. The peak is also below 2 and by Kramers degeneracy it does not represent a non-harmonic eigenfunction. (B) Spectral exclusion plot for the 1-Laplacian spectrum of the hyperbolic triangle [0;3,3,5] zoomed into the interval from 0 to 1. The dashed vertical line is located at $\lambda^{(1/2)}=1/4$.}
    \label{fig:peaks}%
\end{figure}

\section{Results: bounds from semi-definite programming}\label{sec:results}
In this section we present the proofs of the theorems mentioned earlier, as well as write down further bounds that we obtain using linear and semidefinite programming.  
\subsection{Proofs of theorems using Semi definite programming}


\begin{proof}[Proof of Theorem \ref{thm:1*}]
    For genus $1$ or more, the above bound follows the results in \cite{Kravchuk:2021akc}. For genus $0$ hyperbolic spin orbifold, we use the fact $\ell_{1/2}=0$. Since it is a spin orbifold, the orbifold orders are odd i.e $k_i\geq 3$ and there has to be at least $3$ orbifold points for the surface to be spin. Using Riemann-Roch, we then deduce $\ell_{3}\geq 1$ and $\ell_{3/2}\geq 1$. Furthermore, we have $\lambda^{(1/2)}\geq 1/4$ for all spin orbifolds. Now we leverage \texttt{SDP}\eqref{linpro1} with $n=3$ and choosing $\lambda_*^{(1/2)}=1/4$.  We have verified the bound using rational arithmetic as done in \cite{Kravchuk:2021akc}.
\end{proof}




\begin{proof}[Proof of Theorem \ref{thm:2*}]
   We note that for a hyperbolic spin orbifold $X$ admitting harmonic spinor, we have $\ell_{1/2}\geq 1$ and hence $\ell_1\geq 1$. Furthermore, we have $\lambda^{(1/2)}\geq 1/4$ for all spin orbifolds. Now we  leverage \texttt{SDP}\eqref{linpro1} with $n=1$ and choosing $\lambda_*^{(1/2)}=1/4$.  We have verified the bound using rational arithmetic as done in \cite{Kravchuk:2021akc}.
\end{proof}



\begin{proof}[Proof of Theorem \ref{thm:30*}]
   We leverage \texttt{SDP}\eqref{linpro1} with $n=3$ and choosing $\lambda_*^{(0)}=3/2$.
\end{proof}


\begin{proof}[Proof of Theorem \ref{thm:analytic}]
    Given a compact orientable hyperbolic hyperelliptic spin manifold $X$ of genus $g>2$, we have $\ell_{1/2}=\lfloor (g+1)/2\rfloor>1$. We consider the spectral identities derived from  $\langle \mathscr{O}_{\frac{n}{2}} \mathscr{O}_{\frac{n}{2}} \widetilde{\mathscr{O}}_{\frac{n}{2}}\widetilde{\mathscr{O}}_{\frac{n}{2}} \rangle_{\texttt{SYM}}$ with $n=1$. They are given by (see Proposition \ref{crossEqnMultiple}) 

\begin{equation}
 \begin{aligned}
 \strangefont{S}_{2m+n;n/2}&=\sum_{k=1}^\infty \strangefont{T}_{k;n/2}\ \mathcal{F}_{2m}^{(n/2,n/2,n/2,n/2)}(\lambda_k^{(0)}),\\
  - \strangefont{A}_{2m+1+n;n/2}&=\sum_{k=1}^\infty \strangefont{T}_{k;n/2}\ \mathcal{F}_{2m+1}^{(n/2,n/2,n/2,n/2)}(\lambda_k^{(0)}),\\
\strangefont{S}_{2m+n;n/2}&=\mathcal{F}_{2m}^{(n/2,n/2,n/2,n/2)}(0)+\sum_{k=1}^{\infty}\left(\strangefont{Q}_{k;n/2}^2-\frac{1}{\ell_{n/2}}\strangefont{T}_{k;n/2}\right)\mathcal{F}_{2m}^{(n/2,n/2,n/2,n/2)}(\lambda_k^{(0)}),\\
\strangefont{A}_{2m+1+n;n/2}&=\mathcal{F}_{2m+1}^{(n/2,n/2,n/2,n/2)}(0)+\sum_{k=1}^{\infty}\left(\strangefont{Q}_{k;n/2}^2-\frac{1}{\ell_{n/2}}\strangefont{T}_{k;n/2}\right)\mathcal{F}_{2m+1}^{(n/2,n/2,n/2,n/2)}(\lambda_k^{(0)}),\\
\end{aligned}
\end{equation}
To proceed, we consider the set of identities coming from $m=0$, from which we can derive
\begin{equation}
\begin{aligned}
    0&=\mathcal{F}_{0}^{(1/2,1/2,1/2,1/2)}(0)+\sum_{k=1}^{\infty}\left(\strangefont{Q}_{k;n/2}^2-\left(1+\frac{1}{\ell_{1/2}}\right)\strangefont{T}_{k;1/2}\right)\mathcal{F}_{0}^{(1/2,1/2,1/2,1/2)}(\lambda_k^{(0)})\\
    0&=\mathcal{F}_{1}^{(1/2,1/2,1/2,1/2)}(0)+\sum_{k=1}^{\infty}\left(\strangefont{Q}_{k;n/2}^2+\left(1-\frac{1}{\ell_{1/2}}\right)\strangefont{T}_{k;1/2}\right)\mathcal{F}_{1}^{(1/2,1/2,1/2,1/2)}(\lambda_k^{(0)})
    \end{aligned}
\end{equation}

Plugging in the values of $\mathcal{F}$, we obtain from the above
\begin{equation}
0=\sum_{k=1}^{\infty} \left[\lambda^{(0)}_k \strangefont{Q}_{k;n/2}^2  + \left(\lambda^{(0)}_k  \frac{ (\ell_{1/2}-1)}{\ell_{1/2}}-1\right) \strangefont{T}_{k;1/2}\right]\,.
\end{equation}

The above implies that 
\begin{equation}
    \lambda^{(0)}_1<\frac{\ell_{1/2} }{\ell_{1/2}-1}\,.
\end{equation}
Plugging in $\ell_{1/2}=\lfloor (g+1)/2\rfloor$, the theorem follows.
\end{proof}

\begin{thm}\label{thm:3}
    Given a compact orientable hyperbolic spin 2-orbifold $X$ of genus $g$ admitting $\ell_{1/2}>0$ harmonic spinors,  the first non-zero eigenvalue of the Laplacian operator, $\lambda_1^{(0)}(X)$ satisfies the bound, recorded in Table \ref{table:1}.
\end{thm}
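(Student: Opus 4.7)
The plan is to populate each row of Table \ref{table:1} by applying \texttt{SDP}~\ref{linpro3} to a mixed system of correlators with $n=1$, using the data $(g,\ell_{1/2})$ as input. First I would fix the weight-$1$ and weight-$2$ data on the orbifold: since Theorem~\ref{thm:3} assumes $\ell_{1/2}\geqslant 1$, by Riemann--Roch we have $\ell_1 = g$, so the orbifold supports no holomorphic modular forms of weight strictly less than $1$ and the hypotheses of \texttt{SDP}~\ref{linpro3} are met with $n=1$. The integers $(\ell_n,\ell_{n/2})=(g,\ell_{1/2})$ enter \texttt{SDP}~\ref{linpro3} only through the coefficients $1/\ell_n$, $1/\ell_{n/2}$ and the term $(\ell_{n/2}+1)/\ell_n$ in front of the harmonic-spinor contribution, so the program depends parametrically on these two integers.

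Next, for each pair $(g,\ell_{1/2})$ in the table I would perform a bisection search over $\lambda_*^{(0)}\in \mathbb{R}_+$, setting $\lambda_*^{(1/2)}=1/4$ (the universal lower bound on the non-harmonic weight-$1$ spectrum). For a given $\lambda_*^{(0)}$, the feasibility conditions of \texttt{SDP}~\ref{linpro3} are polynomial inequalities in the continuous variables $\lambda_k^{(0)}\geqslant \lambda_*^{(0)}$ and $\lambda_k^{(1/2)}\geqslant 1/4$, together with finitely many linear inequalities indexed by $m\leqslant \Lambda$ and one normalization. This is precisely the class of problems solvable by \texttt{SDPB} \cite{Simmons-Duffin:2015qma}; I would run \texttt{SDPB} at a sufficiently large derivative order $\Lambda$ until the numerical bound stabilizes to the precision reported in the table. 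The value of $\lambda_1^{(0)}(g,\ell_{1/2})$ recorded in the table is the smallest $\lambda_*^{(0)}$ for which a feasible functional $(\alpha_1,\ldots,\alpha_6)$ was found.

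To render the numerical results rigorous, I would follow the rationalization procedure of \cite{Kravchuk:2021akc}: starting from the high-precision floating-point functional output by \texttt{SDPB}, round the coefficients $\alpha_{i;m}$ to nearby rationals and then verify, using exact rational arithmetic, that all positivity and matrix-positivity conditions of \texttt{SDP}~\ref{linpro3} (items (1)--(10)) hold for the rounded functional. For the polynomial matrix inequalities \eqref{positiveTChannel1}--\eqref{positiveTChannel4}, this amounts to exhibiting a sum-of-squares decomposition with rational coefficients, which is straightforward once the interior-point solution is sufficiently deep inside the feasible cone. The entries marked with $\color{red}*$ in Table~\ref{table:1} (where the bound genuinely improves on \cite{Kravchuk:2021akc}) are precisely the cases where the input $\ell_{1/2}$ exceeds the generic value $\lceil \ell^{Max}_{1/2}/2\rceil$; for those, the harmonic-spinor contribution $(\ell_{1/2}+1)/\ell_n\, \strangefont{S}_{1;1/2}$ in the final crossing equation of Proposition~\ref{crossEqnMultiple} carries extra positive weight, and it is this enhancement that tightens the functional search.

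The main obstacle I expect is computational rather than conceptual: as $g$ and $\ell_{1/2}$ grow, the coefficient $(\ell_{1/2}+1)/\ell_n$ changes slowly with $g$, so the SDP becomes increasingly degenerate near the extremal bound, requiring large $\Lambda$ and high-precision \texttt{SDPB} runs to resolve the optimum. A secondary obstacle is certifying rationality: some near-saturating functionals may have eigenvalues of the positivity matrices that cluster near zero, making the rounding step delicate. Both difficulties are handled by increasing working precision and inflating the rounded functional slightly along the constraint normal before verification, as in \cite{Kravchuk:2021akc,cohn2003new}.
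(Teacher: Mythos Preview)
Your proposal is correct and follows essentially the same approach as the paper: apply \texttt{SDP}~\ref{linpro3} with $n=1$, inputs $(\ell_n,\ell_{n/2})=(g,\ell_{1/2})$, and $\lambda_*^{(1/2)}=1/4$, then certify the resulting functional by rational arithmetic as in \cite{Kravchuk:2021akc}. Your write-up is considerably more detailed about the computational pipeline (bisection, \texttt{SDPB}, rounding and SOS certification), but the underlying strategy is identical to the paper's three-line proof.
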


\begin{proof}[Proof of theorem \ref{thm:3}]
    On a hyperbolic spin orbifold $X$ of genus $g$, we have $\ell_{1}=g$ and $\ell_{1/2}\leq \lfloor (g+1)/2\rfloor$. We use the fact that $\lambda^{(1/2)}\geq 1/4$ for all spin orbifolds.  We use \texttt{SDP} (\ref{linpro3}) with $n=1$ and $\lambda_*^{(1/2)}=1/4$. We have verified these bounds using rational arithmetic as done in \cite{Kravchuk:2021akc}.
\end{proof}

\subsection{Exclusion plots}
 As mentioned in the introduction, an \textit{exclusion plot} refers to a region $D\subset (0,\infty)\times (1/4,\infty)$ such that $(\lambda_1^{(0)}(X),\lambda_{1}^{(1/2)}(X))\notin D$, where $X$ can be any compact connected orientable hyperbolic spin surface or orbifold. We may also choose to consider choices of $X$ with prescribed additional properties like genus, number of harmonic spinors it can carry etc. Here $\lambda_1^{(0)}(X)$ is the first nontrivial eigenvalue of the Laplace operator on the compact hyperbolic spin orbifold, while $\lambda_1^{(1/2)}(X)$ is the first nontrivial eigenvalue of the weight-$1$ automorphic Laplacian. In order to obtain the exclusion plots, we perform the following algorithm: 
\begin{enumerate}
    \item Use \texttt{SDP} (\ref{linpro1}) or (\ref{linpro2}), pick a value for $\lambda_*^{(1/2)}$ and search for a functional with the desired properties, leading to a bound $\lambda_1^{(0)}<\lambda_*^{(0)}$ subject to the condition $\lambda_1^{(1/2)}>\lambda_*^{(1/2)}$.
    \item Repeat the above step for various values of $\lambda_*^{(1/2)}$. 
\end{enumerate}
or the following one: 
\begin{enumerate}
    \item Use \texttt{SDP} (\ref{linpro1}) or (\ref{linpro2}), pick a value for $\lambda_*^{(0)}$ and search for a functional with the desired properties, leading to a bound $\lambda_1^{(1/2)}<\lambda_*^{(1/2)}$ subject to the condition $\lambda_1^{(0)}>\lambda_*^{(0)}$.
    \item Repeat the above step for various values of $\lambda_*^{(0)}$. 
\end{enumerate}


Using the aforementioned algorithm involving \texttt{SDP}, we find the disallowed region $D$, which is 
everything except the shaded region (yellow or pink) in the \textit{exclusion} plots. 
We obtain Figure \ref{fig:3,3/2g} by using \texttt{SDP} (\ref{linpro1}) with $n=3$ (for the union of the pink and yellow shaded region) and $n=1$ (for the pink shaded region). The corner point of the yellow shaded region is approximately at $(12.13629,19.67)$, while the corner point of the pink-shaded region is at $(4.7611, 8.28)$, both these points being disallowed. Now, using the Selberg trace formula, we can compute $(\lambda_1^{(0)}(X),\lambda_{1}^{(1/2)}(X))$ for $X=[0;3,3,5]$ (this orbifold has $\ell_{3/2}=\ell_{3}=1$). It is given by $(12.1362327 \pm 10^{-7},19.669\pm 0.03)$. This point is depicted in red on Figure \ref{fig:3,3/2g} and evidently, lies very close to the kink. Figure \ref{fig:3,3/2zoomWithoutRes} zooms onto the kink and shows that the red dot is in the allowed region (up to uncertainty, predicted by the Selberg trace formula). Similarly, using the Selberg trace formula, we can compute $(\lambda_1^{(0)}(X),\lambda_{1}^{(1/2)}(X))$ for $X=[1;3]_{sym}$, the most symmetric point in the moduli space of $[1;3]$, equipped with an odd spin structure such that $\ell_{1/2}=\ell_{1}=1$. In particular we find that $\lambda_1^{(0)}\in (4.7609, 4.7654) $ and $\lambda_{1}^{(1/2)}\in (8.255, 8.298)$. See the table in Appendix \ref{app:tableOfLambdaOne} for a refined interval. For the Laplacian eigenvalue, we can use \texttt{FreeFEM++} as well, this provides the estimate $\lambda_1^{(0)}([1;3]_{sym}) \approx 4.7609$.


Figure \ref{fig:3,3/2} is obtained by using \texttt{SDP} (\ref{linpro2}) with $n=3$. Here we restrict to the case where $\ell_3=\ell_{3/2}=1$, as opposed to using \texttt{SDP} (\ref{linpro1}) with $n=3$, where $\ell_3\geq 1$, $\ell_{3/2}\geq 1$. While Figure \ref{fig:3,3/2} looks almost similar to Figure \ref{fig:3,3/2zoomWithoutRes}, zooming near the kink reveals the differences. Figure \ref{fig:3,3/2zoom} is Figure \ref{fig:3,3/2}, zoomed in near the kink. The corner point is approximately at $(12.13623353125,19.673)$ and disallowed.


Finally, Figure \ref{fig:1/2,1} is obtained by using a simplified version of \texttt{SDP} (\ref{linpro3}) with $n=1$, $\ell_{1}=2$, but $\ell_{1/2}=1$. The red dot here corresponds to the Bolza surface, equipped with an odd spin structure. See the table in Appendix \ref{app:tableOfLambdaOne} for the precise coordinates of the red dot. Figure \ref{fig:1/2,1zoom} is the zoomed version of the above.



\begin{figure}[!ht]
    \centering
    \includegraphics[scale=1.1]{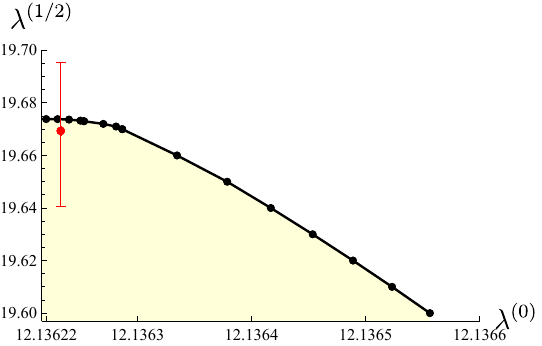}
    \caption{Bounds from the $\ell_{3/2}\geq 1$, $\ell_{3}\geq 1$ system; zooming onto the kink. Everything except the shaded region is disallowed.  The red dot near the corner corresponds to $[0;3,3,5]$.  }
    \label{fig:3,3/2zoomWithoutRes}
\end{figure}

\begin{figure}[!ht]
    \centering
    \includegraphics[scale=1.1]{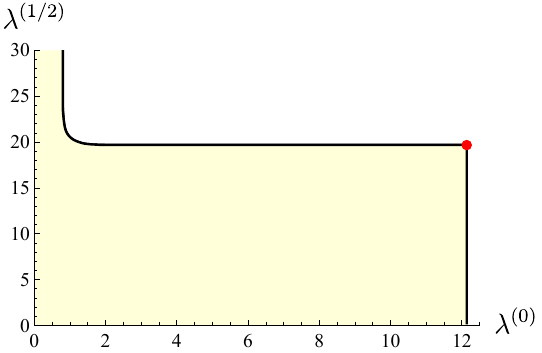}
    \caption{Bounds from the $\ell_{3/2}=\ell_{3}=1$ system. Everything except the yellow shaded region is disallowed for hyperbolic spin orbifolds with $\ell_{3/2}=\ell_{3}=1$.  The red dot in the corner corresponds to $[0;3,3,5]$. }
    \label{fig:3,3/2}
\end{figure}

\begin{figure}[!ht]
    \centering
    \includegraphics[scale=1.1]{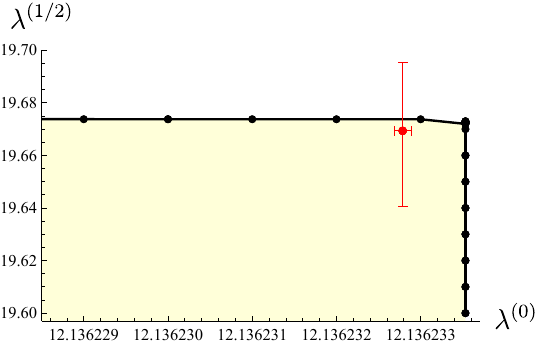}
    \caption{Bounds from the $\ell_{3/2}=\ell_{3}=1$ system; zooming onto the kink. Everything except the shaded region is disallowed.  The red dot near the corner corresponds to $[0;3,3,5]$.  }
    \label{fig:3,3/2zoom}
\end{figure}

\begin{figure}[!ht]
    \centering
    \includegraphics[scale=1.1]{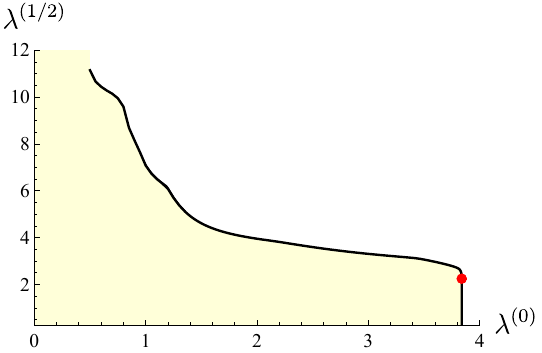}
    \caption{Bounds from the $\ell_{1/2}=1,\ell_{1}=2$ system. Everything except the yellow-shaded region is disallowed.  The red dot in the corner corresponds to Bolza surface with odd spin structure. }
    \label{fig:1/2,1}
\end{figure}

\begin{figure}[!h]
    \centering
    \includegraphics[scale=1.1]{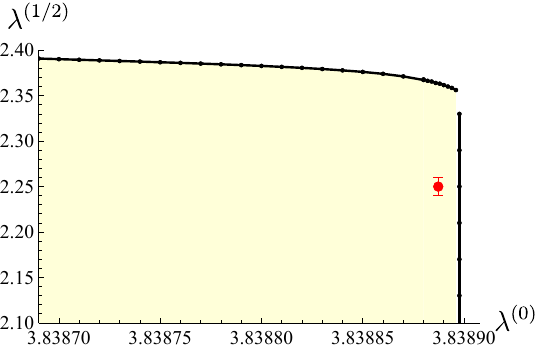}
    \caption{Bounds from $\ell_{1/2}=1,\ell_{1}=2$ system, zooming onto the kink.Everything except the shaded region is disallowed.The red dot corresponds to Bolza surface with odd spin structure. }
    \label{fig:1/2,1zoom}
\end{figure}

\section{Discussion}
\label{sec:discuss}
In this paper, we applied linear programming techniques to constrain the first nonzero eigenvalue of the Laplace and Dirac operators on a compact connected orientable hyperbolic surface or orbifold with a spin structure. The essential tool is the spectral identities coming from the associativity of the product of functions on $\Gamma\backslash \mathrm{SL}(2,\mathbb{R})$. It is the analog of the associativity of the operator product expansion in conformal field theory, which lies at the heart of the conformal bootstrap program.

Our approach is closely related to that of Bernstein and Reznikov \cite{BernsteinReznikov2010},  who investigated the consistency constraints arising from the spectral decomposition of integrals involving a quadruple product of functions within the principal series in $L^2(\Gamma\backslash G)$. See also \cite{Sarnak1994,MichelVenkatesh2010,Nelson2021}. In our current study,  we have demonstrated that integrating these consistency constraints with linear programming techniques, especially when applied to the discrete series, can yield almost optimal bounds on Laplace and Dirac spectra. There is a close analogy of the above with the analytical bootstrap in CFT, in particular, the approach of Bernstein and Reznikov is akin to asymtptotic analysis involving Tauberian theorems, leading to universal behavior of various physical quantities in CFTs \cite{Qiao:2017xif, Mukhametzhanov:2019pzy,Mukhametzhanov:2020swe,Pal:2023cgk}.

Several points could be analysed further. In particular:
\begin{enumerate}
\item It is interesting to note that the bounds we obtain, for example on Laplace eigenvalues, do not always become more stringent as the number of harmonic spinors or genus increases. For example, Table \ref{table:1} shows that the bound at genus 5 with 3 harmonic spinors is less restrictive than the bound at genus 6 with 3 harmonic spinors. We contrast this scenario with that of \cite{Kravchuk:2021akc} where the bound for orbifolds with genus $g_1$ is not weaker compared to that of genus $g_2$ whenever $g_1>g_2$. The origin of monotonicity in the context of \cite{Kravchuk:2021akc} is the fact that the functional used in linear programming for genus $g_2$ can easily be promoted to a functional that is appropriate for genus $g_1$. However, unlike the spectral identities of \cite{Kravchuk:2021akc}, the spectral identities that we bootstrap in this paper have discrete series irreps appearing in the $t$ channel. This constitutes an obstruction to the previous argument. It would be interesting to understand the origin of this non-monotonicity directly and analytically at the level of the spectral identities.

\item In terms of the Dirac spectrum, it is curious that on the exclusion plot of Figure \ref{fig:3,3/2}, the allowed values of $\lambda^{(1/2)}$ seem to blow up quickly as $\lambda^{(0)}$ approaches zero. This raises the question whether our methods could be improved to obtain better bounds in this region of parameter space, and maybe even a universal upper bound on $\lambda^{(1/2)}$.
\item It would also be interesting to study Dirac spectra in higher dimensions using similar spectra identities originating from associativity. In particular the three-dimensional case is quite accessible, extending the work of \cite{Bonifacio:2023ban}.
\end{enumerate}

One can see the results of this paper as a new contribution to an emerging program, started in \cite{Kravchuk:2021akc}, to apply techinques originating from conformal field theory to questions relevant to analytic number theory and hyperbolic geometry. Already in our case, as well as in the three-dimensional case \cite{Bonifacio:2023ban}, these techniques have  been very fruitful. Natural extensions of this work on spinors would be to allow fractional spin structures, or to investigate the case of super-Riemann surfaces. More generally, it would be very interesting to push this program towards quotients of Lie groups of higher rank (for example, $\mathrm{SL}(3,\mathbb{R})$ seems to be a natural next step), as well as the study of non-Archimedean geometries over $\mathbb{Q}_p$. 

Perhaps the most striking feature of our findings is that at least in the cases displayed on Figures \ref{fig:3,3/2zoomWithoutRes} and \ref{fig:1/2,1}, the most symmetric surfaces (i.e. the $[0;3,3,5]$ orbifold, $[1;3]_{sym}$ and the Bolza surface respectively) sit almost exactly at the boundary, or even the kink, of the allowed region in parameter space. The fact that hyperbolic surfaces seem to know about the boundary of the exclusion plot, which purely comes from the constraint equations, seems to suggest that the space of solutions to the constraint equations is closely related to the space of quotients of the form $\Gamma\backslash \mathrm{SL}(2,\mathbb{R})$, where $\Gamma$ is a cocompact lattice. There are well-known results establishing a one-to-one correspondence between abelian algebras and various kinds of geometric spaces. For example, the Gelfand--Naimark theorem \cite{Gelfand:1943} shows that each commutative $C^\ast$-algebra can be thought of as an algebra of continuous functions on a locally compact Hausdorff space. There is an analog of this theorem for von Neumann algebras and measured spaces \cite{Takesaki_2007,pavlov2022gelfand}, and more recently, Connes showed in his reconstruction theorem \cite{Connes:2008vs} that for every spectral triple $(\mathcal{A},\mathcal{H},D)$, with $\mathcal{A}$ commutative, and a few extra conditions, one can realize $\mathcal{A}$ as $C^\infty(X)$, where $X$ is a smooth oriented compact spin$^c$ manifold, and that all such manifolds can be decribed by such a spectral triple. It would be interesting to see whether such reconstruction theorems hold for commutative algebras on which various Lie groups act. The existence of such theorems would likely help understand how closely the space of solutions to the constraint equations described here, and the space of 2-dimensional compact orientable spin-orbifolds, are related.

Finally, we remark on the relevance of our work in context of physics, in particular, conformal field theory. In the present paper, the key role is played by the positivity of the measure of $\widetilde{\Gamma}\backslash \mathrm{SL}(2,\mathbb{R})$. On a similar note we expect that in the context of path integral formulation of conformal field theories, the existence of a $\mathrm{SO}(1,d)$ invariant positive measure along with associativity might lead to the discovery of new constraints on such theories. Additionally, our framework provides a systematic approach for analyzing the de Sitter bootstrap numerically as suggested recently in \cite{Sun:2021thf,Loparco:2023akg,Loparco:2023rug,Penedones:2023uqc}, and for extending this program to include fermions.

\appendix

\section{Dimension of the space of harmonic spinors in low genus}
\label{app:harmonicspinors}

In this work, we obtained bounds on the Laplace and Dirac spectra on a surface or orbifold based on the number of modular forms of various weights on this geometric object. A particularly interesting case has been that of harmonic spinors (i.e. modular forms of weight one), as they carry more than only topological information at genus $g\geq 3$. In this appendix, we summarize the state of the art in the classification of Riemann surfaces of genus $g\geq 3$ in terms of the number of harmonic spinors they can carry. A key role is played by \textit{hyperelliptic surfaces}, which are branched double covers of the Riemann sphere. Note that in genus up to 2, every Riemann surface is hyperelliptic, which informs why the space of harmonic spinors can only retain nontrivial geometric information at genus $g\geq 3$. This appendix cross-references the general results of \cite{Bures1998} with explicit examples of surfaces with many automorphisms from the literature.

\subsection{Genus $3$}

In genus $3$, the situation is particularly nice: either the surface is hyperelliptic, or it admits a representation as a regular quartic in $\mathbb{CP}^2$.

The two cases are defined as follows:
\begin{itemize}
    \item If the surface is hyperelliptic, there exist a choice of spin structure for which the upper bound of two independent harmonic spinors is saturated.
    \item If the surface is a regular quartic, then the maximum number of harmonic spinors is $1$, and it is realized by exactly the odd spin structures on the surface.
\end{itemize}

In genus $3$, the automorphism group of a hyperelliptic surface that has the largest order is $\mathbb{Z}_2\times S_4$. The corresponding surface, given in \cite{https://doi.org/10.48550/arxiv.1711.06599}, has affine equation 
\begin{align}
    y^2=x^8+14x^4+1.
\end{align}
The other candidate special surfaces have automorphism groups $U_6$, $V_8$ (following the notations of \cite{shaska2013subvarieties}), and $\mathbb{Z}_{14}$, see \cite{https://doi.org/10.48550/arxiv.1711.06599} for more details.

On the other hand, the automorphism group of a non-hyperelliptic surface of genus $3$ can be much larger \cite{genus3case}. In particular, there exists a non-hyperelliptic surface of genus $3$ with isometry group $\mathrm{PSL}(2,\mathbb{F}_7)$, that has 168 elements. This surface is called the \textit{Klein quartic}, and has equation 
\begin{align}
    z^3x+x^3y+y^3z=0.
\end{align}
The complete classification of possible automorphism groups of genus $3$ surfaces is given in \cite{genus3case}. 

\subsection{Genus $4$}

In order to understand the situation in genus $4$, we need to introduce the notion of \textit{canonical embedding}. 

\begin{defn}
Let $\Sigma$ be a Riemann surface, and let $\{\omega_1,\dots,\omega_g\}$ be a basis of holomorphic differentials on $\Sigma$. The canonical embedding of $\Sigma$ into $\mathbb{CP}^{g-1}$ is the map
\begin{align}
    \sigma\longmapsto[\omega_1(\sigma),\dots,\omega_g(\sigma)].
\end{align}
\end{defn}
Note that the canonical embedding depends on the choice of basis of holomorphic differentials, however, a change of basis simply amounts to a projective transformation in $\mathbb{CP}^{g-1}$. We have the following general result:

\begin{prop}[\cite{Narasimhan1992}]
The canonical embedding of $\Sigma$ is injective if and only if $\Sigma$ is not hyperelliptic.
\end{prop}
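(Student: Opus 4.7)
The plan is to use the Riemann--Roch theorem to translate injectivity of the canonical embedding $\phi_K : \Sigma \to \mathbb{CP}^{g-1}$ into a statement about the existence of low-degree meromorphic functions on $\Sigma$, and then recognise the latter as exactly the hyperelliptic condition. The key observation is that $\phi_K$ identifies two distinct points $p,q\in\Sigma$ precisely when $p,q$ fail to impose independent linear conditions on the space $H^0(\Sigma,K)$ of holomorphic differentials; equivalently when $\ell(K-p-q)=\ell(K)-1=g-1$, rather than the generic value $g-2$.

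First I would carry out the forward direction: assume $\Sigma$ is non-hyperelliptic and show $\phi_K$ is injective. Applied to the divisor $D=p+q$, Riemann--Roch gives
\[
\ell(p+q)-\ell(K-p-q)=2-g+1=3-g.
\]
If $\phi_K$ identified $p$ and $q$, then $\ell(K-p-q)=g-1$, forcing $\ell(p+q)=2$. This produces a non-constant meromorphic function with pole divisor supported in $\{p,q\}$ of total degree at most $2$, hence a degree-$2$ holomorphic map $\Sigma\to\mathbb{CP}^1$ (the degree cannot be $1$ for $g\geq 1$). This contradicts the assumption that $\Sigma$ is non-hyperelliptic, proving injectivity.

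Conversely, if $\Sigma$ is hyperelliptic with degree-$2$ map $f:\Sigma\to\mathbb{CP}^1$, then pulling back a point gives an effective divisor $D=p+q$ with $\ell(D)\geq 2$, so by Riemann--Roch $\ell(K-p-q)=g-1$, which by the dictionary above says $\phi_K(p)=\phi_K(q)$; hence $\phi_K$ is not injective. The only genuine subtlety is the boundary behaviour: for $g=2$ the canonical map lands in $\mathbb{CP}^1$ and is the hyperelliptic double cover itself (consistent with the statement, since every genus-$2$ surface is hyperelliptic), and for $g\leq 1$ the canonical space is degenerate, so the proposition should be understood for $g\geq 2$. The main technical step is the pair of Riemann--Roch applications above; the conceptual step is recognising that failure of injectivity is equivalent to a jump in $\ell(K-p-q)$, which is the standard ``points imposing independent conditions'' language that makes Riemann--Roch directly applicable.
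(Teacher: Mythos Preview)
The paper does not give its own proof of this proposition; it is simply quoted from \cite{Narasimhan1992} as a classical fact. Your argument is the standard textbook proof via Riemann--Roch and is correct.

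One small refinement for the converse direction: when you pull back a point of $\mathbb{CP}^1$ under the hyperelliptic map $f$, you should choose a non-branch value so that the fibre consists of two \emph{distinct} points $p\neq q$. Over a branch value the fibre is $2p$, and the computation $\ell(K-2p)=g-1$ then shows only that the differential of $\phi_K$ drops rank at $p$ (i.e.\ $\phi_K$ is not an immersion), not directly that it fails to be injective. Since a degree-$2$ map has only finitely many branch points, generic fibres give the two distinct points you need.
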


In genus $4$, we have the additional fact:

\begin{prop}[\cite{Bures1998}]
The canonical model of a non-hyperelliptic surface of genus $4$ is the intersection of a unique quadric and a cubic in $\mathbb{CP}^{3}$. Moreover, the quadric is either smooth or a cone.
\end{prop}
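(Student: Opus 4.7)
The plan is to study the canonical embedding $C \subset \mathbb{CP}^3$ of a non-hyperelliptic genus $4$ surface $\Sigma$ (a smooth irreducible curve of degree $2g-2 = 6$) and control the homogeneous ideal $I(C)$ degree by degree. The key input is Max Noether's theorem: for a non-hyperelliptic canonically embedded curve, the multiplication map $\mathrm{Sym}^m H^0(C,K_C) \to H^0(C, K_C^{\otimes m})$ is surjective for every $m \geq 1$. Combined with Riemann--Roch, which for $m \geq 2$ gives $h^0(C, K_C^{\otimes m}) = (2m-1)(g-1)$, this lets us compute the dimension of $I(C)_m$ directly from the restriction sequence.

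Concretely, the first step will be to apply this to $m=2$. The space of quadrics in $\mathbb{CP}^3$ has dimension $\binom{5}{2}=10$, while $h^0(C, K_C^{\otimes 2}) = 3(g-1) = 9$ for $g=4$. Surjectivity of the restriction map then forces the kernel $I(C)_2$ to have dimension exactly $10 - 9 = 1$, so there is a unique quadric $Q$ containing $C$. Next, for $m=3$, we have $\dim H^0(\mathbb{CP}^3,\mathcal{O}(3)) = 20$ and $h^0(C, K_C^{\otimes 3}) = 5(g-1)=15$, giving $\dim I(C)_3 = 5$. The subspace $H^0(\mathbb{CP}^3,\mathcal{O}(1)) \cdot Q$ already contributes $4$ cubics through $C$, so a genuinely new cubic $F$ exists, unique modulo $Q$. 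This is the cubic we want.

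The intersection $Q \cap F$ is, by Bézout, a curve of degree $2 \cdot 3 = 6$ in $\mathbb{CP}^3$, which matches $\deg C$. One then argues that $C$ is an entire irreducible component of $Q \cap F$; since $C$ is itself of degree $6$ and $Q \cap F$ has degree $6$, the only way this can fail is if $Q \cap F$ has an embedded or extra component, which a dimension/degree count on $Q$ (together with the irreducibility of $C$) excludes. The expected pain point here is making this ``$C = Q \cap F$ scheme-theoretically'' step clean: one must verify that no lower-degree component can split off, which is where irreducibility of $C$ and the choice of $F$ modulo $Q$ will both be used.

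For the final claim on the rank of $Q$, recall that quadrics in $\mathbb{CP}^3$ are classified by rank: rank $4$ (smooth quadric), rank $3$ (quadric cone over a smooth conic), rank $2$ (union of two distinct planes), rank $1$ (double plane). The plan is to exclude ranks $1$ and $2$ using the fact that $C$ is irreducible, non-degenerate (it spans $\mathbb{CP}^3$ since the canonical system is very ample), and of degree $6$. A rank $\leq 2$ quadric is a union of two planes (possibly coinciding), so $C \subset Q$ would force $C$ to lie in a hyperplane, contradicting non-degeneracy of the canonical embedding; hence $\mathrm{rk}(Q) \in \{3,4\}$, i.e.\ $Q$ is either smooth or a cone, as claimed. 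The main obstacle in the overall argument is really the scheme-theoretic step $C = Q \cap F$, but once Noether's theorem is invoked, the rest reduces to linear algebra and the classification of quadric surfaces.
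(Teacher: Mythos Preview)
The paper does not give its own proof of this proposition; it is quoted as a classical fact with a citation to \cite{Bures1998}. Your sketch is the standard textbook argument (essentially the one in Hartshorne~IV, Example~5.2.2, or ACGH) and is correct: Noether's theorem plus Riemann--Roch give $\dim I(C)_2 = 10-9 = 1$ and $\dim I(C)_3 = 20-15 = 5$, the choice of $F$ outside $H^0(\mathcal{O}(1))\cdot Q$ guarantees $F$ does not vanish identically on $Q$ so $Q\cap F$ is a degree-$6$ curve containing $C$, and the complete-intersection (hence Cohen--Macaulay, so no embedded components) plus irreducibility argument forces $C = Q\cap F$. The rank dichotomy for $Q$ follows exactly as you say from non-degeneracy of the canonical embedding. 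There is nothing to compare against in the paper itself.
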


It turns out \cite{Bures1998} that this classification in terms of canonical models is the right one to consider to classify the possible dimensions of harmonic spinors. More precisely:

\begin{itemize}
    \item If the surface is hyperelliptic, there exist spin structures for which the surface admits 2 harmonic spinors.
    \item If the surface is not hyperelliptic and the quadric supporting its canonical embedding is a cone, then it admits a unique spin structure with 2 harmonic spinors. 
    \item If the surface is not hyperelliptic and the quadric supporting its canonical embedding is a smooth, then the maximum number of harmonic spinors is $1$, and it is realized by exactly the odd spin structures on the surface. 
\end{itemize}

In genus $4$, still with the notations of \cite{shaska2013subvarieties}, the automorphism group of a hyperelliptic surface that has the largest order is $V_{10}$. The corresponding surface, given in \cite{https://doi.org/10.48550/arxiv.1711.06599}, has affine equation 
\begin{align}
    y^2=x^6-1.
\end{align}
The other candidate special surfaces have automorphism groups $\mathrm{SL}(2,\mathbb{F}_3)$, $U_8$, and $\mathbb{Z}_{18}$, see \cite{https://doi.org/10.48550/arxiv.1711.06599} for more details.

In a similar fashion to the case of genus $3$, there exist non-hyperelliptic surfaces of genus $4$ with much larger automorphism groups. These surfaces are enumerated in \cite{https://doi.org/10.48550/arxiv.2204.01656}. The surface with the largest automorphism group is \textit{Bring's surface}, with $120$ automorphisms. However, the quartic associated to Bring's surface is smooth. 

If we want to impose that the canonical embedding of the surface lies on a cone, the largest possible automorphism group has order $72$. The corresponding surface has equation 
\begin{align}
    z^3y^2=x(x^4+y^4).
\end{align}

A large number of other large groups (but of smaller order) arise, they are described in detail in \cite{https://doi.org/10.48550/arxiv.2204.01656}.

\subsection{Genus $5$}
 
In genus $5$, a surface can carry a maximal number of harmonic spinors if and only if it is hyperelliptic. The automorphism group of a hyperelliptic surface that has the largest order ($120$) is $\mathbb{Z}_2\times A_5$. The corresponding surface, given in \cite{https://doi.org/10.48550/arxiv.1711.06599}, has affine equation 
\begin{align}
    y^2=x(x^{10}+11x^5-1).
\end{align}
The other candidate special surfaces have automorphism groups $W_2$, $U_{10}$, $V_{12}$ and $\mathbb{Z}_{22}$ \cite{shaska2013subvarieties}, see \cite{https://doi.org/10.48550/arxiv.1711.06599} for more details.

If one drops the requirement of hyperellipticity, there is a surface that has more automorphisms: its order is $192$. This surface is described in \cite{Conder2011EQUATIONSOR,https://doi.org/10.48550/arxiv.2204.01656}.

\subsection{Genus $6$}

In the case of genus $6$, there are once again special surfaces with the maximal ($3$) number of harmonic spinors but that are not hyperelliptic. More precisely,

\begin{itemize}
    \item If the surface is hyperelliptic, there exist spin structures for which the surface admits 3 harmonic spinors.
    \item If the surface is a smooth plane quintic, then it admits a unique spin structure with 3 harmonic spinors. 
    \item If the surface is neither hyperelliptic nor a plane quintic, then there are at most 2 harmonic spinors per spin structure.
\end{itemize}

Note that the classification of the remaining cases is subtle. It is known, however \cite{Bures1998}, that trigonal surfaces cannot have more than $1$ harmonic spinor per spin structure.

In genus $6$, the automorphism group of a hyperelliptic surface that has the largest order is $V_{14}$ (following the notations of \cite{shaska2013subvarieties}). The corresponding surface, given in \cite{https://doi.org/10.48550/arxiv.1711.06599}, has affine equation 
\begin{align}
    y^2=x^{14}-1.
\end{align}
The other candidate special surfaces have automorphism groups $U_{12}$, $GL(2,\mathbb{F}_3)$ and $\mathbb{Z}_{26}$, see \cite{https://doi.org/10.48550/arxiv.1711.06599} for more details.

Once again, there exist non-hyperelliptic surfaces of genus $6$ with much larger automorphism groups. These surfaces are enumerated in \cite{Bures1998}. The surface with the largest automorphism group is \textit{Fermat's quintic} \cite{Bures1998}, with $150$ automorphisms. It is a smooth plane quintic of equation 
\begin{align}
    x^5+y^5=z^5,
\end{align}
so interestingly, it also can carry three harmonic spinors.

The surface of genus 6 that \textit{cannot} carry 3 harmonic spinors and has the largest automorphism group is the Wiman sextic, described for example in \cite{https://doi.org/10.48550/arxiv.2204.01656}. 
Its automorphism group is isomorphic to $S_5$, which means that it has $120$ automorphisms.
\newpage
\section{ Numerical estimate of $\lambda^{(0)}_1$ and $\lambda^{(1/2)}_1$ for various orbifolds and surfaces}
\label{app:tableOfLambdaOne}
\begin{table}[!ht]
	\begin{tabular}[t]{ccc}\toprule
Signature  & Interval containing $\lambda_1^{(0)}$ & Interval containing $\lambda_1^{(1/2)}$ \\
\midrule
\multirow{1}{*}{  [0;3,3,5]  } & [12.13623266082, 12.13623279684] & [19.62850299650, 19.70606979308] \\
\cmidrule(lr){1-3}
\multirow{1}{*}{  [0;3,3,7]  } & [6.622512981830, 6.62251303689] & [14.58137931985, 14.70308279925]\\
\cmidrule(lr){1-3}
\multirow{1}{*}{  [0;3,3,9]  } & [4.760935531772, 4.760935540974] & [13.04476106136, 13.20743304307]\\
\cmidrule(lr){1-3}
\multirow{1}{*}{  [0;3,3,11]  } & [3.817638624612, 3.817638645376] & [12.31511817207, 12.56227388521]\\
\cmidrule(lr){1-3}
\multirow{1}{*}{  [0;3,3,13]  } & [3.243870176473, 3.243870434758] & [11.89945757571, 12.22641050708]\\
\cmidrule(lr){1-3}
\multirow{1}{*}{  [0;3,3,15]  } & [2.856060123567, 2.856061719363] & [11.66435497067, 12.00545171009] \\
\cmidrule(lr){1-3}
\multirow{1}{*}{  [0;3,3,17]  } & [2.575237981382, 2.575242811203] & [11.50074150836, 11.87999680844] \\
\cmidrule(lr){1-3}
\multirow{1}{*}{  [0;3,3,19]  } & [2.361748257939, 2.372476650392] & [11.23045200383, 11.84361592566] \\
\cmidrule(lr){1-3}
\multirow{1}{*}{  [0;3,3,21]  } & [2.193498651071, 2.195285976813] & [11.19103656533, 11.78223049525] \\
\cmidrule(lr){1-3}
\multirow{1}{*}{  [0;3,3,23]  } & [2.057131576807, 2.057705367329] & [11.11887135826, 11.73877352328] \\
\cmidrule(lr){1-3}
\multirow{1}{*}{  [0;3,3,25]  } & [1.944123693427, 1.950822253982] & [11.06202801296, 11.70678314765] \\
\cmidrule(lr){1-3}
\multirow{1}{*}{  [0;3,5,5]  } & [5.873575959007, 5.873576043568] & [10.26823286914, 10.46485970804] \\
\cmidrule(lr){1-3}
\multirow{1}{*}{  [0;3,5,7]  } & [4.105916028717, 4.105919429281] & [8.807322253657, 9.080333571881] \\
\cmidrule(lr){1-3}
\multirow{1}{*}{  [0;3,5,9]  } & [3.240661780680, 3.240671025102] & [8.255608604384, 8.583931161697] \\
\cmidrule(lr){1-3}

\multirow{1}{*}{  [0;3,5,11]  } & [2.734102690099, 2.735649379604] & [7.907206673453, 8.351477142335] \\
\cmidrule(lr){1-3}

\multirow{1}{*}{  [0;3,5,13]  } & [2.400941404204, 2.475085745560] & [7.726023042016, 8.311960209309] \\
\cmidrule(lr){1-3}
\multirow{1}{*}{  [0;3,5,15]  } & [2.163736742483, 2.379780806385] & [7.585768436685, 8.286345064939] \\

\cmidrule(lr){1-3}
\multirow{1}{*}{  [0;3,7,7]  } & [3.253194157760, 3.263120231408] & [7.649577360853, 8.068585239517] \\

\cmidrule(lr){1-3}
\multirow{2}{*}{  $[1;3]_{\text{sym}}$  } & \multirow{2}{*}{[4.760933182368, 4.765358782461]} & $[8.255418967910, 8.297252909660]^{*}$ \\
& & [3.108229958351, 3.190576244169]\\

\cmidrule(lr){1-3}
\multirow{3}{*}{  Bolza surface  } & \multirow{3}{*}{[3.838886940769, 3.842772834639]} & $[2.246498128260, 2.259880718024]^{*}$ \\
& & [1.188383272199, 1.192003537383]\\
& & [0.8970133121437, 0.8975387540243]\\

\bottomrule
\end{tabular}
	\caption{$\lambda_1^{(0)}$ and $\lambda_1^{(1/2)}$ of various hyperbolic spin surfaces and spin orbifolds. Orbifolds with signature [1;3] have 4 different spin structures but for the most symmetric point on the moduli space, there are only two different sets of spectra for the Dirac operator. Similarly, the Bolza surface has 16 different spin structures but there are only three different Dirac spectra. The intervals marked with $*$ correspond to spin structures that support a harmonic spinor.}
 \label{table:2}
\end{table}
\newpage
\bibliographystyle{myJHEP}
\bibliography{references}
\end{document}